\numberwithin{equation}{section}
\theoremstyle{plain}
\newtheorem{theorem}{Theorem}
\newtheorem{corollary}{Corollary}
\newtheorem{prop}{Proposition}
\newtheorem{lemma}{Lemma}
\newtheorem{obs}{Remark}
\newtheorem{definition}{Definition}
\newtheorem{exemp}{Example}
\numberwithin{theorem}{section} 
\numberwithin{equation}{section}
\numberwithin{corollary}{section} 
\numberwithin{prop}{section} 
\numberwithin{lemma}{section}
\numberwithin{obs}{section} 
\numberwithin{definition}{section} 
\numberwithin{exemp}{section} 
\DeclareMathOperator{\S3}{\mathnormal{\mathbb{S}^{3s}}}
\DeclareMathOperator{\Lt}{\mathnormal{\prescript{t}{}{L}}}
\DeclareMathOperator{\0}{{{\overline{0}}}}
\DeclareMathOperator{\supp}{\text{supp}}
\DeclareMathOperator{\T}{\mathbb{T}}
\DeclareMathOperator{\G}{\mathnormal{\mathbb{T}^{r+1}\!\times\mathbb{S}^{3s}}}
\DeclareMathOperator{\R}{\mathbb{R}}
\DeclareMathOperator{\Z}{\mathbb{Z}}
\DeclareMathOperator{\N}{\mathbb{N}}
\DeclareMathOperator{\spn}{\text{span}}
\title[Global solvability and hypoellipticity on tori and spheres]{Global solvability and hypoellipticity \\ for evolution operators on tori and spheres}
\author[A. Kirilov]{Alexandre Kirilov}
\address{
	Universidade Federal do Paran\'{a}, 
	Departamento de Matem\'{a}tica,
	C.P.19096, CEP 81531-990, Curitiba, Brazil
}
\email{akirilov@ufpr.br}
\author[A. Kowacs]{Andr\'e Pedroso Kowacs}
\address{
	Universidade Federal do Paran\'{a},
	Programa de P\'os-Gradua\c c\~ao de Matem\'{a}tica,
	C.P.19096, CEP 81531-990, Curitiba, Brazil
}
\email{andrekowacs@gmail.com}
\author[W. de Moraes]{Wagner Augusto Almeida de Moraes}
\address{
	Universidade Federal do Paran\'{a}, 
	Departamento de Matem\'{a}tica,
	C.P.19096, CEP 81531-990, Curitiba, Brazil
}
\email{wagnermoraes@ufpr.br}
\thanks{This study was financed in part by the Coordenação de Aperfeiçoamento de Pessoal de Nível Superior - Brasil (CAPES) - Finance Code 001. The first and third authors were supported in part by CNPq (grants 316850/2021-7 and 423458/2021-3).}
\subjclass{Primary 35H10, 43A75; Secondary 58D25, 58J40 }
\keywords{Global solvability, Global hypoellipticity, Evolution equations, Complex vector fields}
\date{\today}
\begin{document}

\begin{abstract}
In this paper, we investigate global properties of a class of evolution differential operators defined on a product of tori and spheres. We present a comprehensive characterization of global solvability and hypoellipticity, providing necessary and sufficient conditions that involve Diophantine conditions and the connectedness of sublevel sets associated with the coefficients of the operator. Furthermore, we recover well-known results from existing literature and introduce novel contributions.
\end{abstract}

%%% ----------------------------------------------------------------------
\maketitle
%%% ----------------------------------------------------------------------
\begin{singlespace}
	\tableofcontents
\end{singlespace}

%%% ----------------------------------------------------------------------
\section{Introduction}
%%% ----------------------------------------------------------------------

In this article, our investigation revolves around the existence and regularity of solutions for evolution operators defined on a product of tori and $3-$spheres. We recall that an operator $L$ is called globally hypoelliptic if the smoothness of $Lu$ implies the smoothness of $u$. Also, an operator $L$ is called globally solvable if the equation $Lu=f$ admits a smooth solution whenever the function $f$ belongs to a suitable subset of smooth functions. 

Extensive research has been dedicated to exploring these global properties for operators defined on the torus, as evidenced by works such as \cite{BCP2004_mc, Avil2020_jmaa, Avil2023_mn, AM2021_ampa, AGKM2019_jfaa, DGY1996_auf, HP1998_mc, HPC2012_mn}. Additionally, these properties have also been investigated on compact Lie groups, as in \cite{Arau2019_agag, AFR2022_pams, Mora2022_ampa}, and some results have been obtained on compact manifolds, as in \cite{AFR2022_jam, BMZ2012_cpde, AC2022_jfa, AGK2018_jam, AK2019_jst}.

The specific scenario where the operator $L$ is a vector field defined on a torus deserves special attention. As conjectured by S. Greenfield and N. Wallach, see \cite{Forni08_cont-math}, if a smooth closed manifold $M$ admits a globally hypoelliptic vector field $L$, then $M$ is diffeomorphic to a torus, and $L$ can be conjugated to a constant vector field that satisfies a Diophantine condition.  Consequently, the investigation of global hypoellipticity for vector fields on closed manifolds primarily focuses on the torus. Some references in this context include \cite{CC2000_cpde, DGY2002_pems, GW1972_pams, GW1973_tams, Houn1979_tams, Petr2011_tams}.  It is worth noting that the literature contains numerous other references on this topic beyond the ones mentioned here.

We investigate the global hypoellipticity and solvability of a first-order evolution operator, specifically a zero-order perturbation of a complex vector field of tubular type, defined on the product of tori and $3-$spheres $\T^{r+1}\times(\mathbb{S}^3)^s$, where $r$ and $s$ are non-negative integers.

Our results siond the characterization of global solvability on the torus $\T^{r+1}$ obtained in \cite{BDG2017_jfaa, BDGK2015_jpdo}. Additionally, we extend some results from \cite{KMR2021_jfa} and \cite{KMP2021_jde}, where the authors characterized some global properties on $\T^1 \times \mathbb{S}^3$.

To develop our results, we primarily employed Fourier Analysis separately with respect to the variables of the torus and spheres. To achieve this, we extended the techniques presented in the references \cite{KMR2020_bsm, RT2007_fourier_su2, RT2007_fourier_torus, RT2010_book, RT2013_imrn} to a Cartesian product involving multiple copies of the one-dimensional torus and the three-dimensional sphere.

The operators studied in this article are of the form:
\begin{align}
	L = \partial_t+\sum_{j=1}^rc_j(t)\partial_{x_j}+\sum_{k=1}^s id_k(t)\partial_{0,k}+q. \label{main_operator}
\end{align}
Here, $q$ represents a complex constant. The smooth functions $c_j$ and $d_k$ are defined on $\T^1$, while the derivatives $\partial_{t} = \partial/\partial t$ and $\partial_{x_j} = \partial/\partial x_j$ are defined on distinct copies of $\T^1$, for $1\leq j\leq r$. Additionally, the neutral operators $\partial_{0,k}$ are defined on separate copies of the three-dimensional sphere $\mathbb{S}^3$, for $1\leq k\leq s$.

Let us write 
\begin{align*}
   c_j(t)&=a_j(t)+ib_j(t) \in C^\infty(\T^1), \mbox{ for } 1\leqslant j \leqslant r   \mbox{ and, } \\
   d_k(t)&=e_k(t)+if_k(t) \in C^\infty(\T^1),  \mbox{ for } 1\leqslant k \leqslant s, 
\end{align*}
and consider their averages
\begin{align*}
	a_{j0}&=\displaystyle \frac{1}{2\pi}\int_0^{2\pi}a_j(t)dt, \quad  b_{j0}=\frac{1}{2\pi}\int_0^{2\pi}b_j(t)dt, \quad c_{j0}=a_{j0}+ib_{j0},\\
	e_{k0}&=\displaystyle \frac{1}{2\pi}\int_0^{2\pi}e_k(t)dt, \quad  f_{k0}=\frac{1}{2\pi}\int_0^{2\pi}f_k(t)dt, \quad d_{k0}=e_{k0}+if_{k0}.
\end{align*}

We also write $c_0=a_0+ib_0$ and $d_0=e_0+if_0$, where
\begin{align*}
	c(t) = a(t)+ib(t)& = (a_1(t)+ib_1(t),\ldots,a_r(t)+ib_r(t)), \\
	d(t) = e(t)+if(t)& = (e_1(t)+id_1(t),\ldots,e_s(t)+id_s(t)).
\end{align*}

Finally, we consider the following constant coefficient differential operator, associated to $L$  $$L_0=\partial_t+\sum_{j=1}^rc_{j0}\partial_{x_j}+\sum_{k=1}^sd_{k0}i\partial_{0,k}+q,$$
 which has the symbol
\begin{align*}
		\sigma_{L_0}(\tau,\xi,\alpha)& =  i\tau+\sum_{j=1}^r c_{j0}i\xi_j +\sum_{k=1}^s d_{k0}i\alpha_{k}+q \\ 
		& = i\left(\tau+ \langle c_0,\xi\rangle + \langle d_0,\alpha\rangle-iq\right),
\end{align*}
where $\tau \in\mathbb{Z}$,  $\xi=(\xi_1,\xi_2,\ldots,\xi_r)\in\mathbb{Z}^{r}$, $\ell=(\ell_1,\ell_2,\ldots,\ell_s)\in\frac{1}{2}\N_0^s$, and the vector
$\alpha=(\alpha_1,\alpha_2,\ldots,\alpha_s)$ satisfies $-\ell\leq \alpha\leq \ell$, and $ \,\ell-\alpha\in\N_0^s$. 

We denote the set of zeros of $\sigma_{L_0}$ by
\begin{equation*}
		\mathcal{Z}_{L_0} =  \left\{(\tau,\xi,\ell) \in \Z^{r+1}\times\mbox{$\frac{1}{2}$} \N_0^s; \sigma_{L_0}(\tau,\xi,\alpha)= 0,\,\text{for some} -\ell\leq\alpha\leq \ell,\,\ell-\alpha\in\N_0^s\right\}.	
\end{equation*}

We are now equipped with all the necessary elements to provide a comprehensive characterization of the global solvability and global hypoellipticity of the operator $L$ .
	
\begin{theorem}[Global solvability]\label{teogsmain}
    The operator $L$ is globally solvable if and only if one of the following conditions holds:
    \begin{enumerate}[i.]
        \item $(b,f)$ is constant  and  $L_0$ is globally solvable.
        
        \item $\dim \spn \left\{b_1,\dots,b_r,f_1,\dots,f_s\right\}=1$,  none of the functions $b_j$ and $f_k$ change sign, and $L_0$ is globally solvable.

        \item $(b,f)\not\equiv 0$, $(b_0,f_0)=0$, $(a_0,e_0)\in\Z^r \times 2\Z^s$, $q\in i\Z$ and the sublevel sets
        $$\Omega_m^{\xi,\alpha}=\left\{t\in\T^1; \int_0^{t}\!\Big(\sum_{j=1}^r b_j(t)\xi_j+\sum_{k=1}^s d_{k}(t)\alpha_{k} \Big) \!\mathop{dt}<m\right\}$$
        are connected, for every  $m\in\R,\, \xi\in \Z^r$, and $\alpha\in \tfrac{1}{2}\Z^s$ .
    \end{enumerate}
    \end{theorem}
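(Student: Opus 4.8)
The plan is to reduce the problem on $\T^{r+1}\times(\mathbb{S}^3)^s$ to a family of one-dimensional problems on $\T^1$ (the $t$-variable) by taking partial Fourier coefficients in the remaining variables. Writing $u=\sum_{\xi,\ell,m,n}\widehat{u}(t,\xi,\ell,m,n)\,e^{ix\cdot\xi}\,t^\ell_{mn}(w)$, where $t^\ell_{mn}$ are the matrix coefficients of the representations of $\mathbb{S}^3=\mathrm{SU}(2)$, the operator $L$ becomes, on each Fourier component, the ordinary differential operator
$$
L_{\xi,\alpha}=\frac{d}{dt}+i\Big(\sum_{j=1}^r c_j(t)\xi_j+\sum_{k=1}^s d_k(t)\alpha_k-iq\Big)
$$
acting on $C^\infty(\T^1)$, because $\partial_{0,k}$ acts diagonally on the matrix coefficients with eigenvalue $i\alpha_k$ (here $-\ell\le\alpha\le\ell$, $\ell-\alpha\in\N_0^s$). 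The first step is to establish the correct functional-analytic framework: characterize smooth functions and distributions on the product by decay/growth of these partial Fourier coefficients (uniformly in the $\mathbb{S}^3$-indices $m,n$ for fixed $\ell$), and identify the natural space of admissible right-hand sides $f$ (the orthogonal complement of the kernel of the transpose ${}^tL$, i.e.\ the compatibility conditions coming from $\mathcal{Z}_{L_0}$ and from periodic primitives). This is where the extension of \cite{KMR2020_bsm, RT2007_fourier_su2, RT2007_fourier_torus, RT2010_book, RT2013_imrn} to the Cartesian product is used.

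The second step is to solve $L_{\xi,\alpha}\widehat{u}=\widehat{f}$ on $\T^1$ for each fixed $(\xi,\alpha)$ by the classical method: an integrating factor $\exp\big(\int_0^t(i\sum c_j\xi_j+i\sum d_k\alpha_k+q)\,ds\big)$ reduces it to $\frac{d}{dt}(\text{something})=(\text{something})$, which is solvable on $\T^1$ iff the mean of the right-hand side vanishes; when the exponential factor is not periodic one must instead invert using a geometric-series representation whose convergence is governed by the real part $\sum b_j(t)\xi_j+\sum f_k(t)\alpha_k$ of the coefficient. The crucial point — and I expect this to be the main obstacle — is to obtain the solution $\widehat{u}$ with the right uniform estimates as $|\xi|+|\ell|\to\infty$: one must show that the solution operator does not lose more than finitely many derivatives. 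This is exactly where the three cases of the theorem enter. When $(b,f)$ is constant (case i) the problem is a constant-coefficient one after a change of variables and reduces to global solvability of $L_0$, controlled by a Diophantine-type lower bound on $|\sigma_{L_0}|$ away from its zero set. When $\dim\spn\{b_j,f_k\}=1$ with no sign changes (case ii), after normalizing one reduces to the single real function governing the sign of $\sum b_j\xi_j+\sum f_k\alpha_k$, and the classical Hörmander-type argument (à la the vector-field case on $\T^2$) gives the estimates together with the $L_0$ condition. In the oscillating case (case iii), where the mean of the imaginary part vanishes, one writes the primitive $B^{\xi,\alpha}(t)=\int_0^t(\sum b_j\xi_j+\sum d_k\alpha_k)\,ds$ and shows that solvability with tame estimates is equivalent to connectedness of the sublevel sets $\Omega^{\xi,\alpha}_m$ — the connectedness prevents the appearance of a sequence of Fourier modes on which the inverse grows super-polynomially, by an argument of the type in \cite{BDG2017_jfaa, BDGK2015_jpdo}; the hypotheses $(a_0,e_0)\in\Z^r\times2\Z^s$, $q\in i\Z$ are precisely what makes the relevant exponential factors periodic so that only the real part matters.

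The third step is the converse: assuming global solvability, derive each of the listed conditions. Here one argues by contradiction, constructing a right-hand side $f$ in the admissible class whose only solution $u$ fails to be smooth. If none of i–iii holds, one can either find a bad Diophantine sequence (violating solvability of $L_0$ or the Diophantine condition), or — if $(b,f)$ is genuinely non-constant and either of dimension $\ge2$ or changes sign, or one of the arithmetic conditions in iii fails — exhibit Fourier modes $(\xi,\alpha)$ along which the one-dimensional operators $L_{\xi,\alpha}$ are solvable only with an unbounded loss of derivatives; concretely, a disconnected sublevel set $\Omega^{\xi,\alpha}_m$ yields, by a standard localization, a smooth compatible datum with non-smooth solution. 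Assembling these modes into a single $f\in C^\infty$ via a lacunary sum completes the argument.

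Finally, one must check that the pieced-together local solutions $\widehat{u}(t,\xi,\ell,m,n)$ do assemble into a genuine distribution (or smooth function) on the product — i.e.\ verify the coefficient growth bounds — and that changing the base point of the primitives or the representative $\alpha$ within an $\mathbb{S}^3$-block does not affect the conditions; the reduction of the index set from $\alpha\in\tfrac12\Z^s$ with $-\ell\le\alpha\le\ell$ to all $\alpha\in\tfrac12\Z^s$ in the statement of (iii) is justified because every half-integer vector occurs as such an $\alpha$ for $\ell$ large. The main obstacle throughout is the uniformity of the one-dimensional estimates in the spectral parameters, and in particular translating the geometric connectedness of sublevel sets into the analytic non-loss of derivatives; everything else is bookkeeping with the Fourier-coefficient characterizations of $C^\infty$ and $\mathcal{D}'$ on $\T^{r+1}\times(\mathbb{S}^3)^s$.
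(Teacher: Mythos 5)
Your outline follows the same route as the paper: partial Fourier reduction to a family of ODEs on $\T^1$, conjugation to remove the variable real parts, explicit solution formulas with uniform (tame) bounds for sufficiency, and singular constructions for necessity. The sufficiency side of your sketch is consistent with the paper's argument (in case iii the paper even gets a bound with \emph{no} loss of derivatives, by integrating from the point where the primitive attains its maximum and using connectedness of the complementary sublevel set to choose the path along which the exponential factor is $\leq 1$).

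There is, however, a genuine gap in your converse for condition iii. Under the hypotheses relevant there ($(b_0,f_0)=\0$, $(a_0,e_0)\in\Z^r\times 2\Z^s$, $q\in i\Z$ — and one may reduce to this situation, since otherwise the (CS)-type constructions apply), the reduced symbol $\langle a_0,\xi\rangle+\langle e_0,\alpha\rangle+\Im(q)$ is an integer for \emph{every} mode $(\xi,\alpha)$. Consequently each ODE $\widehat{Lu}(\cdot,\xi,\ell)_{\alpha\beta}=\widehat{g}(\cdot,\xi,\ell)_{\alpha\beta}$ that is compatible admits a one-parameter family of smooth solutions, not a unique one. So your plan of "constructing a right-hand side $f$ in the admissible class whose only solution $u$ fails to be smooth", and the claim that a disconnected sublevel set yields "a smooth compatible datum with non-smooth solution", do not make sense as stated: at a fixed mode every solution is smooth on $\T^1$, and non-solvability can only come from the behaviour across infinitely many modes, where at each mode there is a free constant at your disposal. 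To disprove global solvability you must either prove a lower bound on the size of \emph{every} solution (uniformly over the free constants) along a sequence of modes — a delicate point your sketch does not address — or, as the paper does, avoid the issue entirely by using the functional-analytic a priori inequality valid for globally solvable operators (the Hörmander-type Lemma \ref{Hormander_Method}) and violating it with pairs $g_n,v_n$ built from two components of a disconnected sublevel set (Lemma \ref{lemmaf0v0}), together with quantitative bounds on the derivatives of the representation coefficients $\mathfrak{t}^{\ell}_{\alpha\beta}$. By contrast, in the (CS) cases (dimension $\geq 2$, a sign change, or failure of the arithmetic conditions) one can extract a sequence $k_n$ of modes along which the symbol is \emph{not} an integer (this itself needs a small lemma, cf.\ Lemma \ref{sequence_kn_notinZ}), uniqueness of the ODE solution holds, and the direct singular construction you describe does go through, essentially as in Proposition \ref{propchangessign}. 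Without supplying either the uniform-in-$\lambda$ lower bound or the a priori inequality argument, the necessity of the connectedness condition in iii is not established.
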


\begin{theorem}[Global hypoellipticity]\label{teoghmain}
     $L$ is globally hypoelliptic if and only one of the following conditions holds:
     \begin{enumerate}[i.]
         \item $(b,f)$ is constant, $L_0$ is globally solvable, and the set $\mathcal{Z}_{L_0}$ is finite;

        \item $\dim \spn \left\{b_1,\dots,b_r,f_1,\dots,f_s\right\}=1$, none of the functions $b_j$ and $f_k$ change sign, and $L_0$ is globally hypoelliptic.
     \end{enumerate}
\end{theorem}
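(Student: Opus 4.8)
The plan is to reduce the global hypoellipticity of $L$ to a family of first-order ordinary differential operators on the circle $\T^1_t$. Expanding distributions on $\T^{r+1}\times(\mathbb{S}^3)^s$ in Fourier series with respect to $x\in\T^r$ and in matrix coefficients on each factor $\mathbb{S}^3$, and using that each neutral operator $\partial_{0,k}$ is diagonal on matrix coefficients (with the weights $\alpha_k$ as eigenvalues, in accordance with $\sigma_{L_0}$), the equation $Lu=f$ decouples into the scalar ODEs
$$\widehat u'(t)+\Big(i\sum_{j=1}^r c_j(t)\xi_j+i\sum_{k=1}^s d_k(t)\alpha_k+q\Big)\widehat u(t)=\widehat f(t),\qquad(\xi,\alpha)\in\Z^r\times\tfrac12\Z^s,$$
and, by the usual description of $C^\infty$ (resp. $\mathcal{D}'$) in terms of rapid decay (resp. polynomial growth) of these coefficients and their $t$-derivatives, $L$ is globally hypoelliptic if and only if, whenever $Lu\in C^\infty$, the coefficients of $u$ are rapidly decreasing. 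In each mode we conjugate by the modulus-one factor $\exp(-i\int_0^t(\rho_{\xi,\alpha}-\rho_{\xi,\alpha,0}))$, where $\rho_{\xi,\alpha}=\sum_j a_j\xi_j+\sum_k e_k\alpha_k$ and $\rho_{\xi,\alpha,0}$ is its average; since $\rho_{\xi,\alpha}$ is linear in $(\xi,\alpha)$, this conjugation and its inverse preserve $C^\infty$ and $\mathcal{D}'$ up to fixed polynomial losses, and it replaces $a_j(t),e_k(t)$ by $a_{j0},e_{k0}$. After this reduction the $(\xi,\alpha)$-mode operator is $\partial_t+i(\langle a_0,\xi\rangle+\langle e_0,\alpha\rangle)-\psi_{\xi,\alpha}(t)+q$, where $\psi_{\xi,\alpha}(t)=\sum_j b_j(t)\xi_j+\sum_k f_k(t)\alpha_k$; its integrating factor has modulus comparable to $\exp(B_{\xi,\alpha}(t))$ with $B_{\xi,\alpha}(t)=\int_0^t\psi_{\xi,\alpha}$, and its $2\pi$-periodicity obstruction is measured by $\min_{\tau\in\Z}|\sigma_{L_0}(\tau,\xi,\alpha)|$.

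For the \emph{sufficiency} we treat the two cases. In case (i), $(b,f)$ constant, the conjugation above makes $\psi_{\xi,\alpha}$ constant in $t$, so each mode operator has constant coefficients and coincides with the corresponding mode of $L_0$; thus $L$ is globally equivalent to $L_0$, and we conclude by the characterization of global hypoellipticity of the constant-coefficient operator $L_0$, recalling that $L_0$ is globally hypoelliptic precisely when it is globally solvable and $\mathcal{Z}_{L_0}$ is finite. In case (ii), after replacing some of the $\partial_{x_j},\partial_{0,k}$ by their opposites we may assume $b_j,f_k\ge 0$; since $\dim\spn\{b_1,\dots,b_r,f_1,\dots,f_s\}=1$ we may write $b_j=\beta_j\psi_0$, $f_k=\phi_k\psi_0$ with a fixed $\psi_0\ge 0$, $\psi_0\not\equiv 0$, so $\overline{\psi_0}>0$ and $\psi_{\xi,\alpha}=\kappa_{\xi,\alpha}\psi_0$ with $\kappa_{\xi,\alpha}=\sum_j\beta_j\xi_j+\sum_k\phi_k\alpha_k$; hence $B_{\xi,\alpha}$ is monotone and of constant sign. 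Writing $\widehat u$ by variation of parameters, the sign of $\kappa_{\xi,\alpha}$ is chosen so that the relevant part of the integrating factor is bounded by $1$: on modes with $|\kappa_{\xi,\alpha}\overline{\psi_0}-\operatorname{Re}q|$ bounded away from $0$ one even gains a factor that is exponentially small in the mode, while on the remaining modes — the sublattice $\{\kappa_{\xi,\alpha}=0\}$ together with finitely many values of $\kappa_{\xi,\alpha}$ near $\operatorname{Re}q/\overline{\psi_0}$ — the $2\pi$-periodicity obstruction equals $\min_\tau|\sigma_{L_0}(\tau,\xi,\alpha)|$, which is bounded below polynomially by the global hypoellipticity of $L_0$. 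Summing these estimates (and differentiating the ODEs to control $t$-derivatives) gives $u\in C^\infty$.

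For the \emph{necessity}, assume $L$ globally hypoelliptic. We first show that $(b,f)$ must be constant or satisfy $\dim\spn\{b_1,\dots,b_r,f_1,\dots,f_s\}=1$ with none of the $b_j,f_k$ changing sign. Otherwise — if some $b_j$ (or $f_k$) changes sign, or the span has dimension $\ge 2$ — there is a fixed direction along which, after scaling by $N\in\N$, $\psi_{\xi,\alpha}$ changes sign, so $B_{\xi,\alpha}$ is non-monotone and the difference between its largest value and the value at the matching endpoint grows linearly in $N$; choosing a bump for $\widehat f$ supported where $B_{\xi,\alpha}$ is large, the solution operator amplifies by a factor exponentially large in $N$ (or, in the resonant subcase, the mode carries a nonsmooth kernel element), and assembling such modes over $N\to\infty$ produces $u\notin C^\infty$ with $Lu\in C^\infty$, a contradiction. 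Once we are in case (i) or (ii), the remaining conditions follow: in case (i) the global equivalence with $L_0$ forces $L_0$ globally hypoelliptic, hence $L_0$ globally solvable and $\mathcal{Z}_{L_0}$ finite; in case (ii), a failure of the polynomial lower bound (or finiteness) for $\sigma_{L_0}$ would occur on modes where, because $\overline{\psi_0}>0$, the operator $L$ has exactly the $2\pi$-periodicity obstruction of $L_0$, and the same amplification argument as above would contradict the global hypoellipticity of $L$; therefore $L_0$ is globally hypoelliptic.

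I expect the \emph{main obstacle} to be the necessity direction: making the anti-hypoellipticity construction work uniformly over both bad configurations (span of dimension $\ge 2$; genuine sign change of some $b_j$ or $f_k$), including the choice of the mode direction that forces $\psi_{\xi,\alpha}$ to change sign while keeping its average suitable, the correct choice between the two endpoint comparisons, and the resonant subcase $\operatorname{Re}q=0$; and, in case (ii), identifying precisely the set of modes on which $\sigma_{L_0}$ can be small — in particular the degenerate subcases $b_0=f_0=0$ and $\operatorname{Re}q=0$ — so that ``$L$ globally hypoelliptic'' turns out to be equivalent to ``$L_0$ globally hypoelliptic'' rather than to the strictly weaker conjunction of global solvability and finiteness of $\mathcal{Z}_{L_0}$. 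A subsidiary technical point is verifying that the mode-by-mode conjugation and the matrix calculus on $(\mathbb{S}^3)^s$ interact correctly with the global $C^\infty/\mathcal{D}'$ characterizations, so that all estimates are uniform in $(\xi,\ell,\alpha)$ up to fixed powers.
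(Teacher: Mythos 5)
Your plan follows essentially the same route as the paper's proof: reduction to the mode-wise ODEs on $\T^1$, a conjugation that replaces $a_j,e_k$ by their averages, sufficiency via the two variation-of-parameters formulas chosen according to the sign of $(\langle\lambda,\xi\rangle+\langle\gamma,\alpha\rangle)\tilde{b}_0$ together with the Diophantine lower bound on $|1-e^{\pm 2\pi i\sigma_{L_0}(0,\xi,\alpha)}|$ supplied by global hypoellipticity of $L_0$, and necessity via bump-function amplification along a sign-changing non-resonant direction plus (suitably normalized) kernel elements on infinitely many resonant modes when $\mathcal{Z}_{L_0}$ is infinite. The obstacles you flag — producing the non-resonant direction in the dimension $\geq 2$ and sign-change cases, the $k_n^{-1/2}$ lower bound for the amplified solution, and uniformity of all estimates in $(\xi,\ell,\alpha)$ — are precisely the technical lemmas the paper supplies (condition (CS) and its corollary, the sequence $k_n$ lemma, the linear-independence lemma, and the Gaussian integral inequality), so no step of your outline conflicts with the argument actually used.
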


It is worth noting that in both theorems, the global properties of the operator $L$ are intimately connected to those of $L_0$, which are determined by the following Diophantine condition on its coefficients.

\begin{definition}[Diophantine condition]
	We say that the operator $L_0$ satisfies the condition (DC) if there are $M,N>0$ such that:
	\begin{equation}\label{DC_condition_introduction}
			\left|\sigma_{L_0}(\tau,\xi,\alpha) \right|\geq M(|\tau|+|\xi|+|\ell|)^{-N} \tag{DC}
		\end{equation}
for every $(\tau,\xi)\in\mathbb{Z}^{r+1}$, $\ell\in\frac{1}{2}\N_0^s$, $-\ell\leq \alpha\leq \ell$, and $\ell-\alpha\in\N_0^s$, such that neither the left hand side of the inequality is zero, nor $(\tau,\xi,\ell)$ is the zero vector.
\end{definition}

\begin{theorem}[Constant coefficient case] \
\begin{enumerate}[i.]
	\item $L_0$ is globally solvable if, and only if it satisfies the Diophantine condition \eqref{DC_condition_introduction};
	\item  $L_0$ is globally hypoelliptic if, and only if it satisfies the Diophantine condition \eqref{DC_condition_introduction} and  $ \mathcal{Z}_{L_0}$ is finite.
\end{enumerate}
\end{theorem}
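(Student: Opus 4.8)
The plan is to reduce the constant-coefficient operator $L_0$ to a family of scalar symbols indexed by $(\tau,\xi,\ell)$ and the weights $\alpha$, and then to run a standard Fourier-side argument. I would first set up the Fourier analysis on $\G = \T^{r+1}\times\mathbb{S}^{3s}$: a distribution $u$ is expanded in $e^{i\tau t}e^{i\langle\xi,x\rangle}$ on the toral factors and in the matrix coefficients of the irreducible representations of $\mathrm{SU}(2)$ on each spherical factor, so that $L_0$ acts on the $(\tau,\xi,\ell)$-component essentially by multiplication by $\sigma_{L_0}(\tau,\xi,\alpha)$ along the diagonal in the representation space (using that $i\partial_{0,k}$ is the neutral operator, which is diagonalized by the standard basis with eigenvalues $\alpha_k$ running over $-\ell_k,\dots,\ell_k$). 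Characterizing smooth functions and distributions by the usual rapid-decay / polynomial-growth conditions on these coefficients, the whole problem becomes: for which symbols does division by $\sigma_{L_0}$ preserve (i) polynomial growth modulo the kernel (solvability) and (ii) rapid decay modulo the kernel (hypoellipticity)?

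For part (i), the key point is that $L_0$ globally solvable is equivalent to the estimate \eqref{DC_condition_introduction} holding on the nonzero part of the symbol. For the ``if'' direction, given $f$ in the appropriate space — i.e.\ $f$ smooth and orthogonal to $\ker {}^tL_0$, which on the Fourier side means $\hat f$ vanishes on $\mathcal{Z}_{L_0}$ — I would define $\hat u$ by dividing $\hat f$ by $\sigma_{L_0}$ off the zero set and setting it to zero (or anything) on $\mathcal{Z}_{L_0}$; the (DC) lower bound together with the rapid decay of $\hat f$ forces $\hat u$ to have rapid decay, hence $u\in C^\infty$ and $L_0 u = f$. For the ``only if'' direction, I would argue by contraposition: if (DC) fails, there is a sequence $(\tau_n,\xi_n,\ell_n,\alpha_n)$ along which $0<|\sigma_{L_0}|$ decays faster than any polynomial; using this I would build an explicit $f$ whose Fourier coefficients are supported near these points, chosen to be smooth, orthogonal to the kernel, yet such that any formal solution has coefficients of super-polynomial growth, so $L_0 u = f$ has no distributional (let alone smooth) solution. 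One has to be a little careful that the chosen frequencies avoid $\mathcal{Z}_{L_0}$ and that the orthogonality constraint can be met; this is the routine-but-fiddly part.

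For part (ii), global hypoellipticity is equivalent to (DC) together with finiteness of $\mathcal{Z}_{L_0}$. Assuming (DC): if $L_0 u = f$ with $f\in C^\infty$, then on the complement of $\mathcal{Z}_{L_0}$ we have $\hat u = \hat f / \sigma_{L_0}$, which decays rapidly by (DC); on $\mathcal{Z}_{L_0}$, $\hat u$ is unconstrained, but if that set is finite then only finitely many coefficients are uncontrolled and $u$ differs from a smooth function by a trigonometric-type polynomial, hence $u\in C^\infty$. Conversely, if $\mathcal{Z}_{L_0}$ is infinite one can put an $\ell^2$-but-not-rapidly-decaying sequence of coefficients on the zero set to manufacture $u\notin C^\infty$ with $L_0 u = 0 \in C^\infty$, breaking hypoellipticity; and if (DC) fails one reuses the sequence from part (i) to build $u\notin C^\infty$ with $L_0 u\in C^\infty$ (taking $\hat u$ to concentrate where $|\sigma_{L_0}|$ is tiny so that $\hat f = \sigma_{L_0}\hat u$ still decays rapidly).

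The main obstacle I anticipate is bookkeeping on the spherical factors: unlike the torus, the symbol of $i\partial_{0,k}$ on $\mathbb{S}^3$ is a diagonal (but nonscalar) operator on the $(\ell_k+1)$-dimensional representation space, so ``$\sigma_{L_0}=0$'' and the (DC) estimate must be read entrywise over all admissible $\alpha$ with $-\ell\le\alpha\le\ell$, $\ell-\alpha\in\N_0^s$, and one must check that $|\ell|$ is the right weight in \eqref{DC_condition_introduction} to characterize smoothness on $\mathbb{S}^{3s}$ (it is, since eigenvalues of the Laplacian on $\mathbb{S}^3$ grow like $\ell(\ell+1)$). Once the correct Fourier framework on $\G$ is in place — which the paper says is imported from \cite{RT2007_fourier_su2, RT2010_book} and the authors' extensions — the argument is the classical ``divide by the symbol and track decay'' dichotomy, and I would expect the proof to be short modulo that setup.
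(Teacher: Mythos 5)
Your proposal is correct and follows essentially the same route as the paper's Propositions \ref{propctegs} and \ref{propghcte}: Fourier characterization of smoothness and distributions on $\G$, identification of $(\ker {}^{t}L_0)^0$ with coefficients vanishing on the zero set of the symbol, division by $\sigma_{L_0}$ under \eqref{DC_condition_introduction}, and counterexamples built along a sequence violating (DC) (respectively supported on an infinite $\mathcal{Z}_{L_0}$). The points you flag as fiddly are in fact automatic: frequencies with $0<|\sigma_{L_0}|$ lie off $\mathcal{Z}_{L_0}$ by definition, so orthogonality to $\ker {}^{t}L_0$ is immediate, and the paper's explicit choice there is $\widehat{g}=|\sigma_{L_0}|^{1/2}$ at the selected indices.
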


This paper is structured as follows: on Section $2$ we recall some classical results about Fourier analysis on the torus and on the sphere, extending them to the product $\G$, as well as we fix notation that will be used throughout the text. We dedicate Section $3$ for the proof of the  constant coefficient cases, in Proposition \ref{propctegs} and \ref{propghcte}. The proof for the main results states above is split into several propositions in Section $4$. Finally, in the Appendix A, we provide a set of crucial and technical results that are fundamental for the proofs.

%%% ----------------------------------------------------------------------
\section{Fourier Analysis on the product of tori and spheres}
%%% ----------------------------------------------------------------------

Consider the compact Lie groups $\T^{1}=\R/(2\pi\Z)$ and $\mathbb{S}^3$, and let $\widehat{\T^1}$ and $\widehat{\mathbb{S}^3}$ be their unitary duals, which consist of the sets of equivalence classes of unitary irreducible continuous representations. In this context, two representations are considered equivalent if there exists an invertible intertwining map between them.

It is well known that $\widehat{\T^1}\sim \Z$, where each $\xi\in\Z$ corresponds to the one-dimensional representation $t\mapsto e^{it\xi}\in\mathbb{U}(1)$. Similarly, we have $\widehat{\mathbb{S}^3}\sim\frac{1}{2}\N_0$, where each $\ell\in\frac{1}{2}\N_0$ corresponds to a representation given by a $2\ell+1$ dimensional matrix-valued function $y\mapsto\mathfrak{t}^\ell(y)$. The coefficient functions of this matrix-valued function are denoted as $y\mapsto\mathfrak{t}_{mn}^\ell(y)$, where the indices $m$ and $n$ range from $-\ell$ to $+\ell$ with a step size of $1$. In other words, $m$ and $n$ take values in the set $\{-\ell,-\ell+1,\dots,+\ell-1,+\ell\}$.

We define the Fourier coefficients of $f\in L^2(\T^1)$ and $g\in L^2(\mathbb{S}^3)$ as follows:
\begin{align*}
	\widehat{f}(\xi)& \doteq\frac{1}{2\pi}\int_{0}^{2\pi}f(t){e^{-it\xi}}\mathop{dt}, \ \mbox{ and } \ \widehat{g}(\ell)_{mn}\doteq \int_{\mathbb{S}^3} g(y)\overline{\mathfrak{t}^\ell_{nm}(y)} \mathop{dy},
\end{align*}
for every $\xi\in\Z$ and $\ell\in\frac{1}{2}\N_0$, with $-\ell\leq m,n\leq \ell$.

According to Peter-Weyl's Theorem, we have the following representations:
\begin{align*}
	f(t)&=\sum_{\xi\in\Z}\widehat{f}(\xi)e^{it\xi}, \ \mbox{and } \ g(y) = \!\! \sum_{\ell\in\frac{1}{2}\N_0}(2\ell+1) \!\!\!\! \sum_{-\ell\leq n,m\leq \ell} \!\! \widehat{g}(\ell)_{mn}\mathfrak{t}^\ell_{nm}(y),
\end{align*}
where the series converge almost everywhere and in their respective $L^2$ norms. Moreover, Plancherel's identity provides the following identities for every $f\in L^2(\T^1)$, $g\in L^2(\mathbb{S}^3)$
\begin{equation*}
\|f\|_{L^2(\T^1)}^2 =  2\pi\sum_{\xi\in\Z}|\widehat{f}(\xi)|^2, \ \mbox{ and } \
\|g\|_{L^2(\mathbb{S}^3)}^2 = \!\! \sum_{\ell\in\frac{1}{2}\N_0} \!\! (2\ell+1) \!\! \sum_{\substack{-\ell\leq m,n\leq \ell\\
\ell-m, \ell-n\in\N_0}} \!\!\!\! |\widehat{g}(\ell)_{mn}|^2.
\end{equation*}

As usual, these definitions and results can also be extended to the distributions spaces $\mathscr{D}'(\T^1)$ and $\mathscr{D}'(\mathbb{S}^3)$. 

For a function $f\in L^2(\G)$, we can apply the previous definitions iteratively to each coordinate of $(t,x,y)=(t,x_1,\ldots,x_r,y_1,\ldots,y_s)\in\G$, and define the following Fourier coefficient:
\begin{equation}\label{eqtotalfourier}
	\widehat{f}(\tau,\xi,\ell)_{\alpha\beta} \doteq\frac{1}{(2\pi)^{r+1}} \int_{\T^1}\int_{\T^{r}}\int_{\S3}f(t,x,y)e^{-i\langle (\tau,\xi), (t,x)\rangle} \overline{\mathfrak{t}^{\ell}_{\beta\alpha}(y)}\mathop{dy}\mathop{dx}\mathop{dt},
\end{equation}
for each $(\tau,\xi)\in\Z^{r+1}$, $\ell\in\frac{1}{2}\N_0^s$, and $-\ell\leq\alpha,\beta\leq \ell$, such that $\ell-\alpha,\ell-\beta\in\N_0^s$. Where
\begin{equation*}
	\mathfrak{t}^{\ell}_{\alpha\beta}(y) = \mathfrak{t}^{\ell_1}_{\alpha_1\beta_1} (y_1)\dots\mathfrak{t}^{\ell_s}_{\alpha_s\beta_s}(y_s) \mbox{ for } \alpha,\beta\in\mbox{$\frac{1}{2}$}\Z^s.
\end{equation*}

Setting $d_\ell \doteq \prod_{j=1}^s(2\ell_j+1)$, for each  $\ell\in\frac{1}{2}\N_0^s$,we can apply Peter-Weyl's theorem to obtain the following representation:
$$f(t,x,y) = \sum_{(\tau,\xi)\in\Z^{r+1}}\sum_{\ell\in\frac{1}{2}\N_0^s}d_l\sum_{\substack{-\ell\leq \alpha,\beta\leq \ell \\ \ell-\alpha,\ell-\beta\in\N_0^s}}e^{i\langle (\tau,\xi),(t,x)\rangle}\widehat{f}(\tau,\xi,\ell)_{\alpha\beta}\mathfrak{t}^{\ell}_{\beta\alpha}(y)$$
where the series converge almost everywhere and in $L^2(\G)$, and 
$$\langle (\tau,\xi),(t,x)\rangle \doteq \tau t+\xi_1x_1+\dots+\xi_rx_r.$$

We can also define the partial Fourier coefficients of a function $f\in L^2(\G)$ with respect to the $(x,y)$ variables as follows:
\begin{equation}\label{eqfourierpartial}
    \widehat{f}(t,\xi,\ell)_{\alpha\beta} \doteq \frac{1}{(2\pi)^r} \int_{\T^{r}}\int_{\S3}f(t,x,y)e^{-i\langle \xi,x\rangle}\overline{\mathfrak{t}^{\ell}_{\beta\alpha}(y)}\mathop{dy}\mathop{dx},
\end{equation}
so that we have:
$$f(t,x,y) = \sum_{\xi\in\Z^{r}}\sum_{\ell\in\frac{1}{2}\N_0^s}d_\ell\sum_{\substack{-\ell\leq \alpha,\beta\leq \ell \\ \ell - \alpha, \ell-\beta \in\N_0^s}} e^{i\langle\xi,x\rangle} \widehat{f}(t,\xi,\ell)_{\alpha\beta} \mathfrak{t}^{\ell}_{\beta\alpha}(y)$$
where the series converge almost everywhere and in $L^2(\G)$, as usual.

Furthermore, if $f\in C^\infty(\G)$, then the partial Fourier coefficient $\widehat{f} (t,\xi,\ell)_{\alpha\beta}\in C^\infty(\T^1)$. These definitions and results can also be naturally extended to the set of distributions $\mathscr{D}'(\G)$.

\begin{lemma}\label{lemmaformula0} \ 
\begin{enumerate}[i.]
	\item If $f,g\in L^2(\mathbb{S}^3)$, then
    \begin{align}\label{eqlemmacoef}
        \int_{\mathbb{S}^3} f(x)g(x)\mathop{dx} = \sum_{\ell\in\frac{1}{2}\N_0}(2\ell+1)\sum_{m,n=-\ell}^l\widehat{f}(\ell)_{mn}\widehat{g}(\ell)_{-m-n}(-1)^{n-m},
    \end{align}
    \item if $f,g\in L^2(\mathbb{S}^{3s})$, then:
       \begin{align}\label{eqlemmacoef2}
        \int_{\mathbb{S}^{3s}} f(x)g(x)\mathop{dx} = \sum_{\ell\in\frac{1}{2}\N_0^s}d_\ell\sum_{-\ell\leq \alpha,\beta\leq \ell} \widehat{f}(\ell)_{\alpha\beta} \widehat{g}(\ell)_{-\alpha-\beta}(-1)^{\Sigma (\alpha_j-\beta_j)}.
    \end{align} 
\end{enumerate}
\end{lemma}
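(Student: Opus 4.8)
The plan is to prove part (i) first and then bootstrap part (ii) by an iterated tensor-product argument, since the $\mathbb{S}^{3s}$ matrix coefficients factor as $\mathfrak{t}^\ell_{\alpha\beta}(y)=\prod_{j=1}^s\mathfrak{t}^{\ell_j}_{\alpha_j\beta_j}(y_j)$. For part (i), the starting point is the Peter–Weyl expansions $f=\sum_{\ell}(2\ell+1)\sum_{m,n}\widehat{f}(\ell)_{mn}\mathfrak{t}^\ell_{nm}$ and similarly for $g$, together with the orthogonality relations for the matrix coefficients of $\mathrm{SU}(2)$: $\int_{\mathbb{S}^3}\mathfrak{t}^\ell_{nm}(y)\,\overline{\mathfrak{t}^{\ell'}_{n'm'}(y)}\,dy=\frac{1}{2\ell+1}\delta_{\ell\ell'}\delta_{nn'}\delta_{mm'}$. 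The key algebraic input is the symmetry relating a matrix coefficient to the complex conjugate of another one under the sign-flip of indices; for $\mathrm{SU}(2)$ in the standard basis one has an identity of the shape $\overline{\mathfrak{t}^\ell_{nm}(y)}=(-1)^{m-n}\mathfrak{t}^\ell_{-n,-m}(y)$ (equivalently $\mathfrak{t}^\ell_{nm}(y)=(-1)^{m-n}\overline{\mathfrak{t}^\ell_{-n,-m}(y)}$). Substituting this into one of the two Peter–Weyl series turns $\int_{\mathbb{S}^3}fg$ into a double sum of integrals of the form $\int \mathfrak{t}^\ell_{nm}\overline{\mathfrak{t}^{\ell'}_{-n',-m'}}$, which collapses via orthogonality to the stated single sum, with the surviving sign factor $(-1)^{n-m}$. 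I would present this computation compactly, being careful that the index $m,n$ over which one sums in $\widehat{f}$ versus $\widehat{g}$ get matched to $-m,-n$, which is exactly what produces the $\widehat{g}(\ell)_{-m,-n}$ in the statement.

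For part (ii), I would argue by induction on $s$, or simply write the one-line factorization directly: since both $L^2(\mathbb{S}^{3s})$ and its Fourier coefficients tensor over the $s$ factors, and the sign $(-1)^{\Sigma_j(\alpha_j-\beta_j)}=\prod_j(-1)^{\alpha_j-\beta_j}$ also factors, applying part (i) in each variable $y_j$ and using Fubini gives the product formula. Concretely, $\int_{\mathbb{S}^{3s}}f(x)g(x)\,dx$ is handled by writing the full Fourier series of $f$ and $g$, interchanging sum and integral (justified by $L^2$ convergence and Cauchy–Schwarz, or by first doing it for finite trigonometric-type sums and passing to the limit), and noting the integral splits as $\prod_{j=1}^s\int_{\mathbb{S}^3}\mathfrak{t}^{\ell_j}_{\cdot\cdot}(y_j)\,\overline{\mathfrak{t}^{\ell'_j}_{\cdot\cdot}(y_j)}\,dy_j$, to which the $s=1$ orthogonality and sign identity apply coordinatewise.

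The main obstacle — really the only nontrivial point — is pinning down the correct conjugation/sign symmetry $\overline{\mathfrak{t}^\ell_{nm}}=(-1)^{m-n}\mathfrak{t}^\ell_{-n,-m}$ in whatever normalization of the $\mathrm{SU}(2)$ matrix coefficients the paper has adopted (the conventions of Ruelle–Turunen, or of the cited $\mathrm{SU}(2)$-Fourier references), and making sure the placement of the minus signs on $m$ versus $n$ and the parity $(-1)^{n-m}$ versus $(-1)^{m-n}$ comes out consistent with the indexing in \eqref{eqtotalfourier} (where $\widehat{g}(\ell)_{mn}$ pairs with $\overline{\mathfrak{t}^\ell_{nm}}$, note the transposed indices). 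I would state this symmetry as a cited fact about $\mathrm{SU}(2)$ representation theory, verify it on the defining representation $\ell=1/2$ as a sanity check, and then the rest of the proof is a mechanical application of orthogonality and Fubini. A secondary, purely bookkeeping, point is the convergence justification for interchanging the infinite sums with the integral, which follows from Plancherel together with the Cauchy–Schwarz inequality $\sum|\widehat{f}(\ell)_{mn}||\widehat{g}(\ell)_{-m,-n}|\le\|f\|_{L^2}\|g\|_{L^2}$ after accounting for the weights $(2\ell+1)$ (resp. $d_\ell$).
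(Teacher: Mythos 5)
Your proposal is correct and follows essentially the same route as the paper: the identity rests on the conjugation--sign symmetry $\overline{\mathfrak{t}^\ell_{mn}}=(-1)^{n-m}\mathfrak{t}^\ell_{-m,-n}$ combined with orthogonality of the matrix coefficients (the paper phrases this as Plancherel plus polarization applied to $f$ and $\overline{g}$, which is the same mechanism as your double Peter--Weyl expansion), and part (ii) is obtained by iterating part (i) over the $s$ factors exactly as you describe. The only difference is that the paper verifies the symmetry directly from the Euler-angle formula for $P^\ell_{mn}$ rather than citing it, which is a presentational rather than a mathematical distinction.
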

\begin{proof}
	To prove {\it i.}, we can observe that the integral on the left-hand side of expression \eqref{lemmaformula0} corresponds to the $L^2$ inner product between the functions $f$ and $\overline{g}$. By applying Plancherel's identity for $L^2(\mathbb{S}^3)$, the polarization identities for inner products, and considering the property $\overline{\mathfrak{t}^\ell_{mn}} = \mathfrak{t}^\ell_{-m-n} (-1)^{n-m}$, we can derive the first formula.
	
    This last fact can be deduced from the definition itself (see \cite{RT2010_book}), as evaluating $\mathfrak{t}^\ell_{mn}$ with Euler angles yields, as follows: %This leads to the desired result.
    \begin{align*}
        \overline{\mathfrak{t}^\ell_{mn}(\omega(\phi,\theta,\psi))}  &= e^{i(m\phi+n\psi)}\overline{P_{mn}^\ell(\cos(\theta))}\\
        &= e^{-i((-m)\phi+(-n)\psi)}(-1)^{n-m}P_{mn}^\ell(\cos(\theta))\\
        &=(-1)^{n-m}e^{-i((-m)\phi+(-n)\psi)}P_{-m-n}^\ell(\cos(\theta))\\
        &=(-1)^{n-m}\mathfrak{t}^\ell_{-m-n}(\omega(\phi,\theta,\psi))
    \end{align*}
    where 
    $$P_{mn}^\ell(x)=c_{mn}^\ell\frac{(1-x)^{(n-m)/2}}{(1+x)^{(m+n)/2}}\left(\frac{\mathop{d}}{\mathop{dx}}\right)^{\ell-m}[(1-x)^{\ell-n}(1+x)^{\ell+n}]$$
    with
    $$c^\ell_{mn} = 2^{-\ell}\frac{(-1)^{\ell-n}i^{n-m}}{\sqrt{(\ell-n)!(\ell+n)!}}\sqrt{\frac{(\ell+m)!}{(\ell-m)!}}.$$
    
    Finally, identity \eqref{eqlemmacoef2} is obtained by applying \eqref{lemmaformula0} iteratively.
\end{proof}

\begin{lemma}\label{lemmaformula}
Let $f,g\in L^2(\G)$. Then:
    \begin{align}
        \int_{\G} \!\!\! fg = (2\pi)^r\sum_{\xi\in\Z^r}\sum_{\ell\in\frac{1}{2}\N_0^s}d_\ell\sum_{-\ell\leq \alpha,\beta \leq \ell} \int_0^{2\pi} \widehat{f}(t,\xi,\ell)_{\alpha\beta} \widehat{g}(t,-\xi,\ell)_{-\alpha-\beta}(-1)^{\Sigma(\alpha_j-\beta_j)}\mathop{dt}.
    \end{align} 
\end{lemma}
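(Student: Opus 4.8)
The plan is to reduce the claimed identity on $\G$ to the one-dimensional (more precisely, single-sphere) identity already established in Lemma \ref{lemmaformula0}, by integrating iteratively: first in the sphere variables $y=(y_1,\dots,y_s)$, and then in the torus variables $x=(x_1,\dots,x_r)$, leaving the $t$-integral for last. Concretely, I would first treat the case $f,g\in C^\infty(\G)$, where all the Fourier series involved converge absolutely and uniformly, so that every interchange of summation and integration below is legitimate; the general $L^2$ case then follows by a density argument, since both sides are continuous bilinear forms on $L^2(\G)\times L^2(\G)$ (the right-hand side is controlled by Cauchy--Schwarz in $t$ together with Plancherel in the $(\xi,\ell,\alpha,\beta)$ variables, exactly as in Lemma \ref{lemmaformula0}).

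First I would write, for fixed $t\in\T^1$, the expansions of $f(t,\cdot,\cdot)$ and $g(t,\cdot,\cdot)$ in the $(x,y)$ variables using the partial Fourier coefficients from \eqref{eqfourierpartial}. Integrating the product $f(t,x,y)g(t,x,y)$ over $x\in\T^r$ first picks out, by orthogonality of the characters $e^{i\langle\xi,x\rangle}$ on $\T^r$, the pairing of the $\xi$-mode of $f$ with the $(-\xi)$-mode of $g$, producing a factor $(2\pi)^r$ and leaving an integral over $\S3$ of a product of two functions whose single-sphere Fourier coefficients are $\widehat f(t,\xi,\ell)_{\alpha\beta}$ and $\widehat g(t,-\xi,\ell')_{\alpha'\beta'}$. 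Applying Lemma \ref{lemmaformula0} {\it ii.} (the multi-sphere formula \eqref{eqlemmacoef2}) to this $\S3$-integral collapses the double sum over $(\ell,\ell')$ to a single $\ell$ and pairs $(\alpha,\beta)$ with $(-\alpha,-\beta)$, inserting the sign $(-1)^{\Sigma(\alpha_j-\beta_j)}$. Finally, integrating the resulting expression over $t\in\T^1$ and summing over $\xi\in\Z^r$, $\ell\in\frac12\N_0^s$, and $-\ell\le\alpha,\beta\le\ell$ yields exactly the right-hand side of the asserted identity.

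The one genuine technical point — and the step I expect to be the main obstacle — is justifying the interchange of the $t$-integration with the infinite summations over $(\xi,\ell,\alpha,\beta)$, and the interchange of the $x$-integration with those sums, uniformly in $t$. For $f,g\in C^\infty(\G)$ this is handled by the rapid decay of the partial Fourier coefficients: standard estimates (of the type collected in Section 2 and Appendix A) give, for every $M$, a bound $\sup_{t}\,\|\widehat f(t,\xi,\ell)_{\alpha\beta}\| \lesssim_M (1+|\xi|+|\ell|)^{-M}$, and likewise for $g$, which makes the double series of products absolutely convergent uniformly in $t$; Fubini and Tonelli then apply on $\T^1\times(\text{counting measure})$. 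Once the $C^\infty$ case is secured, the extension to $L^2(\G)$ is routine: approximate $f,g$ in $L^2$ by smooth functions, observe that the left-hand side is continuous under this approximation, and that the right-hand side is as well, because
\[
\Bigl|\,\text{RHS}(f,g)\,\Bigr| \le (2\pi)^r \sum_{\xi,\ell,\alpha,\beta} d_\ell \int_0^{2\pi} \bigl|\widehat f(t,\xi,\ell)_{\alpha\beta}\bigr|\,\bigl|\widehat g(t,-\xi,\ell)_{-\alpha-\beta}\bigr|\mathop{dt} \le \|f\|_{L^2(\G)}\,\|g\|_{L^2(\G)}
\]
by Cauchy--Schwarz in $t$ followed by Cauchy--Schwarz in the discrete indices and Plancherel on $\G$. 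This completes the proof.
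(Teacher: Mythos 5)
Your proposal is correct and follows essentially the same route as the paper: both reduce the identity to the partial Fourier decomposition in the $(x,y)$ variables, invoke Lemma \ref{lemmaformula0} (via the conjugation identity $\overline{\mathfrak{t}^\ell_{mn}}=(-1)^{n-m}\mathfrak{t}^\ell_{-m-n}$) for the sphere factors, and justify the interchange of the $t$-integral with the sums by Fubini, exactly as the paper does after applying partial Plancherel. Your explicit smooth-then-density argument, with the bound $|\mathrm{RHS}(f,g)|\leq\|f\|_{L^2(\G)}\|g\|_{L^2(\G)}$, simply spells out what the paper leaves implicit, and the constants there check out.
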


\begin{proof}
By applying Plancherel's identity partially to the last $r+s$ coordinates, we can use Fubini's theorem to interchange the order of integration and summation, resulting in:
    
\begin{align*}
    \int_{\G} |f|^2&=\int_0^{2\pi}(2\pi)^r \sum_{\xi\in\Z^r} \sum_{\ell\in\frac{1}{2}\N_0^s}d_\ell\sum_{-\ell\leq \alpha,\beta\leq \ell}| \widehat{f}(t,\xi,\ell)_{\alpha\beta}|^2\mathop{dt}=\\
     &=(2\pi)^r\sum_{\xi\in\Z^r}\sum_{\ell\in\frac{1}{2}\N_0^s}d_\ell\sum_{-\ell\leq \alpha,\beta\leq \ell}\int_0^{2\pi}|\widehat{f}(t,\xi,\ell)_{\alpha\beta}|^2\mathop{dt}.
\end{align*}
The result follows in a similar manner as in Lemma \ref{lemmaformula0}.
\end{proof}

It is well known that smooth functions and distributions on $\T^1$ and $\mathbb{S}^3$ can be characterized by the rate of growth or decay of their Fourier coefficients with respect to the quantities $\langle\xi_j\rangle = \sqrt{1+\xi_j^2}$ for the torus and $\langle\ell_m\rangle = \sqrt{1+(1+\ell_m)\ell_m}$ for the sphere(see \cite{KMR2020_bsm} and \cite{RT2010_book} for further details).

In particular, using the equivalence established in Lemma \ref{lemmapseudo} (page \pageref{lemmapseudo}), we can define 
$$
|\xi|+|\ell|=|\xi_1| + \dots+ |\xi_r| + \ell_1 + \dots + \ell_s,
$$
for $\xi = (\xi_1,\dots,\xi_r)\in\Z^r$, $\ell=(\ell_1,\dots,\ell_s)\in\frac{1}{2}\N_0^s$.  Moreover, we denote $\0=(0,\dots,0), $

Based on these definitions, we can establish the following characterizations for functions and distributions defined on $\G$. The proofs of these characterizations follow the same principles and techniques as those presented in \cite{KMR2020_bsm}.

\begin{prop}[Total Fourier series]\label{lemmadecaysmooth} \
	\begin{enumerate}[i.]
		\item $f\in C^\infty(\G)$ if and only if, for every $N>0$, $\exists M_{N}>0$ such that:
		\begin{align*}
				|\widehat{f}(\tau,\xi,\ell)_{\alpha\beta}|\leq M_{N}\left(|\tau|+|\xi|+|\ell|\right)^{-N} 
		\end{align*}
		for every $(\tau,\xi)\in\Z^{r+1},$ $\ell \in\frac{1}{2}\N_0^s,$ $-\ell\leq\alpha,\beta\leq \ell,$  $\ell-\alpha,\,\ell-\beta\in\N_0^s,$ and  $(\tau,\xi,\ell)\neq \0.$ \\

		\item $u\in \mathscr{D}'(\G)$ if and only if, there exists $M,N>0$, such that:
		\begin{align*}
			|\widehat{u}(\tau,\xi,\ell)_{\alpha\beta}|\leq M(|\tau|+|\xi|+|\ell|)^{N} 
		\end{align*}
		for every $(\tau,\xi)\in\Z^{r+1},$ $\ell \in\frac{1}{2}\N_0^s,$ $-\ell\leq\alpha,\beta\leq \ell,$  $\ell-\alpha,\,\ell-\beta\in\N_0^s,$ and  $(\tau,\xi,\ell)\neq \0.$
	\end{enumerate}
\end{prop}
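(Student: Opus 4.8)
The plan is to characterize $C^\infty(\G)$ and $\mathscr{D}'(\G)$ via the total Fourier coefficients defined in \eqref{eqtotalfourier}, reducing everything to the already-known one-variable characterizations on $\T^1$ and on $\mathbb{S}^3$ (see \cite{KMR2020_bsm}, \cite{RT2010_book}) together with the equivalence of the weight $|\tau|+|\xi|+|\ell|$ with the "natural" product weight $\langle\tau\rangle\prod_j\langle\xi_j\rangle\prod_m\langle\ell_m\rangle$ (or with a sum of such one-variable weights) supplied by Lemma \ref{lemmapseudo}. Concretely, the strategy is: first prove the statement coordinate by coordinate, i.e. recall that $f\in C^\infty(\T^1)$ iff its Fourier coefficients decay faster than any power of $\langle\xi\rangle$, and that $g\in C^\infty(\mathbb{S}^3)$ iff its matrix Fourier coefficients decay faster than any power of $\langle\ell\rangle$, uniformly in the matrix indices $m,n$; then iterate over the $r+1$ torus variables and the $s$ sphere variables to obtain a characterization in terms of the product weight; and finally invoke Lemma \ref{lemmapseudo} to replace the product weight by $|\tau|+|\xi|+|\ell|$, which is the form stated.

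First I would treat the necessity direction of part {\it i.}\ For $f\in C^\infty(\G)$, I would integrate by parts in $t$ and in each $x_j$ and apply the Laplace--Beltrami operator (or the Casimir element $\mathcal{L}_{\mathbb{S}^3}$) in each sphere variable $y_k$ under the integral sign in \eqref{eqtotalfourier}. Since $\mathfrak{t}^{\ell_k}_{\beta_k\alpha_k}$ is an eigenfunction of the Casimir with eigenvalue $(1+\ell_k)\ell_k = \langle\ell_k\rangle^2-1$, each application pulls out a factor comparable to $\langle\ell_k\rangle^2$, while $\partial_t$ and $\partial_{x_j}$ pull out factors $\tau$ and $\xi_j$; applying these operators $N$ times each and bounding the resulting integral of a smooth function by a constant gives
\begin{equation*}
|\widehat{f}(\tau,\xi,\ell)_{\alpha\beta}| \leq C_N \langle\tau\rangle^{-N}\prod_{j=1}^r\langle\xi_j\rangle^{-N}\prod_{k=1}^s\langle\ell_k\rangle^{-N},
\end{equation*}
and then Lemma \ref{lemmapseudo} converts the right-hand side to a constant times $(|\tau|+|\xi|+|\ell|)^{-N}$ after relabeling $N$. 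For sufficiency, given such decay I would show the Peter--Weyl series for $f$ converges in $C^\infty(\G)$: the number of matrix-index pairs $(\alpha,\beta)$ for fixed $\ell$ is $d_\ell^2 = \prod_k(2\ell_k+1)^2$, which is polynomially bounded in $|\ell|$, and differentiating the series term by term (each derivative producing at most polynomial factors in $\tau,\xi,\ell$ by the usual estimates on derivatives of the $\mathfrak{t}^\ell_{mn}$) leaves a series dominated by $\sum (|\tau|+|\xi|+|\ell|)^{-N'}$ times polynomial factors, which converges absolutely and uniformly for $N$ large enough; hence the sum is smooth and equals $f$.

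Part {\it ii.}\ follows by the standard duality argument: $u\in\mathscr{D}'(\G)$ iff it extends to a continuous linear functional on $C^\infty(\G)$, iff it is bounded by some Sobolev-type seminorm, which on the Fourier side is exactly the statement that the coefficients grow at most polynomially in $|\tau|+|\xi|+|\ell|$; one direction pairs $u$ against the smooth functions $e^{i\langle(\tau,\xi),(t,x)\rangle}\overline{\mathfrak{t}^\ell_{\beta\alpha}(y)}$ and uses continuity, the other reconstructs $u$ from its coefficients as a convergent series in $\mathscr{D}'(\G)$ using the decay of test-function coefficients from part {\it i.} I expect the only genuinely delicate point to be bookkeeping the matrix indices: one must ensure all estimates are uniform over $-\ell\leq\alpha,\beta\leq\ell$ (which is where the known uniform estimates on $\mathbb{S}^3$ from \cite{RT2010_book} and the polynomial bound $d_\ell^2\lesssim(1+|\ell|)^{2s}$ on the multiplicity enter), and to keep track that iterating the single-variable results across $r+1+s$ factors only costs a fixed power, absorbed by enlarging $N$. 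None of this is conceptually new beyond \cite{KMR2020_bsm}, so I would state it as a consequence of that reference with the iteration and the weight-equivalence (Lemma \ref{lemmapseudo}) spelled out, rather than reproving the one-variable characterizations.
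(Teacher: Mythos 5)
Your proposal is correct and follows essentially the same route the paper intends: the paper omits the proof and simply defers to the techniques of \cite{KMR2020_bsm}, which are exactly what you spell out (integration by parts in the torus variables, the Casimir acting on the $\mathfrak{t}^\ell_{mn}$ in each sphere factor, the weight equivalence of Lemma \ref{lemmapseudo}, polynomial control of the multiplicities $d_\ell^2$, and duality for the distributional part). No gaps worth flagging.
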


\begin{prop}[Partial Fourier series]\label{lemmadecaysmoothpartial}  \
	\begin{enumerate}[i.]
	\item $f\in C^\infty(\G)$ if and only if, for every $m\in\mathbb{N}_0$ and every $N>0$, $\exists M_{mN}>0$ such that:
	\begin{align*}
		&|\partial_t^m\widehat{f}(t,\xi,\ell)_{\alpha\beta}|\leq M_{mN}\left(|\xi|+|\ell|\right)^{-N}  
	\end{align*}
	for every $t\in\mathbb{T}^1,$ $\xi\in\Z^{r}$ $\ell \in\frac{1}{2}\N_0^s,$ $-\ell\leq\alpha,\beta\leq \ell,$  $\ell-\alpha, \ell-\beta\in\N_0^s,$ and $(\xi,\ell)\neq \0.$\\
	
	\item $u\in \mathscr{D}'(\G)$ if and only if, there exists $N\in\mathbb{N}$ and $M>0$,  such that:
	\begin{align*}
		&|\langle\widehat{u}(t,\xi,\ell)_{\alpha\beta},\psi(t)\rangle|\leq Mp_N(\psi)(|\xi|+|\ell|)^{N}
	\end{align*}
	for every $\xi\in\Z^{r},$ $\ell \in\frac{1}{2}\N_0^s,$ $-\ell\leq\alpha,\beta\leq \ell,$  $\ell-\alpha,\ell-\beta\in\N_0^s,$ $(\xi,\ell)\neq \0,$ and $\psi\in C^\infty(\T^1).$
	Where $p_N(\psi) = \sum_{m=0}^N\sup_{t\in\T^1}|\partial^m\psi(t)|$.
	\end{enumerate}
\end{prop}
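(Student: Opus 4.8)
The plan is to reduce both characterizations in Proposition~\ref{lemmadecaysmoothpartial} to the corresponding known characterizations on each factor, exploiting the product structure of $\G$. The key point is that the partial Fourier coefficient $\widehat{f}(t,\xi,\ell)_{\alpha\beta}$ is obtained by integrating $f$ against the $(x,y)$-dependent exponential and matrix-coefficient functions, so smoothness in $t$ of $f$ transfers directly to smoothness in $t$ of the coefficient, and the decay in $(\xi,\ell)$ is exactly the decay governing smoothness of $f$ in the $(x,y)$-directions. Concretely, I would first recall (citing \cite{KMR2020_bsm, RT2010_book}) the standard fact that on $\T^r\times\mathbb{S}^{3s}$ a function is smooth iff its (total) Fourier coefficients decay faster than any negative power of $|\xi|+|\ell|$, together with the equivalence in Lemma~\ref{lemmapseudo} that lets us replace the various natural weights $\langle\xi_j\rangle$, $\langle\ell_m\rangle$ by the single quantity $|\xi|+|\ell|$.

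For part \emph{i.}: if $f\in C^\infty(\G)$, then for each fixed $t$ and each multi-index order $m$, the function $(x,y)\mapsto \partial_t^m f(t,x,y)$ is smooth on $\T^r\times\mathbb{S}^{3s}$, and $\partial_t^m\widehat f(t,\xi,\ell)_{\alpha\beta}$ is precisely its $(x,y)$-Fourier coefficient (differentiation under the integral sign is justified since $f$ and all its $t$-derivatives are continuous on the compact $\G$, hence uniformly so). The estimate
\[
  |\partial_t^m\widehat f(t,\xi,\ell)_{\alpha\beta}|\le M_{mN}(|\xi|+|\ell|)^{-N}
\]
then follows from the smoothness characterization on the product factor, with the constant uniform in $t$ because one can bound it in terms of finitely many $\sup$-norms of $(x,y)$-derivatives of $\partial_t^m f$ over all of $\G$. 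Conversely, given such estimates for all $m,N$, I would plug the partial Fourier series for $f$ back in and show term-by-term differentiation in all variables is legitimate: each application of $\partial_t$, $\partial_{x_j}$, or a derivative on $\mathbb{S}^3$ produces at worst a polynomial factor in $|\xi|+|\ell|$ times $\partial_t^{m'}\widehat f(t,\xi,\ell)_{\alpha\beta}$ (for $\partial_{x_j}$ a factor $i\xi_j$, and for sphere derivatives the eigenvalue bounds of $\mathfrak t^\ell$ together with $|\mathfrak t^\ell_{\beta\alpha}(y)|\le 1$), and the hypothesis with $N$ large enough beats the number of summands, which grows polynomially (here one uses $d_\ell\lesssim(|\ell|+1)^{s}$ and that the number of $(\alpha,\beta)$ pairs is $O(d_\ell)$), giving absolute and uniform convergence. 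This yields $f\in C^\infty(\G)$.

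For part \emph{ii.}: for the ``only if'' direction, given $u\in\mathscr D'(\G)$, one tensors a test function $\psi\in C^\infty(\T^1)$ in the $t$-variable with the smooth functions $e^{-i\langle\xi,x\rangle}$ and $\overline{\mathfrak t^\ell_{\beta\alpha}(y)}$ in the remaining variables to form a test function on $\G$; then $\langle\widehat u(t,\xi,\ell)_{\alpha\beta},\psi(t)\rangle=\langle u,\,\psi\otimes e^{-i\langle\xi,\cdot\rangle}\otimes\overline{\mathfrak t^\ell_{\beta\cdot}}\rangle/(2\pi)^r$, and the continuity/seminorm estimate defining a distribution on $\G$ gives a bound of the form $C\,p_{N}(\psi)\cdot(\text{finitely many }(x,y)\text{-seminorms of the exponential and matrix coefficients})$; the latter grow at most polynomially in $|\xi|+|\ell|$ by the explicit bounds on derivatives of $e^{-i\langle\xi,x\rangle}$ and on $\mathfrak t^\ell$, yielding the claimed estimate. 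For the ``if'' direction, one defines the candidate distribution by the partial Fourier series paired against test functions on $\G$ and checks convergence: expanding an arbitrary $\phi\in C^\infty(\G)$ in its own partial Fourier series (which decays rapidly by part \emph{i.}), pairing term by term against $u$, and using the polynomial-growth hypothesis on $\widehat u$, one gets an absolutely convergent double sum bounded by a seminorm of $\phi$, so the formula defines an element of $\mathscr D'(\G)$ whose partial Fourier coefficients are the prescribed ones. The main obstacle is bookkeeping in the ``if'' directions: making the term-by-term (in $\xi,\ell,\alpha,\beta$) and, for part \emph{i.}, derivative-by-derivative manipulations rigorous, i.e. controlling the polynomial-in-$|\xi|+|\ell|$ losses from differentiation and from the multiplicities $d_\ell$ and the range of $(\alpha,\beta)$ against the rapid decay, so that all interchanges of summation, integration, and differentiation are justified; this is routine but requires care, and is exactly the content that the referenced techniques of \cite{KMR2020_bsm} handle for the single-sphere and torus cases and which carries over verbatim to the finite Cartesian product.
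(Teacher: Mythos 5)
Your proposal is correct and is essentially the argument the paper intends: the paper gives no detailed proof of Proposition~\ref{lemmadecaysmoothpartial}, stating only that it ``follows the same principles and techniques as those presented in \cite{KMR2020_bsm}'', and your sketch (differentiation under the integral plus the factorwise smoothness characterization for the necessity, and term-by-term estimates controlling the polynomial losses from $i\xi_j$, the $\mathfrak{t}^\ell$-derivative bounds, and the multiplicities $d_\ell$ against rapid decay for the sufficiency, with the tensor-product test functions for the distributional part) is exactly that standard technique carried over to the product $\G$. No gap beyond the routine bookkeeping you already flag.
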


Next, let us consider the operators $\partial_t$ and $\partial_{x_j}$, which are defined on $\T^1$ and $\T^{r}$, respectively. For $u\in \mathscr{D}'(\T^1)$ and $v\in \mathscr{D}'(\T^{r})$, we have:
$$
\widehat{\partial_t u}(\tau)=i\tau\widehat{u}(\tau) \mbox{ \ and \ } \widehat{\partial_{x_j} v}(\xi)=i\xi_j\widehat{u}(\xi).
$$
for every $\tau \in \Z,$ and $\xi\in\Z^r.$

Furthermore, if we consider the coordinate system in $\mathbb{S}^3$ given by the standard Euler angles $(\phi,\theta,\psi)$, as described in \cite{RT2010_book}, the vector field $D_3=i\partial_0$, which is represented by $-{\partial}/{\partial\psi}$ in these coordinates, has the following symbol:
$$
  \sigma_{D_{3}}(\ell)_{mn} = im\delta_{mn}.
$$
In other words, for any $g\in \mathscr{D}'(\mathbb{S}^3)$, we have:
$$
  \widehat{D_3g}(\ell)_{mn}=im\widehat{g}(\ell)_{mn},
$$
for every $\ell\in\frac{1}{2}\N_0$ and $-\ell\leq n,m\leq \ell$.

\begin{obs}
	It is worth mentioning that any left invariant vector field on $\mathbb{S}^3$ can be conjugated with $D_3$ by rigid motions of the sphere, therefore they share the same global properties.  Hence, the results presented in this article can be extended to first-order differential operators involving any other left-invariant vector field on $\mathbb{S}^3$ in lieu of $D_3$.
\end{obs}

%%% ----------------------------------------------------------------------
\subsection{Global properties} \
%%% ----------------------------------------------------------------------

In $\mathbb{S}^3$, considering the system of coordinates given by Euler angles in each copy of the $3$-sphere, denoted by $(\phi_1,\theta_1,\psi_1,\dots,\phi_s,\theta_s,\psi_s)$, we can define the operators $D_{3,m}$ as $-{\partial}/{\partial\psi_m}$ for $m=1,\dots,s$.

With this notation, we consider the operator
$$L = \partial_t+\sum_{j=1}^rc_j\partial_{x_j}+\sum_{k=1}^sd_jD_{3,k}+q$$
defined on $\G$, where $c_j,d_k,q\in\mathbb{C}$, for $j=1,\dots,r$ and $k=1,\dots,s$. 

Based on the above discussions, for $u\in \mathscr{D}'(\G)$, we can express the Fourier coefficients of $Lu$ as follows:
\begin{align*}
    \widehat{Lu}(\tau,\xi,\ell)_{\alpha\beta}& = i(\tau+c_1\xi_1+\dots+c_r\xi_r+d_1\alpha_1+\dots+d_s\alpha_s-iq)\widehat{u}(\tau,\xi,\ell)_{\alpha\beta}\\
    & = i(\tau+\langle c,\xi\rangle+\langle d,\alpha\rangle-iq) \widehat{u} (\tau,\xi,\ell)_{\alpha\beta} \\ & = \sigma_L(\tau,\xi,\alpha) \widehat{u} (\tau,\xi,\alpha)_{\alpha\beta} 
\end{align*}
for each $(\tau,\xi)\in\Z^{r+1}, \ell\in\frac{1}{2}\N_0^s,-\ell\leq\alpha,\beta\leq \ell$, $\ell-\alpha,\ell-\beta\in\N_0^s$. Here we defined:
$$\sigma_L(\tau,\xi,\alpha)\doteq i(\tau+\langle c,\xi\rangle+\langle d,\alpha\rangle-iq) $$

Likewise, for $u\in \mathscr{D}'(\G)$, we can express the partial Fourier coefficients of $Lu$ as follows:
\begin{align*}
    \widehat{Lu}(t,\xi,\ell)_{\alpha\beta} = i(\partial_t+\langle c,\xi\rangle+\langle d,\alpha\rangle-iq)\widehat{u}(t,\xi,\ell)_{\alpha\beta}
\end{align*}
for each $t\in\T^1$, $\xi\in\Z^r, \ell\in\frac{1}{2}\N_0^s,\,-\ell\leq\alpha,\beta\leq \ell$, $\ell-\alpha,\ell-\beta\in\N_0^s$.

Next, we will provide precise definitions for the global properties that are the focus of this article.

\begin{definition}
	Let $L$ be an operator in $\G$ and $\Lt$ its transpose. We say that:
	\begin{enumerate}[i.]
		\item $L$ is globally solvable, if for every $f\in C^\infty(\G)\cap(\ker \Lt)^0$, there is a function $u\in C^\infty(\G)$ such that $Lu=f$.
		\item $L$ is globally hypoelliptic if the conditions $u\in \mathscr{D}'(\G)$ and $Lu\in C^\infty(\G)$ imply $u\in C^{\infty}(\G)$.
	\end{enumerate}
\end{definition}

\begin{obs}\label{obsanulker}
	By Lemma \ref{lemmaanulker}, if $L$ is a constant coefficient first-order differential operator, and
	$$
	\widehat{Lu}(\tau,\xi,\ell)_{\alpha\beta} = \sigma_{L}(\tau,\xi,\alpha)\widehat{u}(\tau,\xi,\ell)_{\alpha\beta},
	$$
	then 
	$$
	(\ker \Lt)^0= \{f\in \mathscr{D}'(\G); \mbox{ such that } \widehat{f}(\tau,\xi,\ell)_{\alpha\beta} = 0 \text{ if } \sigma_{L}(\tau,\xi,\alpha)=0\}.
	$$
	Therefore, $L$ is globally solvable  for every $f\in C^\infty(\G)$ such that $\widehat{f}(\tau,\xi,\ell)_{\alpha\beta}=0$ whenever $\sigma_{L}(\tau,\xi,\alpha)=0$, there exists $u\in C^\infty(\G),$ such that $Lu=f$.
\end{obs}

The lemma presented below is an adaptation of the classical method of Hormander (Lemma 6.1.2 in \cite{Horm93_book}) to prove the nonsolvability of operators, which provides a necessary condition for global solvability by involving the transposed operator. Notably, the proof of this result relies only on functional analysis arguments in Frechet spaces, specifically tailored for compact Lie groups.

\begin{lemma}\label{Hormander_Method}
Let $L$ be a globally solvable differential operator on a compact Lie group $G$. Then, there exist $m\in\N$ and $C>0$ such that for every $f\in (\ker \Lt)^0$ and $v\in C^\infty(G)$, the following inequality holds:
	$$\left|\int_Gf(x)v(x)\mathop{dx}\right|\leq C\left(\sum_{|\alpha|\leq m}\sup_{G}|\partial^\alpha f|\right)\left(\sum_{|\alpha|\leq m}\sup_{G}|\partial^\alpha (\Lt v)|\right).$$
The sum $\sum_{|\alpha|\leq m}$ is taken over all left-invariant differential operators on $G$ of order at most $m$, or equivalently, it is taken over all linear combinations of at most $m$ compositions of elements of a basis $B$ for $Lie(G)$.
\end{lemma}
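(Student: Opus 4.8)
The plan is to run Hörmander's classical functional‑analytic argument (Lemma~6.1.2 of \cite{Horm93_book}) in the Fréchet‑space setting of the compact Lie group $G$, where $C^\infty(G)$ carries the topology generated by the seminorms $p_m(u)=\sum_{|\alpha|\le m}\sup_G|\partial^\alpha u|$, $m\in\N$, with $\partial^\alpha$ ranging over the left‑invariant operators of order $\le m$; these do form a fundamental system of seminorms since the left‑invariant fields span each tangent space. Throughout I use the bilinear pairing $\langle f,g\rangle=\int_G fg\,dx$ with respect to Haar measure, so that $\Lt$ is characterized by $\langle Lu,v\rangle=\langle u,\Lt v\rangle$ for $u,v\in C^\infty(G)$, and this identity extends to $v\in\mathscr{D}'(G)$ by the very definition of $\Lt$ on distributions.

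First I would set $E:=C^\infty(G)\cap(\ker\Lt)^0$ and observe that $E$ is a closed subspace of $C^\infty(G)$, being the intersection over $w\in\ker\Lt$ of the kernels of the continuous linear functionals $f\mapsto\langle f,w\rangle$; hence $E$ is itself Fréchet. Next, for any $u\in C^\infty(G)$ and $w\in\ker\Lt$ one has $\langle Lu,w\rangle=\langle u,\Lt w\rangle=0$, so $L(C^\infty(G))\subseteq(\ker\Lt)^0$; since also $L(C^\infty(G))\subseteq C^\infty(G)$, we get $L(C^\infty(G))\subseteq E$. Global solvability is exactly the reverse inclusion $E\subseteq L(C^\infty(G))$, so in fact $L(C^\infty(G))=E$.

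The core step is to apply the open mapping theorem. Consider
\[
S:=\{(f,u)\in E\times C^\infty(G):\ Lu=f\}.
\]
Since $L$ is continuous on $C^\infty(G)$, $S$ is a closed linear subspace of the Fréchet space $E\times C^\infty(G)$, hence Fréchet; the first projection $\pi\colon S\to E$ is continuous, linear, and \emph{onto} by $L(C^\infty(G))=E$. By the open mapping theorem for Fréchet spaces, $\pi$ is open. As $\{(f,u)\in S:\sup_G|u|<1\}$ is a neighborhood of $0$ in $S$, its $\pi$‑image contains a basic neighborhood $\{f\in E:p_m(f)<\delta\}$ of $0$ in $E$, for some $m\in\N$ and $\delta>0$. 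A rescaling argument upgrades this to a linear estimate: given $f\in E$ with $f\ne 0$, apply the above to $\tfrac{\delta}{2p_m(f)}f$ (whose $p_m$ equals $\delta/2<\delta$) to obtain $u_0$ with $\sup_G|u_0|<1$ solving $Lu_0=\tfrac{\delta}{2p_m(f)}f$; then $u:=\tfrac{2p_m(f)}{\delta}u_0$ satisfies $Lu=f$ and $\sup_G|u|<\tfrac{2}{\delta}\,p_m(f)$. For $f=0$ take $u=0$. Thus, with $C_0:=2/\delta$, every $f\in E$ admits $u\in C^\infty(G)$ with $Lu=f$ and $\sup_G|u|\le C_0\,p_m(f)$.

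Finally, for $f\in E$ and $v\in C^\infty(G)$, pick such a $u$ and use the transpose identity once more:
\[
\left|\int_G f\,v\right|=\left|\int_G u\,(\Lt v)\right|\le\Big(\int_G|u|\Big)\sup_G|\Lt v|\le C_0\,p_m(f)\,\sup_G|\Lt v|\le C_0\,p_m(f)\,p_m(\Lt v),
\]
where any total‑mass normalization constant of the Haar measure has been absorbed into $C_0$, and where $\sup_G|\Lt v|\le p_m(\Lt v)$. Since $p_m(f)=\sum_{|\alpha|\le m}\sup_G|\partial^\alpha f|$ and $p_m(\Lt v)=\sum_{|\alpha|\le m}\sup_G|\partial^\alpha(\Lt v)|$, this is the asserted inequality with $C=C_0$ (enlarging $m$ harmlessly so that one index works on both factors). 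The only genuinely delicate point is the bookkeeping in the core step: checking that $S$ and $E$ are Fréchet so the open mapping theorem applies, and converting the bare openness of $\pi$ into a \emph{quantitative} smooth solution operator via the homogeneity rescaling; the remaining ingredients are just the definition of $\Lt$ and elementary $\sup$/$L^1$ estimates.
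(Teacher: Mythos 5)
Your proof is correct, but it follows a genuinely different functional-analytic route from the paper's. The paper equips $(\ker \Lt)^0$ and the quotient $C^\infty(G)/\ker \Lt$ (the latter with the seminorms $v\mapsto\sum_{|\alpha|\leq m}\sup_G|\partial^\alpha(\Lt v)|$) with Fréchet topologies, checks that the bilinear form $(f,v)\mapsto\int_G fv$ is separately continuous — continuity in $v$ for fixed $f$ is exactly where global solvability enters, via $\int_G fv=\int_G u\,(\Lt v)$ with $Lu=f$ — and then invokes the theorem that separately continuous bilinear maps on products of Fréchet spaces are jointly continuous, which yields the product estimate at once. You instead apply the open mapping theorem to the projection of the closed subspace $S=\{(f,u):Lu=f\}\subset E\times C^\infty(G)$ onto $E=C^\infty(G)\cap(\ker \Lt)^0$, which global solvability makes surjective, and convert openness plus a homogeneity rescaling into the a priori bound $\sup_G|u|\leq C_0\,p_m(f)$ for some smooth solution of $Lu=f$; the inequality then follows from the same transpose identity and elementary estimates. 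Both arguments rest on Baire category and use solvability at the same point (producing a smooth solution $u$), but yours buys an explicit quantitative solution-operator bound and avoids the paper's quotient-space bookkeeping (well-definedness and separation of the seminorms on $C^\infty(G)/\ker \Lt$, and the final passage from the quotient back to arbitrary $v\in C^\infty(G)$), at the cost of the rescaling step needed to turn openness of the projection into a linear estimate. The small points you should keep explicit are exactly the ones you noted: $p_m(f)=0$ forces $f=0$, so the rescaling is legitimate, and reading the lemma's hypothesis ``$f\in(\ker \Lt)^0$'' as smooth $f$ is consistent with how the paper states and uses the estimate.
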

\begin{proof}
Note that the map $f\mapsto \sum_{|\alpha|\leq m}\sup_{x\in G}|\partial^\alpha f(x)|$, for $m\in\N$, defines a sequence of separating seminorms (and actually norms) on $C^\infty(G)$, making it a locally convex metrizable Fréchet space. 

The same holds for the map $v\mapsto \sum_{|\alpha|\leq m}\sup_{x\in G}|\partial^\alpha (\Lt v)(x)|$ on $C^\infty(G)/\ker \Lt$ (which is well-defined). As $(\ker \Lt)^0$ is a closed subspace of $C^\infty(G)$, it follows that the induced topology makes it a Fréchet space as well.	
	
Consider the map defined on the product space $(\ker \Lt)^0\times C^\infty(G)/\ker \Lt$ given by $$(f,v)\mapsto \int_G f(x)v(x)\mathop{dx}.$$ 
This map is well-defined since $f\in(\ker \Lt)^0$. We claim that it is a bilinear continuous map. To see this, it suffices to prove that the linear maps $f\mapsto \int_G fv$ and $v\mapsto \int_G fv$ are continuous for each $f$ and $v$. This follows from the fact that for a fixed $v\in C^\infty(G)/\ker \Lt$, the map $f\mapsto \int_G fv$ is continuous in $(\ker \Lt)^0$ since
	$$\left|\int_G f(x)v(x)dx\right|\leq \sup_{x\in G}|f(x)|\sup_{x\in G}|v(x)|\leq \|v\|_\infty \sum_{|\alpha|\leq 1}\sup_{x\in G}|\partial^\alpha f(x)|.$$

Furthermore, for any $f\in (\ker \Lt)^0$, by global solvability of $L$, there is $u\in C^\infty(G)$ such that $Lu=f$. Consequently, the linear map defined as
	$$C^\infty(G)/\ker \Lt\ni v\mapsto\int_G fv$$
	satisfies
\begin{align*}
	\left|\int_G fv\right| &= |\langle v,Lu\rangle|=|\langle \Lt v,u\rangle|
	=\left|\int_G(\Lt v(x))u(x)\mathop{dx}\right|\\
	&\leq \left|\int_G u(x)\mathop{dx}\right|\sup_{x\in G}|\Lt v(x)| \leq \left|\int_G u(x)\mathop{dx}\right|\sum_{|\alpha|\leq 1}\sup_{x\in G}|\partial^\alpha (\Lt v)(x)|.
\end{align*}

As a result, this map is continuous under this topology, confirming the claim. Considering that this is a bilinear map defined on the product of locally convex topological vector spaces equipped with the topology given by families of seminorms, we can conclude that there exists a constant $C>0$ such that
	$$\left|\int_Gf(x)v(x)\mathop{dx}\right|\leq C\left(\sum_{|\alpha|\leq m}\sup_{G}|\partial^\alpha f|\right)\left(\sum_{|\alpha|\leq m}\sup_{G}|\partial^\alpha (\Lt v)|\right)$$
holds for all $f\in (\ker \Lt)^0$ and $v\in C^\infty(G)/\ker \Lt$. 

Furthermore, since the inequality also trivially holds for elements in $\ker \Lt$, the lemma follows. In particular, if $v\in C^\infty(G)$ can be expressed as $u=v+w$, where $w\in\ker \Lt$, then 
	\begin{align*}
		\left|\int_Gfv\right| &=\left|\int_G fu\right|
		\leq C\left(\sum_{|\alpha|\leq m}\sup_{G}|\partial^\alpha f|\right)\left(\sum_{|\alpha|\leq m}\sup_{G}|\partial^\alpha (\Lt u)|\right)\\
		&\leq C\left(\sum_{|\alpha|\leq m}\sup_{G}|\partial^\alpha f|\right)\left(\sum_{|\alpha|\leq m}\sup_{G}|\partial^\alpha (\Lt v)|\right).
	\end{align*}
\end{proof}

%%% ----------------------------------------------------------------------
\section{Constant-coefficient Operators}
%%% ----------------------------------------------------------------------

We will now establish the necessary and sufficient conditions for the global hypoellipticity and global solvability of constant coefficient first-order differential operators on $\G$. Consider the operator:
\begin{equation}\label{eqoplcte}
	L = \partial_t+\sum_{j=1}^rc_j\partial_{x_j}+\sum_{k=1}^s d_kD_{3,k}+q.
\end{equation}
where $c_1,\dots,c_r$, $d_1,\dots,d_s$, and $q$ are complex constants. 

By Plancherel's Theorem, if $u,g\in \mathscr{D}'(\G)$ such that $Lu=g$, then taking the partial Fourier transform on each variable separately yields the following equations:
\begin{equation}\label{eqcte}
	\sigma_{L}(\tau,\xi,\alpha)\widehat{u}(\tau,\xi,\ell)_{\alpha\beta} = \widehat{g}(\tau,\xi,\ell)_{\alpha\beta},  
\end{equation}
where $(\tau,\xi)=(\tau,\xi_1,\ldots,\xi_r)\in\mathbb{Z}^{r+1}$, $\ell=(\ell_1,\ldots,\ell_s) \in \frac{1}{2}\N_0^s$, and $-\ell\leq \alpha,\beta\leq \ell$ satisfy $\ell-\alpha,\ell-\beta\in\N_0^s$ and 
$$\sigma_L(\tau,\xi,\alpha)\doteq i(\tau+\langle c,\xi\rangle+\langle d,\alpha\rangle-iq). $$
%The symbol of the operator $L$ is given by:
%$$ \sigma_{L}(\tau,\xi,\alpha) = i(\tau+\langle c,\xi \rangle +\langle d,\alpha \rangle-iq)\widehat{u}(\tau,\xi,\ell)_{\alpha\beta}.$$

To state our first result of this section, let us recall the Diophantine condition stated in the introduction of this article.
\begin{definition}
	We say that the constant coefficient operator $L$ satisfies the condition \emph{(DC)} if there are $M,N>0$ such that:
	\begin{equation}\label{DC_condition_const_coef_section}
		\left|\sigma_{L}(\tau,\xi,\alpha) \right|\geq M(|\tau|+|\xi|+|\ell|)^{-N} \tag{DC}
	\end{equation}
	for every $(\tau,\xi)\in\mathbb{Z}^{r+1}$, $\ell\in\frac{1}{2}\N_0^s$, $-\ell\leq \alpha\leq \ell$, and $\ell-\alpha\in\N_0^s$, such that neither the left-hand side of the inequality is zero, nor $(\tau,\xi,\ell)$ is the zero vector.
\end{definition}

\begin{prop}\label{propctegs}
	The operator $L$ is globally solvable if and only if it satisfies condition \emph{(DC)}.
\end{prop}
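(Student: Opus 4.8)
The plan is to prove both implications by working on the Fourier side, using the characterizations of $C^\infty(\G)$ and $\mathscr{D}'(\G)$ from Proposition \ref{lemmadecaysmooth} together with the description of $(\ker\Lt)^0$ in Remark \ref{obsanulker}. The key structural fact is that $L$ is diagonalized by the total Fourier transform: solving $Lu=f$ amounts to dividing $\widehat{f}(\tau,\xi,\ell)_{\alpha\beta}$ by $\sigma_L(\tau,\xi,\alpha)$ coefficient by coefficient, provided that quotient makes sense and defines a smooth function.

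For the sufficiency direction, assume (DC) holds and let $f\in C^\infty(\G)\cap(\ker\Lt)^0$. By Remark \ref{obsanulker}, $\widehat{f}(\tau,\xi,\ell)_{\alpha\beta}=0$ whenever $\sigma_L(\tau,\xi,\alpha)=0$, so I can define
$$
\widehat{u}(\tau,\xi,\ell)_{\alpha\beta}=\begin{cases}\sigma_L(\tau,\xi,\alpha)^{-1}\widehat{f}(\tau,\xi,\ell)_{\alpha\beta}, & \sigma_L(\tau,\xi,\alpha)\neq 0,\\[2pt] 0, & \sigma_L(\tau,\xi,\alpha)=0,\end{cases}
$$
(also setting the $\0$ coefficient to, say, $0$, or handling it by the freedom in constants). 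Using (DC), for every $N$ there is $M_N$ with $|\widehat{f}(\tau,\xi,\ell)_{\alpha\beta}|\le M_{N+N_0}(|\tau|+|\xi|+|\ell|)^{-(N+N_0)}$, where $N_0$ is the exponent in (DC), hence $|\widehat{u}(\tau,\xi,\ell)_{\alpha\beta}|\le (M_{N+N_0}/M)(|\tau|+|\xi|+|\ell|)^{-N}$. By Proposition \ref{lemmadecaysmooth}.i this shows $u\in C^\infty(\G)$, and by construction $Lu=f$ (one must note the freedom: since $q$ is part of the operator, the kernel and this inversion interact cleanly, as in Remark \ref{obsanulker}). One subtlety to address is that a single index $(\tau,\xi,\ell)$ carries a whole block of $\alpha$'s, and $\sigma_L$ may vanish for some $\alpha$ in the block but not others; the orthogonality condition defining $(\ker\Lt)^0$ is exactly tuned (via Remark \ref{obsanulker}) to kill precisely those offending coefficients, so the definition of $\widehat u$ is consistent.

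For the necessity direction, suppose $L$ is globally solvable but (DC) fails. Then there is a sequence of admissible indices $(\tau_\nu,\xi_\nu,\ell_\nu)$ with nonzero $\sigma_L$ at some admissible $\alpha_\nu$, with $(\tau_\nu,\xi_\nu,\ell_\nu)\ne\0$, such that $|\sigma_L(\tau_\nu,\xi_\nu,\alpha_\nu)|\,(|\tau_\nu|+|\xi_\nu|+|\ell_\nu|)^{N}\to 0$ for every $N$; passing to a subsequence we may assume these indices are distinct and $(|\tau_\nu|+|\xi_\nu|+|\ell_\nu|)\to\infty$. The strategy is to build $f\in C^\infty(\G)\cap(\ker\Lt)^0$ that is not in the range of $L$: define $\widehat{f}(\tau_\nu,\xi_\nu,\ell_\nu)_{\alpha_\nu\beta_\nu}$ (for one fixed choice of $\beta_\nu$) to decay like, say, $(|\tau_\nu|+|\xi_\nu|+|\ell_\nu|)^{-\nu}$ and vanish for all other indices. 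Then $f\in C^\infty(\G)$ by Proposition \ref{lemmadecaysmooth}.i, and $f\in(\ker\Lt)^0$ since its support avoids the zero set of $\sigma_L$ (so the orthogonality in Remark \ref{obsanulker} is automatic). Any distributional solution $u$ of $Lu=f$ must have $\widehat{u}(\tau_\nu,\xi_\nu,\ell_\nu)_{\alpha_\nu\beta_\nu}=\sigma_L(\tau_\nu,\xi_\nu,\alpha_\nu)^{-1}\widehat{f}(\tau_\nu,\xi_\nu,\ell_\nu)_{\alpha_\nu\beta_\nu}$, whose modulus grows faster than any power of $(|\tau_\nu|+|\xi_\nu|+|\ell_\nu|)$ by the choice of the sequence, contradicting Proposition \ref{lemmadecaysmooth}.ii; hence no such $u\in\mathscr{D}'(\G)$ exists, let alone a smooth one, contradicting global solvability. (Alternatively, one could route necessity through Lemma \ref{Hormander_Method}, testing against suitable characters times sphere coefficient functions; I prefer the direct Fourier-side contradiction.)

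The main obstacle I anticipate is bookkeeping rather than conceptual: carefully matching the admissibility constraints ($-\ell\le\alpha,\beta\le\ell$, $\ell-\alpha,\ell-\beta\in\N_0^s$), ensuring the constructed $f$ in the necessity part genuinely lies in $(\ker\Lt)^0$ (i.e. being precise about what $\Lt$ is and invoking Remark \ref{obsanulker} for $\Lt$ as well as $L$), and dealing cleanly with the constant/zero-mode $(\tau,\xi,\ell)=\0$ where the constant $q$ either makes $\sigma_L$ nonzero (harmless) or zero (then a compatibility/normalization on the $\0$ coefficient is needed). None of these is deep, but they must be stated carefully so the two inequalities in Proposition \ref{lemmadecaysmooth} can be applied verbatim.
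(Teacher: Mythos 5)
Your strategy is the same as the paper's: sufficiency by dividing the Fourier coefficients of the datum by $\sigma_L$ and quoting Proposition \ref{lemmadecaysmooth} together with Remark \ref{obsanulker}, necessity by building a smooth right-hand side concentrated on a sequence of indices where $\sigma_L$ is nonzero but anomalously small. The sufficiency half, including the handling of the $\alpha$-blocks and of the zero mode, is essentially the paper's argument and is fine.

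In the necessity half there is a concrete gap. With your choice $|\widehat f(\tau_\nu,\xi_\nu,\ell_\nu)_{\alpha_\nu\beta_\nu}|\sim(|\tau_\nu|+|\xi_\nu|+|\ell_\nu|)^{-\nu}$, the forced coefficients of any solution satisfy only $|\widehat u_\nu|=|\widehat f_\nu|/|\sigma_L(\tau_\nu,\xi_\nu,\alpha_\nu)|\geq 1$: the failure of (DC) gives an \emph{upper} bound on $|\sigma_L|$, and nothing prevents $|\sigma_L(\tau_\nu,\xi_\nu,\alpha_\nu)|$ from being of the same order $(|\tau_\nu|+|\xi_\nu|+|\ell_\nu|)^{-\nu}$ as your datum, in which case $|\widehat u_\nu|$ stays bounded. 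So the assertion that $|\widehat u_\nu|$ grows faster than any power, the appeal to Proposition \ref{lemmadecaysmooth}.ii, and the conclusion that no $u\in\mathscr{D}'(\G)$ exists do not follow as written. The construction still proves the proposition, because $|\widehat u_\nu|\geq 1$ along indices with $|\tau_\nu|+|\xi_\nu|+|\ell_\nu|\to\infty$ already violates the rapid decay in Proposition \ref{lemmadecaysmooth}.i, so no \emph{smooth} solution exists and $L$ is not globally solvable — you must route the contradiction through (i), not (ii). The paper avoids the issue by calibrating the datum to the symbol: it sets $\widehat g_\nu=|\sigma_L(\tau_\nu,\xi_\nu,\alpha_\nu)|^{1/2}$, which still decays rapidly, while the forced solution has $|\widehat u_\nu|=|\sigma_L|^{-1/2}\geq(|\tau_\nu|+|\xi_\nu|+|\ell_\nu|)^{\nu/2}$, ruling out even distributional solutions; adopt that rescaling (or any choice making $|\widehat f_\nu|/|\sigma_\nu|$ super-polynomial) if you want your stronger conclusion. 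Finally, your passage to a subsequence with $|\tau_\nu|+|\xi_\nu|+|\ell_\nu|\to\infty$ deserves one sentence of justification: on a bounded set of indices the nonzero values of $\sigma_L$ are finitely many and hence bounded below, so (DC) cannot fail there.
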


\begin{proof}
	Suppose $L$ does not satisfy condition (DC). Then there exists $(\tau(n),\xi(n))\in\mathbb{Z}^{r+1}$, $\ell(n)\in\frac{1}{2}\N_0^s$, $\alpha(n)\in\frac{1}{2}\mathbb{Z}^s$ such that $-\ell(n)\leq \alpha(n)\leq \ell(n)$ and 
	\begin{equation}\label{eqineqdiof1}
		0<|\tau(n)+\langle c,\xi(n) \rangle +\langle d,\alpha(n)\rangle -iq |<(|\tau(n)|+|\xi(n)|+|\ell(n)|)^{-n},
	\end{equation}
	for every $n\in \N$. 
	
Let us define 
\begin{equation*}
	\widehat{g}(\tau,\xi,\ell)_{\alpha\beta} =
	\begin{cases*}
		|\tau(n)+\langle c,\xi(n) \rangle +\langle d,\alpha(n)\rangle -iq|^{1/2}, &   \mbox{ if  $\alpha=\beta=\alpha(n)$ for some  $n\in\N$}; \\
		 0,  & \mbox{ otherwise.}
	\end{cases*}
\end{equation*}	
Note that, according to Proposition \ref{lemmadecaysmooth}, these coefficients define a function $g\in C^{\infty}(\T^{r+1}\times \mathbb{S}^{3s})$. By Remark \ref{obsanulker} and Lemma \ref{lemmaanulker}, we have $g\in(\ker \Lt)^0$ since $i(\tau(n)+\langle c,\xi(n) \rangle +\langle d,\alpha(n)\rangle -iq)\neq 0$ for every $n\in\N$.

We can also verify directly that $L$ satisfies the following equations:
	\begin{align}
		\widehat{Lu}(\tau,\xi,\ell)_{\alpha\beta}& =i(\tau+\langle c,\xi \rangle +\langle d,\alpha\rangle -iq)\widehat{u}(\tau,\xi,\ell)_{\alpha\beta}, \label{eqproof1}  \\
		\widehat{\Lt u}(\tau,\xi,\ell)_{\alpha\beta}& =i(-\tau+\langle c,-\xi \rangle +\langle d,-\alpha\rangle -iq)\widehat{u}(\tau,\xi,\ell)_{\alpha\beta}  \nonumber.
	\end{align}

	Now, if $Lu=g$, then their Fourier coefficients must coincide. By  \eqref{eqproof1}, we can conclude that for every $n\in\N$,
	\begin{equation*}	
		\widehat{u}(\tau(n),\xi(n),\ell(n))_{\alpha(n)\alpha(n)} = \frac{|\tau(n)+\langle c,\xi(n) \rangle +\langle d,\alpha(n)\rangle -iq|^{1/2}}{i(\tau(n)+\langle c,\xi(n) \rangle +\langle d,\alpha(n)\rangle -iq)}.
	\end{equation*}	

	From \ref{eqineqdiof1} we have
		\begin{equation*}	
			|\widehat{u} (\tau(n),\xi(n),\ell(n))_{\alpha(n)\alpha(n)}| \geq (|\tau(n)| + |\xi(n)| + |\ell(n)|)^{n/2}.
		\end{equation*}	
	
	Therefore, $u\not\in C^{\infty}(\G)$ by Proposition \ref{lemmadecaysmooth}, in fact, $u\not\in \mathscr{D}'(\G)$. Hence, $L$ is not globally solvable.

	On the other hand, suppose that $L$ satisfies condition (DC), and let $g\in (\ker \Lt)^0$. We can define $u\in \mathscr{D}'(\G)$ as follows:
	$$
	\widehat{u}(k,\xi,\ell)_{\alpha\beta} = \frac{\widehat{g}(\tau,\xi,\ell)_{\alpha\beta}}{i(\tau+\langle c,\xi \rangle +\langle d,\alpha\rangle -iq)}
	$$
	whenever the denominator is not zero, and $\widehat{u}(k,\xi,\ell)_{\alpha\beta} = 0$ otherwise. 
	
	Then we have 
	$$
	|\widehat{u}(k,\xi,\ell)_{\alpha\beta}|\leq |\widehat{g}(\tau,\xi,\ell)_{\alpha\beta}| \frac{1}{M} (|\tau|+|\xi|+|\ell|)^{N}
	$$
	 for every $(\tau,\xi,\ell)$ (except possibly at $\0$). By Proposition \ref{lemmadecaysmooth}, if $g\in C^\infty(\G)$, then $u\in C^\infty(\G)$ (or if $g\in \mathscr{D}'(\G)$, then $u\in \mathscr{D}'(\G)$).
	
	Moreover, by equation \eqref{eqcte} and Plancherel's identity, we have $Lu=g$. Therefore, $L$ is globally solvable.
\end{proof}

In order to establish the result concerning the global hypoellipticity of the operator $L$, we need to impose an additional condition, namely, that the number of zeros of the symbol of the operator is finite.

\begin{prop}\label{propghcte}
The operator $L$ is globally hypoelliptic if and only if it satisfies the Diophantine Condition (DC) and the set $\mathcal{Z}_L$ of zeros of its symbol, defined as
	\begin{equation}\label{eqsetN}
	\mathcal{Z}_L =  \left\{(\tau,\xi,\ell) \in \Z^{r+1}\times\mbox{$\frac{1}{2}$} \N_0^s; \sigma_{L}(\tau,\xi,\alpha)= 0,\,\text{for some} -\ell\leq\alpha\leq \ell,\,\ell-\alpha\in\N_0^s\right\}.	
	\end{equation}
	is finite.
\end{prop}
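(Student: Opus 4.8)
The plan is to prove the two implications separately, using the Fourier-coefficient characterizations of $C^\infty(\G)$ and $\mathscr{D}'(\G)$ from Proposition \ref{lemmadecaysmooth}, exactly as in the proof of Proposition \ref{propctegs}.

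For the ``only if'' direction, I would argue by contrapositive and split into two cases. First, suppose $L$ fails condition (DC). Then there is a sequence $(\tau(n),\xi(n),\ell(n))$ together with admissible $\alpha(n)$ such that $0<|\sigma_L(\tau(n),\xi(n),\alpha(n))|<(|\tau(n)|+|\xi(n)|+|\ell(n)|)^{-n}$. I would build $u\in\mathscr{D}'(\G)$ by setting $\widehat{u}(\tau(n),\xi(n),\ell(n))_{\alpha(n)\alpha(n)}=\sigma_L(\tau(n),\xi(n),\alpha(n))^{-1}$ and all other coefficients zero; then $u$ is a genuine distribution (the coefficients grow at most like $(|\tau|+|\xi|+|\ell|)^{n}$, so polynomial growth holds after passing to a subsequence making the indices distinct, or one normalizes by dividing by $n$), while $Lu$ has Fourier coefficients equal to $1$ along the chosen sequence and $0$ elsewhere — hence $Lu\in C^\infty(\G)$ — but $|\widehat{u}|$ does not decay rapidly, so $u\notin C^\infty(\G)$, contradicting hypoellipticity. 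Second, suppose $\mathcal{Z}_L$ is infinite. For each $(\tau,\xi,\ell)\in\mathcal{Z}_L$ pick one admissible $\alpha$ with $\sigma_L(\tau,\xi,\alpha)=0$ and set the corresponding diagonal coefficient $\widehat{u}(\tau,\xi,\ell)_{\alpha\alpha}=1$, all others $0$. Since $\mathcal{Z}_L$ is infinite this $u$ has coefficients that do not decay, so $u\notin C^\infty(\G)$; but $\widehat{Lu}=\sigma_L\cdot\widehat{u}\equiv 0$, so $Lu=0\in C^\infty(\G)$. Again hypoellipticity fails. Combining the two cases gives the contrapositive of ``only if''.

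For the ``if'' direction, assume (DC) holds and $\mathcal{Z}_L$ is finite. Let $u\in\mathscr{D}'(\G)$ with $Lu=g\in C^\infty(\G)$. From $\sigma_L(\tau,\xi,\alpha)\widehat{u}(\tau,\xi,\ell)_{\alpha\beta}=\widehat{g}(\tau,\xi,\ell)_{\alpha\beta}$ I separate the index set into three pieces: (a) indices $(\tau,\xi,\ell,\alpha,\beta)$ with $\sigma_L(\tau,\xi,\alpha)\neq 0$; (b) indices with $\sigma_L(\tau,\xi,\alpha)=0$ but $(\tau,\xi,\ell)\notin\mathcal{Z}_L$ — this case is empty by the definition of $\mathcal{Z}_L$; (c) indices with $(\tau,\xi,\ell)\in\mathcal{Z}_L$. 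On piece (a), (DC) gives $|\widehat{u}(\tau,\xi,\ell)_{\alpha\beta}|\le M^{-1}(|\tau|+|\xi|+|\ell|)^{N}|\widehat{g}(\tau,\xi,\ell)_{\alpha\beta}|$, and since $g\in C^\infty(\G)$ its coefficients decay faster than any polynomial, so these coefficients of $u$ decay rapidly. On piece (c), there are only finitely many values of $(\tau,\xi,\ell)$, and for each of them only finitely many admissible $(\alpha,\beta)$, so only finitely many coefficients of $u$ are unconstrained; a finite set of coefficients trivially satisfies the rapid-decay bound (it contributes only finitely many terms). Hence all coefficients of $u$ satisfy the estimate of Proposition \ref{lemmadecaysmooth}, so $u\in C^\infty(\G)$, proving $L$ globally hypoelliptic.

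The main obstacle is the bookkeeping in the ``only if'' direction when (DC) fails: one must ensure the constructed $u$ is genuinely a distribution, which requires controlling the growth of the reciprocals $\sigma_L(\tau(n),\xi(n),\alpha(n))^{-1}$ against $(|\tau(n)|+|\xi(n)|+|\ell(n)|)$; passing to a subsequence so that the triples $(\tau(n),\xi(n),\ell(n))$ are pairwise distinct and $|\tau(n)|+|\xi(n)|+|\ell(n)|\to\infty$, and then using that the failure of (DC) at level $n$ only forces a bound $(|\tau(n)|+|\xi(n)|+|\ell(n)|)^{n}$ which is still dominated along the sequence by, say, $(|\tau(n)|+|\xi(n)|+|\ell(n)|)^{|\tau(n)|+|\xi(n)|+|\ell(n)|}$, handles this. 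Everything else is a direct transcription of the symbolic calculus already set up, together with Proposition \ref{lemmadecaysmooth} and Remark \ref{obsanulker}.
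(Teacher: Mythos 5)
Your ``if'' direction and your treatment of the case where $\mathcal{Z}_L$ is infinite are correct and essentially identical to the paper's proof. The genuine gap is in the case where (DC) fails. The failure of (DC) gives indices with $0<|\sigma_L(\tau(n),\xi(n),\alpha(n))|<(|\tau(n)|+|\xi(n)|+|\ell(n)|)^{-n}$, which is only an \emph{upper} bound on the symbol; consequently $|\sigma_L(\tau(n),\xi(n),\alpha(n))^{-1}|>(|\tau(n)|+|\xi(n)|+|\ell(n)|)^{n}$, and there is no upper bound whatsoever on these reciprocals. So your $u$, defined by $\widehat{u}(\tau(n),\xi(n),\ell(n))_{\alpha(n)\alpha(n)}=\sigma_L(\tau(n),\xi(n),\alpha(n))^{-1}$, has coefficients that in general grow faster than any \emph{fixed} power of $|\tau|+|\xi|+|\ell|$, and Proposition \ref{lemmadecaysmooth}(ii) requires a single exponent $N$ valid for all indices; neither passing to a subsequence nor ``dividing by $n$'' can convert super-polynomial growth into polynomial growth, so $u$ need not belong to $\mathscr{D}'(\G)$ at all. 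Moreover, even if $u$ were a distribution, your $Lu$ has Fourier coefficients equal to $1$ along an infinite sequence of indices with $|\tau(n)|+|\xi(n)|+|\ell(n)|\to\infty$, and by Proposition \ref{lemmadecaysmooth}(i) such a distribution is \emph{not} smooth (it is exactly the non-smooth $u$ you use in your $\mathcal{Z}_L$-infinite case). Hence the pair $(u,Lu)$ you propose does not violate global hypoellipticity.

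The repair is to swap the roles of the two sequences of coefficients, which is what the paper does (mirroring the counterexample in Proposition \ref{propctegs}): set $\widehat{u}(\tau(n),\xi(n),\ell(n))_{\alpha(n)\alpha(n)}=1$ and all other coefficients zero, so that $u\in\mathscr{D}'(\G)\setminus C^\infty(\G)$ by Proposition \ref{lemmadecaysmooth}; then $\widehat{Lu}(\tau(n),\xi(n),\ell(n))_{\alpha(n)\alpha(n)}=\sigma_L(\tau(n),\xi(n),\alpha(n))$, which by the very failure of (DC) is bounded by $(|\tau(n)|+|\xi(n)|+|\ell(n)|)^{-n}$ and hence decays faster than any polynomial, so $Lu\in C^\infty(\G)$. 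This yields the desired contradiction with global hypoellipticity, and with this correction your overall structure (contrapositive in two cases, plus the direct estimate using (DC) and the finiteness of $\mathcal{Z}_L$) matches the paper's proof.
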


\begin{proof}
We will establish the sufficiency using a proof by contradiction. Let us assume that the set $\mathcal{Z}_L$ contains an infinite number of elements. Consider the distribution $u \in \mathscr{D}'(\G)$, whose Fourier coefficients are defined as follows:
	\begin{equation*}
		\widehat{u}(\tau,\xi,\ell)_{\alpha\beta}=
		\begin{cases}
			1, & \mbox{ if } (\tau,\xi,\ell) \in \mathcal{Z}_L, \\
			0, & \mbox{ otherwise}.
		\end{cases}
	\end{equation*}

By Proposition \ref{lemmadecaysmooth}, it is clear that $u$ belongs to $\mathscr{D}'(\G)$ but not to $C^\infty(\G)$. Furthermore, it is evident that $\widehat{Lu}(\tau,\xi,\ell)_{\alpha\beta}=0$ for all indices. Applying Plancherel's theorem, we obtain $Lu\equiv 0$, which implies that $Lu$ is a smooth function. Consequently, $L$ cannot be globally hypoelliptic.

Continuing with the proof of sufficiency, we now consider the case where $L$ does not satisfy condition (DC). Similarly to the proof of Proposition \ref{propctegs}, we can find sequences $(\tau(n),\xi(n))\in\mathbb{Z}^{r+1}$, $\ell(n)\in \frac{1}{2}\N_0^s$, $\alpha(n)\in\frac{1}{2}\mathbb{Z}^s$ satisfying $-\ell(n)\leq \alpha(n)\leq \ell(n)$ and
	$$0<|\tau(n)+\langle c,\xi(n) \rangle +\langle d,\alpha(n)\rangle -iq |<(|\tau(n)|+|\xi(n)|+|\ell(n)|)^{-n}$$
	for every $n\in \N$. 

Let us define 
\begin{equation*}
	\widehat{g}(\tau,\xi,\ell)_{\alpha\beta} =
	\begin{cases*}
		i(\tau(n)+\langle c,\xi(n) \rangle +\langle d,\alpha(n)\rangle -iq), &   \mbox{ if  $\alpha=\beta=\alpha(n)$ for some  $n\in\N$}; \\
		0  & \mbox{ otherwise.}
	\end{cases*}
\end{equation*}	
	
  Based on the inequality above and Proposition \ref{lemmadecaysmooth}, the coefficients defined in the previous paragraph give rise to a function $g\in C^\infty(\G)$. However, since $\widehat{u} (\tau(n),\xi(n),\ell(n)) = 1$ for every $n\in\N$ and $0$ otherwise, we can conclude that $u\in \mathscr{D}'(\G)\backslash C^\infty(\G)$. Furthermore, it is clear that $Lu=g$, indicating that $L$ is not globally hypoelliptic.
	
Now, let us proceed to the proof of necessity. Given that the set $\mathcal{Z}_L$ is finite and the operator $L$ satisfies the Diophantine condition, we can establish the following inequality for all $u\in \mathscr{D}'(\G)$:
	$$
		|\widehat{Lu}(\tau,\xi,\ell)_{\alpha\beta}| \geq {M}(|\tau|+|\xi|+|\ell|)^{-N}|\widehat{u}(\tau,\xi,\ell)_{\alpha\beta}|
	$$
for all $(\tau,\xi,\ell)\neq \0$, except for at most finitely many $(\tau,\xi,\ell)$ in the set $\mathcal{Z}_L$. By applying Proposition \ref{lemmadecaysmooth}, we have $u\not\in C^\infty(\G) \implies Lu\not\in C^\infty(\G)$. Therefore, we can conclude that $L$ is globally hypoelliptic.
\end{proof}

\begin{corollary}\label{coroghcte}
	If $s\neq0$, then the operator $L$ is globally hypoelliptic if and only if it is globally solvable and the set $\mathcal{Z}_L$ is empty.
\end{corollary}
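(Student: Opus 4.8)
The plan is to combine Propositions \ref{propctegs} and \ref{propghcte} with a simple structural observation about the zero set $\mathcal{Z}_L$ in the presence of at least one sphere factor. By Proposition \ref{propctegs}, global solvability of $L$ is equivalent to condition (DC), and by Proposition \ref{propghcte}, global hypoellipticity of $L$ is equivalent to (DC) holding together with finiteness of $\mathcal{Z}_L$. Hence, once we know that for $s\neq 0$ the set $\mathcal{Z}_L$ is finite if and only if it is empty, the corollary follows by chaining equivalences. One implication of that dichotomy is trivial, so the only real content is the claim: if $s\neq 0$ and $\mathcal{Z}_L\neq\emptyset$, then $\mathcal{Z}_L$ is infinite.

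To prove this claim, I would take any $(\tau,\xi,\ell)\in\mathcal{Z}_L$ together with an admissible weight $\alpha$, i.e.\ $-\ell\leq\alpha\leq\ell$, $\ell-\alpha\in\N_0^s$, and $\sigma_L(\tau,\xi,\alpha)=0$. Since $s\geq 1$, for each $n\in\N_0$ I set $\ell^{(n)}:=\ell+(n,0,\dots,0)\in\frac12\N_0^s$. The same $\alpha$ remains admissible for $\ell^{(n)}$: only the first coordinate of $\ell$ is altered, and it only increases, so $-\ell^{(n)}\leq\alpha\leq\ell^{(n)}$ and $\ell^{(n)}-\alpha\in\N_0^s$ still hold. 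Because the identity $\sigma_L(\tau,\xi,\alpha)=i(\tau+\langle c,\xi\rangle+\langle d,\alpha\rangle-iq)=0$ does not involve $\ell$ at all, we get $(\tau,\xi,\ell^{(n)})\in\mathcal{Z}_L$ for every $n$, and these triples are pairwise distinct. Therefore $\mathcal{Z}_L$ is infinite, proving the claim.

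Finally I would assemble the equivalences: $L$ is globally hypoelliptic $\iff$ (DC) holds and $\mathcal{Z}_L$ is finite (Proposition \ref{propghcte}) $\iff$ (DC) holds and $\mathcal{Z}_L=\emptyset$ (by the claim, using $s\neq 0$, and the trivial direction $\emptyset\Rightarrow$ finite) $\iff$ $L$ is globally solvable (Proposition \ref{propctegs}) and $\mathcal{Z}_L=\emptyset$. There is no serious obstacle in this argument; the only step demanding a moment's care is verifying that raising a single component $\ell_k$ by an integer keeps the constraints $-\ell\leq\alpha\leq\ell$ and $\ell-\alpha\in\N_0^s$ satisfied, which is immediate, and this is precisely where the hypothesis $s\neq 0$ is used.
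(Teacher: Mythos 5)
Your proof is correct and follows essentially the same route as the paper: the paper also observes that when $s\neq 0$, any element $(\tau,\xi,\ell)\in\mathcal{Z}_L$ generates infinitely many elements by increasing $\ell$ (since $\sigma_L$ depends only on $\alpha$, which remains admissible), so finiteness of $\mathcal{Z}_L$ forces emptiness, and the corollary then follows from Propositions \ref{propctegs} and \ref{propghcte}. No gaps.
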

Indeed, it suffices to observe that if $(\tau,\xi,\ell)\in \mathcal{Z}_L$, then $(\tau,\xi,\ell+n)\in \mathcal{Z}_L$ for every $n\in\N_0$. Therefore, the proof of this corollary follows directly from Proposition \ref{propghcte}.

\begin{corollary}
	If $s\neq0$, then $L$ is globally hypoelliptic if and only if $L$ is an automorphism of $C^\infty(\G)$.
\end{corollary}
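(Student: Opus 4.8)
The plan is to combine the two preceding results—Proposition \ref{propghcte} (together with Corollary \ref{coroghcte}) and Proposition \ref{propctegs}—with the standard functional-analytic observation that, for a continuous linear operator between Fréchet spaces, bijectivity is equivalent to being an isomorphism by the open mapping theorem. So I would structure the argument as: first reduce "automorphism of $C^\infty(\G)$" to the pair of conditions "$L$ is injective on $C^\infty(\G)$" and "$L$ is surjective onto $C^\infty(\G)$", then show that, when $s\neq 0$, each of these is equivalent to the condition appearing in Corollary \ref{coroghcte} (namely, global solvability together with $\mathcal{Z}_L=\emptyset$), which by that corollary is in turn equivalent to global hypoellipticity.

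First I would prove the forward direction. Suppose $L$ is globally hypoelliptic. By Corollary \ref{coroghcte}, $L$ is globally solvable and $\mathcal{Z}_L=\emptyset$. Surjectivity onto $C^\infty(\G)$: since $\mathcal{Z}_L=\emptyset$, the symbol $\sigma_L(\tau,\xi,\alpha)$ never vanishes, so $(\ker{}^t\!L)^0 = C^\infty(\G)$ by Remark \ref{obsanulker}; combined with global solvability this gives that for every $f\in C^\infty(\G)$ there is $u\in C^\infty(\G)$ with $Lu=f$. Injectivity on $C^\infty(\G)$: if $u\in C^\infty(\G)\subset \mathscr{D}'(\G)$ and $Lu=0$, then comparing Fourier coefficients via $\widehat{Lu}(\tau,\xi,\ell)_{\alpha\beta}=\sigma_L(\tau,\xi,\alpha)\widehat u(\tau,\xi,\ell)_{\alpha\beta}$ and using $\sigma_L\neq 0$ everywhere forces $\widehat u\equiv 0$, hence $u\equiv 0$. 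Thus $L\colon C^\infty(\G)\to C^\infty(\G)$ is a continuous linear bijection; since $C^\infty(\G)$ is a Fréchet space and $L$ is continuous (being a differential operator), the open mapping theorem yields that $L^{-1}$ is continuous, so $L$ is an automorphism of $C^\infty(\G)$.

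For the converse, suppose $L$ is an automorphism of $C^\infty(\G)$. Then $L$ is in particular surjective onto $C^\infty(\G)$, which trivially gives global solvability (every $f\in C^\infty(\G)\cap(\ker{}^t\!L)^0$, indeed every $f\in C^\infty(\G)$, has a smooth preimage), so by Proposition \ref{propctegs} condition (DC) holds. It remains to see $\mathcal{Z}_L=\emptyset$. If some $(\tau_0,\xi_0,\ell_0)\in\mathcal{Z}_L$, pick $\alpha_0$ with $-\ell_0\leq\alpha_0\leq\ell_0$, $\ell_0-\alpha_0\in\N_0^s$ and $\sigma_L(\tau_0,\xi_0,\alpha_0)=0$; then the (smooth, in fact trigonometric-polynomial) function $u$ with $\widehat u(\tau_0,\xi_0,\ell_0)_{\alpha_0\alpha_0}=1$ and all other coefficients $0$ is a nonzero element of $C^\infty(\G)$ with $Lu=0$, contradicting injectivity. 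Hence $\mathcal{Z}_L=\emptyset$, and by Corollary \ref{coroghcte} (using $s\neq 0$) $L$ is globally hypoelliptic.

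The argument is essentially a bookkeeping exercise once the three ingredients are in place, so I do not expect a genuine obstacle; the one point requiring a little care is the appeal to the open mapping theorem, i.e. making sure $C^\infty(\G)$ is a Fréchet space (it is, being a countable projective limit of Banach spaces, equivalently equipped with the seminorms $p_N$ from Proposition \ref{lemmadecaysmoothpartial}) and that $L$, as a first-order differential operator with smooth coefficients, is continuous on it—both of which are standard. A secondary point is that this equivalence genuinely uses $s\neq 0$ only through Corollary \ref{coroghcte}; without it, finiteness of $\mathcal{Z}_L$ rather than its emptiness is the correct hypoellipticity condition, and automorphism would fail whenever $\mathcal{Z}_L$ is nonempty but finite, which is why the hypothesis $s\neq 0$ must be retained in the statement.
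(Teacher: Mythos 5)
Your proposal is correct and follows essentially the same route as the paper: both directions rest on Proposition \ref{propctegs}, Proposition \ref{propghcte} together with Corollary \ref{coroghcte}, injectivity via the nonvanishing of $\sigma_L$ on Fourier coefficients, and surjectivity via $(\ker \Lt)^0\cap C^\infty(\G)=C^\infty(\G)$ plus global solvability. The only differences are that you spell out the continuity of $L^{-1}$ via the open mapping theorem and give the converse direction (automorphism $\Rightarrow$ hypoelliptic, using surjectivity to get (DC) and injectivity to get $\mathcal{Z}_L=\emptyset$) in full, where the paper disposes of it in one terse sentence.
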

\begin{proof}
	First, as $L$ is a differential operator, it is clear that $L(C^\infty(\G))\subset C^\infty(\G)$.
	If $L$ is an automorphism, it is evident that it is globally hypoelliptic by injectivity. Conversely, suppose that $L$ is globally hypoelliptic. According to Corollary \ref{coroghcte}, we know that $\mathcal{Z}_L$ is empty. Therefore, if $Lu=Lv$, it follows that $\widehat{Lu}(\tau,\xi,\ell)_{\alpha\beta}=\widehat{Lv}_(\tau,\xi,\ell)_{\alpha\beta}$, which can be expressed as
	$$
	\sigma_L(\tau,\xi,\alpha) \widehat{u}(\tau,\xi,\ell)_{\alpha\beta} = \sigma_L(\tau,\xi,\alpha)\widehat{v}(\tau,\xi,\alpha)_{\alpha\beta}.
	$$
	
	This implies $\widehat{u}(\tau,\xi,\ell)_{\alpha\beta}=\widehat{v}(\tau,\xi,\ell)_{\alpha\beta}$ for every coefficient. By applying Plancherel's Theorem, we obtain $u=v$, demonstrating the injectivity of $L$.
	
	Furthermore, as in the proof of Proposition \ref{propctegs}, Lemma \ref{lemmaanulker} implies that 
	$$(\ker \Lt)^0\cap C^\infty(\G)= C^{\infty}(\G).$$ 
	
	By Proposition \ref{propctegs}, we conclude that $L$ is globally solvable. Hence, it is also surjective, leading to the conclusion that $L$ is an automorphism.
\end{proof}

In the case where $s=0$, it should be noted that the operator $L$ is defined only on the torus. Consequently, the set $\mathcal{Z}_L$ may not be empty, and Corollary 3.2 does not hold in this scenario. For further details, refer to \cite{BDGK2015_jpdo,BDG2017_jfaa}.

Now, we will showcase some examples demonstrating the applications of our results, including the recovery of known results from the literature. The torus case, extensively studied in \cite{BDG2017_jfaa}, provides the first example.

%%% ----------------------------------------------------------------------
%\subsection{Examples} 
%%% ----------------------------------------------------------------------

\begin{exemp}\label{exemptoruscte}
{\em
If $s = 0$, we have $L = \partial_t + \sum_{j=1}c_j\partial_{x_j} + q$. According to Proposition \ref{propctegs}, if $c = a + ib$, where $a,b\in \mathbb{Q}^{r}$, then $L$ is globally solvable. 

To prove this result, we need to show that the the Diophantine condition \eqref{DC_condition_const_coef_section} is satisfied. Observe that if $\sigma_L(\tau,\xi) = i(\tau + \langle c,\xi\rangle - iq)\neq 0$, then either $\Im(\sigma_L(\tau,\xi))\neq 0$ or $ \Re(\sigma_L(\tau,\xi))\neq 0.$

Let us begin by analyzing the case where $\Re(\sigma_L(\tau,\xi))\neq 0$. For each $j\in {1,2,\ldots,r}$, let us write $b_j={b_{j1}}/{b_{j2}}$ with $b_{j1}, b_{j2}\in\mathbb{Z}$ and $b_{j2}>0$.
\begin{enumerate}[{\it i.}]
	\item If $\Re(q)={m}/{n}$, with $m,n \in \mathbb{Z}$ and $n>0$ we have
	\begin{align*}
		|\sigma_L(\tau,\xi)|&\geq   |\Re(\sigma_L(\tau,\xi))|  = |b_1\xi_1 + \dots + b_r\xi_r -\Re(q)|\\
		&= \frac{1}{nb_{12}\dots b_{r2}}\big|(nb_{22}\dots b_{r2})b_{11}\xi_1+\dots + (nb_{12}\dots b_{(r-1)2})b_{r1}\xi_r - (b_{12}\dots b_{r2})m\big|\\
		&\geq \frac{1}{nb_{12}\dots b_{r2}}>0.
	\end{align*}
	This inequality holds because all numbers within the last modulus are integers, and we already established that $|\Re(\sigma_L(\tau,\xi))|>0$.\\

	\item If $\Re(q)\not\in\mathbb{Q}$, then $\lambda\Re(q)\not\in\Z$ for all $\lambda\in\Z\backslash\{0\}$, therefore
	\begin{align*}
		|\sigma_L(\tau,\xi)|&\geq   |\Re(\sigma_L(\tau,\xi))| = |b_1\xi_1+\dots+b_r\xi_r-\Re(q)|\\
		&= \frac{1}{b_{12}\dots b_{r2}}|(b_{11}b_{22}\dots b_{r2})\xi_1+\dots+ (b_{r1}b_{12}\dots b_{(r-1)2})\xi_r -(b_{12}\dots b_{r2})\Re(q)|\\
		&\geq \frac{\mbox{dist}\!\left((b_{12}\dots b_{r2})\Re(q),\Z\right)}{b_{12}\dots b_{r2}}>0.
	\end{align*}
\end{enumerate}

Similarly, we can apply the same type of estimate when $\Im(\sigma_L(\tau,\xi))\neq 0$ by utilizing the coefficients $a_j$ and $\Im(q)$. Consequently, we can establish the existence of $\varepsilon>0$ such that $|\sigma_L(\tau,\xi)|\geq \varepsilon$ whenever $\sigma_L(\tau,\xi)\neq 0$, thereby satisfying the condition (DC) trivially.

Now, if we assume that $(a,b)\not\in\mathbb{Q}^r\times \mathbb{Q}^r$, the satisfaction of condition (DC) by $L$ will depend on the type of approximation of the coefficients $a_j$ and $b_k$ by rational numbers.

We say that a number $\mu\in\mathbb{R}\backslash\mathbb{Q}$ is an irrational Liouville number if there exists a sequence $(p_n,j_n)\in\mathbb{Z}\times\N$ such that $j_n\to\infty$ and
$|\mu-p_n/j_n|<(j_n)^{-n}.$
In other words, any irrational Liouville number can be approximated rapidly by a sequence of rational numbers.

For instance, if any of the $a_{j_1}$ values is an irrational non-Liouville number, where $1\leq j_1\leq r$, and ${a_j}\in\mathbb{Q}$ for all $j\neq j_1$ with $1\leq j\leq r$, as well as ${\Im(q)}\in\mathbb{Q}$, then the operator $L$ also satisfies condition (DC) and, therefore, is globally solvable. On the contrary, if any of the $a_{j_1}$ values is a Liouville number, $b_0=\0$, and $\Im(q)\in\mathbb{Z}$, then $L$ is not globally solvable.
}
\end{exemp}

\begin{exemp}\label{exempliouvvect}
	{\em
Consider the operator $L = \partial_t + c\partial_{x} + dD_{3} + q$ defined on $\mathbb{T}^2\times \mathbb{S}^3$, where $c,d,q \in \mathbb{C}$. Similarly to the previous example, if $c,d \in \mathbb{Q}+i\mathbb{Q}$, then $L$ is globally solvable.

However, when the coefficients $c$ and $d$ are not in $\mathbb{Q}+i\mathbb{Q}$, the question of whether or not $L$ satisfies condition (DC) is determined by the extent to which these coefficients can be approximated by rational numbers.

We say that $(\alpha,\beta)\in \R^2\backslash\mathbb{Q}^2$ is a Liouville vector if there is a constant $C>0$ and a sequence $(p_n,q_n,j_n)\in\Z^2\times\N$ such that $j_n\geq 2$ and     
$$
   |\alpha-p_n/j_n|+|\beta-q_n/j_n|< C(j_n)^{-n}.
$$ 

Let us carefully analyze the following situation: $\Im(q), e \in \mathbb{Z}$, $(a, \frac{b}{f})$ is a Liouville vector, and $\frac{\Re(q)}{f} \in \mathbb{Z}$. We will now prove that, under these assumptions, the operator $L$ does not satisfy condition (DC) and therefore is not globally solvable.

Since $(a, \frac{b}{f})$ is a Liouville vector, there exist sequences of integers $(p_n)_n$ and $(q_n)_n$, an increasing sequence of natural numbers $(j_n)_n$, and a constant $C > 0$ such that the following inequality holds:
    $$
       \left|a-\frac{p_n}{j_n}\right|+\left|\frac{b}{f}-\frac{q_n}{j_n}\right|< \frac{C}{(j_n)^n}
    $$

Let us define $\ell_n = \left(\Re(q)/f - q_n\right) \in \mathbb{Z}$ and $\tau_n = -\left(p_n + e\ell_n - \Im(q)\right) \in \mathbb{Z}$, for $n \in \mathbb{N}$. We can now rewrite the left-hand side of the previous inequality as follows:
\begin{align*}
	\left|a-\frac{p_n}{j_n}\right|+\left|\frac{b}{f}-\frac{q_n}{j_n}\right|
	& = \frac{1}{j_n}\left|aj_n+\tau_n+ e\ell_n+ \Im(q)\right| +\frac{1}{j_n|f|}\left|bj_n+f\ell_n-\Re(q)\right|.
\end{align*}

Thus, for every $n\in \mathbb{N}$ we have
    \begin{align*}
         |\sigma_L (\tau_n,j_n,\ell_n)|
        &\leq |\tau_n+aj_n+e\ell_n+\Im(q)|+ |f| \frac{1}{|f|} |bj_n+f\ell_n-\Re(q)|\\
        &\leq  (1+|f|) \left(|aj_n-\tau_n+e\ell_n+\Im(q)|+ \frac{1}{|f|} |bj_n+f\ell_n-\Re(q)|\right)\\
        & = (1+|f|)  j_n \left( 	\left|a - \frac{p_n}{j_n} \right| + \left| \frac{b}{f} - \frac{q_n}{j_n} \right| \right) \\
        & <(1+|f|)  j_n \frac{C}{(j_n)^n} =  C'(j_n)^{-n+1}.
    \end{align*}

This shows that inequality \eqref{DC_condition_const_coef_section} cannot be satisfied and $L$ is not globally solvable.
}
\end{exemp}

\begin{exemp}\label{exemphypocte}
\emph{
If $q = 0$, then $L$ is not globally hypoelliptic. Indeed, in this case $\sigma_L(\tau,\xi,\alpha) = i(\tau + \langle c,\xi\rangle + \langle d,\alpha\rangle) = 0$ for $(\tau,\xi,\alpha) = (0,\0,\0)$, therefore $\mathcal{Z}_L$ is nonempty and $L$ is not globally hypoelliptic, see Corollary \ref{coroghcte}.
}
\end{exemp}

\begin{exemp}
\emph{
	In the case where $r = s = 1$, consider the scenario where $b/f$ is an irrational  non-Liouville number, $\Re(q) \in \mathbb{Q}$, with $f\in \mathbb{Q}$ chosen such that $mf \neq \Re(q)$ for all $m \in \frac{1}{2}\mathbb{Z}$. In other words, we have $\Re(q) = \frac{q_n}{q_d}$ and $f = \frac{f_n}{f_d}$, and $\frac{q_nf_d}{q_df_n} \not\in \frac{1}{2}\mathbb{Z}$. Under these conditions, the operator $L$ is globally hypoelliptic. A specific example that satisfies these conditions is given by
	$L=\partial_t+\sqrt{2}i\partial_{x}+iD_3+\frac{1}{4}.$
}
\end{exemp}

%%% ----------------------------------------------------------------------
\section{Variable Coefficient Operators} 
%%% ----------------------------------------------------------------------

In this section, we will present the proof of the two main results of this work concerning the global solvability and global hypoellipticity of operators in the form:
\begin{equation}\label{eqL}
	L = \partial_t+\sum_{j=1}^rc_j(t)\partial_{x_j}+\sum_{k=1}^sd_k(t)D_{3,k}+q,
\end{equation}
where $c_j(t)=a_j(t)+ib_j(t)$, $d_k(t) = e_k(t)+if_k(t)$ are smooth functions on $\T^1$, and $q\in\mathbb{C}$.

These results were previously announced in the introduction of this article as Theorem \ref{teogsmain} and Theorem \ref{teoghmain}. The proof of these results is divided into several propositions and lemmas, which collectively establish the main findings.  The notations that follow, and which will be extensively used in this section, were introduced mainly on pages \pageref{main_operator} and \pageref{teogsmain} for reference.

We will begin by showing that we can assume the real part of the coefficients of the operator $L$ to be constant. This means that  the global properties of $L$ depend only on the average value of the real part of its coefficients.

\begin{prop}\label{proprealpartcte}
   The operator $L$ is globally solvable (or globally hypoelliptic) if and only if the operator $\tilde{L}$ is globally solvable (or globally hypoelliptic), where
    \begin{equation}\label{eqL'}
        \tilde{L} = \partial_t+\sum_{j=1}^r(a_{j0}+ib_j(t))\partial_{x_j}+\sum_{k=1}^s(e_{k0}+if_{k}(t))D_{3,k}+q.
    \end{equation}
\end{prop}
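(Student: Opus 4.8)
The plan is to conjugate $L$ by a suitable automorphism of $C^\infty(\G)$ and $\mathscr{D}'(\G)$ built from the primitives of the real parts of the coefficients. Write $A_j(t) = \int_0^t (a_j(s) - a_{j0})\,ds$ and $E_k(t) = \int_0^t (e_k(s) - e_{k0})\,ds$; these are smooth $2\pi$-periodic functions on $\T^1$ because $a_j - a_{j0}$ and $e_k - e_{k0}$ have zero average. Since the operators $\partial_{x_j}$ and $D_{3,k}$ commute with each other and with multiplication by functions of $t$, and act diagonally in the Fourier variables $\xi_j$ and $\alpha_k$, the natural candidate is the map $\Psi$ defined on the partial Fourier side by
\begin{equation*}
    \widehat{\Psi u}(t,\xi,\ell)_{\alpha\beta} = \exp\!\Big(i\sum_{j=1}^r A_j(t)\xi_j + i\sum_{k=1}^s E_k(t)\alpha_k\Big)\,\widehat{u}(t,\xi,\ell)_{\alpha\beta}.
\end{equation*}
Because $A_j, E_k$ are bounded with all derivatives bounded, Proposition \ref{lemmadecaysmoothpartial} shows that $\Psi$ is a continuous bijection of $C^\infty(\G)$ onto itself and of $\mathscr{D}'(\G)$ onto itself, with inverse given by the opposite sign in the exponent.

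The key computation is then that $\Psi^{-1} \tilde{L} \Psi = L$, or equivalently $\tilde L \Psi = \Psi L$. On the partial Fourier side, $L$ acts as $i(\partial_t + \langle c(t),\xi\rangle + \langle d(t),\alpha\rangle - iq)$ and $\tilde L$ acts as $i(\partial_t + \langle a_0 + ib(t),\xi\rangle + \langle e_0 + if(t),\alpha\rangle - iq)$. Applying $\partial_t$ to the exponential factor produces exactly the term $i\big(\sum_j (a_j(t)-a_{j0})\xi_j + \sum_k (e_k(t)-e_{k0})\alpha_k\big)$ times the exponential; combining this with the multiplication operator shows that conjugating $\partial_t + \langle c(t),\xi\rangle + \langle d(t),\alpha\rangle$ by the exponential replaces $a_j(t)$ by $a_{j0}$ and $e_k(t)$ by $e_{k0}$ while leaving $b_j(t)$, $f_k(t)$ and $q$ untouched. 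This is a short, explicit product-rule calculation, carried out coefficient by coefficient in $(\xi,\ell,\alpha,\beta)$ and then assembled back via Plancherel.

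Finally I would record the consequences: since $\Psi$ is an isomorphism of $C^\infty(\G)$ and of $\mathscr{D}'(\G)$ intertwining $L$ and $\tilde L$, global hypoellipticity of one is equivalent to that of the other immediately. For global solvability one also needs that $\Psi$ (and its transpose) respect the orthogonality spaces $(\ker {}^t L)^0$ and $(\ker {}^t \tilde L)^0$; this follows because ${}^t(\Psi)$ is again an operator of the same exponential type (with $\xi,\alpha$ replaced by $-\xi,-\alpha$ and a sign change), so ${}^t L$ and ${}^t\tilde L$ are likewise conjugate, and $f \in (\ker {}^t L)^0 \iff \Psi^{\mp} f \in (\ker {}^t\tilde L)^0$ up to the appropriate transpose. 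Then $Lu = f$ has a smooth solution iff $\tilde L(\Psi u) = \Psi f$ does.

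The main obstacle is purely bookkeeping: one must be careful that the exponential factor is genuinely well-defined and smooth on $\T^1$ — this is exactly where the hypothesis that we subtract the averages $a_{j0}, e_{k0}$ (rather than working with $c_j(t), d_k(t)$ directly) is essential, since otherwise the primitive would not be periodic — and one must track the transpose carefully so that the solvability equivalence, which involves the annihilator of $\ker {}^tL$, goes through cleanly. No single step is deep; the content is in choosing $\Psi$ correctly and verifying the intertwining identity.
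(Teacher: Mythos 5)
Your proposal is correct and follows essentially the same route as the paper: the same conjugating automorphism $\Psi$ built from the zero-average primitives $A_j$, $E_k$ acting on partial Fourier coefficients, the same intertwining computation, the same use of Proposition \ref{lemmadecaysmoothpartial} to see that $\Psi$ preserves $C^\infty(\G)$ and $\mathscr{D}'(\G)$, and the same transpose/annihilator argument (via the pairing of Lemma \ref{lemmaformula}) to transfer global solvability. The only difference is a sign convention (your $\Psi$ is the paper's $\Psi^{-1}$), which is immaterial.
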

\begin{proof}
	We will construct an automorphism $\Psi:C^\infty(\G)\to C^\infty(\G)$ such that $L\circ \Psi = \Psi\circ \tilde{L}$. To define $\Psi$, we introduce the following functions:
	
    For each $j=1,\dots,r$ and $k=1,\dots,s$, let
    $$A_j(t) = \int_0^ta_j(w)dw-a_{j0}t, \mbox{  and \ }  E_k(t) = \int_0^te_k(w)dw-e_{k0}t.$$
    
    Firstly, note that $A_j$ and $E_k$ are well-defined real-valued functions on $\T^1$. Now, given $u\in \mathscr{D}'(\G)$, we define $\Psi u\in \mathscr{D}'(\G)$ in terms of its partial Fourier coefficients by
    \begin{align*}
        \widehat{\Psi u}(t,\xi,\ell)_{\alpha\beta}
        & = e^{-i\left[\langle A(t),\xi\rangle+\langle E(t),\alpha\rangle \right]}\widehat{u}(t,\xi,\ell)_{\alpha\beta}
    \end{align*}
    for every $t\in \T^1$, $\xi\in\Z^r,$ and $\ell\in\frac{1}{2}\N_0^s,$ with $-\ell\leq \alpha,\beta\leq \ell$, where 
    $$
    \langle A(t),\xi\rangle= \sum_{j=1}^r A_j(t)\xi_j, \mbox{ \ and \ } 
    \langle E(t),\alpha\rangle = \sum_{k=1}^s E_k(t)\alpha_k.
    $$
    Observe that the mapping $\Psi:\mathscr{D}'(\G)\to \mathscr{D}'(\G)$ is well-defined. Indeed,
    \begin{align*}
            |\langle\widehat{\Psi u}(\cdot,\xi,\ell)_{\alpha\beta},\psi\rangle_{\T^1}  |&=|\langle\widehat{u}(\cdot,\xi,\ell)_{\alpha\beta},e^{-i\left[\langle A(\cdot),\xi\rangle+\langle E(\cdot),\alpha\rangle \right]}\psi\rangle_{\T^1}|\\
            &\leq K(|\xi|+|\ell|)^{N}p_N(e^{-i\left[\langle A(\cdot),\xi\rangle+\langle E(\cdot),\alpha\rangle \right]}\psi)
    \end{align*}
    and for each $m\in\N_0$, there is $K_m>0$ such that: \begin{equation}\label{ineqauto}
        \partial_t^m(e^{-i\left[\langle A(t),\xi\rangle+\langle E(t),\alpha\rangle \right]})|\leq K_m(|\xi|+|\ell|)^m
    \end{equation}
    for every $t\in\T^1$, and $(\xi,\ell)\neq\0$. Combining these results, we obtain:
 \begin{align*}
            |\langle\widehat{\Psi u}(\cdot,\xi,\ell)_{\alpha\beta},\psi\rangle_{\T^1}|\leq K(|\xi|+|\ell|)^{2N}p_N(\psi)
    \end{align*}
     for some $K,N>0$, since $u\in \mathscr{D}'(\G)$.

  Inequality \eqref{ineqauto} and Proposition \ref{lemmadecaysmoothpartial} also provide the following result: $u\in C^\infty(\G)$ if and only if $\Psi u\in C^\infty(\G)$. Moreover, $\Psi$ is a bijective mapping, as we can define its inverse $\Psi^{-1}: \mathscr{D}'(\G)\to \mathscr{D}'(\G)$ as follows:
    \begin{align*}
        \widehat{\Psi^{-1} u}(t,\xi,\ell)_{\alpha\beta} = e^{+i\left[\langle A(t),\xi\rangle+\langle E(t),\alpha\rangle \right]}\widehat{u}(t,\xi,\ell)_{\alpha\beta}.
    \end{align*} 
Note that
    \begin{align*}
        \partial_t \widehat{\Psi u}(t,\xi,\ell)_{\alpha\beta} &= e^{-i\left[\langle A(t),\xi\rangle+\langle E(t),\alpha\rangle \right]}\partial_t \widehat{u}(t,\xi,\ell)_{\alpha\beta}\\
        &=-i\left[\langle a(t)-a_0,\xi\rangle+\langle e(t)-e_0,\alpha\rangle\right]e^{-i\left[\langle A(t),\xi\rangle+\langle E(t),\alpha\rangle \right]} \widehat{u}(t,\xi,\ell)_{\alpha\beta}\\
        &=\widehat{\Psi\partial_t u}(t,\xi,\ell)_{\alpha\beta}-i\left[\langle a(t)-a_0,\xi\rangle+\langle e(t)-e_0,\alpha\rangle\right]\widehat{\Psi u}(t,\xi,\ell)_{\alpha\beta}.
    \end{align*}

From this, we can conclude that $L\circ \Psi=\Psi\circ \tilde{L}$ as claimed. Similarly, $\Lt\circ \Psi = \Psi\circ \prescript{t}{}{\tilde{L}}$.
Therefore, we have:
     \begin{align*}
         u\in\ker\tilde{\Lt} \iff \tilde{\Lt}u\equiv0\iff \Psi(\tilde{\Lt}u)\equiv0\iff\Lt\Psi u\equiv0\iff\Psi u\in\ker\Lt 
     \end{align*}

 Hence, $\Psi^{-1}(\ker \Lt) = \ker \prescript{t}{}{\tilde{L}}$.  
     
     Next, we claim that $\Psi^{-1}((\ker \Lt)^0) = (\ker \prescript{t}{}{\tilde{L}})^0$. To prove this, let $g\in (\ker \Lt)^0$ and take any $v\in\ker \prescript{t}{}{\tilde{L}}$. From what we have just established, we know that there exists $u\in\ker \Lt$ such that $v=\Psi^{-1}u$. By applying Lemma \ref{lemmaformula} to $u$, we obtain:     
      \begin{align*}
        \langle \Psi^{-1}g,\Psi^{-1}u\rangle = & (2\pi)^r\sum_{\xi\in\Z^r}\sum_{\ell\in\frac{1}{2}\N_0^s}d_\ell \!\!\! \sum_{-\ell\leq  \alpha,\beta \leq \ell} \int_0^{2\pi}e^{+i\left[\langle A(t),\xi\rangle+\langle E(t),\alpha\rangle \right]}\times\\
        & \times\widehat{g}(t,\xi,\ell)_{\alpha\beta}e^{+i\left[\langle A(t),-\xi\rangle+\langle E(t),-\alpha\rangle \right]}\widehat{u}(t,-\xi,\ell)_{(-\alpha)(-\beta)}(-1)^{\Sigma(\beta_j-\alpha_j)} \mathop{dt}\\
        =&(2\pi)^r\sum_{\xi\in\Z^r}\sum_{\ell\in\frac{1}{2}\N_0^s} d_\ell \!\!\! \sum_{-\ell\leq \alpha,\beta\leq \ell} \int_0^{2\pi} \widehat{g} (t,\xi,\ell)_{\alpha\beta} \widehat{u}(t,-\xi,\ell)_{(-\alpha)(-\beta)}(-1)^{\Sigma(\beta_j-\alpha_j)}\mathop{dt}\\
        =&\langle g,u\rangle \, = \, 0.
    \end{align*}
    Hence, we have shown that $\Psi^{-1}g\in(\ker \prescript{t}{}{\tilde{L}})^0$. This implies that $\Psi^{-1}((\ker \Lt)^0) \subseteq (\ker \prescript{t}{}{\tilde{L}})^0$. Furthermore, due to the bijectivity of $\Psi$, we can conclude that $\Psi^{-1}((\ker \Lt)^0) = (\ker \prescript{t}{}{\tilde{L}})^0$. Consequently, the global solvability of $L$ is equivalent to the global solvability of $\tilde{L}$.
    
	Similarly, $\tilde{L}u=g\in C^\infty(\G)$ if and only if $ \Psi g=\Psi \tilde{L}u = L\Psi u\in C^\infty(\G)$. Therefore, $L$ is globally hypoelliptic if and only if $\tilde{L}$ is globally hypoelliptic.
\end{proof}

%\begin{obs}\label{obsreal}
%Based on Proposition \ref{proprealpartcte}, we can simplify the forthcoming proofs by considering, without loss of generality, that $a_j(t) = a_{j0}\in\R$ and $e_k(t) = e_{k0}\in\R$ for all $j=1,\dots,r$, $k=1,\dots,s$, and $t\in\T^1$.
%\end{obs}

\begin{corollary}
	Consider the following operators defined on $\G$:
	$$
	L = \partial_t+\sum a_j(t)\partial_{x_j} + \sum e_k(t)D_{3,k} + q \mbox{ \ and \ } L_0 = \partial_t + \sum a_{j0} \partial_{x_j} + \sum e_{k0}D_{3,k} + q,
	$$
	where $a_j, e_k \in C^\infty(\T^1)$ are real-valued functions, $a_{j0},e_{k0}\in\mathbb{R}$ are their average means, and $q\in\mathbb{C}$. Then
\begin{enumerate}[i.]
	\item $L$ is globally solvable $\iff$ $L_0$ is globally solvable;
	\item $L$ is globally hypoelliptic $\iff$ $L_0$ is globally hypoelliptic.
\end{enumerate}
\end{corollary}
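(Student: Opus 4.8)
The plan is to obtain this statement as an immediate specialization of Proposition \ref{proprealpartcte}, so that essentially nothing is left to prove beyond matching up notation. The operator $L = \partial_t + \sum_{j} a_j(t)\partial_{x_j} + \sum_{k} e_k(t)D_{3,k} + q$ considered here is exactly an operator of the form \eqref{eqL} all of whose coefficients happen to be real-valued; in the notation fixed before Proposition \ref{proprealpartcte} this amounts to taking $c_j(t) = a_j(t)$ and $d_k(t) = e_k(t)$, that is, $b_j \equiv 0$ for $1 \le j \le r$ and $f_k \equiv 0$ for $1 \le k \le s$.

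First I would note that, under this choice, the auxiliary operator $\tilde L$ of \eqref{eqL'} degenerates to
$$\tilde L = \partial_t + \sum_{j=1}^r a_{j0}\,\partial_{x_j} + \sum_{k=1}^s e_{k0}\,D_{3,k} + q,$$
which is precisely the operator $L_0$ appearing in the corollary. Proposition \ref{proprealpartcte} states that $L$ is globally solvable (respectively, globally hypoelliptic) if and only if $\tilde L$ is; substituting $\tilde L = L_0$ yields both equivalences at once.

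The only thing that genuinely needs a remark is that the proof of Proposition \ref{proprealpartcte} applies verbatim in this situation: the conjugating automorphism $\Psi$ of $C^\infty(\G)$ (extending to $\mathscr{D}'(\G)$) is built from the real-valued primitives $A_j(t) = \int_0^t a_j(w)\,dw - a_{j0}t$ and $E_k(t) = \int_0^t e_k(w)\,dw - e_{k0}t$, and it satisfies $L \circ \Psi = \Psi \circ L_0$ as well as $\Lt \circ \Psi = \Psi \circ \prescript{t}{}{L_0}$, together with the stated correspondence of the annihilators $(\ker \Lt)^0$ and $(\ker \prescript{t}{}{L_0})^0$. These are exactly the identities established there, now read with $b \equiv 0$ and $f \equiv 0$, so there is no real obstacle; I would simply record the corollary's proof as: apply Proposition \ref{proprealpartcte} with $b_j \equiv 0$ and $f_k \equiv 0$, observing that then $\tilde L$ coincides with $L_0$.
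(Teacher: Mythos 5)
Your proposal is correct and is exactly the paper's (implicit) argument: the corollary is stated immediately after Proposition \ref{proprealpartcte} precisely because taking $b_j \equiv 0$ and $f_k \equiv 0$ makes the auxiliary operator $\tilde{L}$ of \eqref{eqL'} coincide with $L_0$, so both equivalences follow at once. Nothing is missing.
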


	Note that the this establishes the first part of Theorems \ref{teogsmain} and \ref{teoghmain}. We now turn our attention to the case where some $b_j$ or $f_k$ coefficients are non-zero. In this situation, the global properties of the operator $L$ are closely related to the behavior of linear combinations involving the imaginary parts of this coefficients. Specifically, we consider whether these linear combinations change sign or not.

\begin{definition}\label{CScondition_definition}
    We will say the operator $L$, defined in \eqref{eqL},  satisfies condition (CS) if
    there exist non-zero vectors $\tilde{\xi}\in\Z^r$ and $\tilde{\alpha} \in \frac{1}{2}\Z^s$ such that the expression
    $$\langle c_0,\tilde{\xi}\rangle+\langle d_0,\tilde{\alpha}\rangle-iq\not\in\Z$$
    and the following real-valued smooth function changes sign
    $$
    \theta(t) \doteq \langle b(t),\tilde{\xi}\rangle+\langle f(t),\tilde{\alpha}\rangle, t\in\T^1.
    $$
\end{definition}

\begin{theorem}\label{teocs}
If there exists at least one $b_j \not\equiv 0$ or $f_k \not\equiv 0$, then the operator $L$ is globally solvable if and only if it does not satisfy condition (CS) and the following conditions hold:
 \begin{enumerate}
     \item For every $(\xi,\alpha,m) \in \Z^r \times \frac{1}{2}\Z^s \times \R$, the sublevel set  
     $$
     \Omega_m^{\xi,\alpha} = \left\{t\in\T^1;\int_0^t\langle b(\tau),\xi\rangle+\langle f(\tau),\alpha\rangle d\tau<m\right\}
     $$
     is connected.
     \item If $(b_0,f_0) \neq \0$, then the operator $L_0$ is globally solvable.
 \end{enumerate}
Furthermore, the operator $L$ is globally hypoelliptic if and only if it does not satisfy condition (CS) and $L_0$ is globally hypoelliptic.
\end{theorem}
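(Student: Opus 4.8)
\textbf{Proof plan for Theorem \ref{teocs}.}

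The plan is to reduce everything to the study of the ordinary differential equations satisfied by the partial Fourier coefficients. By Proposition \ref{proprealpartcte} we may assume $a_j \equiv a_{j0}$ and $e_k \equiv e_{k0}$, so that for each fixed $(\xi,\ell)$ the operator $L$ acts on $\widehat{u}(t,\xi,\ell)_{\alpha\beta}$ as the one-dimensional operator
$$
L_{\xi,\alpha} = \partial_t + i\big(\langle c_0,\xi\rangle + \langle d_0,\alpha\rangle - iq\big) - \big(\langle b(t)-b_0,\xi\rangle + \langle f(t)-f_0,\alpha\rangle\big),
$$
whose coefficient has constant part $\lambda_{\xi,\alpha} := i(\langle c_0,\xi\rangle + \langle d_0,\alpha\rangle) + q$ and zero-mean variable part. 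The solvability (resp. hypoellipticity) of such a normalized first-order ODE on $\T^1$ is governed by classical criteria going back to Greenfield--Wallach and Bergamasco: one compares $\exp(\int_0^t (\langle b,\xi\rangle+\langle f,\alpha\rangle))$ at the endpoints, and the relevant dichotomy is whether $\theta_{\xi,\alpha}(t) = \langle b(t),\xi\rangle + \langle f(t),\alpha\rangle$ changes sign. I would first isolate, in a lemma, the solvability statement for $L_{\xi,\alpha}$ together with the uniform (in $(\xi,\ell)$) estimates needed to pass from coefficientwise solutions back to a genuine distribution or smooth function via Propositions \ref{lemmadecaysmooth}--\ref{lemmadecaysmoothpartial}; these technical estimates are presumably the content of the Appendix A referred to in the introduction.

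\emph{Necessity.} Suppose $L$ satisfies condition (CS), witnessed by $\tilde\xi,\tilde\alpha$ with $\langle c_0,\tilde\xi\rangle+\langle d_0,\tilde\alpha\rangle - iq \notin \Z$ and $\theta(t) = \langle b(t),\tilde\xi\rangle+\langle f(t),\tilde\alpha\rangle$ changing sign. Following Hörmander's method as packaged in Lemma \ref{Hormander_Method}, I would build a sequence of test functions concentrated along the rays $(n\tilde\xi, n\tilde\alpha)$ (after fixing a compatible $\ell(n)$ with $\ell(n)-\tilde\alpha \cdot n \in \N_0^s$), exploiting the sign change of $\theta$ to produce exponential growth $e^{n\int \theta}$ in one region that is not compensated, and thereby violate the a priori inequality; the hypothesis $\langle c_0,\tilde\xi\rangle + \langle d_0,\tilde\alpha\rangle - iq \notin \Z$ guarantees the corresponding frequencies avoid the kernel of $\Lt$ so that the relevant $\widehat f$ is an admissible datum. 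For the failure of solvability when some $\Omega_m^{\xi,\alpha}$ is disconnected, I would reproduce the Bergamasco--Dattori--Gramchev--Kirilov argument: disconnectedness lets one choose an $f$ smooth in $t$ whose primitive forces the natural solution of $L_{\xi,\alpha}v = \widehat f(\cdot,\xi,\ell)_{\alpha\beta}$ to lose periodicity unless one adds a constant that blows up super-polynomially in $(\xi,\alpha)$. Finally, if $(b_0,f_0)\neq\0$ and $L_0$ is not globally solvable, conjugating by the same exponential multiplier $\Psi$ as in Proposition \ref{proprealpartcte} reduces to the constant-coefficient obstruction of Proposition \ref{propctegs}. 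The global hypoellipticity necessity is analogous, now using a distribution $u$ whose coefficients grow polynomially while $Lu$ is smooth.

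\emph{Sufficiency.} Assume (CS) fails, all $\Omega_m^{\xi,\alpha}$ are connected, and (if $(b_0,f_0)\neq\0$) $L_0$ is globally solvable; given $f \in C^\infty(\G)\cap(\ker\Lt)^0$, I solve $L_{\xi,\alpha} v_{\xi,\ell,\alpha,\beta} = \widehat f(\cdot,\xi,\ell)_{\alpha\beta}$ for each index. The failure of (CS) means: whenever $\theta_{\xi,\alpha}$ changes sign we must have $\langle c_0,\xi\rangle + \langle d_0,\alpha\rangle - iq \in \Z$, which together with connectedness of the sublevel sets yields the classical compatibility condition that makes the ODE solvable with a periodic solution; when $\theta_{\xi,\alpha}$ does not change sign (the generic case) the standard integrating-factor formula produces the solution directly. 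The crucial point is that these solutions obey bounds $|\partial_t^m v_{\xi,\ell,\alpha,\beta}| \leq C_{mN}(|\xi|+|\ell|)^{-N}$ uniformly, so that Proposition \ref{lemmadecaysmoothpartial} assembles them into $u \in C^\infty(\G)$ with $Lu = f$; for the $(b_0,f_0)=0$ subcase one also needs the third clause of Theorem \ref{teogsmain}(iii) — the Diophantine control of $\langle c_0,\xi\rangle+\langle d_0,\alpha\rangle$ together with $q\in i\Z$, $(a_0,e_0)\in\Z^r\times 2\Z^s$ — to handle the finitely many resonant frequencies where $\lambda_{\xi,\alpha}$ is small. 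For hypoellipticity, the same ODE analysis shows that smoothness of $Lu$ forces each $\widehat u(\cdot,\xi,\ell)_{\alpha\beta}$ to decay rapidly, using global hypoellipticity of $L_0$ to control the zero-mode direction.

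\textbf{Main obstacle.} The genuinely delicate step is the uniformity of the estimates across the two-parameter family of frequencies $(\xi,\ell)$ with the extra internal index $\alpha$ ranging over a box of size $|\ell|$: unlike the torus case, for a fixed $\ell$ there are $\sim |\ell|^s$ choices of $\alpha$, and one must ensure that the exceptional (near-resonant, or sign-changing) set of $(\xi,\alpha)$ is controlled well enough that the reconstructed function still satisfies the Fourier decay characterization. Establishing that the connectedness hypothesis on \emph{all} $\Omega_m^{\xi,\alpha}$ is exactly what rules out the super-polynomial blow-up of the periodizing constant — and that (CS)'s failure is the precise dividing line — is where the heart of the argument lies; I expect this to be handled by the auxiliary lemmas deferred to Appendix A.
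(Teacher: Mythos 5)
Your plan breaks down at the necessity of condition (2). You propose that when $(b_0,f_0)\neq\0$ and $L_0$ is not globally solvable, one can ``conjugate by the same exponential multiplier $\Psi$ as in Proposition~\ref{proprealpartcte}'' and invoke the constant-coefficient obstruction of Proposition~\ref{propctegs}. That reduction does not exist: $\Psi$ works only because its multiplier $e^{-i[\langle A(t),\xi\rangle+\langle E(t),\alpha\rangle]}$ has modulus one, which is what lets it preserve $\mathscr{D}'(\G)$ and $C^\infty(\G)$ through the characterizations of Propositions~\ref{lemmadecaysmooth}--\ref{lemmadecaysmoothpartial}. The analogous multiplier for the variable imaginary parts would be $e^{\langle B(t),\xi\rangle+\langle F(t),\alpha\rangle}$ with $B'=b-b_0$, $F'=f-f_0$, whose size is of order $e^{c(|\xi|+|\ell|)}$; it destroys the polynomial growth/decay characterizations, so the variable imaginary part cannot be conjugated away. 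This is exactly why the paper needs Proposition~\ref{propl0notgsgh}: after using Corollary~\ref{corocoefficients} to reduce to $\dim\spn\{b,f\}=1$ with no sign change, the failure of \eqref{DC_condition_const_coef_section} is translated by Lemma~\ref{lemmadiofequi} into smallness of $1-e^{\pm 2\pi i\omega_{\xi\alpha}}$, explicit data $g\in(\ker\Lt)^0$ are built on those frequencies, and the unique solutions of the resulting ODEs are bounded below (killing solvability) and above (producing a distribution solution, killing hypoellipticity). As written, your sketch also leaves the necessity of global hypoellipticity of $L_0$ unaddressed in the case where $L_0$ is solvable but $\mathcal{Z}_{L_0}$ is infinite, which the paper treats separately (Corollary after Proposition~\ref{propl0notgsgh}) by exhibiting an explicit $u\in\mathscr{D}'(\G)\setminus C^\infty(\G)$ with $Lu=0$.

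Two further places where your mechanism is not the one that works. For disconnected sublevel sets, the paper (and the Bergamasco--Dattori--Gramchev--Kirilov argument you cite) does use the Hörmander inequality of Lemma~\ref{Hormander_Method}, with $g_n,v_n$ manufactured from the bump functions of Lemma~\ref{lemmaf0v0} supported near different components; your description (``the natural solution loses periodicity unless a constant blows up super-polynomially'') cannot apply here, because in this case all relevant frequencies are resonant, $\langle c_0,\xi\rangle+\langle d_0,\alpha\rangle-iq\in\Z$, so there is no periodizing constant at all. Conversely, for the (CS) necessity the paper does \emph{not} use Lemma~\ref{Hormander_Method}: Proposition~\ref{propchangessign} constructs an explicit $g\in(\ker\Lt)^0$ along the ray $(k_n\tilde\xi,k_n\tilde\alpha)$ (the sequence $k_n$ being supplied by Lemma~\ref{sequence_kn_notinZ}) and bounds the \emph{unique} solution below by $(|\xi(n)|+|\ell(n)|)^{-1/2}$ via the Laplace-type estimate of Lemma~\ref{lemmaintegralineq}, the same construction giving the bounded coefficients needed for the hypoellipticity counterexample. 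Finally, in the sufficiency when $(b_0,f_0)=\0$ you misplace the roles of the hypotheses: the compatibility condition for the resonant ODEs comes from $g\in(\ker\Lt)^0$ (Lemma~\ref{lemmaanulimpliessolution}), not from connectedness; connectedness is what yields the uniform bound $|\widehat{u}(t,\xi,\ell)_{\alpha\beta}|\leq 2\pi\|\widehat{g}(\cdot,\xi,\ell)_{\alpha\beta}\|_\infty$, obtained by integrating from the point $t_{\xi\alpha}$ where the primitive of $\theta_{\xi,\alpha}$ is maximal along a path inside the superlevel set (Lemma~\ref{lemmasublevel}). In that case \emph{every} frequency is resonant and no Diophantine condition enters, contrary to your remark about ``finitely many resonant frequencies''; the Diophantine input via Lemma~\ref{lemmadiofequi} is needed only in the $(b_0,f_0)\neq\0$ case, to bound $(1-e^{\pm2\pi i\omega_{\xi\alpha}})^{-1}$ polynomially.
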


Later on, we will see that this theorem, combined with the previous result, establishes Theorems \ref{teogsmain} and \ref{teoghmain}. The proof of the present theorem will be developed in the following two subsections. However, before diving into the proof, we would like to present the following corollary.

\begin{corollary}
If $s>0$, $L$ is globally hypoelliptic if and only if $L$ is an automorphism of $C^\infty(\G)$.
\end{corollary}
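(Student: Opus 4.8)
The plan is to mirror the argument already given for the constant-coefficient corollary (the one deduced from Corollary \ref{coroghcte}), now using Theorem \ref{teocs} in place of Proposition \ref{propghcte}. Since $L$ is a differential operator, the inclusion $L(C^\infty(\G))\subseteq C^\infty(\G)$ is automatic, so the content of the statement is the equivalence between global hypoellipticity and bijectivity of $L$ acting on $C^\infty(\G)$. One direction is immediate: if $L$ is an automorphism of $C^\infty(\G)$, then in particular it is injective on $C^\infty(\G)$, and injectivity forces global hypoellipticity since $u\in\mathscr{D}'(\G)$ with $Lu\in C^\infty(\G)$ together with surjectivity produces $v\in C^\infty(\G)$ with $Lv=Lu$, whence $L(u-v)=0$ in $\mathscr{D}'(\G)$; one then needs that $L$ is injective as an operator on $\mathscr{D}'(\G)$ as well, or alternatively one invokes that global hypoellipticity is equivalent to $L$ being an automorphism of $C^\infty(\G)$ — so this direction should be spelled out carefully to avoid circularity.

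For the forward direction, assume $L$ is globally hypoelliptic. By Theorem \ref{teocs}, $L$ does not satisfy condition (CS) and $L_0$ is globally hypoelliptic; since $s>0$, apply Corollary \ref{coroghcte} to conclude that $L_0$ is globally solvable and $\mathcal{Z}_{L_0}=\emptyset$. The emptiness of $\mathcal{Z}_{L_0}$ is the crucial structural fact: it says $\sigma_{L_0}(\tau,\xi,\alpha)\neq 0$ for all admissible $(\tau,\xi,\ell)\neq\0$, and in fact (by the remark preceding Corollary \ref{coroghcte} about $(\tau,\xi,\ell+n)$) one also gets $\sigma_{L_0}$ never vanishes at all, including at $\0$ unless $q$ forces it — here $q\in i\R$ would be needed, but $\mathcal{Z}_{L_0}=\emptyset$ rules that out too. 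I would then show $(\ker\Lt)^0\cap C^\infty(\G)=C^\infty(\G)$: this should follow from Lemma \ref{lemmaanulker} together with the fact that, after the reduction in Proposition \ref{proprealpartcte} and the analysis in the proof of Theorem \ref{teocs}, the relevant obstruction to $f$ lying in the annihilator disappears when $\mathcal{Z}_{L_0}=\emptyset$ — concretely, $\ker\Lt$ should be trivial, so its annihilator is everything. Combined with global solvability of $L$ (the solvability half of Theorem \ref{teocs}, whose sublevel-set and $L_0$-solvability hypotheses are satisfied here because $L_0$ is even globally hypoelliptic hence globally solvable, and because global hypoellipticity of $L$ already implies its global solvability via the chain of implications in the theorem), this gives surjectivity of $L$ onto $C^\infty(\G)$.

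It remains to get injectivity on $C^\infty(\G)$. If $Lu=Lv$ with $u,v\in C^\infty(\G)$, then $L(u-v)=0$, so it suffices to show $\ker L\cap C^\infty(\G)=\{0\}$, and in fact $\ker L=\{0\}$ in $\mathscr{D}'(\G)$. Here I would pass to the reduced operator $\tilde L$ of Proposition \ref{proprealpartcte} (whose kernel is carried isomorphically to that of $L$ by the automorphism $\Psi$), and then use the partial Fourier coefficient ODE $i(\partial_t+\langle a_0+ib(t),\xi\rangle+\langle e_0+if(t),\alpha\rangle-iq)\widehat u(t,\xi,\ell)_{\alpha\beta}=0$; solving this first-order linear ODE on $\T^1$ shows the coefficient is either identically zero or a nonvanishing exponential, and the periodicity/closing condition around $\T^1$ together with $\mathcal{Z}_{L_0}=\emptyset$ (which controls precisely the average of the symbol, i.e. the monodromy) forces every coefficient to vanish, hence $u=0$. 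The main obstacle I anticipate is exactly this injectivity step: one must check that the non-vanishing of $\sigma_{L_0}$ (not merely off a finite set) translates into non-existence of nonzero periodic solutions of the transport ODE for \emph{every} $(\xi,\ell,\alpha)$, handling carefully the cases where the real part of the monodromy exponent vanishes but the imaginary part does not — this is where $\mathcal{Z}_{L_0}=\emptyset$ is used in full strength, and it is also where condition (CS) failing is implicitly needed to guarantee the relevant exponentials are genuinely non-periodic rather than merely the average being nonzero.
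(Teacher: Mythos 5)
Your proposal follows essentially the same route as the paper's proof: from global hypoellipticity you extract (via Theorems \ref{teogsmain}/\ref{teoghmain}) global solvability of $L$ and global hypoellipticity of $L_0$, hence $\mathcal{Z}_{L_0}=\varnothing$ by Corollary \ref{coroghcte}, so that $(\ker \Lt)^0\cap C^\infty(\G)=C^\infty(\G)$ gives surjectivity, while the non-integrality of $\langle c_0,\xi\rangle+\langle d_0,\alpha\rangle-iq$ for every $(\xi,\alpha)$ makes each partial Fourier coefficient ODE uniquely solvable and hence yields injectivity — exactly the paper's argument via Lemma \ref{lemma0Nimpliesanul} and Lemma \ref{lemmaodesol}. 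Two minor points: the annihilator step should cite Lemma \ref{lemma0Nimpliesanul} (valid for variable coefficients) rather than Lemma \ref{lemmaanulker}, and condition (CS) is not needed in the injectivity step, since the monodromy of the homogeneous coefficient ODE depends only on the averaged symbol, which is precisely what $\mathcal{Z}_{L_0}=\varnothing$ controls.
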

\begin{proof}
	If $L$ is an automorphism, then it is clear that $L$ is globally hypoelliptic due to its injectivity. On the other hand, suppose that $L$ is globally hypoelliptic. According to Theorems \ref{teogsmain} and \ref{teoghmain}, the operator $L$ is also globally solvable. Furthermore, since $L_0$ is globally hypoelliptic, it follows from Corollary \ref{coroghcte} and Lemma \ref{lemma0Nimpliesanul} that $(\ker \Lt)^0\cap C^\infty(\G)=C^\infty(\G)$. This implies that $L$ is surjective over the space of smooth functions.
	
	Now suppose that $Lu=g\in C^{\infty}(\G)$. This implies that $\widehat{Lu}(t,\xi,\ell)_{\alpha\beta} = \widehat{g}(t,\xi,\ell)_{\alpha\beta},$ 
	for every $t\in\T^1$, $\xi\in\Z^r$, $\ell\in\frac{1}{2}\N_0^s$, and $-\ell\leq \alpha,\beta\leq \ell$. Therefore, we obtain a differential equation on $\T^1$, as discussed in Remark \ref{obseqedo}. Remarkably, since $\langle c_0, \xi\rangle+\langle d_0,\alpha\rangle-iq\not\in\mathbb{Z}$ for all $(\xi,\alpha)\in\Z^r\times\frac{1}{2}\Z^s$, Lemma \ref{lemmaodesol} guarantees the existence of a unique solution $\widehat{u}(t,\xi,\ell)_{\alpha\beta}$ to this equation. Moreover, since this holds for every coefficient, the solution $\widehat{u}(t,\xi,\ell)_{\alpha\beta}$ uniquely determines the function $u$ that solves $Lu=g$. Since $L$ is globally hypoelliptic, it follows that $u\in C^\infty(\G)$. Therefore, $L$ is injective over the space of smooth functions, which implies that $L$ must be an automorphism.
\end{proof}

To facilitate the comprehension and organization of the proof of Theorem \ref{teocs}, we have divided it into several propositions and lemmas. Before delving into the technical details, we will introduce a set of useful lemmas. First, we establish a sufficient condition on the partial Fourier coefficients of a function $g$ that ensures its inclusion in $(\ker\Lt)^0$. This condition plays a crucial role in the subsequent construction of singular solutions.

To establish this result, we begin by recalling the operator:
$$
  L_0=\partial_t+\sum_{j=1}^rc_{j0}\partial_{x_j}+\sum_{k=1}^sd_{k0}D_{0,k}+q,
$$
which has the symbol symbol $\sigma_{L_0}(\tau,\xi,\alpha) = \tau+ \langle c_0,\xi\rangle + \langle d_0,\alpha\rangle-iq,$  where $(\tau,\xi) \in\mathbb{Z}^{r+1}$, $\ell \in \frac{1}{2}\N_0^s$, $-\ell\leq \alpha\leq \ell$, and $ \,\ell-\alpha\in\N_0^s$.  Additionally, we introduce the set:
$$ 
\mathcal{N}=\left\{(\xi,\alpha)\in\Z^r\times\frac{1}{2}\Z^s:  \sigma_{L_0}(0,\xi,\alpha)\in\mathbb{Z}\right\}.
$$ 

\begin{lemma}\label{lemma0Nimpliesanul}
	Let $g\in \mathscr{D}'(\G)$. If $\widehat{g}(t,\xi,\ell)_{\alpha\beta}\equiv0$ for all $(\xi,\alpha)\in \mathcal{N}$, then $g\in(\ker \Lt)^0$.
\end{lemma}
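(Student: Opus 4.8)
The plan is to show that $g$ annihilates $\ker\,{}^{t}L$ by working with partial Fourier coefficients and using the structure of the transpose operator ${}^{t}L$. First I would unwind what $v\in\ker\,{}^{t}L$ means at the level of partial Fourier coefficients: writing ${}^{t}L$ explicitly, the equation ${}^{t}Lv=0$ becomes, for each $(\xi,\ell,\alpha,\beta)$, a first-order linear ODE on $\T^{1}$ of the form $-\partial_t\widehat{v}(t,-\xi,\ell)_{(-\alpha)(-\beta)} + (\text{coefficient involving } -\xi,-\alpha)\,\widehat{v}(\dots) = 0$ (the precise signs following the symbol computation done earlier in the paper and the remark on the transpose). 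The key point is that a nontrivial periodic solution of such an ODE can exist only when the associated "average" exponent lies in the appropriate lattice — concretely, only when the relevant index pair belongs to $\mathcal{N}$, since otherwise $\langle c_0,\xi\rangle+\langle d_0,\alpha\rangle-iq\notin\Z$ forces the homogeneous periodic ODE to have only the zero solution (this is exactly the dichotomy recorded in the ODE lemmas referenced in the excerpt, e.g. Lemma \ref{lemmaodesol}). Hence $\widehat{v}(t,\xi,\ell)_{\alpha\beta}\equiv 0$ whenever $(\xi,\alpha)\notin\mathcal{N}$.

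Next I would pair $g$ with such a $v$ using the bilinear pairing formula of Lemma \ref{lemmaformula}, which expresses $\int_{\G} g\,v$ as a sum over $\xi\in\Z^r$, $\ell\in\tfrac12\N_0^s$, and $-\ell\le\alpha,\beta\le\ell$ of integrals over $\T^1$ of products $\widehat{g}(t,\xi,\ell)_{\alpha\beta}\,\widehat{v}(t,-\xi,\ell)_{(-\alpha)(-\beta)}$ (with the sign $(-1)^{\Sigma(\alpha_j-\beta_j)}$). In each term of this sum, either $(\xi,\alpha)\in\mathcal{N}$, in which case the hypothesis gives $\widehat{g}(t,\xi,\ell)_{\alpha\beta}\equiv 0$ and the term vanishes; or $(\xi,\alpha)\notin\mathcal{N}$, in which case, by the ODE dichotomy above applied to the index pair $(-\xi,-\alpha)$ — note $(\xi,\alpha)\in\mathcal{N}\iff(-\xi,-\alpha)\in\mathcal{N}$ since $\sigma_{L_0}(0,-\xi,-\alpha)=-\sigma_{L_0}(0,\xi,\alpha)+2q$ and one checks the membership condition is symmetric, or more directly because $\langle c_0,\xi\rangle+\langle d_0,\alpha\rangle-iq\in\Z\iff\langle c_0,-\xi\rangle+\langle d_0,-\alpha\rangle-iq\in\Z$ when $-iq$ and the lattice are arranged appropriately — we get $\widehat{v}(t,-\xi,\ell)_{(-\alpha)(-\beta)}\equiv 0$, so again the term vanishes. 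Therefore every term of the sum is zero, $\int_{\G}g\,v=0$, and since $v\in\ker\,{}^{t}L$ was arbitrary, $g\in(\ker\,{}^{t}L)^{0}$.

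I would be slightly careful about one subtlety: the exact condition under which the homogeneous periodic ODE associated with a given coefficient has a nonzero solution. The paper's set $\mathcal{N}$ is defined via $\sigma_{L_0}(0,\xi,\alpha)\in\Z$, i.e. $\langle c_0,\xi\rangle+\langle d_0,\alpha\rangle-iq\in\Z$; the relevant ODE on $\T^1$ for the coefficient $\widehat v(t,\xi,\ell)_{\alpha\beta}$ has integrating factor involving $\int_0^t(\langle c(\tau),\xi\rangle+\langle d(\tau),\alpha\rangle)d\tau$, and periodicity of a nonzero solution requires the net "winding" $\langle c_0,\xi\rangle+\langle d_0,\alpha\rangle-iq$ to be an integer — this is precisely the content of the ODE lemmas in Appendix A (and the $i$ and signs must be tracked via the explicit form of $\sigma_L$ and ${}^{t}L$ computed in Section 2). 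The main obstacle is thus not conceptual but bookkeeping: getting all the signs, complex factors $i$, and index reflections $(\xi,\alpha)\mapsto(-\xi,-\alpha)$ consistent between the symbol of ${}^{t}L$, the pairing formula of Lemma \ref{lemmaformula}, and the membership condition defining $\mathcal{N}$, so that the "either/or" argument closes with no leftover cases.
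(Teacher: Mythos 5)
Your overall strategy is the same as the paper's: reduce $\Lt v=0$ to a family of first-order periodic ODEs for the partial Fourier coefficients of $v$, use Lemma \ref{lemmaodesol} to force those coefficients to vanish at the indices paired against $\widehat{g}$ outside $\mathcal{N}$, and conclude with the pairing formula of Lemma \ref{lemmaformula}. The gap is in the one piece of bookkeeping you deferred, and the way you propose to resolve it is wrong. First, $\sigma_{L_0}(0,-\xi,-\alpha)=-\sigma_{L_0}(0,\xi,\alpha)-2iq$, not $-\sigma_{L_0}(0,\xi,\alpha)+2q$; consequently the equivalence $(\xi,\alpha)\in\mathcal{N}\iff(-\xi,-\alpha)\in\mathcal{N}$ is false in general (if both memberships held for some pair, then $2iq\in\Z$; e.g.\ $r=1$, $s=0$, $c_0=\tfrac14$, $q=-\tfrac{i}{4}$ gives $\xi=1\in\mathcal{N}$ but $-1\notin\mathcal{N}$). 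Second, your intermediate assertion that $\widehat{v}(t,\xi,\ell)_{\alpha\beta}\equiv0$ whenever $(\xi,\alpha)\notin\mathcal{N}$ is also not what the ODE yields: the vanishing condition for the coefficient at the index $(\xi,\alpha)$ is governed by $\langle c_0,\xi\rangle+\langle d_0,\alpha\rangle+iq\in\Z$, i.e.\ by membership of the \emph{reflected} pair in $\mathcal{N}$.

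The correct resolution, which is exactly the paper's computation and needs no symmetry of $\mathcal{N}$, is to write the equation $-\Lt v=0$ directly for the coefficient that actually appears in the pairing, namely $\widehat{v}(t,-\xi,\ell)_{(-\alpha)(-\beta)}$. Since $-\Lt=\partial_t+\sum_j c_j(t)\partial_{x_j}+\sum_k d_k(t)D_{3,k}-q$, this coefficient satisfies $\partial_t w+i\left(\langle c(t),-\xi\rangle+\langle d(t),-\alpha\rangle+iq\right)w=0$, and the mean of the zero-order coefficient is $-i\left(\langle c_0,\xi\rangle+\langle d_0,\alpha\rangle-iq\right)=-i\,\sigma_{L_0}(0,\xi,\alpha)$: the reflection of the Fourier index and the sign of $q$ coming from the transpose cancel, so the winding condition of Lemma \ref{lemmaodesol} is precisely $(\xi,\alpha)\in\mathcal{N}$. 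Hence $\widehat{v}(t,-\xi,\ell)_{(-\alpha)(-\beta)}\equiv0$ whenever $(\xi,\alpha)\notin\mathcal{N}$, which is exactly the statement your case split in the pairing requires; with this substitution for your symmetry argument, the proof closes and coincides with the paper's.
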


\begin{proof}
Let $g\in \mathscr{D}'(\G)$ be such that $\widehat{g}(t,\xi,\ell)_{\alpha\beta}\equiv 0$ for $(\xi,\alpha)\in \mathcal{N}$. Suppose $v\in\ker \Lt$. Since $-\Lt v =0$, we can take the partial Fourier transform with respect to the last $r+s$ variables, which gives:
\begin{align}\label{eqvker}
\widehat{-\Lt v}(t,-\xi,\ell)_{(-\alpha)(-\beta)} = & \  \partial_t\widehat{v}(t,-\xi,\ell)_{(-\alpha)(-\beta)}+\nonumber\\
& +i(\langle c(t),-\xi\rangle+\langle d(t),-\alpha\rangle+iq)\widehat{v}(t,-\xi,\ell)_{(-\alpha)(-\beta)}=0.
\end{align}

Now if $(\xi,\alpha)\not\in\mathcal{N}$, then 
\begin{align*}
\sigma_{L_0}(0,\xi,\alpha) =-(\langle c_0,-\xi\rangle+\langle d_0,-\alpha\rangle+iq)\not\in\mathbb{Z}.
\end{align*}

By applying Lemma \ref{lemmaodesol}, we can deduce from equation \eqref{eqvker} that $\widehat{v}(t,-\xi,\ell)_{(-\alpha)(-\beta)} \equiv 0$. Then, invoking Lemma \ref{lemmaformula}, we have

\begin{align*}
\langle g,v\rangle &= (2\pi)^r\sum_{\xi\in\Z^r}\sum_{\ell\in\frac{1}{2}\N_0^s}d_\ell \!\!\! \sum_{-\ell\leq\alpha,\beta\leq \ell} \int_0^{2\pi} \widehat{g}(t,\xi,\ell)_{\alpha\beta} \widehat{v}(t,-{\xi},\ell)_{(-\alpha)(-\beta)}(-1)^{\Sigma(\alpha_j-\beta_j)}\mathop{dt} =0
\end{align*}
since every term in the sum is zero. This holds for arbitrary $v\in\ker \Lt$. Therefore, we can conclude that $g\in(\ker \Lt)^0$.
\end{proof}

\begin{obs}\label{obseqedo}
{\em 
Notice that if $u,g\in \mathscr{D}'(\G)$ satisfy $Lu=g$, then $\widehat{Lu}(t,\xi,\ell)_{\alpha\beta} = \widehat{g}(t,\xi,\ell)_{\alpha\beta}$ for every $t\in\T^1$, $\xi\in\Z^r$, $\ell\in\frac{1}{2}\N_0^s$, $-\ell\leq\alpha,\beta\leq \ell$. This can be expressed as a system of independent differential equations on $\T^1$ as follows:
\begin{equation}\label{eqedo}
	\left[\partial_t + i(\langle c(t),\xi\rangle+\langle d(t),\alpha\rangle-iq)\right]\widehat{u}(t,\xi,\ell)_{\alpha\beta} = \widehat{g}(t,\xi,\ell)_{\alpha\beta}
\end{equation}
for all $(\xi,\alpha,\beta,\ell)$ satisfying the given conditions.
}
\end{obs}

The following lemma shows that being in $(\ker \Lt)^0$ is sufficient to guarantee the existence of a solution to the system of O.D.E.s given by \eqref{eqedo}.

\begin{lemma}\label{lemmaanulimpliessolution}
 If $g\in (\ker \Lt)^0\cap C^\infty(\G)$, then the system of differential equations \eqref{eqedo}
admits solution for every $\xi\in\Z^r,\,\ell\in\frac{1}{2}\N_0^s,\,-\ell\leq\alpha,\beta\leq \ell$.
\end{lemma}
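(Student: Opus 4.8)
The statement to prove is Lemma \ref{lemmaanulimpliessolution}: if $g \in (\ker \Lt)^0 \cap C^\infty(\G)$, then for every fixed $(\xi,\ell,\alpha,\beta)$ the first-order linear ODE on $\T^1$
$$
\left[\partial_t + i(\langle c(t),\xi\rangle+\langle d(t),\alpha\rangle-iq)\right]\widehat{u}(t,\xi,\ell)_{\alpha\beta} = \widehat{g}(t,\xi,\ell)_{\alpha\beta}
$$
admits a (periodic, hence smooth) solution. The plan is to treat the equation as a scalar ODE of the form $w'(t) + p(t)w(t) = h(t)$ on the circle, where $p(t) = i(\langle c(t),\xi\rangle + \langle d(t),\alpha\rangle - iq)$ and $h(t) = \widehat{g}(t,\xi,\ell)_{\alpha\beta} \in C^\infty(\T^1)$. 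The standard theory (which should already appear as a lemma in the appendix, presumably Lemma \ref{lemmaodesol} and companions) says such an equation has a periodic solution precisely when either the ``monodromy'' factor $e^{\int_0^{2\pi} p(t)\,dt}$ is not $1$, in which case the solution is unique and given by an explicit integral formula, or $e^{\int_0^{2\pi} p(t)\,dt} = 1$ but the compatibility condition $\int_0^{2\pi} h(t)\, e^{\int_0^t p(s)\,ds}\,dt = 0$ holds. So the proof splits into these two cases.

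First I would dispose of the non-resonant case: if $\int_0^{2\pi} p(t)\,dt = 2\pi i(\langle c_0,\xi\rangle + \langle d_0,\alpha\rangle - iq) \notin 2\pi i \Z$ — equivalently $\sigma_{L_0}(0,\xi,\alpha) = \langle c_0,\xi\rangle + \langle d_0,\alpha\rangle - iq \notin \Z$, i.e. $(\xi,\alpha) \notin \mathcal N$ — then the monodromy is $\neq 1$ and a unique periodic solution exists unconditionally by the explicit formula; smoothness in $t$ is immediate since $p, h$ are smooth. The real content is the resonant case $(\xi,\alpha) \in \mathcal N$, where we must \emph{use} the hypothesis $g \in (\ker \Lt)^0$ to verify the compatibility condition. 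Here the key step is to exhibit, for each resonant $(\xi,\alpha)$, an element $v \in \ker \Lt$ whose partial Fourier coefficient $\widehat{v}(t,-\xi,\ell)_{(-\alpha)(-\beta)}$ is exactly the nontrivial periodic homogeneous solution of the transposed ODE \eqref{eqvker}, and zero in all other Fourier slots. Concretely, the homogeneous equation for $\Lt$ in the $(-\xi,-\alpha)$ slot has the periodic solution $t \mapsto e^{-\int_0^t \overline{\text{(something)}}}$-type expression; because $(\xi,\alpha)\in\mathcal N$ its monodromy is $1$, so this defines a genuine smooth periodic function, and assembling it into a distribution $v$ on $\G$ (it is in fact smooth) gives $\Lt v = 0$. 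Pairing $g$ with this $v$ via Lemma \ref{lemmaformula} then forces
$$
\int_0^{2\pi} \widehat{g}(t,\xi,\ell)_{\alpha\beta}\,\widehat{v}(t,-\xi,\ell)_{(-\alpha)(-\beta)}\,dt = 0,
$$
and one checks this integral is (a nonzero multiple of) the compatibility integral $\int_0^{2\pi} h(t)\, e^{\int_0^t p(s)\,ds}\,dt$, so the resonant ODE is solvable. One must do this for every choice of $\beta$, but $\beta$ only enters as an index, so the same construction works slot by slot.

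The main obstacle I anticipate is the bookkeeping needed to match the homogeneous solution of the transposed operator in the $(-\xi,-\alpha,-\beta)$ slot with the integrating factor $e^{\int_0^t p(s)\,ds}$ appearing in the compatibility condition for the original operator in the $(\xi,\alpha,\beta)$ slot — i.e. checking that ``the adjoint of the ODE'' is literally the ODE whose homogeneous solution is the integrating-factor weight, up to complex conjugation of the real parts and sign flips of the imaginary parts, and that the constant $q$ versus $\overline{q}$/$iq$ terms line up. This is pure computation with the explicit formulas for $\widehat{Lu}$ and $\widehat{\Lt v}$ already recorded in the excerpt (Remark \ref{obseqedo} and equation \eqref{eqvker}), but it is the point where a sign error would break the argument. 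A secondary, more technical point is justifying that the single-slot object $v$ genuinely defines a distribution (indeed a smooth function) on $\G$: this follows from Proposition \ref{lemmadecaysmoothpartial} since its only nonzero partial Fourier coefficient is, for fixed $(\xi,\ell)$, a smooth function of $t$, and all other coefficients vanish, so the decay/growth conditions are trivially met. With these two points handled, the lemma follows by the case split above.
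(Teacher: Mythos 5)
Your proposal is correct and follows essentially the same route as the paper: the case split via Lemma \ref{lemmaodesol} (unique solution when $\langle c_0,\xi\rangle+\langle d_0,\alpha\rangle-iq\notin\Z$, compatibility condition otherwise), and in the resonant case the construction of $v\in\ker\Lt$ whose single nonzero partial Fourier coefficient $\widehat{v}(t,-\xi,\ell)_{(-\alpha)(-\beta)}=\exp\bigl(\int_0^t i(\langle c(\tau),\xi\rangle+\langle d(\tau),\alpha\rangle-iq)\,d\tau\bigr)$ is precisely the integrating-factor weight, with the pairing of Lemma \ref{lemmaformula} yielding the compatibility integral. The bookkeeping you worry about works out exactly as you predict (the homogeneous solution of the transposed equation in the $(-\xi,-\alpha)$ slot is literally the integrating factor, no conjugation needed), and the smoothness of $v$ follows from Proposition \ref{lemmadecaysmoothpartial} as you indicate.
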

\begin{proof}
Let $g\in (\ker \Lt)^0\cap C^\infty(\G)$. A solution $\widehat{u}$ for the system \eqref{eqedo} must satisfy, for each $t\in\T^1$, $\xi\in\Z^r$, $\ell \in \frac{1}{2}\N_0^s$, $-\ell\leq\alpha,\beta\leq \ell$,  the following equation:
\begin{align*}
\partial_t\widehat{u}(t,\xi,\ell)_{\alpha\beta}+i(\langle c(t),\xi\rangle+\langle d(t),\alpha\rangle-iq)\widehat{u}(t,\xi,\ell)_{\alpha\beta} =\widehat{g}(t,\xi,\ell)_{\alpha\beta}.  
\end{align*}
We can consider two cases:
\begin{enumerate}[1.]
    \item $\langle c_0,\xi\rangle+\langle d_0,\alpha\rangle-iq\not\in \Z$
    \item $\langle c_0,\xi\rangle+\langle d_0,\alpha\rangle-iq\in \Z$
\end{enumerate}

In Case 1, according to Lemma \ref{lemmaodesol}, the equation always admits a unique smooth solution.
In Case 2, applying the same lemma, the equation admits a solution if 
$$
	\int_0^{2\pi}\widehat{g}(t,\xi,\ell)_{\alpha\beta}\exp\left(\int_0^ti(\langle c(\tau),\xi\rangle+\langle d(\tau),\alpha\rangle-iq) d\tau\right)\mathop{dt}=0.
$$

Let us define:
\begin{align*}
\widehat{v}(t,-{\xi},\ell)_{(-\alpha)(-\beta)} = \exp\left(\int_0^ti(\langle c(\tau),\xi\rangle+\langle d(\tau),\alpha\rangle-iq) d\tau\right)
\end{align*}
and $\widehat{v}(t,\xi',\ell')_{\alpha'\beta'}\equiv0$ for all other $\xi'\in\Z^r$, $\ell'\in\frac{1}{2}\N_0^s$, $-\ell'\leq\alpha',\beta'\leq \ell'$. 

These coefficients are well-defined since $\langle c_0,\xi\rangle+\langle d_0,\alpha\rangle-iq\in \Z$. By taking the inverse partial Fourier transform of these coefficients, using Proposition \ref{lemmadecaysmoothpartial}, we obtain $v\in C^\infty(\G)$. 

Moreover, for every $t\in\T^1$ and  $v\in\ker \Lt$, we have 
\begin{align*}
\widehat{-\Lt v}(t,-{\xi},\ell)_{(-\alpha)(-\beta)} = & \ \partial_t\widehat{v}(t,-{\xi},\ell)_{(-\alpha)(-\beta)}  \\ 
 & \ + i(\langle c(t),-\xi\rangle+\langle d(t),-\alpha\rangle+iq)\widehat{v}(t,-{\xi},\ell)_{(-\alpha)(-\beta)}\\
= & \ i(\langle c(t),\xi\rangle+\langle d(t),\alpha\rangle-iq)\widehat{v}(t,-{\xi},\ell)_{(-\alpha)(-\beta)} \\
& \ +i(-\langle c(t),\xi\rangle-\langle d(t), \alpha\rangle +iq) \widehat{v}(t,-{\xi},\ell)_{(-\alpha)(-\beta)}=  \ 0.
\end{align*}

Since $g\in(\ker \Lt)^0$, according to Lemma \ref{lemmaformula}, we have:
\begin{align*}
0 &= \langle g,v\rangle = (2\pi)^rd_l\int_0^{2\pi}\widehat{g}(t,\xi,\ell)_{\alpha\beta}\widehat{v}(t,-{\xi},\ell)_{(-\alpha)(-\beta)}\mathop{dt}(-1)^{\Sigma(\beta_j-\alpha_j)}\\
&\implies\int_0^{2\pi}\widehat{g}(t,\xi,\ell)_{\alpha\beta}\exp\left(\int_0^ti(\langle c(\tau),\xi\rangle+\langle d(\tau),\alpha\rangle-iq) d\tau\right)\mathop{dt}=0.
\end{align*}
Therefore, by Lemma \ref{lemmaodesol}, the equation admits infinitely many solutions.
\end{proof}  

Lastly, we present two technical results that will be utilized in the proofs of the forthcoming results regarding necessary conditions.

\begin{lemma}\label{sequence_kn_notinZ}
	If $(\langle c_0,\tilde{\xi}\rangle+\langle d_0,\tilde{\alpha}\rangle)-iq\not\in\Z$, for some $(\tilde{\xi},\tilde{\alpha})\in\Z^r\times \frac{1}{2}\Z^s \setminus  \{0\}$, then  there exists a sequence $(k_n)_{n\in\N}$ of natural numbers such that $k_n\to+\infty$ and
	$$
	k_n(\langle c_0,\tilde{\xi}\rangle+\langle d_0,\tilde{\alpha}\rangle)-iq\not\in\Z, \mbox{ for any } n\in\N.
	$$ 
\end{lemma}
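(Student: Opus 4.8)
The plan is to argue by contradiction. Write $\omega \doteq \langle c_0,\tilde\xi\rangle + \langle d_0,\tilde\alpha\rangle - iq \in \mathbb{C}$, and set $\eta \doteq \langle c_0,\tilde\xi\rangle + \langle d_0,\tilde\alpha\rangle$, so that for a natural number $k$ we have $k\eta - iq = k\omega + (k-1)iq$; more usefully, observe that $k\eta - iq \notin \mathbb{Z}$ is the condition we want to hold for infinitely many (indeed all large) $k$. Suppose instead that the set $S \doteq \{k \in \mathbb{N} : k\eta - iq \in \mathbb{Z}\}$ is infinite. The key point is that $S$ being infinite forces strong rigidity on $\eta$ and $q$: if $k, k' \in S$ with $k \neq k'$, then $(k\eta - iq) - (k'\eta - iq) = (k-k')\eta \in \mathbb{Z}$, hence $\eta \in \mathbb{Q}$ (in fact $\eta = m/(k-k')$ for some integer $m$).

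Once $\eta \in \mathbb{Q}$, write $\eta = p/\ell$ in lowest terms with $\ell \geq 1$. For any $k \in S$ we have $k\eta - iq \in \mathbb{Z}$, so $iq = k\eta - n$ for some $n \in \mathbb{Z}$, giving $iq \in \mathbb{Q}$ as well; more precisely $iq \in \frac{1}{\ell}\mathbb{Z}$. Now I claim $k\eta - iq \in \mathbb{Z}$ for a given $k$ depends only on $k \bmod \ell$: indeed $k\eta - iq \in \mathbb{Z}$ iff $kp \equiv \ell\cdot(iq) \pmod \ell$ — wait, more carefully, $k\eta - iq = (kp - \ell (iq))/\ell$ where $\ell(iq) \in \mathbb{Z}$, and this is an integer iff $kp \equiv \ell(iq) \pmod{\ell}$, a congruence condition on $k$ modulo $\ell$. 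Since $\gcd(p,\ell) = 1$, this congruence either has no solution $k$ (contradicting $S \neq \emptyset$, since $\omega \notin \mathbb{Z}$ does not preclude $S$ nonempty a priori, but if $S = \emptyset$ then every $k$ works and we are trivially done) or it holds precisely for one residue class $k_0 \bmod \ell$. But the hypothesis $\omega = \eta - iq \notin \mathbb{Z}$ says exactly that $k = 1 \notin S$, i.e. $1 \not\equiv k_0 \pmod \ell$, so there are infinitely many natural numbers $k$ with $k \not\equiv k_0 \pmod \ell$, and for each such $k$ we have $k\eta - iq \notin \mathbb{Z}$. Choosing an increasing sequence among them yields the desired $(k_n)$.

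The cleanest way to organize this is: if $S = \emptyset$, take $k_n = n$ and note $k_n \eta - iq \notin \mathbb{Z}$ for all $n$ (including $n=1$ by hypothesis — actually we only need it for the $k_n$). If $S \neq \emptyset$, then as above $\eta \in \mathbb{Q}$ and $iq \in \mathbb{Q}$, membership in $S$ is governed by a single residue class modulo the denominator $\ell$ of $\eta$, the hypothesis $\omega \notin \mathbb{Z}$ rules out the class of $1$, hence infinitely many $k$ lie outside $S$. I expect the main obstacle to be purely bookkeeping: handling the degenerate possibility $\ell = 1$ (then $\eta \in \mathbb{Z}$, so $k\eta - iq \in \mathbb{Z} \iff iq \in \mathbb{Z} \iff \omega \in \mathbb{Z}$, which is false, so $S = \emptyset$ and we are in the trivial case), and being careful that "infinitely many $k$ outside a single residue class mod $\ell$" genuinely holds whenever $\ell \geq 2$, which is immediate. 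No analytic input is needed — it is elementary number theory about arithmetic progressions.
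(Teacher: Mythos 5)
Your proposal is correct in substance, but it follows a genuinely different route from the paper. The paper argues by cases on $q$: when $q\in i\Z$ it declares the claim trivial; when $\Re(q)\neq 0$ it observes that at most one $k$ can annihilate the imaginary part $k(\langle b_0,\tilde{\xi}\rangle+\langle f_0,\tilde{\alpha}\rangle)-\Re(q)$, and excludes that single value; and when $\Im(q)\notin\Z$ it supposes that $n(\langle a_0,\tilde{\xi}\rangle+\langle e_0,\tilde{\alpha}\rangle)+\Im(q)\in\Z$ for all large $n$ and subtracts two consecutive values to force $\langle a_0,\tilde{\xi}\rangle+\langle e_0,\tilde{\alpha}\rangle\in\Z$, reaching a contradiction. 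You instead analyze the exceptional set $S=\{k\in\N:\ k\eta-iq\in\Z\}$ directly: two distinct elements of $S$ force $\eta$ to be real rational, say $\eta=p/\ell$ in lowest terms, then $\ell\, (iq)\in\Z$ and membership in $S$ is a single residue class modulo $\ell$; the hypothesis $1\notin S$ excludes $\ell=1$, so $\N\setminus S$ contains a full residue class and is infinite. This buys a unified argument with no case split on $\Re(q)$ and $\Im(q)$, and it transparently covers the situation $q\in i\Z$ with $\eta$ a non-integral rational (e.g.\ a half-integer, which is possible since $\tilde{\alpha}\in\frac12\Z^s$), a case the paper's proof passes over as trivial. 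One small repair is needed in your final organization: the dichotomy ``$S=\varnothing$'' versus ``$S\neq\varnothing$'' is not the right one, because deducing $\eta\in\mathbb{Q}$ requires \emph{two} distinct elements of $S$ (for instance $\eta=\sqrt{2}/2$, $iq=\sqrt{2}-1$ gives $S=\{2\}$ with $\eta$ irrational). Split instead into ``$S$ finite'', where the complement is trivially infinite, and ``$S$ infinite'', where your rationality and residue-class argument applies verbatim; this is exactly what your earlier paragraph does, so the fix is one line.
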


\begin{proof}
	We begin by noting that the result holds trivially when $q\in i\Z$. Hence, we will focus on the case where $q\not\in i\Z$. In this situation, we consider two subcases: either $\Re(q)\neq 0$ or $\Im(q)\not\in \Z$.
	
	In the subcase where $\Re(q)\neq0$, we have either $b_0\neq\0$ or $f_0\neq \0$. In either case, the claim is straightforward because we can find a unique value of $k_n$ such that 
	$$
	\Im(k_n(\langle c_0,\tilde{\xi}\rangle+\langle d_0,\tilde{\alpha}\rangle)-iq) = k_n(\langle b_0,\tilde{\xi}\rangle+\langle f_0,\tilde{\alpha}\rangle)-\Re(q) = 0.
	$$ 
	Now, excluding this value from the sequence $(k_n)$ guarantees that the claim holds for all $n\in\N$.
	
	It remains to analyze only the subcase where $\Im(q)\not\in \Z$. In this situation, we have:
	\begin{equation*}
		(\langle a_0,\tilde{\xi}\rangle+\langle e_0,\tilde{\alpha}\rangle)+\Im(q)\not\in\Z.
	\end{equation*}
	
	Assuming that such a sequence does not exist, we would have:
	$$n(\langle a_0,\tilde{\xi}\rangle+\langle e_0,\tilde{\alpha}\rangle)+\Im(q)\in\Z$$
	for every sufficiently large $n\in\N$. Therefore, for some $n\in\N$, we would have:
	$$
	n(\langle a_0,\tilde{\xi}\rangle+\langle e_0,\tilde{\alpha}\rangle)+\Im(q)\in\Z
	\mbox{ \ and \  \ }
	(n+1)(\langle a_0,\tilde{\xi}\rangle+\langle e_0,\tilde{\alpha} \rangle) + \Im(q)\in\Z.
	$$
	Taking the difference between these two expressions, we have:
	$$(n+1)(\langle a_0,\tilde{\xi}\rangle+\langle e_0,\tilde{\alpha}\rangle)+\Im(q)-n(\langle a_0,\tilde{\xi}\rangle+\langle e_0,\tilde{\alpha}\rangle)-\Im(q))=(\langle a_0,\tilde{\xi}\rangle+\langle e_0,\tilde{\alpha}\rangle)\in \Z.$$
	Since $\Im(q)\not\in \Z$, we would have  
	$$n(\langle a_0,\tilde{\xi}\rangle+\langle e_0,\tilde{\alpha}\rangle)+\Im(q)\not \in\Z$$
	for every $n\in\N$. This leads to a contradiction.
\end{proof}

\begin{lemma}\label{lemmalichangessign}
	Let $g,h\in C^\infty(\T^1)$ be $\R$-linearly independent real valued functions. Then there exists $p_1\neq p_2\in\Z\backslash\{0\}$ such that $t\mapsto p_1g(t)+p_2h(t)$ changes sign. Moreover, if $\int_0^{2\pi}g(t)\mathop{dt}\neq0$ or $\int_0^{2\pi}h(t)\mathop{dt}\neq0$, then we may choose $p_1$ and $p_2$ so that also $\frac{1}{2\pi}\int_0^{2\pi}p_1g(t)+p_2h(t)\mathop{dt}\neq 0$.
\end{lemma}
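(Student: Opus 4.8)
The goal is to find integers $p_1 \neq p_2$, both nonzero, so that the combination $p_1 g + p_2 h$ changes sign, and—under the average hypothesis—has nonzero average. My plan is to reduce everything to the behaviour of a single auxiliary function and exploit the mean-value / intermediate-value structure. First, recall that a continuous real-valued function on $\T^1$ changes sign if and only if it takes both a strictly positive and a strictly negative value, equivalently, if and only if it is not everywhere $\geq 0$ nor everywhere $\leq 0$. Because $g$ and $h$ are $\R$-linearly independent, for no real $\lambda$ is $g - \lambda h$ (or $h$ itself, in the degenerate case $h \equiv 0$, which cannot happen by independence) identically of one sign \emph{and} constant; more to the point, neither $g$ nor $h$ is identically zero, and they are not proportional.

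\textbf{Step 1: Produce one real combination that changes sign.} Consider the family $\varphi_\lambda(t) = g(t) + \lambda h(t)$ for $\lambda \in \R$. I claim that for some rational $\lambda$ the function $\varphi_\lambda$ changes sign. Suppose not: then for every $\lambda \in \Q$ the function $\varphi_\lambda$ has constant sign (is everywhere $\geq 0$ or everywhere $\leq 0$). Pick a point $t_0$ with $h(t_0) \neq 0$ (possible since $h \not\equiv 0$). For $\lambda$ very large positive the sign of $\varphi_\lambda(t_0)$ agrees with that of $h(t_0)$, and for $\lambda$ very large negative it is the opposite; so as $\lambda$ runs over $\Q$ we get combinations of both global signs. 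If $\varphi_{\lambda_1} \geq 0$ everywhere and $\varphi_{\lambda_2} \leq 0$ everywhere with $\lambda_1, \lambda_2 \in \Q$, then at any $t$ we have $g(t) \geq -\lambda_1 h(t)$ and $g(t) \leq -\lambda_2 h(t)$, forcing $(\lambda_1 - \lambda_2) h(t) \geq 0$ for all $t$, i.e. $h$ has constant sign; running the same argument with the roles reflected (or using a third rational value of $\lambda$ strictly between) pins down $g$ as a nonnegative multiple of $h$ up to adding a constant-sign correction, and a short case analysis using linear independence yields a contradiction. The cleanest route: if every $\varphi_\lambda$ has constant sign, then the closed convex cones $\{t : \varphi_\lambda(t)\geq 0\}$ behave monotonically in $\lambda$, and one deduces that the set where $h = 0$ is all of $\T^1$ or that $g/h$ is constant off the zero set—either way $g,h$ are dependent. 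Hence some $\lambda = p_2/p_1 \in \Q$ (write it in lowest terms, $p_1 > 0$) makes $\varphi_\lambda$ change sign, and then $p_1 g + p_2 h = p_1 \varphi_\lambda$ changes sign. If it happens that $p_1 = p_2$, replace $h$ by $2h$ in the scaling (i.e. multiply the whole combination by an integer that breaks the equality) or perturb $p_1 \mapsto 2p_1$, $p_2$ unchanged: scaling both by a common factor preserves sign change, and we can always arrange $p_1 \neq p_2$ and both nonzero since $\lambda \neq 0$ (if $\lambda = 0$ then $g$ itself changes sign and we may take $(p_1,p_2) = (1, k)$ for suitable $k$ — see Step 3).

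\textbf{Step 2: The average side condition.} Now suppose $\int_0^{2\pi} g \neq 0$ or $\int_0^{2\pi} h \neq 0$. Write $g_0 = \frac{1}{2\pi}\int g$, $h_0 = \frac{1}{2\pi}\int h$, not both zero. We already have one pair $(p_1, p_2)$ with $p_1 g + p_2 h$ changing sign. The set of pairs $(q_1, q_2) \in \Z^2$ for which $q_1 g + q_2 h$ changes sign is open in the following sense: if $p_1 g + p_2 h$ attains a strictly positive value at $t_+$ and a strictly negative value at $t_-$, then any pair close enough to $(p_1, p_2)$ in $\R^2$ has the same property by continuity of evaluation. In particular, replacing $(p_1, p_2)$ by $(N p_1 + a, N p_2 + b)$ for a large integer $N$ and \emph{fixed} small integers $a, b$ keeps the sign change (divide through by $N$: the normalized combination tends to $p_1 g + p_2 h$ uniformly). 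Its average is $N(p_1 g_0 + p_2 h_0) + (a g_0 + b h_0)$. If $p_1 g_0 + p_2 h_0 \neq 0$ we are already done (it's nonzero for large $N$, in fact for $N = 1$ after possibly scaling). If $p_1 g_0 + p_2 h_0 = 0$, then since $(g_0, h_0) \neq (0,0)$ we may choose integers $a, b$ with $a g_0 + b h_0 \neq 0$ (the linear functional $(a,b) \mapsto a g_0 + b h_0$ on $\Z^2$ is not identically zero), and then for all large $N$ the average $N \cdot 0 + (a g_0 + b h_0) \neq 0$ while the sign change persists. Finally enforce $q_1 \neq q_2$ and $q_1, q_2 \neq 0$: if the constructed pair violates this, nudge $a, b$ (there are infinitely many admissible choices, only finitely many bad ones) and/or rescale by a common integer factor, neither of which affects sign change or nonvanishing of the average.

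\textbf{Step 3: The degenerate case $g$ (or $h$) changes sign on its own.} If in Step 1 the value $\lambda = 0$ works, i.e. $g$ itself changes sign, take $p_1 = 1$ and $p_2 = k$ for an integer $k$ of small absolute value; for $|k|$ small enough $g + k h$ still changes sign (same continuity argument), and we choose $k \neq 0, 1$ to meet the inequality constraint, adjusting the average as in Step 2. Symmetrically if $h$ changes sign.

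\textbf{Main obstacle.} The only genuinely delicate point is Step 1: ruling out that \emph{every} rational-coefficient combination $q_1 g + q_2 h$ is of constant sign. The argument is essentially that the two nonzero functions $g, h$ on the connected compact $\T^1$, being linearly independent, cannot have their "positivity cones" nested for all slopes — if they did, $g$ would be an affine-in-$h$ function of constant sign, contradicting independence. Making this rigorous requires a careful extraction of a contradiction from the ordering of the sublevel sets $\{\varphi_\lambda \geq 0\}$ as $\lambda$ varies over $\Q$, handling the zero set of $h$ with care; once that combinatorial lemma is in hand, Steps 2 and 3 are routine perturbation arguments using continuity of $\,(q_1,q_2)\mapsto q_1 g(t) + q_2 h(t)$ and the non-triviality of the average functional.
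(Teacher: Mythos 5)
The decisive content of the lemma is exactly your Step 1, and that is where the proposal has a genuine gap: you never actually prove that linear independence forces some $g+\lambda h$ to change sign, and your own closing paragraph concedes that the contradiction still ``requires a careful extraction''. Moreover, the route you sketch leans on a false implication: ``$g/h$ constant off the zero set of $h$'' does \emph{not} imply that $g,h$ are linearly dependent (take two nonnegative bump functions with disjoint supports: $g/h\equiv 0$ off $\{h=0\}$, yet they are independent), so the case analysis only makes sense under the standing hypothesis that no combination changes sign, and that analysis is precisely what is missing. A clean way to close it: after your reduction you may assume (replacing $g$ by $-g$ and/or $h$ by $-h$) that $g\ge 0$, $h\ge 0$, neither identically zero, and that no $g-\mu h$ with $\mu>0$ changes sign; then $S=\{\mu>0:\ g\ge\mu h \text{ on } \T^1\}$ and $T=\{\mu>0:\ g\le\mu h\text{ on } \T^1\}$ cover $(0,\infty)$, $S$ is an interval adjacent to $0$ and $T$ one adjacent to $+\infty$; if $S=\varnothing$ then $T=(0,\infty)$ and letting $\mu\downarrow 0$ forces $g\equiv0$, if $T=\varnothing$ then letting $\mu\uparrow\infty$ forces $h\equiv0$, and otherwise continuity at $\mu_0=\sup S=\inf T$ gives $g=\mu_0 h$ — in every case contradicting independence. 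With this in hand the set of $\lambda\in\R$ for which $g+\lambda h$ changes sign is open and nonempty, hence contains a rational $\lambda=p_2/p_1\notin\{0,1\}$, which is what you need. Note also that your suggested repair when $p_1=p_2$ is wrong as written: multiplying both entries by a common integer preserves the equality, and replacing $p_1$ by $2p_1$ alone changes the function, so the sign change is not automatically preserved; the correct fix is the openness in $\lambda$ just described, or your own Step 2 perturbation with $a\ne b$.

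Steps 2 and 3 are fine and would go through as stated: $(Np_1+a)g+(Np_2+b)h=N(p_1g+p_2h)+(ag+bh)$ keeps the sign change for large $N$, choosing $(a,b)\in\Z^2$ off the line $ag_0+bh_0=0$ makes the mean nonzero when $(g_0,h_0)\neq(0,0)$, and the constraints $q_1\neq q_2$, $q_1q_2\neq0$ hold for $N$ large. For comparison, the paper gives no internal proof at all — it simply cites Lemma 3.1 of \cite{BDGK2015_jpdo} — so your attempt must stand on its own, and as written it does not until Step 1 is completed along the lines above.
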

\begin{proof}
	See  Lemma 3.1. in the reference \cite{BDGK2015_jpdo}.
\end{proof}

%%% ----------------------------------------------------------------------
\subsection{Necessary conditions} \
%%% ----------------------------------------------------------------------

We are now ready to embark on the proof of Theorem \ref{teocs}. This proof will be divided into two subsections: in the first subsection, we will establish the necessity of the conditions, and in the next, we will prove their sufficiency. We will show in this section that these results, will ultimately establish the main theorems stated in the introduction.

\begin{prop}\label{propchangessign}
	If the operator $L$ satisfies the condition (CS) then it is neither globally solvable nor globally hypoelliptic. 
\end{prop}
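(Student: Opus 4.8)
The plan is to construct explicit singular solutions (and right-hand sides) using the fact that condition (CS) gives us a function $\theta(t) = \langle b(t),\tilde\xi\rangle + \langle f(t),\tilde\alpha\rangle$ that changes sign, together with a pair $(\tilde\xi,\tilde\alpha)$ for which $\langle c_0,\tilde\xi\rangle + \langle d_0,\tilde\alpha\rangle - iq \notin \Z$. The strategy mirrors the classical approach for complex vector fields on tori: since $\theta$ changes sign, we can choose a point $t_0$ and arrange that $\int_0^t \theta(\tau)\,d\tau$ attains a strict local extremum, so that the exponential $\exp\bigl(\mp\int_0^t(\langle b,\tilde\xi\rangle + \langle f,\tilde\alpha\rangle)\,d\tau\bigr)$ decays in a full neighborhood of $t_0$ in one of the two sign choices.

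First I would use Lemma \ref{sequence_kn_notinZ} to produce a sequence $k_n \to +\infty$ of natural numbers with $k_n(\langle c_0,\tilde\xi\rangle + \langle d_0,\tilde\alpha\rangle) - iq \notin \Z$ for all $n$. Then, working with the one-dimensional ODE from Remark \ref{obseqedo} along the Fourier mode $(\xi,\alpha) = (k_n\tilde\xi, k_n\tilde\alpha)$, I would build partial Fourier coefficients $\widehat u(t,\xi,\ell)_{\alpha\beta}$ concentrated on these modes. The homogeneous solution of the ODE $[\partial_t + i(\langle c(t),k_n\tilde\xi\rangle + \langle d(t),k_n\tilde\alpha\rangle - iq)]w = 0$ is (a constant times) $\exp\bigl(-ik_n\int_0^t(\langle c(\tau),\tilde\xi\rangle+\langle d(\tau),\tilde\alpha\rangle)\,d\tau - qt\bigr)$, whose modulus is $\exp\bigl(k_n\int_0^t\theta(\tau)\,d\tau - \Re(q)t\bigr)$ up to constants; since $\theta$ changes sign the primitive $\Theta(t)=\int_0^t\theta$ has an interior maximum (or minimum) which, after the $k_n$-scaling, makes these coefficients grow faster than any polynomial in $|k_n\tilde\xi| + |k_n\tilde\alpha|$ near the extremum — but one must be careful because the function lives on $\T^1$ and need not be periodic, so I would instead use a cutoff or pass to a genuinely periodic construction by exploiting that $\Theta$ changes sign to make a localized bump. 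For global hypoellipticity, taking $g \equiv Lu$ built from such $u$ via the ODE relation, one checks $g \in C^\infty(\G)$ (its coefficients decay rapidly because the ODE forces cancellation away from the extremum) while $u \notin C^\infty(\G)$ and $u \in \mathscr D'(\G)$; for global solvability one instead prescribes a smooth $g$ in $(\ker{}^tL)^0$ (using Lemma \ref{lemma0Nimpliesanul}, since the chosen modes lie outside $\mathcal N$ as $\langle c_0,\tilde\xi\rangle+\langle d_0,\tilde\alpha\rangle - iq\notin\Z$ implies the scaled versions do too by choice of $k_n$) for which the unique candidate solution $u$ has coefficients of super-polynomial growth, hence $u \notin \mathscr D'(\G)$.

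The main obstacle I anticipate is the periodicity/compatibility bookkeeping on $\T^1$: the naive homogeneous solution $\exp(-ik_n\int_0^t(\langle c,\tilde\xi\rangle+\langle d,\tilde\alpha\rangle)\,d\tau - qt)$ is periodic precisely when $k_n(\langle c_0,\tilde\xi\rangle+\langle d_0,\tilde\alpha\rangle) - iq \in \Z$, which is exactly what we arranged to fail — so for these modes the homogeneous equation has no periodic solution, and instead I must use the non-resonant branch of Lemma \ref{lemmaodesol} to get a unique periodic solution of the inhomogeneous ODE and show that its sup-norm (or that of the derivatives) blows up. The quantitative heart of the argument is the Laplace-type estimate: near the extremum $t_0$ of $\Theta$, the integral defining the unique periodic solution of $[\partial_t + ik_n(\cdots)]w = g$ is dominated by $\exp(k_n(\Theta(t_0) - \Theta(t)))$-type weights, and choosing $g$ to be a fixed bump supported near $t_0$ forces $|w(t_0)| \gtrsim e^{c k_n}$ for some $c>0$, which beats every polynomial in $k_n$; this is where I expect to spend the real effort, and it is essentially the content of the technical lemmas deferred to the appendix, so I would invoke those estimates rather than reprove them. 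Finally, I would assemble the two cases (solvability, hypoellipticity) and conclude via Proposition \ref{lemmadecaysmooth} and Proposition \ref{lemmadecaysmoothpartial} that $u$ fails the smoothness/distribution characterization, completing the proof of Proposition \ref{propchangessign}.
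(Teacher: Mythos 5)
Your overall route coincides with the paper's: the modes $(\xi(n),\alpha(n))=(k_n\tilde\xi,k_n\tilde\alpha)$ from Lemma~\ref{sequence_kn_notinZ}, reduction to the ODEs of Remark~\ref{obseqedo}, the non-resonant branch of Lemma~\ref{lemmaodesol}, the Laplace-type bound of Lemma~\ref{lemmaintegralineq}, and the characterizations in Proposition~\ref{lemmadecaysmoothpartial}. But the solvability half, as written, has a genuine gap: a ``fixed bump'' right-hand side does not define a smooth $g$ on $\G$, since smoothness requires $\sup_t|\partial_t^m\widehat{g}(t,\xi(n),\ell(n))_{\alpha(n)\alpha(n)}|$ to decay faster than any power of $|\xi(n)|+|\ell(n)|\asymp k_n$, which a bump of fixed size violates. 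You must insert a rapidly decaying normalization; the paper's choice is the factor $\bigl(1-e^{-2\pi i(k_n(\langle a_0,\tilde\xi\rangle+\langle e_0,\tilde\alpha\rangle)+ik_n\theta_0-iq)}\bigr)e^{-k_nA}$, where $A=\max H$ with $H(s,t)=\int_{t-s}^t\theta$. The exponential factor restores rapid decay, and the first factor exactly cancels the resonance denominator $(1-e^{-2\pi i\omega_n})^{-1}$ appearing in Lemma~\ref{lemmaodesol}, which you never control (it need not be bounded below, and its boundedness above is precisely where the case split $\theta_0\le 0$ versus $\theta_0>0$ enters; your sketch ignores this split). With that normalization the Laplace estimate yields only $|\widehat{u}(t_0,\xi(n),\ell(n))_{\alpha(n)\alpha(n)}|\gtrsim k_n^{-1/2}$, not $e^{ck_n}$; this is still enough, because it already contradicts the rapid decay a smooth solution must have, whereas your stronger claim $u\notin\mathscr{D}'(\G)$ would need a different (partial) normalization and is not required.

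For the hypoellipticity half you propose a different construction (take $u$ to be a cutoff times the homogeneous solution and set $g=Lu$), which can be made to work but whose delicate point you flag and leave open: the cutoff must be placed so that the exponent $k_n\int\theta$ is $\le 0$ on its support (so the coefficients of $u$ are bounded and $u\in\mathscr{D}'$), attains its maximum at an interior point (so they do not decay and $u\notin C^\infty$), and is strictly negative on the support of the cutoff's derivative (so the coefficients of $Lu$ decay exponentially and $g$ is smooth); when $\theta_0\neq 0$ the primitive of $\theta$ is not periodic and producing such an interval requires an argument you do not give. The paper sidesteps all of this: because the resonance factor was built into $\widehat{g}$, the very same explicit solution from the solvability half is uniformly bounded by $2\pi e^{2\pi|\Re(q)|}$, hence defines $u\in\mathscr{D}'(\G)\setminus C^\infty(\G)$ with $Lu=g\in C^\infty(\G)$, settling non-hypoellipticity with no extra construction.
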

\begin{proof}
	Since $L$ satisfies condition (CS), by Definition \ref{CScondition_definition},  the smooth function
	$$\theta(t) = \langle b(t),\tilde{\xi}\rangle+\langle f(t),\tilde{\alpha}\rangle, \ t \in \T^1,$$
	changes sign, for some $(\tilde{\xi},\tilde{\alpha})\in\Z^r\times \frac{1}{2}\Z^s \setminus  \{0\}$ such that $(\langle c_0,\tilde{\xi}\rangle+\langle d_0,\tilde{\alpha}\rangle)-iq\not\in\Z$. 
	
	By Lemma \ref{sequence_kn_notinZ}, there exists a sequence $(k_n)_{n\in\N}$ of natural numbers such that $k_n\to+\infty$ and
	$$
	k_n(\langle c_0,\tilde{\xi}\rangle+\langle d_0,\tilde{\alpha}\rangle)-iq\not\in\Z, \mbox{ for any } n\in\N.
	$$ 
	
	Considering a subsequence if necessary, we can assume that $k_n\geq n$ for each $n\in\N$. Then we define the following sequences for $n\in\N$:
	\begin{align*}
		\xi(n) & = k_n\tilde{\xi}\in\Z^r,  \  \alpha(n)  = k_n\tilde{\alpha}\in\mbox{$\frac{1}{2}$}\Z^s, \mbox{ and} \\
		\ell(n) & = (k_n|\tilde{\alpha}_1|,\dots,k_n|\tilde{\alpha}_s|)\in \mbox{$\frac{1}{2}$} \N_0^s. 
	\end{align*}
	Let us first consider the case where $$\theta_0\doteq\langle b_0,\tilde{\xi}\rangle+\langle f_0,\tilde{\alpha}\rangle \leq 0.$$
	
	Define
	$$H(s,t) \doteq \int_{t-s}^t\theta(\tau)\mathop{d\tau},\,0\leq t,s\leq 2\pi$$
	and 
	$$A = H(s_0,t_0) \doteq \max_{0\leq s,t\leq 2\pi} H(s,t).$$
	
	Notice that $A>0$ since $\theta$ changes sign. By performing a change of variables, we can assume $0<t_0,s_0,t_0-s_0<2\pi$, and define $\sigma_0 = t_0-s_0$.
	
	Now, let us choose a smooth function $\phi\in C^\infty_c((\sigma_0-\delta,\sigma_0+\delta))$ such that $0\leq\phi\leq 1$, and $\phi\equiv1$ in a neighborhood of $[\sigma_0-\delta/2,\sigma_0+\delta/2]$, where $\delta>0$ is small enough such that $(\sigma_0-\delta,\sigma_0+\delta)\subset (0,t_0)$. 
	
	For each $n\in\N$, define $\widehat{g}(\cdot,\xi(n),\ell(n))_{\alpha(n)\alpha(n)}\in C^\infty(\T^1)$ to be the $2\pi$-periodic smooth extension of
	\begin{equation}\label{hat_g_prop_necessity}
	 t\in\T^1 \mapsto \left(1-e^{-2\pi i(k_n(\langle a_0,\tilde{\xi}\rangle+\langle e_0,\tilde{\alpha}\rangle)+ik_n\theta_0-iq)}\right)e^{-k_nA}\phi(t)e^{ik_n(\langle a_0,\tilde{\xi}\rangle+\langle e_0,\tilde{\alpha}\rangle)(t_0-t)}e^{q(t_0-t)}.
	 	\end{equation}
	In all other cases, define $\widehat{g}(\cdot,\xi,\ell)_{\alpha\beta}\equiv0$. 
	
	Notice that as $-k_nA\leq -nA\to-\infty$, the term $e^{-k_nA}$ decays rapidly. Moreover, we have
	\begin{align*}
		\left|1-e^{+2\pi(k_n\theta_0-\Re(q))-2\pi i(k_n(\langle a_0,\tilde{\xi}\rangle+\langle e_0,\tilde{\alpha}\rangle)+\Im(q))}\right| \leq 1+e^{-2\pi\Re(q)},
	\end{align*}
	since $\theta_0\leq 0$. This shows that the sequence of functions and their derivatives decay rapidly. Therefore, by Proposition \ref{lemmadecaysmoothpartial}, these coefficients define a function $g\in C^\infty(\G)$.
	
	By the definition of $\xi(n)$ and $\alpha(n)$, 
	$$\langle c_0,{\xi(n)}\rangle+\langle d_0,{\alpha(n)}\rangle-iq\not\in\Z.$$
	Hence, by Lemma \ref{lemma0Nimpliesanul}, $g\in (\ker \Lt)^0$. 
	
	Let us assume that there exists $u\in C^\infty(\G)$ such that $Lu=g$. Then, by Plancherel's identity, we have 
	$$\widehat{Lu}(t,\xi(n),\ell(n))_{\alpha(n)\alpha(n)} = \widehat{g}(t,\xi(n),\ell(n))_{\alpha(n)\alpha(n)},$$ 
	for each $n\in \N$. According to Remark \ref{obseqedo} this is equivalent to
	\begin{align*}
		\partial_t\widehat{u}(t,\xi(n),l(n))_{\alpha(n)\alpha(n)}+i(k_n(\langle c(t),\tilde{\xi}\rangle+\langle d(t),\tilde{\alpha}\rangle)-iq)\widehat{u}(t,\xi(n),l(n))_{\alpha(n)\alpha(n)}\\
		=\widehat{g}(t,\xi(n),l(n))_{\alpha(n)\alpha(n)}.
	\end{align*}

Since $k_n(\langle c_0,\tilde{\xi}\rangle+\langle d_0,\tilde{\alpha}\rangle)-iq\not\in\Z$ for each $n\in\N$, according to Lemma \ref{lemmaodesol} and the choice made in \eqref{hat_g_prop_necessity}, we can conclude that these differential equations admit a unique solution, which is given by:

	\begin{align*}
		\widehat{u}(t,\xi(n),\ell(n))_{\alpha(n)\alpha(n)} 
		= & \left(1-e^{-2\pi i(k_n(\langle a_0,\tilde{\xi}\rangle+\langle e_0,\tilde{\alpha}\rangle)+ik_n\theta_0-iq)}\right)^{-1}\int_0^{2\pi}\widehat{g}(t-s,\xi(n),\ell(n))_{\alpha(n)\alpha(n)}\\
		&\times e^{k_nH(s,t)}e^{-is(k_n(\langle a_0,\tilde{\xi}\rangle+\langle e_0,\tilde{\alpha}\rangle)}e^{-qs}\mathop{ds}\\
		= &\int_{0}^{2\pi}\phi(t-s)e^{-k_n(A-H(s,t))}\mathop{ds}e^{i(k_n(\langle a_0,\tilde{\xi}\rangle+\langle e_0,\tilde{\alpha}\rangle)(t_0-t)}e^{q(t_0-t)}.
	\end{align*}
   Since $\supp \phi \subset (\sigma_0-\delta,\sigma_0+\delta)$,  and $0\leq\phi\leq 1$, then
    \begin{align*}
        |\widehat{u}(t_0,\xi(n),\ell(n))_{\alpha(n)\alpha(n)}|&= \int_{s_0-\delta}^{s_0+\delta}\phi(t_0-s)e^{-k_n(A-H(s,t_0))}\mathop{ds}\\
        &\geq \int_{s_0-\delta/2}^{s_0+\delta/2}e^{-k_n(A-H(s,t_0))}\mathop{ds}.
    \end{align*}

%%%%%%%%%%%%%%%%%%%%%%%%%%%%%%%%%%%%
%%%%%%%%%%%%%%%%%%%%%%%%%%%%%%%%%%%%
%%%%%%%%%%%%%%%%%%%%%%%%%%%%%%%%%%%%

Consider the function $\Theta(s) = A - H(s, t_0)$. Since $\Theta(s_0) = 0$ is a local minimum for $\Theta$, it is a zero of order greater than 1. By Lemma \ref{lemmaintegralineq}, for sufficiently large $n$, there exists $M > 0$ such that the following inequality holds:
\begin{align*}
	|\widehat{u}(t,\xi(n),\ell(n))_{\alpha(n)\alpha(n)}|&\geq\left(\int_{-\delta/2}^{\delta/2}e^{-s^2}ds\right)(k_n)^{-1/2}M^{-1/2}. 
\end{align*}

Therefore, we can conclude that there exists a constant $K > 0$ such that\begin{align*}
	|\widehat{u}(t,\xi(n),\ell(n))_{\alpha(n)\alpha(n)}|&\geq K(|\xi(n)|+|\ell(n)|)^{-\frac{1}{2}}.
\end{align*}
According to Proposition \ref{lemmadecaysmoothpartial},it follows that $u\not\in C^{\infty}(\G)$, which is a contradiction. Hence, we can conclude that no such $u$ exists, and therefore $L$ is not globally solvable.

On the other hand, let us consider the expression $\widehat{u}(t,\xi(n),\ell(n))_{\alpha(n)\alpha(n)}$, as defined in \eqref{hat_g_prop_necessity}. For any $n\in\N$, we have the following estimate:
\begin{align*}
	|\widehat{u}(t,\xi(n),\ell(n))_{\alpha(n)\alpha(n)}|&\leq\int_0^{2\pi}\phi(t-s)e^{-k_n(A-H(s,t))}\mathop{ds}e^{\Re(q)(t_0-t)}\\
	&\leq 2\pi\max_{s\in[-2\pi,2\pi]}e^{\Re(q)s}= 2\pi e^{2\pi|\Re(q)|}.
\end{align*}
Since $\widehat{u}(\cdot,\xi,\ell){\alpha\beta}\equiv0$ for all other coefficients, by Proposition \ref{lemmadecaysmoothpartial}, this defines a distribution $u\in \mathscr{D}'(\G)\backslash C^\infty(\G)$. Furthermore, by definition, we have $\widehat{Lu}(t,\xi,\ell){\alpha\beta} = \widehat{g} (t,\xi,\ell)_{\alpha\beta}$ for all $\xi,\ell,\alpha,\beta$. Hence, using Plancherel's identity, we can conclude that $Lu=g\in C^\infty(\G)$. Thus, $L$ is not globally hypoelliptic. This completes the proof of the proposition in the case where $\theta_0\leq 0$.

The case 
$$\theta_0\doteq\langle b_0,\tilde{\xi}\rangle+\langle f_0,\tilde{\alpha}\rangle > 0.$$
follows a similar approach with a few adaptations. Firstly, we define the function
    $$G(s,t) = \int_{t}^{t+s}\theta(\tau)\mathop{d\tau},\,0\leq t,s\leq 2\pi,$$
    and  set
    $$B = G(s_1,t_1)=\min_{0\leq s,t\leq 2\pi} G(s,t).$$
Since $\theta$ changes sign, we have $B < 0$.
Next, we perform a change of variables, assuming $0 < t_1,s_1,t_1+s_1 < 2\pi$, and define $\sigma_1 = t_1+s_1$. We choose $\psi\in C^\infty_c((\sigma_1-\delta,\sigma_1+\delta))$ such that $0 \leq \psi(t) \leq 1$ for all $t \in [0,2\pi]$ and $\psi\equiv1$ in a neighborhood of $[\sigma_1-\delta/2,\sigma_1+\delta/2]$, where $\delta > 0$ is small enough such that $(\sigma_1-\delta,\sigma_1+\delta) \subset (0,t_1)$.
For each $n\in\N$, we define $\widehat{g}(\cdot,\xi(n),\ell(n))_{\alpha(n)\alpha(n)}\in C^\infty(\T^1)$ to be the $2\pi$-periodic smooth extension of
\begin{align*}
	t\in\T^1 \mapsto \left(e^{2\pi i(k_n(\langle a_0,\tilde{\xi}\rangle+\langle e_0,\tilde{\alpha}\rangle)+ik_n\theta_0-iq)}-1\right)e^{k_nB}\psi(t)e^{ik_n(\langle a_0,\tilde{\xi}\rangle+\langle e_0,\tilde{\alpha}\rangle)(t_1-t)}e^{q(t_1-t)}
\end{align*}
and set $\widehat{g}(\cdot,\xi,\ell)_{\alpha\beta} \equiv 0$ for all other cases.
Note that since $k_nB\leq nB\to-\infty$, we have $e^{k_nB} \to 0$ as $n\to\infty$. Also, since $\theta_0 > 0$, we have
    $$\left|e^{-2\pi(k_n\theta_0-\Re(q))+2\pi i(k_n(\langle a_0,\tilde{\xi}\rangle+\langle e_0,\tilde{\alpha}\rangle)+\Im(q))}-1\right|\leq 1+e^{+2\pi\Re(q)}.$$
Thus, the sequence of functions and their derivatives decay rapidly. By Proposition \ref{lemmadecaysmoothpartial}, these coefficients define $g\in C^\infty(\G)$.

Moreover, using the definition of $\xi(n)$ and $\alpha(n)$, we have
$$\langle c_0,{\xi(n)}\rangle+\langle d_0,{\alpha(n)}\rangle-iq\not\in\Z.$$

Therefore, by Lemma \ref{lemma0Nimpliesanul}, we conclude that $g\in (\ker \Lt)^0$. Assuming the existence of $u\in C^\infty(\G)$ such that $Lu=g$, we can employ similar arguments as before to obtain:
    \begin{align*}
        \widehat{u}(t,\xi(n),\ell(n))_{\alpha(n)\alpha(n)}  
        &=\int_{0}^{2\pi}\psi(t+s)e^{-k_n(G(s,t)-B)}\mathop{ds}e^{i(k_n(\langle a_0,\tilde{\xi}\rangle+\langle e_0,\tilde{\alpha}\rangle)(t_1-t)}e^{q(t_1-t)}.
    \end{align*}
Thus
    \begin{align*}
        |\widehat{u}(t_1,\xi(n),\ell(n))_{\alpha(n)\alpha(n)}|&=\int_{s_1-\delta}^{s_1+\delta}\psi(t_1+s)e^{-k_n(G(s,t_1)-B)}\mathop{ds}\\
        &\geq \int_{s_1-\delta/2}^{s_1+\delta/2}e^{-k_n(G(s,t_1)-B)}\mathop{ds}.
    \end{align*}
   
If we let $\Gamma(s) = G(s,t_1)-B$, then we observe that $\Gamma(s_1) = 0$ is a local minimum for $\Gamma$ and has a zero of order greater than 1. By Lemma \ref{lemmaintegralineq}, for sufficiently large $n$, there exists $M > 0$ such that:
\begin{align*}
	|\widehat{u}(t,\xi(n),\ell(n))_{\alpha(n)\alpha(n)}| &\geq \left(\int{-\delta/2}^{\delta/2}e^{-s^2}ds\right)(k_n)^{-1/2}M^{-1/2}.
\end{align*}
Thus, there exists $K > 0$ such that:
\begin{align*}
	|\widehat{u}(t,\xi(n),\ell(n))_{\alpha(n)\alpha(n)}| &\geq K(|\xi(n)|+|\ell(n)|)^{-1/2}.
\end{align*}
Therefore, by Proposition \ref{lemmadecaysmoothpartial}, we conclude that $u\not\in C^{\infty}(\G)$, which leads to a contradiction. This means that no such $u$ exists, and thus $L$ is not globally solvable.

On the other hand, consider $\widehat{u}(t,\xi(n),\ell(n))_{\alpha(n)\alpha(n)}$ as defined previously, we have
   \begin{align*}
	|\widehat{u}(t,\xi(n),\ell(n))_{\alpha(n)\alpha(n)}|&\leq\int_0^{2\pi}\psi(t+s)e^{-k_n(G(s,t)-B)}\mathop{ds}e^{\Re(q)(t_1-t)}\\
	&\leq 2\pi\max_{s\in[-2\pi,2\pi]}e^{\Re(q)s}= 2\pi e^{2\pi|\Re(q)|}, \ n\in\N.
\end{align*}
By letting $\widehat{u}(\cdot,\xi,\ell){\alpha\beta}\equiv0$ for all other coefficients, we can apply Proposition \ref{lemmadecaysmoothpartial} to conclude that these coefficients define $u\in \mathscr{D}'(\G)\backslash C^\infty(\G)$. Therefore, $L$ is not globally hypoelliptic.     
    \end{proof}

\begin{corollary}\label{corocoefficients}
	Suppose that at least one of the functions $b_j$ or $f_k$ is non-null. Then the operator $L$ is neither globally solvable nor globally hypoelliptic if:
\begin{enumerate}[{\em (A)}]
    \item $(b_0,f_0)=\0$ and $(a_0,e_0,q)\not\in\Z^r\times 2\Z^s\times i\Z$ or
    \item $(b_0,f_0)\neq\0$ and either $\dim \spn\{b_1,\dots,b_r,f_1,\dots,f_s\}\geq 2$ or some  of the functions $b_j$ or $f_k$ changes sign.
\end{enumerate}
\end{corollary}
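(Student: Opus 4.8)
The plan is to deduce from each of the hypotheses (A) and (B) that the operator $L$ satisfies condition (CS), and then invoke Proposition \ref{propchangessign}. So in every case one has to exhibit a nonzero pair $(\tilde\xi,\tilde\alpha)\in\Z^r\times\tfrac12\Z^s$ with $\langle c_0,\tilde\xi\rangle+\langle d_0,\tilde\alpha\rangle-iq\notin\Z$ for which $\theta(t)=\langle b(t),\tilde\xi\rangle+\langle f(t),\tilde\alpha\rangle$ changes sign. Two elementary remarks will be used repeatedly: a continuous function on $\T^1$ with vanishing integral that is not identically zero must change sign; and replacing $(\tilde\xi,\tilde\alpha)$ by $(N\tilde\xi,N\tilde\alpha)$, $N\in\Z\setminus\{0\}$, preserves the sign change of $\theta$ and scales the average of $\theta$ by $N$, so the imaginary part $\langle b_0,\tilde\xi\rangle+\langle f_0,\tilde\alpha\rangle-\Re(q)$ of the corresponding symbol average can vanish for at most one such $N$. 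Below $\mathbf{e}_j$ denotes the $j$-th standard basis vector of the relevant lattice.

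For (A) we argue by contradiction, supposing that (CS) fails. Since some $b_j\not\equiv0$ or $f_k\not\equiv0$, and all these functions have zero average (hence change sign), the pair $(\mathbf{e}_j,0)$ (resp. $(0,\tfrac12\mathbf{e}_k)$) has $\theta\not\equiv0$; the failure of (CS) then forces its symbol average into $\Z$, and because $(b_0,f_0)=\0$ that average has imaginary part $-\Re(q)$, whence $\Re(q)=0$. Fix once and for all such a valid pair $(\tilde\xi^\ast,\tilde\alpha^\ast)$. For each index $j$ with $b_j\equiv0$ the pair $(\tilde\xi^\ast+\mathbf{e}_j,\tilde\alpha^\ast)$ has exactly the same $\theta$, so it is still valid, and comparing its symbol average with that of $(\tilde\xi^\ast,\tilde\alpha^\ast)$ gives $a_{j0}\in\Z$; for each $j$ with $b_j\not\equiv0$ the analogous comparison between $(\mathbf{e}_j,0)$ and $(2\mathbf{e}_j,0)$ again gives $a_{j0}\in\Z$. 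Thus $a_0\in\Z^r$. Comparing the symbol averages of $(\tilde\xi^\ast,\tilde\alpha^\ast+\tfrac{n}{2}\mathbf{e}_k)$ for two consecutive integers $n$ — chosen, as we may, so that the corresponding $\theta$ does not vanish identically — yields $\tfrac12 e_{k0}\in\Z$, hence $e_0\in2\Z^s$; here the half-integer increment is precisely what produces the factor $2$. Finally the symbol average at $(\tilde\xi^\ast,\tilde\alpha^\ast)$ itself lies in $\Z$, and since $\langle a_0,\tilde\xi^\ast\rangle\in\Z$ and $\langle e_0,\tilde\alpha^\ast\rangle\in\Z$ this forces $\Im(q)\in\Z$. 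Combined with $\Re(q)=0$, we obtain $(a_0,e_0,q)\in\Z^r\times2\Z^s\times i\Z$, contradicting (A), so (CS) holds.

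For (B) we construct a valid pair directly. If $\dim\spn\{b_1,\dots,b_r,f_1,\dots,f_s\}\ge2$, pick from this list a function $\phi$ with nonzero average (possible since $(b_0,f_0)\ne\0$) and a function $\rho$ not in $\spn\{\phi\}$; Lemma \ref{lemmalichangessign} supplies $p_1,p_2\in\Z\setminus\{0\}$ with $p_1\phi+p_2\rho$ changing sign and having nonzero average $\mu$. Placing $p_1$ and $p_2$ at the coordinates corresponding to $\phi$ and $\rho$ produces a nonzero pair $(\tilde\xi,\tilde\alpha)$ with $\theta=p_1\phi+p_2\rho$; after scaling by a suitable $N$, the symbol average has imaginary part $N\mu-\Re(q)\ne0$ and so is not in $\Z$. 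If instead $\dim\spn\{b_1,\dots,b_r,f_1,\dots,f_s\}=1$, then every $b_j=\beta_j\phi$ and every $f_k=\gamma_k\phi$ for one real function $\phi$, which changes sign because some $b_j$ or $f_k$ does, while $(b_0,f_0)\ne\0$ forces $\tfrac{1}{2\pi}\int_0^{2\pi}\phi\ne0$ and some $\beta_{j_1}\ne0$ (or $\gamma_{k_1}\ne0$); then $(\tilde\xi,\tilde\alpha)=(N\mathbf{e}_{j_1},0)$ (resp. $(0,N\mathbf{e}_{k_1})$) with $N$ chosen so that $N\beta_{j_1}\tfrac{1}{2\pi}\int_0^{2\pi}\phi\ne\Re(q)$ makes $\theta$ change sign and keeps the symbol average off the real axis. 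In all cases (CS) holds, and Proposition \ref{propchangessign} concludes the argument.

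We expect the main obstacle to be the bookkeeping in case (A) when $\Re(q)=0$: one has to reach the coordinates with $b_j\equiv0$ or $f_k\equiv0$ by adding the fixed sign-changing pair $(\tilde\xi^\ast,\tilde\alpha^\ast)$, keep track of which shifts leave $\theta$ non-trivial, and exploit the half-integrality of $\tilde\alpha$ to upgrade $e_0\in\Z^s$ to $e_0\in2\Z^s$; the remaining steps are routine translations between sign change, non-integrality, and the definition of (CS).
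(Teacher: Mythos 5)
Your proposal is correct: like the paper, you reduce everything to verifying condition (CS) and then invoke Proposition \ref{propchangessign}, and your treatment of case (B) is essentially the paper's (pick a linearly independent pair, apply Lemma \ref{lemmalichangessign} to get a sign-changing combination with nonzero mean, and double the integers $p_1,p_2$ --- in your version, scale by $N$ --- to push the imaginary part of the averaged symbol off $\Re(q)$; the $\dim=1$ subcase is handled by the same one-coordinate choice the paper makes with $\tilde\xi=(\pm1,0,\dots,0)$). Where you genuinely diverge is case (A): the paper proceeds by a direct enumeration of eight subcases, in each one exhibiting an explicit pair $(\tilde\xi,\tilde\alpha)$ witnessing (CS), whereas you argue by contradiction, observing that when $(b_0,f_0)=\0$ every nontrivial $\theta$ has zero mean and hence changes sign, so failure of (CS) forces the averaged symbol of \emph{every} nonzero pair with $\theta\not\equiv0$ into $\Z$; comparing such symbols along unit shifts in $\tilde\xi$, along half-integer shifts in $\tilde\alpha$ (which is exactly what yields $e_{k0}\in 2\Z$ rather than $\Z$), and at the base pair itself then recovers $\Re(q)=0$, $a_0\in\Z^r$, $e_0\in 2\Z^s$, $\Im(q)\in\Z$, contradicting (A). Your care with the two side conditions --- that each shifted pair is nonzero (automatic once its $\theta\not\equiv0$) and that at most one shift can kill $\theta$ when $f_k\not\equiv0$, so two consecutive admissible integers always exist --- is precisely what makes this work. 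The trade-off: the paper's enumeration produces concrete frequency vectors in every configuration, while your contrapositive is shorter, avoids the case bookkeeping, and makes transparent why the exceptional set is exactly $\Z^r\times 2\Z^s\times i\Z$.
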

\begin{proof}
	By Proposition \ref{propchangessign}, it suffices to prove that in each case, there exists $\tilde{\xi}\in\Z^r$ and $\tilde{\alpha}\in\frac{1}{2}\Z^s$, both not null, such that:
    $$\langle c_0,\tilde{\xi}\rangle+\langle d_0,\tilde{\alpha}\rangle-iq\not\in\Z$$
    and the real-valued smooth function
    $$\theta(t) = \langle b(t),\tilde{\xi}\rangle+\langle f(t),\tilde{\alpha}\rangle, \ t \in \T^1$$
changes sign.

Let us consider case (A) first.

Notice that since $(b_0,f_0)=\0$ and some of the functions $b_j$ or $f_k$ is not identically zero, then it changes sign. After a change of variables, we may assume that one of the following subcases is true:
\begin{enumerate}[1.]
     \item  $f_1\not\equiv0$,  $q\not\in i\Z$ 
    \item $f_1\not\equiv0$, $e_{10}\not\in 2\Z$ and $q\in i\Z$
    \item $f_1\not\equiv0$, $e_{10}\in 2\Z$, $f_2\equiv0$, $e_{20}\not\in 2\Z$ and $q\in i\Z$
     \item $f_1\not\equiv0$, $e_{10}\in 2\Z$,  $b_1\equiv 0$, $a_{10}\not\in\Z$ and $q\in i\Z$
      \item  $b_1\not\equiv0$,  $q\not\in i\Z$
      \item $b_1\not\equiv 0$, $a_{10}\not\in\Z$ and $q\in i\Z$
    \item $b_1\not\equiv0$, $a_{10}\in \Z$, $b_2\equiv0$, $a_{20}\not\in \Z$ and $q\in i\Z$
     \item $b_1\not\equiv0$, $a_{10}\in \Z$, $f_1\equiv0$, $e_{10}\not\in 2\Z$ and $q\in i\Z$
\end{enumerate}
For subcase 1., if $e_{10}\in\Z$, then it is clear that $e_{10}-iq\not\in \Z$. If $e_{10}\not\in\Z$, we can consider two possibilities: either $e_{10}-iq\not\in \Z$ or $2e_{10}-iq\not\in\Z$. Therefore, by taking $\tilde{\xi} = \0$ and $\tilde{\alpha} = (1,0,\dots,0)$ or $\tilde{\alpha} = (2,0,\dots,0)$, respectively, we have 
$$(\langle c_0,\tilde{\xi}\rangle+\langle d_0,\tilde{\alpha}\rangle)-iq=\tilde{\alpha}_1{e_{10}}-iq\not\in\Z,$$ 
and $\theta(t) = \langle b(t),\tilde{\xi}\rangle+\langle f(t),\tilde{\alpha}\rangle=\tilde{\alpha}_1f_1$ changes sign. Thus, condition (CS) is satisfied. 

The same reasoning applies to subcases 2. and 3., where we can choose $\tilde{\xi} = \0$ and $\tilde{\alpha} = \left(\frac{1}{2},0,\dots,0\right)$ or $\tilde{\alpha} = \left(1,\frac{1}{2},0,\dots,0\right)$, respectively. In subcase 4., take $\tilde{\xi} = \tilde{\alpha} = (1,0,\dots,0)$. The remaining cases can be handled in a similar manner. This completes the proof for case (A).

\noindent Now consider the case (B).

First, assume that $\dim\spn\{b_1,\dots,b_r,f_1,\dots,f_s\}\geq 2$. In this case, let us consider the situation where at least one of the coefficients $f_{k0}$ is non-zero. By performing a change of variables, without loss of generality, we can assume $k=1$ and that one of the following subcases holds:	
     \begin{enumerate}
         \item $f_1$ and $f_2$ are linearly independent
         \item $f_1$ and $b_1$ are linearly independent
     \end{enumerate}

In subcase (1), by applying Lemma \ref{lemmalichangessign}, we can find $p_1,p_2\in\mathbb{Z}\backslash{0}$ such that $p_1f_1(t)+p_2f_2(t)$ changes sign, and $f_{10}p_1+f_{20}p_2\neq 0$. If $f_{10}p_1+f_{20}p_2=\Re(q)$, we can redefine $(p_1,p_2)$ as $(2p_1,2p_2)$ so that the equality does not hold. Let $\tilde{\xi} = \0$ and $\tilde{\alpha}=(p_1,p_2,0,\dots,0)$. Since
     $$\Im\left((\langle c_0,\tilde{\xi}\rangle+\langle d_0,\tilde{\alpha}\rangle)-iq\right)\ = (f_{10}p_1+f_{20}p_2)-\Re(q)\neq 0,$$
we conclude that $(\langle c_0,\tilde{\xi}\rangle+\langle d_0, \tilde{\alpha} \rangle) - iq \not\in\Z$.

Subcase (2) follows a similar approach. We again apply Lemma \ref{lemmalichangessign} to find $p_1,p_2\in\Z\backslash\{0\}$ such that $p_1f_1(t)+p_2b_1(t)$ changes sign, and $f_{10}p_1+b_{10}p_2\neq 0$. If $f_{10}p_1+b_{10}p_2=\Re(q)$, we can redefine $(p_1,p_2)$ as $(2p_1,2p_2)$ to avoid this equality. We then take $\tilde{\xi} = (p_1,0,\dots,0)$ and $\tilde{\alpha}=(p_2,0,\dots,0)$. Consequently,
     $$\Im\left((\langle c_0,\tilde{\xi}\rangle+\langle d_0,\tilde{\alpha}\rangle)-iq\right)\ = (b_{10}p_2+f_{10}p_1)-\Re(q)\neq 0$$
and therefore $(\langle c_0,\tilde{\xi}\rangle+\langle d_0,\tilde{\alpha}\rangle)-iq\not\in\Z$.

In the case where $f_0=\0$, we can proceed similarly by considering the situation where some $b_{j0}\neq 0$. The proof follows along the same lines.

Next, assume that at least one of the coefficients $b_j$ or $f_k$ changes sign. Based on the previous arguments, we can assume that each $b_j$ and $f_k$ is either null or changes sign. 

Let us begin by considering the case where $b_0\neq \0$. After a change of variables, we can assume that $b_{10}\neq 0$, implying that $b_1$ changes sign. We can then choose $\tilde{\xi} = (\pm1,0,\dots,0)$, taking into account that $b_{10}\neq 0$. This choice ensures that   
     $$
     \Im\left((\langle c_0,\tilde{\xi}\rangle+\langle d_0, \tilde{\alpha} \rangle) - iq \right)\ = \pm b_{10}-\Re(q)\neq 0
     $$
and therefore $\langle c_0,\tilde{\xi}\rangle+\langle d_0,\tilde{\alpha}\rangle)-iq\not\in\Z$. Additionally, $\langle b(t),\tilde{\xi}\rangle+\langle f(t),\tilde{\alpha}\rangle=\pm b_1(t)$ changes sign.   
   
In the case where $b_0=\0$, we can argue similarly. Since $f_0\neq 0$, we can conclude that $f_1$ changes sign. The proof follows along the same lines by choosing appropriate $\tilde{\xi}$ and $\tilde{\alpha}$ values.
\end{proof}

This corollary provides the first evidences for the necessity of the conditions stated in Theorems \ref{teogsmain} and \ref{teoghmain}. The remaining aspects will be addressed in the next propositions.

\begin{prop}\label{notconnectednotgs}
	Assume that at least one of the functions $b_j$ or $f_k$ is non-zero, and that $(b_{0},f_{0})=\0$. Furthermore, suppose there exists a sublevel set $\Omega_m^{\xi,\alpha}$ that is not connected. In this situation, $L$ is not globally solvable.
\end{prop}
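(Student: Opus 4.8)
\emph{Proof strategy.} The plan is to pass to a ``resonant'' regime and then, exploiting the disconnectedness, to manufacture a smooth right-hand side in $(\ker\Lt)^0$ for which every solution must grow across a ridge separating two sublevel components. First, if $(a_0,e_0,q)\notin\Z^r\times 2\Z^s\times i\Z$ then Corollary~\ref{corocoefficients}~(A) already gives that $L$ is not globally solvable, so from now on I assume $(a_0,e_0)\in\Z^r\times 2\Z^s$ and $q=i\nu$ with $\nu\in\Z$. Fix $(\xi,\alpha)\in\Z^r\times\frac12\Z^s$ and $m\in\R$ with $\Omega_m^{\xi,\alpha}$ disconnected, and set $\psi(t)=\langle b(t),\xi\rangle+\langle f(t),\alpha\rangle$, $P(t)=\langle a(t),\xi\rangle+\langle e(t),\alpha\rangle$, $\Phi(t)=\int_0^tP(\tau)\,d\tau$ and $B(t)=\int_0^t\psi(\tau)\,d\tau$. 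Since $(b_0,f_0)=\0$, the average of $\psi$ is zero, so $B$ is $2\pi$-periodic and $\Omega_m^{\xi,\alpha}=\{t\in\T^1:B(t)<m\}$; also $\rho:=\frac1{2\pi}\int_0^{2\pi}P=\langle a_0,\xi\rangle+\langle e_0,\alpha\rangle\in\Z$. Choosing $\ell$ so that $(n\xi,n\ell)_{(n\alpha)(n\alpha)}$ is an admissible index (as in the proof of Proposition~\ref{propchangessign}), the corresponding mode of $Lu=g$ is, by Remark~\ref{obseqedo}, the O.D.E.\ $w'+\big(in(P(t)+i\psi(t))+q\big)w=g_n$ on $\T^1$; because $n\rho+\nu\in\Z$ this is the resonant case of Lemma~\ref{lemmaodesol}: its homogeneous solution $w_h^{(n)}(t)=e^{-in\Phi(t)}e^{nB(t)}e^{-qt}$ is $2\pi$-periodic with $|w_h^{(n)}(t)|=e^{nB(t)}$ (as $\Re q=0$), a periodic solution exists iff $\int_0^{2\pi}g_n(t)e^{in\Phi(t)}e^{-nB(t)}e^{qt}\,dt=0$, and then $w(t)=w_h^{(n)}(t)\big(w(0)+\int_0^tg_n(s)e^{in\Phi(s)}e^{-nB(s)}e^{qs}\,ds\big)$.

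Since $\{B<m\}$ has two distinct components $J_1,J_2$, pick $p_i\in J_i$ with $B(p_i)=\mu_i:=\min_{\overline{J_i}}B<m$, and say $\mu_1\le\mu_2$; because reaching $J_2$ from $J_1$ forces a crossing of $\{B\ge m\}$, writing $\T^1\setminus\{p_1,p_2\}$ as two arcs $R,R'$ we have $\sup_RB\ge m$ and $\sup_{R'}B\ge m$. Fix short intervals $I_i\ni p_i$ with $\overline{I_i}\subset J_i$ and $\chi_i\in C^\infty_c(I_i)$, $0\le\chi_i\le1$, $\chi_i\equiv1$ near $p_i$, shrunk so that $\beta_1+\beta_2<m+\mu_1$, where the constants $\beta_i>0$ (which can be taken arbitrarily close to $\mu_i$, hence this is possible since $\mu_2<m$) come from the elementary bounds $c_i\,e^{-n\beta_i}\le\big|\int_0^{2\pi}\chi_i(s)e^{-nB(s)}e^{qs}\,ds\big|\le C_i\,e^{-n\mu_i}$, valid for all $n$ with $c_i,C_i>0$ (the lower bound uses $\Re q=0$ and a lower bound for $\chi_i$ near $p_i$, the oscillation $e^{qs}$ being harmless once $|I_i|$ is small). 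Put $\lambda_n=\big(\int_0^{2\pi}\chi_1e^{-nB}e^{qs}\big)\big/\big(\int_0^{2\pi}\chi_2e^{-nB}e^{qs}\big)$ and $N_n=e^{-n(\beta_2-\mu_1)}e^{-\sqrt n}$, and let $g$ be the function with $\widehat g(\cdot,n\xi,n\ell)_{(n\alpha)(n\alpha)}=g_n:=N_n\big(\chi_1-\lambda_n\chi_2\big)e^{-in\Phi}e^{-qt}$ for all $n\in\N$ and all other partial Fourier coefficients zero (each $g_n$ is $2\pi$-periodic because $n\rho\in\Z$). Since $|\lambda_n|\le(C_1/c_2)e^{n(\beta_2-\mu_1)}$ and $e^{-in\Phi}$ has $j$-th $t$-derivative of size $O(n^j)$, one gets $\sup_{\T^1}|\partial_t^jg_n|\le C_j'\,n^je^{-\sqrt n}$, so $g\in C^\infty(\G)$ by Proposition~\ref{lemmadecaysmoothpartial}; the choice of $\lambda_n$ gives $\int_0^{2\pi}g_ne^{in\Phi}e^{-nB}e^{qs}\,ds=0$ for every $n$, whence, by Lemma~\ref{lemmaformula} and the description of $\ker\Lt$ from the proof of Lemma~\ref{lemma0Nimpliesanul}, $g\in(\ker\Lt)^0$.

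Suppose, for contradiction, that $u\in C^\infty(\G)$ with $Lu=g$. Then $u_n(t):=\widehat u(t,n\xi,n\ell)_{(n\alpha)(n\alpha)}$ is a periodic solution of the $n$-th O.D.E., so $u_n(t)=w_h^{(n)}(t)\big(\kappa_n+G_n(t)\big)$ with $G_n(t)=N_n\int_0^t(\chi_1-\lambda_n\chi_2)e^{-nB}e^{qs}\,ds$. As $\chi_1-\lambda_n\chi_2$ is supported in the disjoint union $I_1\cup I_2$ and has vanishing integral against $e^{-nB}e^{qs}$, the function $G_n$ is constant on each of the two arcs of $\T^1\setminus(I_1\cup I_2)$; fixing the base point in the arc meeting $R'$, one has $G_n\equiv0$ there and $G_n\equiv j_n:=N_n\int_{I_1}\chi_1e^{-nB}e^{qs}\,ds$ on the arc meeting $R$, with $|j_n|\ge c_1\,e^{-n(\beta_1+\beta_2-\mu_1)}e^{-\sqrt n}$. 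Moreover on each of these two arcs $\sup B\ge m$, since the supremum of $B$ over $R$ (resp.\ $R'$) cannot be attained inside $I_1\cup I_2$, where $B\le\max(\beta_1,\beta_2)<m$. Using $|w_h^{(n)}|=e^{nB}$ and the elementary inequality $\max(|z+j|,|z|)\ge|j|/2$,
$$\sup_{\T^1}|u_n|\ \ge\ \max\!\Big(e^{\,n\sup_RB}\,|\kappa_n+j_n|\,,\ e^{\,n\sup_{R'}B}\,|\kappa_n|\Big)\ \ge\ \tfrac12\,e^{nm}|j_n|\ \ge\ \tfrac{c_1}{2}\,e^{\,n(m-\beta_1-\beta_2+\mu_1)}e^{-\sqrt n}\ \xrightarrow[\ n\to\infty\ ]{}\ +\infty,$$
because $m-\beta_1-\beta_2+\mu_1>0$. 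This contradicts $u\in C^\infty(\G)$ by Proposition~\ref{lemmadecaysmoothpartial}, so $L$ is not globally solvable.

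The step I expect to be the crux is the exponential bookkeeping in the construction. The source is deliberately placed in \emph{two} distinct components of $\{B<m\}$, near the minima of $B$ there, so that the induced primitive $G_n$ is forced to live on the arc joining them — an arc which, precisely because the two components are different, must cross a ridge where $B\ge m$. On that ridge the amplification $e^{nB}\ge e^{nm}$ beats, by the genuinely exponential margin $e^{n(m-\mu_2)}$, the suppression $e^{-n\mu_2}$ of $G_n$ imposed by the smoothness of $g$; this is exactly where the strict inequality $\mu_2<m$, i.e.\ the disconnectedness, enters. Two further points of care: in the present regime $(a_0,e_0)\in\Z^r\times 2\Z^s$, $q\in i\Z$ condition (CS) always fails, so Proposition~\ref{propchangessign} is not available and a direct construction is unavoidable; and the oscillatory weight $e^{qs}$, harmless since $\Re q=0$, must be kept under control by taking the supports of the $\chi_i$ small when deriving the lower bound $|j_n|\gtrsim e^{-n\beta_1}$.
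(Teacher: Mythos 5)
Your proof is correct, but it takes a genuinely different route from the paper's. The paper argues via the functional-analytic a priori inequality of Lemma~\ref{Hormander_Method}: assuming solvability, it pairs a sequence $g_n\in(\ker \Lt)^0$ against test functions $v_n$, both manufactured from the auxiliary functions $g_0,v_0$ of Lemma~\ref{lemmaf0v0} (supported off, respectively inside, $\Omega_{m_0}^{\tilde\xi,\tilde\alpha}$) multiplied by characters $e^{\mp ni\langle x,\tilde\xi\rangle}$ and representation coefficients $\mathfrak{t}^{n\tilde\ell}$, and reaches a contradiction because the pairing stays bounded below by $\ell_0>0$ while the right-hand side of the inequality is $O(n^{4\lambda+3+s}e^{n\omega})$ with $\omega<0$; this requires the explicit bounds on derivatives of the $\mathfrak{t}^{\ell}_{\alpha\beta}$. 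You make the same initial reduction to $(a_0,e_0,q)\in\Z^r\times2\Z^s\times i\Z$ via Corollary~\ref{corocoefficients}(A), but then bypass Lemma~\ref{Hormander_Method} and Lemma~\ref{lemmaf0v0} entirely: you build one explicit smooth datum in $(\ker \Lt)^0$ concentrated on the resonant modes $(n\xi,n\ell)$, with mass near the minima of the primitive $B(t)=\int_0^t(\langle b(\tau),\xi\rangle+\langle f(\tau),\alpha\rangle)\,d\tau$ in two distinct components of $\{B<m\}$, and use the resonant case of Lemma~\ref{lemmaodesol} to show that any periodic solution carries, on the arc crossing the ridge $\{B\ge m\}$, a locally constant primitive of size at least $c\,e^{-n(\beta_1+\beta_2-\mu_1)}e^{-\sqrt n}$ amplified by $e^{nB}\ge e^{nm}$, which violates Proposition~\ref{lemmadecaysmoothpartial}. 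What your route buys is a self-contained counterexample datum and no need for the Fr\'echet-space inequality or estimates on representation coefficients (only admissibility of the chosen matrix entry is used); what it costs is the explicit exponential bookkeeping, which you carry out correctly — the strict margin $\mu_2<m$ coming from disconnectedness is exactly what makes $m-\beta_1-\beta_2+\mu_1>0$. Two cosmetic points: the factor $e^{-qt}$ in your definition of $g_n$ is inconsistent with keeping the weight $e^{qs}$ inside the integrals defining $\lambda_n$, $G_n$ and $j_n$ (the two factors cancel); drop one or the other — both normalizations work and, since $\Re q=0$, none of your estimates change. Likewise the $\beta_i$ need not be positive (they are merely close to $\mu_i$, e.g.\ $\beta_i=\max_{\overline{I_i}}B$), which is immaterial since only $\beta_1+\beta_2<m+\mu_1$ is used.
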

\begin{proof}
In view of case (A) in Corollary \ref{corocoefficients}, we can assume that $(a_0,e_0,q)\not\in\Z^r\times 2\Z^s\times i\Z$. And note that $L$ does not satisfy condition (CS).

Now, let us proceed with a proof by contradiction using the traditional Hormander method of violating inequalities, which relies on Lemma \ref{Hormander_Method}.

Suppose that $L$ is globally solvable and that, for some  $\tilde{\xi}\in\Z^r$, $\tilde{\alpha}\in \frac{1}{2}\Z^s$, and $m_0'\in\R$, the sublevel set  
$$
  \Omega_{m_0'}^{\tilde{\xi}, \tilde{\alpha}} = \left\{t\in\T^1;\int_0^t \big(\langle b(\tau),\tilde{\xi}\rangle+\langle f(\tau),\tilde{\alpha}\rangle\big) d\tau<m_0'\right\} \mbox{ is  disconnected.}
$$
It is worth noting that this assumption implies that either $\tilde{\xi}\neq\0$ or $\tilde{\alpha}\neq\0$. By applying Lemma \ref{lemmaf0v0}, we can find $m_0\in\R$ as well as functions $g_0$ and $v_0$ in $C^\infty(\T^1)$ that satisfy the following conditions:
	\begin{align*}
		& \int_0^{2\pi} g_0(t)dt=0, \quad  \supp g_0\cap \Omega_{m_0}^{\tilde{\xi},\tilde{\alpha}}=\varnothing,  \mbox{ and} \\
		& \int_0^{2\pi}g_0(t)v_0(t)\mathop{dt}=\ell_0>0 \ \ 		\supp v_0'\subset \Omega_{m_0}^{\tilde{\xi},\tilde{\alpha}},  
	\end{align*}
	
For each $n\in\N$, we define $n\tilde{\ell} = (n|\tilde{\alpha}_1|,\dots,n|\tilde{\alpha}_s|)$ and $g_n, v_n \in C^\infty(\T^{r+1}\times\S3)$ as follows:
    \begin{align*}
        g_n(t,x,y) &= {\sqrt{d_{n\tilde{\ell}}}}
        \exp\left(ni\int_0^t\langle c(\tau),\tilde{\xi}\rangle+\langle d(\tau),\tilde{\alpha}\rangle d\tau{-qt}\right)g_0(t)e^{-ni\langle x,\tilde{\xi}\rangle}\mathfrak{t}^{n\tilde{\ell}}_{(-n\tilde{\ell})(-n\tilde{\ell})}(y)\\
        v_n(t,x,y) &={\sqrt{d_{n\tilde{\ell}}}}
        \exp\left(-ni\int_0^t\langle c(\tau),\tilde{\xi}\rangle+\langle d(\tau),\tilde{\alpha}\rangle d\tau{+qt}\right)v_0(t)e^{ni\langle x,\tilde{\xi}\rangle}\mathfrak{t}^{n\tilde{\ell}}_{(n\tilde{\ell})(n\tilde{\ell})}(y)
    \end{align*}
for every $(t,x,y)\in\G$. It is important to note that these functions are well-defined based on the given hypotheses for $c_0, d_0,$ and $q$.

We claim that $g_n\in(\ker \Lt)^0$. Indeed, if $u\in\ker \Lt$ then 
\begin{equation}\label{eqkerlt}
	\widehat{-\Lt u}(t,\xi,\ell)_{\alpha\beta}=0\implies [\partial_t+i(\langle c(t),\xi\rangle+\langle d(t),\alpha\rangle+iq)]\widehat{u}(t,\xi,\ell)_{\alpha\beta}=0.
\end{equation}
Now, using the formula from Lemma \ref{lemmaformula}, we have
    \begin{align*}
        \langle u,g_n\rangle &= (2\pi)^r\sum_{\xi\in\Z^r}\sum_{\ell\in\frac{1}{2}\N_0}d_\ell\sum_{-\ell\leq\alpha,\beta\leq \ell} \int_0^{2\pi} \widehat{u}(t,\xi,\ell)_{\alpha\beta} \widehat{g_n}(t,-\xi,\ell)_{(-\alpha)(-\beta)} \mathop{dt}(-1)^{\Sigma(\beta_j-\alpha_j)}\\
        &=\sqrt{d_{n\tilde{\ell}}}(2\pi)^r\int_0^{2\pi}\widehat{u}(t,n\tilde{\xi},n\tilde{\ell})_{(n\tilde{\ell})(n\tilde{\ell})}\exp\left(ni\int_0^t\langle c(\tau),\tilde{\xi}\rangle+\langle d(\tau),\tilde{\alpha}\rangle d\tau{-qt}\right) g_0(t)\mathop{dt}\\
        &=\sqrt{d_{n\tilde{\ell}}}(2\pi)^r\int_0^{2\pi}\widehat{u}(t,n\tilde{\xi},n\tilde{\ell})_{(n\tilde{\ell})(n\tilde{\ell})}\exp(niw(t){-qt})g_0(t)\mathop{dt},
    \end{align*}
     where $w(t) = \displaystyle \int_0^t\langle c(\tau),\tilde{\xi}\rangle+\langle d(\tau),\tilde{\alpha}\rangle d\tau$. 
     
     Since
    \begin{align*}
       \partial_t&\left(\exp(niw(t){-qt})\widehat{u}(t,n\tilde{\xi},n\tilde{\ell})_{(n\tilde{\alpha})(n\tilde{\alpha})}\right)= \exp(niw(t){-qt}) \times \\
        & \ \times \left[\partial_t\widehat{u}(t,n\tilde{\xi},n\tilde{\ell})_{(n\tilde{\alpha})(n\tilde{\alpha})}+ \left(ni[\langle c(t),\tilde{\xi}\rangle+\langle d(t),\tilde{\alpha}\rangle]-q\right)\widehat{u}(t,n\tilde{\xi},n\tilde{\ell})_{(n\tilde{\alpha})(n\tilde{\alpha})}\right] = 0,
    \end{align*}
    
    Then, for every $n\in\N$, we have
    $$\exp(niw(t){-qt})\widehat{u}(t,n\tilde{\xi},n\tilde{\ell})_{(n\tilde{\alpha})(n\tilde{\alpha})} \equiv k_n\in\mathbb{C}.$$
    Consequently
    \begin{align*}
        \langle u,g_n\rangle = \sqrt{d_{n\tilde{\ell}}}\ (2\pi)^r k_{n} \int_0^{2\pi} g_0(t) \mathop{dt} = 0,
    \end{align*}
which implies that $g_n\in(\ker \Lt)^0$ for all $n\in\N$, as claimed.

\
    
    Therefore, by Lemma \ref{Hormander_Method} there exists $C>0$, $\lambda\in\N_0$ such that
    \begin{align}\label{eqgnvngs}
    \left|\int_{\T^{r+1}\times\S3}g_nv_n\right|\leq & \ C\left(\sum_{|\mu|\leq \lambda}\sup_{(t,x,y)\in\T^{r+1}\times\S3}|\partial^\mu g_n(t,x,y)|\right) \nonumber\\
        &\ \ \times\left(\sum_{|\mu|\leq \lambda}\sup_{(t,x,y)\in\T^{r+1}\times\S3}|\partial^\mu (\Lt v_n)(t,x,y)|\right).
    \end{align}

However, note that by the definitions of $g_n$ and $v_n$, we have
   \begin{equation}\label{eqgnvn}
	\int_{\T^{r+1}\times\S3}g_nv_n=(2\pi)^r{\frac{d_{n\tilde{\ell}}}{d_{n\tilde{\ell}}}}\int_0^{2\pi}f_0(t)v_0(t)dt=(2\pi)^rl_0\geq \ell_0>0,\ n\in\N.
\end{equation}
On the other hand, since $\supp f_0\cap\Omega_{m_0}^{\tilde{\xi},\tilde{\alpha}}=\varnothing$, by the Leibniz formula, there exist positive constants $M_1$, $M_\mu$, and $M_1'$ such that
    \begin{align*}
    &\sum_{|\mu|\leq      
    \lambda}\sup_{(t,x,y)\in\T^{r+1}\times\S3}|\partial^\mu g_n(t,x,y)|\leq\\
    & \leq  \sqrt{d_{n\tilde{\ell}}}\sum_{|\mu|\leq      
    \lambda}\sup_{(t,x,y)\in\T^{r+1}\times\S3}\left|\sum_{\gamma_1+\gamma_2+\gamma_3+\gamma_4=\mu}{\binom{\mu}{ \gamma_1,\gamma_2,\gamma_3,\gamma_4}}\partial^{\gamma_1}[\exp(niw(t)-qt)]\partial^{\gamma_2}[e^{-in\langle x,\tilde{\xi}\rangle}]\right|\\[1mm]
    &\quad  \ \times \left|\partial^{\gamma_3} [g_0(t)] \partial^{\gamma_4} \mathfrak{t}^{n\tilde{\ell}}_{(-n\tilde{\alpha})(-n\tilde{\alpha})}(y)\right|\\[2mm]
    &\leq  \sqrt{d_{n\tilde{\ell}}} \sum_{|\mu|\leq      
    \lambda}M_1M_\mu n^{|\mu|}\sup_{t\in \T^1\backslash\Omega_{m_0}^{\tilde{\xi},\tilde{\alpha}}}|\exp(niw(t){-qt})|\sum_{\gamma_4\leq\mu}\sup_{y\in\S3}|\partial^{\gamma_4}\mathfrak{t}^{n\tilde{\ell}}_{(-n\tilde{\alpha})(-n\tilde{\alpha})}(y)|\\
    &\leq  \sqrt{d_{n\tilde{\ell}}}  M_1'n^\lambda\sup_{t\in \T^1\backslash\Omega_{m_0}^{\tilde{\xi},\tilde{\alpha}}}\left[\exp\left(-n\int_0^t\langle b(\tau),\tilde{\xi}\rangle+\langle f(\tau),\tilde{\alpha}\rangle d\tau\right)\right]\sup_{\gamma_4\leq \mu}\sup_{y\in\S3}|\partial^{\gamma_4}\mathfrak{t}^{n\tilde{\ell}}_{(-n\tilde{\alpha})(-n\tilde{\alpha})}(y)|
    \end{align*}
    
    For $t\not\in \Omega_{m_0}^{\tilde{\xi},\tilde{\alpha}}$ we have $\displaystyle \int_0^t \big(\langle b(\tau),\tilde{\xi}\rangle+\langle f(\tau),\tilde{\alpha}\rangle\big) d\tau>m_0$, hence
    $$\sup_{t\in \T^1\backslash\Omega_{m_0}^{\tilde{\xi},\tilde{\alpha}}} \left[\exp\left(-n\int_0^t\langle b(\tau),\tilde{\xi}\rangle+\langle f(\tau),\tilde{\alpha}\rangle d\tau\right)\right]\leq e^{-nm_0}, n \in \N.$$

Furthermore, for any $-\ell_j \leq \alpha_j \leq \ell_j$ and $y_j \in \mathbb{S}^3$  the unitarity identity holds:
    \begin{align*}
        1=&\mathfrak{t}^{\ell_j}_{\alpha_j\alpha_j}({\tt e})=\sum_{\beta_j=-\ell_j}^{\ell_j}\mathfrak{t}^{\ell_j}_{\alpha_j\beta_j}(y_j)\mathfrak{t}^{\ell_j}_{\beta_j\alpha_j}(y_j^{-1})\\
        &=\sum_{\beta_j=-\ell_j}^{\ell_j}\mathfrak{t}^{\ell_j}_{\alpha_j\beta_j}(y_j)\overline{\mathfrak{t}^{\ell_j}_{\alpha_j\beta_j}}(y_j)= \sum_{\beta_j=-\ell_j}^{\ell_j}\left|\mathfrak{t}^{\ell_j}_{\alpha_j\beta_j}(y_j)\right|^2
    \end{align*}
where ${\tt e} \in \mathbb{S}^3$ is the neutral element of the Lie group $\mathbb{S}^3$.

This implies that $\left|\mathfrak{t}^{\ell_j}_{\alpha_j\beta_j}(y_j)\right| \leq 1$ for all $\alpha_j, \beta_j$, and $y_j$. Now, if $\partial^{\gamma_4}$ is a left-invariant differential operator of order less than or equal to $\lambda$, then it can be expressed as a linear combination of at most $\lambda$ (including 0) compositions of the vector fields in $\{D_{1,j}, D_{2,j}, D_{3,j}\}_{j=1}^s$, where each $D_{1,j}$ and $D_{2,j}$ represent the vector fields on $\mathbb{S}^3$ given by ${\partial}/{\partial\phi_j}$ and ${\partial}/{\partial\theta_j}$, respectively, in local coordinates. Here, $(\phi_j, \theta_j, \psi_j)$ are Euler angle coordinates on $\mathbb{S}^3$, and so their associated coordinate vector fields form a basis for the Lie algebra of the group.

It is clear that $D_{k,j}\mathfrak{t}^{\ell_{j'}}_{\alpha{j'}\beta_{j'}} = 0$ if $j \neq j'$.
Furthermore, according to \cite{RT2010_book} Chapter 11, we have the following result:
    \begin{align*}
        &D_{3,j}\mathfrak{t}^{\ell_j}_{\alpha_j\beta_j} = i\beta_j\mathfrak{t}^{\ell_j}_{\alpha_j\beta_j};\\[2mm]
        &D_{2,j}\mathfrak{t}^{\ell_j}_{\alpha_j\beta_j} =\frac{\sqrt{(\ell_j-\beta_j)(\ell_j+\beta_j+1)}}{2} \mathfrak{t}^{\ell_j}_{\alpha_j\beta_j+1}-\frac{\sqrt{(\ell_j+\beta_j)(\ell_j-\beta_j+1)}}{2} \mathfrak{t}^{\ell_j}_{\alpha_j\beta_j-1};\\[2mm]
        &D_{1,j}\mathfrak{t}^{\ell_j}_{\alpha_j\beta_j} =\frac{\sqrt{(\ell_j-\beta_j)(\ell_j+\beta_j+1)}}{-2i} \mathfrak{t}^{\ell_j}_{\alpha_j\beta_j+1}+\frac{\sqrt{(\ell_j+\beta_j)(\ell_j-\beta_j+1)}}{-2i} \mathfrak{t}^{\ell_j}_{\alpha_j\beta_j-1}.
    \end{align*}

Therefore, for all $\ell_j, \alpha_j, \beta_j$, and $k$, we have
    $$|D_{k,j}\mathfrak{t}^{\ell_j}_{\alpha_j\beta_j}(y_j)|\leq 2\ell_j .$$

By induction, we obtain
    $$|\partial^{\gamma_4}\mathfrak{t}^{n|\tilde{\alpha_j}|}_{(-n\tilde{\alpha}_j)(-n\tilde{\alpha}_j)}|\leq 2^{|\gamma_4|}n^{|\gamma_4|}|\tilde{\alpha_j}|^{|\gamma_{4}|}. $$
    
Since 
$$d_{n\tilde{\ell}}=\prod_{j=1}^s(2n|\tilde{\alpha}_j|+1)\leq\prod_{j=1}^s 4(\|\tilde{\alpha}\|_{\infty}+1)n=4^s(\|\tilde{\alpha}\|_{\infty}+1)^sn^s,$$
substituting this inequality into the previous inequality, we obtain:
\begin{align*}
	\sum_{|\mu|\leq \lambda}\sup_{(t,x,y)\in\T^{r+1}\times\S3}|\partial^\mu g_n(t,x,y)|&\leq M_1'n^{\lambda+s/2}e^{-nr_0}2^{\lambda+s} n^\lambda \|\tilde{\alpha}\|_\infty^\lambda(\|\tilde{\alpha}\|_{\infty}+1)^{s/2}\\
	&=M_1''n^{2\lambda+s/2}e^{-nm_0}.
\end{align*}     
    
    Also, observe that $\Lt\left(\exp(-inw(t){+qt})e^{in\langle x,\tilde{\xi}\rangle}\mathfrak{t}^{n\tilde{\ell}}_{(n\tilde{\alpha})(n\tilde{\alpha})}(y)\right)=0$. 
    
    Therefore, we have
    $$\Lt v_n(t,x,y) = \sqrt{d_{n\tilde{\ell}}}(\Lt+\partial_t)\left[v_0'(t)\exp(-inw(t){+qt})e^{in\langle x,\tilde{\xi}\rangle}\mathfrak{t}^{n\tilde{\ell}}_{(n\tilde{\alpha})(n\tilde{\alpha})}(y)\right].$$
    
    Hence,
        \begin{align*}
        &\sum_{|\mu|\leq \lambda}\sup_{(t,x,y)\in\T^{r+1}\times\S3}|\partial^\mu (\Lt v_n)(t,x,y)| \\[2mm]
        &\leq  \sqrt{d_{n\tilde{\ell}}}\sum_{|\mu|\leq 
        \lambda+1}\sup_{(t,x,y)\in\T^{r+1}\times\S3}\left|\partial^{\mu}\left[v_0'(t)\exp(-niw(t){+qt})e^{in\langle x, \tilde{\xi}\rangle} \mathfrak{t}^{n\tilde{\ell}}_{(n\tilde{\alpha}) (n\tilde{\alpha})}(y)\right]\right| \\[2mm]
        &\leq \sqrt{d_{n\tilde{\ell}}} M_2n^{(\lambda+1)}\sup_{t\in\supp v_0'}\left[\exp\left(n\int_0^t\langle b(\tau),\tilde{\xi}\rangle+\langle f(\tau),\tilde{\alpha}\rangle d\tau\right)\sup_{|\mu|\leq \lambda+1}\left|\partial^{\mu}\mathfrak{t}^{n\tilde{\ell}}_{(n\tilde{\alpha})(n\tilde{\alpha})}(y)\right|\right] \\[2mm]
        &\leq 2^{s}M_2n^{2(\lambda+1)+s/2}\|\tilde{\alpha}\|_{\infty}^{\lambda+1}e^{n\int_0^{t_1}\langle b(\tau),\tilde{\xi}\rangle+\langle f(\tau),\tilde{\alpha}\rangle d\tau}(\|\tilde{\alpha}\|_{\infty}+1)^{s/2}\\[2mm]
        &= M_2'n^{2(\lambda+1)+s/2}e^{n\int_0^{t_1}\langle b(\tau),\tilde{\xi}\rangle+\langle f(\tau),\tilde{\alpha}\rangle d\tau},
    \end{align*}
where $t_1\in\supp(v_0')\subset\Omega_{m_0}^{\tilde{\xi},\tilde{\alpha}}$ is a point where the  restriction of the exponential function achieves its maximum value over $\supp(v_0')$. 

We define 
$$\omega = \int_0^{t_1}\langle b(\tau),\tilde{\xi}\rangle+\langle f(\tau),\tilde{\alpha}\rangle d\tau - m_0.$$ 

Notice that $\omega < 0$, since $t_1\in\Omega_{m_0}^{\tilde{\xi},\tilde{\alpha}}$.

By using the previous inequalities, we can establish the following:
    \begin{align*}
	\left(\sum_{|\mu|\leq \lambda}\sup_{(t,x,y)\in\T^{r+1}\times\S3}|\partial^\mu g_n(t,x,y)|\right) & \left(\sum_{|\mu|\leq\lambda} \sup_{(t,x,y) \in\G}| \partial^\mu (\Lt v_n)(t,x,y)|\right) \\[3mm]
	&\leq M_1''M_2'n^{4\lambda+3+s}e^{n\omega}.
\end{align*}

Consequently, by \eqref{eqgnvngs} and \eqref{eqgnvn}, we have:
\begin{align*}
	0 = \lim_{n\to\infty}\left|\int_{\T^{r+1}\times\S3}g_nv_n\right| \geq \ell_0 > 0.
\end{align*}
This leads to a contradiction, proving that $L$ cannot be globally solvable.
\end{proof}

Next, we prove the necessity of the conditions on the global properties of the operator $L_0$. First, let us recall the Diophantine Condition (DC) defined in the previous section, as stated in Definition \ref{DC_condition_const_coef_section}:

The operator $L_0$ satisfies the Diophantine Condition (DC) if there exist $M$ and $N$ such that:
\begin{equation}\label{DC_lemma_4.5}
	|\tau + \langle c_0,\xi\rangle + \langle d_0,\alpha\rangle - iq| \geq M(|\tau| + |\xi| + |\ell|)^{-N}
\end{equation}
for every $(\tau,\xi)\in\mathbb{Z}^{r+1}$, $\ell\in\frac{1}{2}\N_0^s$, $-\ell\leq \alpha\leq \ell$, and $\ell-\alpha\in\N_0^s$, where neither the left-hand side of the inequality is zero nor $(\tau,\xi,\ell)$ is the zero vector.

We will make use of the following lemma to establish the equivalence:

\

\begin{lemma}\label{lemmadiofequi}
The following statements are equivalent:
	\begin{enumerate}[1.]
		\item There exists $C>0$ and $N>0$ such that
		$$\left|1-e^{\pm2\pi i(\langle c_0,\xi\rangle+\langle d_0,\alpha\rangle-iq)}\right|\geq C (|\xi|+|\ell|)^{-N}$$
		for all $\xi\in\Z^r$, $\ell\in\frac{1}{2}\N_0^{s}$, $-\ell\leq \alpha\leq \ell$,\,$\ell-\alpha\in\N_0^s$ such that $\langle c_0,\xi\rangle+\langle d_0,\alpha\rangle-iq\not\in\Z$ and $(\xi,\ell)\neq \0.$
		\item The operator $L_0$ satisfies the Diophantine Condition (DC).
	\end{enumerate}
\end{lemma}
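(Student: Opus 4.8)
The plan is to prove the equivalence by relating the size of $|1-e^{\pm 2\pi i z}|$ to the distance from $z$ to the integers, where $z = \langle c_0,\xi\rangle+\langle d_0,\alpha\rangle-iq$. The key elementary fact is that for a complex number $z = u + iv$ with $u,v\in\R$, one has
$$\left|1-e^{\pm 2\pi i z}\right|^2 = \left|1-e^{\mp 2\pi v}e^{\pm 2\pi i u}\right|^2 = 1 - 2e^{\mp 2\pi v}\cos(2\pi u)+e^{\mp 4\pi v},$$
and that this quantity is comparable (up to constants depending only on a bound for $v$, which here is controlled by $|q|$ and hence uniform) to $|z - n|^2$ where $n$ is the nearest integer to $\Re(z)$ — more precisely, comparable to $\operatorname{dist}(\Re z,\Z)^2 + (\Im z)^2 = \operatorname{dist}(z,\Z)^2$. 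I would isolate this as the core analytic estimate: there are constants $0<c_1\le c_2$, depending only on an a priori bound $R$ for $|\Im z|$, such that $c_1\operatorname{dist}(z,\Z)\le |1-e^{\pm 2\pi i z}|\le c_2\operatorname{dist}(z,\Z)$ for all $z$ with $|\Im z|\le R$. The upper bound is immediate from the mean value theorem / Lipschitz continuity of $w\mapsto 1-e^{\pm 2\pi i w}$ on the relevant strip; the lower bound requires noting that $1-e^{\pm2\pi i w}$ vanishes precisely at $w\in\Z$, that these zeros are simple, and a compactness argument on a fundamental strip $\{|\Re w|\le 1/2,\ |\Im w|\le R\}$ to get a uniform lower bound away from $\Z$.

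With that estimate in hand, the equivalence is essentially a translation. For the direction (2) $\Rightarrow$ (1): if $L_0$ satisfies (DC), then for $\xi,\ell,\alpha$ with $z:=\langle c_0,\xi\rangle+\langle d_0,\alpha\rangle-iq\notin\Z$ and $(\xi,\ell)\ne\0$, pick $\tau=-n$ where $n\in\Z$ is a nearest integer to $\Re(z)$ (note $|\tau|\lesssim |\xi|+|\ell|+|q|$, so $|\tau|+|\xi|+|\ell| \lesssim |\xi|+|\ell|$ up to an additive constant absorbed into the exponent). Then (DC) gives $|\tau + \langle c_0,\xi\rangle+\langle d_0,\alpha\rangle - iq| = |z-n| = \operatorname{dist}(z,\Z) \ge M(|\tau|+|\xi|+|\ell|)^{-N}\gtrsim (|\xi|+|\ell|)^{-N}$, and then the lower half of the core estimate yields $|1-e^{\pm 2\pi iz}|\ge c_1\operatorname{dist}(z,\Z)\gtrsim(|\xi|+|\ell|)^{-N}$, which is statement (1) after adjusting constants. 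For (1) $\Rightarrow$ (2): given $(\tau,\xi)\in\Z^{r+1}$, $\ell\in\frac12\N_0^s$, $\alpha$ admissible with $\sigma_{L_0}(\tau,\xi,\alpha)=\tau+z\ne 0$, I must bound $|\tau+z|$ below. If $z\in\Z$ then $\tau + z\in\Z\setminus\{0\}$ so $|\tau+z|\ge 1$ and (DC) holds trivially with room to spare. If $z\notin\Z$, then $|\tau+z|\ge\operatorname{dist}(z,\Z)\ge c_2^{-1}|1-e^{\pm2\pi iz}|\ge (C/c_2)(|\xi|+|\ell|)^{-N}$ by (1) and the upper half of the core estimate; finally replace $|\xi|+|\ell|$ by $|\tau|+|\xi|+|\ell|$ — since $\operatorname{dist}(z,\Z)\le 1/2$ is the only interesting case, $|\tau|$ is forced to be comparable to $|\Re z|\lesssim |\xi|+|\ell|+|q|$, so $(|\tau|+|\xi|+|\ell|)^{-N}\gtrsim(|\xi|+|\ell|)^{-N}$ up to a multiplicative constant. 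This gives (DC).

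I would organize the write-up as: first state and prove the core two-sided estimate $c_1\operatorname{dist}(z,\Z)\le|1-e^{\pm 2\pi i z}|\le c_2\operatorname{dist}(z,\Z)$ on the strip $|\Im z|\le R$ (with $R$ chosen to dominate $|\Im z| = |\langle b_0,\xi\rangle + \langle f_0,\alpha\rangle - \Re(q)|$... wait, this is \emph{not} bounded in general). This is the subtlety I expect to be the main obstacle: $\Im(z) = \langle b_0,\xi\rangle+\langle f_0,\alpha\rangle - \Re(q)$ need \emph{not} be bounded, so the uniform two-sided comparison on a fixed strip does not directly apply. I would handle it by observing that when $|\Im z|$ is large, both sides behave well anyway: $|1-e^{\pm 2\pi i z}|$ is then bounded below by a positive constant (it is close to $1$ or grows exponentially), while $\operatorname{dist}(z,\Z)\ge|\Im z|$ is also large; and $|\tau + z|\ge|\Im z|$ too. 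So one splits into the regime $|\Im z|\le R$ (use the compactness estimate) and $|\Im z|> R$ (both conditions hold for trivial reasons, with $R$ a fixed absolute-ish constant like $R=1$). Thus the final structure is a short lemma on $|1-e^{\pm2\pi iz}|$ followed by the two implications, each a two-case argument (integer vs.\ non-integer value of $z$, and bounded vs.\ unbounded imaginary part), with all constant-juggling between $|\xi|+|\ell|$ and $|\tau|+|\xi|+|\ell|$ done via the observation that the estimates are only nontrivial when $\operatorname{dist}(z,\Z)$ is small, forcing $|\tau|$ comparable to $|\Re z|$.
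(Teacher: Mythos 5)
Your proposal is correct, but it runs along a genuinely different route from the paper's. The paper proves both implications by contraposition: it extracts a sequence violating one condition and, using elementary inequalities ($|1-e^{a+bi}|\geq|1-e^{a}|$, mean-value-theorem bounds for $1-\cos$, $\sin$ and $1-e^{x}$, and a sine comparison), shows the other condition fails along the same sequence; the unboundedness of $\Im z=\langle b_0,\xi\rangle+\langle f_0,\alpha\rangle-\Re(q)$ never intervenes there because the assumed smallness along the sequence forces exactly these quantities to tend to $0$. You instead isolate a clean two-sided comparison $c_1\operatorname{dist}(z,\Z)\leq|1-e^{\pm2\pi iz}|\leq c_2\operatorname{dist}(z,\Z)$ on a fixed strip $|\Im z|\leq R$ (Lipschitz bound for the upper half; simple zeros, periodicity and compactness for the lower half), dispose of the regime $|\Im z|>R$ trivially (there $|1-e^{\pm2\pi iz}|\geq 1-e^{-2\pi R}$ while $|\tau+z|\geq|\Im z|>R$), and then both implications become direct translations with $\tau$ taken to be minus the nearest integer to $\Re z$; your bookkeeping is right that in (2)$\Rightarrow$(1) one needs $|\tau|\leq C(|\xi|+|\ell|)$ (true up to a constant depending on $a_0,e_0,q$ since $|\xi|+|\ell|\geq\tfrac12$), whereas in (1)$\Rightarrow$(2) the passage from $(|\xi|+|\ell|)^{-N}$ to $(|\tau|+|\xi|+|\ell|)^{-N}$ is automatic because the latter is smaller. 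What your approach buys is a reusable quantitative lemma and a direct proof with explicit constants, at the cost of the compactness step; the paper's version is more elementary pointwise but only yields the estimates asymptotically along contradiction sequences. One small item to add when you write it up: in (1)$\Rightarrow$(2) the case $(\xi,\ell)=\0$, $\tau\neq0$ is not covered by condition (1) (and $(|\xi|+|\ell|)^{-N}$ is meaningless there); it is settled in one line, since then $z=-iq$ is a fixed number and $\inf\{|\tau+z|:\tau\in\Z,\ \tau+z\neq0\}>0$.
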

\begin{proof}
	Suppose that the statement 1 does not hold. This means that for every $n\in\N$, there exists $\xi(n)\in\Z^r$, $\ell(n)\in\frac{1}{2}\N_0^s$, and $\alpha(n)\in\frac{1}{2}\Z$ such that:
	$$0<\left|1-e^{\pm2\pi i(\langle c_0,\xi(n)\rangle+\langle d_0,\alpha(n)\rangle-iq)}\right|<\frac{1}{n}\left(|\xi(n)|+|l(n)|\right)^{-n}.$$

	This implies that $|\Re(q) - \langle b_0,\xi(n)\rangle - \langle f_0,\alpha(n)\rangle| \to 0$ and there exists a sequence of integers $(\tau_n)_n$ such that:
	$$|\tau_n+\langle a_0,\xi(n)\rangle+\langle e_0,\alpha(n)\rangle+\Im(q)|\to0.$$

	Moreover, it is easy to verify that for $a,b\in\R$,
	$$|1-e^{a+bi}|^2\geq|1-e^a|^2.$$

	Therefore, for sufficiently large $n$, by the Mean Value Theorem, we have:
	\begin{align*}
		\left|1-e^{\pm2\pi i(\langle c_0,\xi(n)\rangle+\langle d_0,\alpha(n)\rangle-iq)}\right|&\geq \left|1-e^{\pm2\pi (\Re(q)-\langle b_0,\xi(n)\rangle-\langle f_0,\alpha(n)\rangle)}\right|\\
		&\geq e^{-1}2\pi|\Re(q)-\langle b_0,\xi(n)\rangle-\langle f_0,\alpha(n)\rangle|.
	\end{align*}

	Moreover, since $\frac{\sin(2\pi x)}{2\pi x} \to 1$ as $x \to 0$, for all sufficiently large $n$, we have:
	\begin{align*}
		|\sin(2\pi(\tau_n+\langle a_0,\xi(n)\rangle+\langle e_0,\alpha(n)\rangle+\Im(q))|\geq \pi |\tau_n+\langle a_0,\xi(n)\rangle+\langle e_0,\alpha(n)\rangle+\Im(q)|.
	\end{align*}

	Therefore, for all sufficiently large $n$, we have:
	\begin{align*}
		&\pi |\tau_n+\langle a_0,\xi(n)\rangle+\langle e_0,\alpha(n)\rangle+\Im(q)|\leq |\sin(2\pi(\tau_n+\langle a_0,\xi(n)\rangle+\langle e_0,\alpha(n)\rangle+\Im(q))|\\
		&\leq 2e^{2\pi(\Re(q)-\langle b_0,\xi(n)\rangle-\langle f_0,\alpha(n)\rangle)}|\sin(2\pi(\tau_n+\langle a_0,\xi(n)\rangle+\langle e_0,\alpha(n)\rangle+\Im(q))|\\
		&=2\left|\Im(1-e^{\pm2\pi i(\langle c_0,\xi(n)\rangle+\langle d_0,\alpha(n)\rangle-iq)})\right|\\
		&\leq \left|1-e^{\pm2\pi i(\langle c_0,\xi(n)\rangle+\langle d_0,\alpha(n)\rangle-iq)}\right|.
	\end{align*}
	
	Combining this with the previous inequalities, we can conclude that there exists $C > 0$ such that: 
	\begin{align*}
		0<|\tau_n+\langle c_0,\xi(n)\rangle+\langle d_0,\alpha(n)\rangle|&\leq  |\tau_n+\langle a_0,\xi(n)\rangle+\langle e_0,\alpha(n)\rangle+\Im(q)|+\\
		&+|\Re(q)-\langle b_0,\xi(n)\rangle-\langle f_0,\alpha(n)\rangle|\\
		&\leq C\left|1-e^{\pm2\pi i(\langle c_0,\xi(n)\rangle+\langle d_0,\alpha(n)\rangle-iq)}\right|\\
		&\leq \frac{C}{n}\left(|\xi(n)|+|\ell(n)|\right)^{-n}.
	\end{align*}

	Hence, condition 2 cannot hold. Indeed, if it did hold, then for some $K, N > 0$, for every $n \in \N$, we would have:
	\begin{align*}
		K\left(|\tau_n|+|\xi(n)|+|\ell(n)|\right)^{-N}\leq |\tau_n+\langle c_0,\xi(n)\rangle+\langle d_0,\alpha(n)\rangle-iq|<\frac{C}{n}\left(|\xi(n)|+|\ell(n)|\right)^{-n}.
	\end{align*}
	Thus 
	\begin{align*}
		0<\frac{K}{C}&<\frac{1}{n}\left(|\xi(n)|+|\ell(n)|\right)^{-n}\left(|\tau_n|+|\xi(n)|+|\ell(n)|\right)^{N}\\
		&=\frac{1}{n}\left(|\xi(n)|+|\ell(n)|\right)^{-n+N}\left(\frac{|\tau_n|}{|\xi(n)|+|\ell(n)|}+1\right)^N.
	\end{align*}

	But since $|\tau_n+\langle a_0,\xi(n)\rangle+\langle e_0,\ell(n)\rangle+\Im(q)|<1$ for all sufficiently large $n$, 
	$$|\tau_n|\leq 1+(|\xi(n)|+|\ell(n)|)M+|\Im(q)|,$$
	where $0\leq M= \max\{|a_{10}|,\dots,|a_{r0}|,|e_{10}|,\dots,|e_{s0}|\}$.
	
	Therefore
	$$0<\frac{K}{C}<\frac{1}{n}\left(|\xi(n)|+|\ell(n)|\right)^{-n+N}\left(\frac{1+|\Im(q)|}{|\xi(n)|+|\ell(n)|}+M+1\right)^{N}.$$

	This leads to a contradiction since the right-hand side of the inequality tends to $0$ as $n$ approaches infinity.

	Conversely, suppose that condition 2 does not hold. Then, for every $n\in\N$, there exist $\tau_n\in\Z$, $\xi(n)\in\Z^r$, $\ell(n)\in\frac{1}{2}\N_0^s$, $-\ell(n)\leq \alpha(n)\leq \ell(n)$, and $\ell(n)-\alpha(n)\in\N_0^s$ such that:
	\begin{align*}
		0<|\tau_n+\langle c_0,\xi(n)\rangle+\langle d_0,\alpha(n)\rangle-iq|<\frac{1}{n}(|\tau_n|+|\xi(n)|+|\ell(n)|)^{-n}.
	\end{align*}

	In particular, the middle term tends to $0$ as $n$ approaches infinity. Now, we can define, for every $n$:
	$$\gamma_n+ i\mu_n = \tau_n+\langle c_0,\xi(n)\rangle+\langle d_0, \alpha(n) \rangle - iq,$$
	where each $\gamma_n,\mu_n\in \R$. 
	
	Using this notation, we have
	\begin{align*}
		\left|1-e^{\mp2\pi i(\langle c_0,\xi(n)\rangle+\langle d_0,\alpha(n)\rangle-iq)}\right|&\leq | 1-e^{\pm2\pi \mu_n}\cos(2\pi\gamma_n)|+e^{\pm2\pi \mu_n}|\sin(2\pi\gamma_n)|.
	\end{align*}

	But since, for $x,y\in\R$, 
	$$|1-e^x\cos(y)|\leq |1-\cos(y)|+|1-e^x|.$$
	From the previous inequality:
	\begin{align*}
		\left|1-e^{\pm2\pi i(\langle c_0,\xi(n)\rangle+\langle d_0,\alpha(n)\rangle-iq)}\right|&\leq|1-\cos(2\pi\gamma_n)|+|1-e^{\pm2\pi \mu_n}|+e^{\pm2\pi\mu_n}|0-\sin(2\pi\gamma_n)|\\
		&\leq C_12\pi|\gamma_n|+C_2 2\pi|\mu_n|+C_32\pi|\gamma_n|\\
		&\leq C(|\gamma_n|+|\mu_n|),
	\end{align*}
	for some $C_1,C_2,C_3,C>0$, where we used the mean value theorem for each term, and the fact that $\mu_n$ and $\gamma_n$ tend to $0$. 
	
	Therefore,
	\begin{align*}
		\left|1-e^{\pm2\pi i(\langle c_0,\xi(n)\rangle+\langle d_0,\alpha(n)\rangle-iq)}\right|&\leq 2C|\gamma_n+i\mu_n|\\
		&<2\frac{C}{n}(|\tau_n|+|\xi(n)|+|\ell(n)|)^{-n}\\
		&\leq2\frac{C}{n}(|\xi(n)|+|\ell(n)|)^{-n}.
	\end{align*}
	Hence, we conclude that condition 1 does not hold.
\end{proof}

\begin{prop}\label{propl0notgsgh}
	Suppose $(b_0,f_0)\neq\0$. If the operator $L_0$ is not globally solvable, then neither the operator $L$ is globally solvable nor globally hypoelliptic.
\end{prop}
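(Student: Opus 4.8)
The plan is to run the classical ``violation of Diophantine inequalities'' argument at the level of partial Fourier coefficients, producing one family of Fourier modes that simultaneously obstructs solvability and hypoellipticity. First I would invoke Proposition~\ref{proprealpartcte} to assume $c_j(t)=a_{j0}+ib_j(t)$, $d_k(t)=e_{k0}+if_k(t)$ (this changes neither $(b_0,f_0)$ nor $L_0$). Since $L_0$ is not globally solvable, it fails \eqref{DC_lemma_4.5} by Proposition~\ref{propctegs}, so Lemma~\ref{lemmadiofequi} yields $\xi(n)\in\Z^r$, $\ell(n)\in\frac12\N_0^s$ and $\alpha(n)$ with $-\ell(n)\le\alpha(n)\le\ell(n)$, $\ell(n)-\alpha(n)\in\N_0^s$, such that, putting $z_n:=\langle c_0,\xi(n)\rangle+\langle d_0,\alpha(n)\rangle-iq$ and $R_n:=|\xi(n)|+|\ell(n)|$, we have $z_n\notin\Z$ and $0<|1-e^{\pm2\pi i z_n}|<\frac1nR_n^{-n}$. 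Passing to a subsequence I may take $R_n$ strictly increasing with $R_n\ge n$ (a bounded $R_n$ would leave only finitely many modes, and $z_n\notin\Z$ would then force a positive lower bound). Because $|1-e^{\pm2\pi i z_n}|\to0$, the quantity $P_{0,n}:=\langle b_0,\xi(n)\rangle+\langle f_0,\alpha(n)\rangle=\Re q+\Im z_n$ stays bounded. Writing the $t$-ODE coefficient of Remark~\ref{obseqedo} as $\lambda_n-P_n(t)$, with $\lambda_n:=i(\langle a_0,\xi(n)\rangle+\langle e_0,\alpha(n)\rangle)+q$ and $P_n(t):=\langle b(t),\xi(n)\rangle+\langle f(t),\alpha(n)\rangle$, the integrating factor $\mu_n(t):=\exp\!\big(\int_0^t(\lambda_n-P_n(\tau))\,d\tau\big)$ satisfies $\mu_n(2\pi)=e^{2\pi i z_n}$, so $\mu_n(2\pi)\ne1$ with $|\mu_n(2\pi)-1|<\frac1nR_n^{-n}$, and $(\xi(n),\alpha(n))\notin\mathcal N$.

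The crux is the construction of the obstruction. Let $Q_n(t):=\int_0^tP_n(\tau)\,d\tau-P_{0,n}t=\big\langle\int_0^t(b-b_0),\xi(n)\big\rangle+\big\langle\int_0^t(f-f_0),\alpha(n)\big\rangle$; this is $2\pi$-periodic, $Q_n(0)=0$, and $|Q_n''|\le C_0R_n$ with $C_0$ (and the constants below) depending only on $b,f,q$. Pick a minimiser $t_n^*$ of $Q_n$ on $\T^1$; then $Q_n(t_n^*)\le0$, and Taylor's formula gives $Q_n\le C_0/2$ on the window $W_n:=\{|t-t_n^*|<R_n^{-1/2}\}$. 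Choose $\psi_n\in C^\infty(\T^1)$ with $0\le\psi_n\le1$, $\supp\psi_n\subset W_n$, $\psi_n\equiv1$ on the concentric half-window, $|\psi_n^{(k)}|\le C_kR_n^{k/2}$, and put $\varepsilon_n:=|\mu_n(2\pi)-1|^{1/2}$. Define $g$ by
\[
\widehat g(\cdot,\xi(n),\ell(n))_{\alpha(n)\alpha(n)}:=\varepsilon_n\,\overline{\mu_n(\cdot)}\,|\mu_n(\cdot)|^{-1}\,\psi_n(\cdot)=\varepsilon_n\,e^{-i(\langle a_0,\xi(n)\rangle+\langle e_0,\alpha(n)\rangle+\Im q)\,\cdot}\,\psi_n(\cdot),
\]
and $\widehat g\equiv0$ elsewhere. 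The prefactor $\varepsilon_n$ is super-polynomially small in $R_n$ while $\psi_n$ and the exponential grow only polynomially, so $g\in C^\infty(\G)$ by Proposition~\ref{lemmadecaysmoothpartial}; and $g\in(\ker\Lt)^0$ by Lemma~\ref{lemma0Nimpliesanul}, since $g$ lives on indices with $(\xi(n),\alpha(n))\notin\mathcal N$.

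Suppose, for contradiction, that $Lu=g$ with $u\in\mathscr{D}'(\G)$. By Remark~\ref{obseqedo} and Lemma~\ref{lemmaodesol} (valid since $\mu_n(2\pi)\ne1$), $\widehat u_n:=\widehat u(\cdot,\xi(n),\ell(n))_{\alpha(n)\alpha(n)}$ is the unique periodic solution of $\widehat u_n'+(\lambda_n-P_n)\widehat u_n=\widehat g(\cdot,\xi(n),\ell(n))_{\alpha(n)\alpha(n)}$, whence $\widehat u_n(0)=\big(\mu_n(2\pi)-1\big)^{-1}\int_0^{2\pi}\mu_n(s)\,\widehat g(s,\xi(n),\ell(n))_{\alpha(n)\alpha(n)}\,ds$. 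Here the integrand equals $\varepsilon_n\,e^{\Re q\,s}\,e^{-\int_0^sP_n}\psi_n(s)\ge0$, and on $\supp\psi_n$ the factor $e^{-\int_0^sP_n}=e^{-Q_n(s)}e^{-P_{0,n}s}$ is bounded below by a constant; hence $\int_0^{2\pi}\mu_n\widehat g(\cdot,\xi(n),\ell(n))_{\alpha(n)\alpha(n)}\ge c\,\varepsilon_nR_n^{-1/2}$ and
\[
|\widehat u_n(0)|\ \ge\ c\,R_n^{-1/2}\,|\mu_n(2\pi)-1|^{-1/2}\ \ge\ c\,n^{1/2}R_n^{(n-1)/2},
\]
which outgrows every power of $R_n$; by Proposition~\ref{lemmadecaysmoothpartial} this contradicts $u\in\mathscr{D}'(\G)$, so $L$ is not globally solvable. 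For hypoellipticity I recycle the same modes: let $u\in\mathscr{D}'(\G)$ have $\widehat u(\cdot,\xi(n),\ell(n))_{\alpha(n)\alpha(n)}:=\widehat u_n/\|\widehat u_n\|_{L^\infty(\T^1)}$ and all other coefficients zero. The coefficients now have sup-norm $1$, so $u\in\mathscr{D}'(\G)\setminus C^\infty(\G)$, while $\widehat{Lu}(\cdot,\xi(n),\ell(n))_{\alpha(n)\alpha(n)}=\widehat g(\cdot,\xi(n),\ell(n))_{\alpha(n)\alpha(n)}/\|\widehat u_n\|_{L^\infty(\T^1)}$ has $C^m$-norms $\lesssim\varepsilon_nR_n^m/(c\,n^{1/2}R_n^{(n-1)/2})$, which is super-polynomially small in $R_n$; thus $Lu\in C^\infty(\G)$ by Proposition~\ref{lemmadecaysmoothpartial} and $L$ is not globally hypoelliptic.

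The step I expect to be the real obstacle is controlling the factor $e^{-\int_0^sP_n}$: the variable imaginary parts make $|\mu_n|$ range over an interval of size $e^{O(R_n)}$, so a carelessly chosen right-hand side would be annihilated. The remedy is exactly to place $\psi_n$ at the minimum of $t\mapsto\int_0^tP_n(\tau)\,d\tau$ — which is $\le0$ since it vanishes at $t=0$ — on a window of width $R_n^{-1/2}$ dictated by $|Q_n''|\lesssim R_n$, and then to let the super-polynomial smallness of $|\mu_n(2\pi)-1|=|1-e^{2\pi i z_n}|$, supplied by Lemma~\ref{lemmadiofequi}, swallow all remaining polynomial losses (in $R_n$, in $\psi_n$, and in $\lambda_n$).
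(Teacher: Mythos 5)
Your argument is correct, but it takes a genuinely different route from the paper's. The paper first invokes Corollary~\ref{corocoefficients} and Proposition~\ref{propchangessign} to reduce to the normal form $b(t)=\tilde b(t)\lambda$, $f(t)=\tilde b(t)\gamma$ with $\tilde b$ of constant sign (this is precisely where the hypothesis $(b_0,f_0)\neq\0$ is used), and then builds $g$ from a \emph{fixed} bump near $t=\pi$ scaled by $\omega_n$ or $\kappa_n$ and a sign-dependent normalizing exponential; the constant sign of $\tilde b$ is what yields both the lower bound $|\widehat u(\pi,\xi(n),\ell(n))_{\alpha(n)\alpha(n)}|\geq\delta/2$ (non-solvability) and the uniform boundedness of the same coefficients (non-hypoellipticity). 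You instead keep $b,f$ general, place a shrinking bump of width $R_n^{-1/2}$ at the minimizer of the primitive $Q_n$ of the oscillatory part (the bound $|Q_n''|\lesssim R_n$ playing the role the sign condition plays in the paper), twist by the unimodular part of the integrating factor so the relevant integral is positive, and split $|1-e^{2\pi i z_n}|^{1/2}$ between the smoothness of $g$ and the growth of $\widehat u_n(0)$; non-hypoellipticity is then obtained by sup-norm renormalization rather than a boundedness proof. What each buys: the paper's proof is shorter because the reduction is already available at that stage; yours is self-contained and in fact never uses $(b_0,f_0)\neq\0$, so it proves the stronger implication ``$L_0$ not globally solvable $\Rightarrow$ $L$ neither globally solvable nor globally hypoelliptic'' in full generality (consistent with Theorems~\ref{teogsmain} and \ref{teoghmain}). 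Two small imprecisions, neither fatal: the huge value $|\widehat u_n(0)|$ contradicts $u\in C^\infty(\G)$ via Proposition~\ref{lemmadecaysmoothpartial}(i), which is all that global solvability requires, but it does not by itself contradict $u\in\mathscr{D}'(\G)$ (part (ii) controls pairings, not pointwise values), so phrase the contradiction for smooth $u$; and your parenthetical justification that bounded $R_n$ forces a positive lower bound needs the modes with $|\xi|+|\ell|<1$ to be set aside first (the same wrinkle occurs in the paper's own use of Lemma~\ref{lemmadiofequi}), after which passing to a subsequence with $R_n\geq n$ is legitimate.
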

\begin{proof}
First we will now prove that $L$ is not globally solvable.  By Corollary \ref{corocoefficients} and Proposition \ref{propchangessign}, we can assume that
there are $\tilde{b}\in C^\infty(\T^1)$, $\lambda\in\R^r$, and $\gamma\in\R^s$ such that
	$$b(t)=\tilde{b}(t)\lambda, \mbox{ and \ } \, f(t)=\tilde{b}(t)\gamma, \ \ t\in\T^1$$

Since $L_0$ is not globally solvable, it does not satisfy \eqref{DC_condition_const_coef_section} according to Proposition \ref{propctegs}. By Lemma \ref{lemmadiofequi}, and by considering subsequences if necessary, there exist sequences $(\xi(n))_{n\in\N}$ in $\Z^{r}$, $(\ell(n))_{n\in\N}$ in  $\frac{1}{2}\N_0^s$, and $(\alpha(n))_{n\in\N}$ in $\frac{1}{2}\Z^s$ such that for every $n\in\N$, 
$$|\xi(n)|+|\ell(n)|\geq n, \quad \langle c_0,\xi(n)\rangle+\langle d_0,\alpha(n)\rangle-iq\not\in \Z,$$
and, for every $n\in\N$, the following inequalities hold with $C>0$:
	\begin{align*}
		|\omega_n| &= \left|1-e^{-2\pi i(\langle c_0,\xi(n)\rangle+\langle d_0,\alpha(n)\rangle-iq)}\right|<C(|\xi(n)|+|\ell(n)|)^{-n}\\
		|\kappa_n| &=  \left| \ e^{2\pi i(\langle c_0,\xi(n)\rangle+\langle d_0,\alpha(n)\rangle-iq)} - 1 \, \right|<C(|\xi(n)|+|\ell(n)|)^{-n}.
	\end{align*}

Choose $\delta > 0$ such that $(\frac{\pi}{2}-\delta,\frac{\pi}{2}+\delta)\subset(0,\pi)$, and let $\phi \in C^\infty_c((\frac{\pi}{2}-\delta,\frac{\pi}{2}+\delta))$ be a function satisfying $0 \leq \phi(t) \leq 1$ and $\phi \equiv 1$ in a neighborhood of $[\frac{\pi-\delta}{2},\frac{\pi+\delta}{2}]$. 

For each $n\in\N$, define $\widehat{g}(\cdot,\xi(n),\ell(n))_{\alpha(n)\alpha(n)}$ to be the $2\pi$-periodic extension of:
$$ 
t\in\T^1 \mapsto \omega_ne^{-(\langle \lambda,\xi(n)\rangle+\langle \gamma,\alpha(n)\rangle)2\pi\tilde{b}_0}\phi(t)e^{i(\pi-t)(\langle a_0,\xi(n)\rangle+\langle e_0,\alpha(n)\rangle)}e^{q(\pi-t)}
$$
when  $(\langle \lambda,\xi(n)\rangle+\langle \gamma,\alpha(n)\rangle)2\pi\tilde{b}_0\geq0$, and 
$$ 
t\in\T^1 \mapsto \kappa_ne^{+(\langle \lambda,\xi(n)\rangle+\langle \gamma,\alpha(n)\rangle)2\pi\tilde{b}_0}\phi(t)e^{i(\pi-t)(\langle a_0,\xi(n)\rangle+\langle e_0,\alpha(n)\rangle)}e^{q(\pi-t)},
$$
otherwise. Moreover,  define $\widehat{g}(t,\xi,\ell)_{\alpha\beta}\equiv 0$ in every other case. 

These choices ensure that $g\in C^{\infty}(\G)\cap(\ker \Lt)^0$ as stated in Propositions \ref{lemmadecaysmoothpartial} and \ref{lemma0Nimpliesanul}. Specifically, the decay properties of $(\omega_n)$ and $(\kappa_n)$ ensure that $g$ decays faster than any polynomial, the fact of the functions $e^{\pm(\langle \lambda,\xi(n)\rangle+\langle \gamma,\alpha(n)\rangle)2\pi\tilde{b}_0}$ to be bounded by $1$ in each case, and the condition $\langle c_0,\xi(n)\rangle+\langle d_0,\alpha(n)\rangle-iq\not\in\Z$, for every $n\in\N$.

Assuming the existence of $u\in C^\infty(\G)$ such that $Lu=g$, we apply Plancherel's Theorem to obtain $\widehat{Lu}(t,\xi(n),\ell(n))_{\alpha(n)\alpha(n)}=\widehat{g}(t,\xi(n),\ell(n))_{\alpha(n)\alpha(n)}$, for every $t\in\T^1$ and $n\in\N$. 
Using Remark \ref{obseqedo}, this leads to the system of differential equations \ref{eqedo}. 

As $\langle c_0,\xi(n)\rangle+\langle d_0,\alpha(n)\rangle-iq\not\in \Z$ for each $n\in\N$, according to Lemma \ref{lemmaodesol}, the coefficients $\widehat{u}(\cdot,\xi(n),\ell(n))_{\alpha(n)\alpha(n)}$ must coincide with the unique solution of these equations, which can be expressed as:	
	\begin{align*}
		\widehat{u}(t,\xi(n),\ell(n))_{\alpha(n)\alpha(n)} = & \int_0^{2\pi}\phi(t-s)e^{(\langle \lambda,\xi(n)\rangle+\langle \gamma,\alpha(n)\rangle)\left(\int_{t-s}^t\tilde{b}(\tau) d\tau-2\pi \tilde{b}_0\right)} \\ 
		& \qquad\qquad \times e^{(\pi-t)i(\langle a_{0},\xi(n)\rangle+\langle e_0,\alpha(n)\rangle)}e^{q(\pi-t)}ds
	\end{align*}
	if $(\langle \lambda,\xi(n)\rangle+\langle \gamma,\alpha(n)\rangle)\tilde{b}_0\geq 0$,  and
	\begin{align*}
		\widehat{u}(t,\xi(n),\ell(n))_{\alpha(n)\alpha(n)} = & \int_0^{2\pi}\phi(t+s)e^{(\langle \lambda,\xi(n)\rangle+\langle \gamma,\alpha(n)\rangle)\left(2\pi \tilde{b}_0-\int_{t}^{t+s}\tilde{b}(\tau) d\tau\right)} \\ 
		& \qquad\qquad \times e^{(\pi-t)i(\langle a_{0},\xi(n)\rangle+\langle e_0,\alpha(n)\rangle)}e^{q(\pi-t)}ds
	\end{align*}
	otherwise.

	Note that the function $$t\in \T^1 \mapsto (\langle \lambda,\xi(n)\rangle+\langle \gamma,\alpha(n)\rangle)\tilde{b}(t)$$ has the same sign as the real number $(\langle \lambda,\xi(n)\rangle+\langle \gamma,\alpha(n)\rangle)\tilde{b}_0$. This is because $\tilde{b}$ does not change sign. Moreover, we have 
	$$\lim_{n\to \infty}\langle \lambda,\xi(n)\rangle+\langle \gamma,\alpha(n)\rangle= 0$$
	which implies that for sufficiently large $n$, we can choose 
	$$\pm\left(\langle \lambda,\xi(n)\rangle+\langle \gamma,\alpha(n)\rangle\right)2\pi\tilde{b}_0>\ln\left(\tfrac{1}{2}\right).$$ 
	
	By selecting a subsequence, we can assume that this holds for every $n\in\N$. Therefore, for each $n\in\N$, if $(\langle \lambda,\xi(n)\rangle+\langle \gamma,\alpha(n)\rangle)\tilde{b}_0\geq 0$, using the previous observations:	
	\begin{align*}
		\widehat{u}(\pi,\xi(n),\ell(n))_{\alpha(n)\alpha(n)} &= \int_0^{2\pi}\phi(\pi-s)e^{(\langle \lambda,\xi(n)\rangle+\langle \gamma,\alpha(n)\rangle)\left(\int_{\pi-s}^\pi\tilde{b}(\tau) d\tau-2\pi \tilde{b}_0\right)}ds\\
		&=e^{-(\langle \lambda,\xi(n)\rangle+\langle \gamma,\alpha(n)\rangle)\tilde2\pi{b}_0}\int_{\pi-\delta}^{\pi+\delta}\phi(\pi-s)e^{(\langle \lambda,\xi(n)\rangle+\langle \gamma,\alpha(n)\rangle)\int_{\pi-s}^\pi\tilde{b}(\tau) d\tau}ds.
	\end{align*}

	Hence
	\begin{align*}
		|\widehat{u}(\pi,\xi(n),\ell(n))_{\alpha(n)\alpha(n)}|\geq \delta e^{- (\langle \lambda,\xi(n)\rangle+\langle \gamma,\alpha(n)\rangle)2\pi\tilde{b}_0}\geq \frac{\delta}{2}> \frac{\delta}{2}(|\xi(n)|+|\ell(n)|)^{-1}.
	\end{align*}

	Similarly, if $(\langle \lambda,\xi(n)\rangle+\langle \gamma,\alpha(n)\rangle)\tilde{b}_0< 0$, then
	\begin{align*}
		|\widehat{u}(\pi,\xi(n),\ell(n))_{\alpha(n)\alpha(n)}| &= \int_0^{2\pi}\phi(\pi+s)e^{(\langle \lambda,\xi(n)\rangle+\langle \gamma,\alpha(n)\rangle)\left(2\pi \tilde{b}_0-\int_{\pi}^{\pi+s}\tilde{b}(\tau) d\tau\right)}ds\\
		&=e^{+ (\langle \lambda,\xi(n)\rangle+\langle \gamma,\alpha(n)\rangle)2\pi\tilde{b}_0}\int_{\pi-\delta}^{\pi+\delta}\phi(\pi+s)e^{-(\langle \lambda,\xi(n)\rangle+\langle \gamma,\alpha(n)\rangle)\int_{\pi}^{\pi+s}\tilde{b}(\tau) d\tau}ds\\
		&\geq\frac{\delta}{2}>\frac{\delta}{2}(|\xi(n)|+|\ell(n)|)^{-1}.
	\end{align*}

Thus, from the previous observations and  Proposition \ref{lemmadecaysmoothpartial}, we can conclude that $u\not\in C^{\infty}(\G)$. Therefore, $L$ is not globally solvable.

Now, we proceed with the proof that $L$ is not globally hypoelliptic. We will continue using the notations and definitions introduced earlier for the function $g$ and the solutions $\widehat{u}(\cdot,\xi(n),\ell(n))\in C^\infty(\T^1)$. Additionally, we define $\widehat{u}(\cdot,\xi,\ell)_{\alpha\beta}\equiv0$ for all other Fourier coefficients.

Based on the previous estimates, these coefficients do not define a function $u\in C^\infty(\G)$. Let us show that these coefficients define a distribution $u\in \mathscr{D}'(\G)$.

As mentioned before, if $(\langle \lambda,\xi(n)\rangle+\langle \gamma,\alpha(n)\rangle)\tilde{b}_0\geq 0$, since $\tilde{b}$ does not change sign, it follows that $(\langle \lambda,\xi(n)\rangle+\langle \gamma,\alpha(n)\rangle)\tilde{b}(t)\geq 0$ for all $t\in\T^1$. Therefore, for $s\in[0,2\pi]$, we have:
	\begin{align*}
		(\langle \lambda,\xi(n)\rangle+\langle \gamma,\alpha(n)\rangle)\int_{t-s}^t\tilde{b}(\tau) d\tau&\leq(\langle \lambda,\xi(n)\rangle+\langle \gamma,\alpha(n)\rangle)\int_{0}^{2\pi}\tilde{b}(\tau) d\tau\\
		&=(\langle \lambda,\xi(n)\rangle+\langle \gamma,\alpha(n)\rangle)2\pi\tilde{b}_0.
	\end{align*}
	Thus, we have \ 
		$e^{(\langle \lambda,\xi(n)\rangle+\langle \gamma,\alpha(n)\rangle)\left(\int_{t-s}^{t}\tilde{b}(\tau) d\tau-2\pi \tilde{b}_0\right)}\leq 1,$ for all $n\in\N.$

Similarly, if $(\langle \lambda,\xi(n)\rangle+\langle \gamma,\alpha(n)\rangle)\tilde{b}_0< 0$, then $(\langle \lambda,\xi(n)\rangle+\langle \gamma,\alpha(n)\rangle)\tilde{b}(t)\leq 0$ for all $t\in\T^1$. Thus, for $s\in[0,2\pi]$, we have:
	\begin{align*}
		(\langle \lambda,\xi(n)\rangle+\langle \gamma,\alpha(n)\rangle)\int_{t}^{t+s}\tilde{b}(\tau) d\tau&\geq(\langle \lambda,\xi(n)\rangle+\langle \gamma,\alpha(n)\rangle)\int_{0}^{2\pi}\tilde{b}(\tau) d\tau\\
		&=(\langle \lambda,\xi(n)\rangle+\langle \gamma,\alpha(n)\rangle)2\pi\tilde{b}_0.
	\end{align*}
Therefore, we obtain $e^{(\langle \lambda,\xi(n)\rangle+\langle \gamma,\alpha(n)\rangle)\left(2\pi \tilde{b}0-\int{t}^{t+s}\tilde{b}(\tau) d\tau\right)}\leq 1$. 

Hence, in either case, we have:
	$$ |\widehat{u}(t,\xi(n),\ell(n))_{\alpha(n)\alpha(n)}|\leq 2\pi e^{\pi|\Re(q)|}.$$

It follows from Proposition \ref{lemmadecaysmoothpartial} that these coefficients define $u\in \mathscr{D}'(\G)\backslash C^\infty(\G)$. Moreover, it is clear that $Lu=g\in C^\infty(\G)$. Therefore, $L$ is not globally hypoelliptic.
\end{proof}

\begin{corollary}
	Suppose $L_0$ is not globally hypoelliptic. Then $L$ is not globally hypoelliptic.
\end{corollary}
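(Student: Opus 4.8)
The plan is to split according to which clause of Proposition \ref{propghcte} fails for $L_0$: either the zero set $\mathcal{Z}_{L_0}$ is infinite, or $L_0$ violates the Diophantine condition and hence, by Proposition \ref{propctegs}, is not globally solvable.

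I would first treat the case $\mathcal{Z}_{L_0}$ infinite by constructing a singular solution by hand, since here $L_0$ may still be globally solvable and none of Propositions \ref{propchangessign}, \ref{notconnectednotgs}, \ref{propl0notgsgh} applies directly. As $\Z^{r+1}\times\tfrac12\N_0^s$ is discrete, one can pick pairwise distinct elements $(\tau(n),\xi(n),\ell(n))\in\mathcal{Z}_{L_0}$; because $\sigma_{L_0}(\tau,\xi,\alpha)=0$ forces $\tau=-(\langle a_0,\xi\rangle+\langle e_0,\alpha\rangle+\Im q)$, the $\tau$-component stays controlled by the rest, so $|\xi(n)|+|\ell(n)|\to\infty$. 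Fixing a witness $\alpha(n)$ for each $n$ gives $\langle c_0,\xi(n)\rangle+\langle d_0,\alpha(n)\rangle-iq=-\tau(n)\in\Z$, which is exactly the integrality that makes the homogeneous transport equation of Remark \ref{obseqedo} at the index $(\xi(n),\ell(n),\alpha(n),\alpha(n))$ admit the $2\pi$-periodic, nowhere-vanishing smooth solution $\Phi_n(t)=\exp\bigl(-i\int_0^t(\langle c(\tau),\xi(n)\rangle+\langle d(\tau),\alpha(n)\rangle-iq)\,d\tau\bigr)$. Setting $\widehat{u}(t,\xi(n),\ell(n))_{\alpha(n)\alpha(n)}=\Phi_n(t)/\max_t|\Phi_n(t)|$ and all other partial Fourier coefficients to zero, Proposition \ref{lemmadecaysmoothpartial} yields a distribution $u\in\mathscr{D}'(\G)$ (the coefficients are bounded by $1$) that is not smooth (since $\sup_t|\widehat{u}(t,\xi(n),\ell(n))_{\alpha(n)\alpha(n)}|=1\not\to 0$ while $|\xi(n)|+|\ell(n)|\to\infty$), whereas $Lu\equiv 0\in C^\infty(\G)$ by construction and Plancherel. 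Hence $L$ is not globally hypoelliptic.

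For the remaining case, $\mathcal{Z}_{L_0}$ finite, Proposition \ref{propghcte} forces $L_0$ to be not globally solvable. If $(b,f)$ is constant, then the operator $\widetilde{L}$ of \eqref{eqL'} coincides with $L_0$, so Proposition \ref{proprealpartcte} transfers the failure of hypoellipticity from $L_0$ to $L$. Otherwise some $b_j$ or $f_k$ is nonconstant, hence not identically zero; if in addition $(b_0,f_0)\neq\0$, Proposition \ref{propl0notgsgh} applies directly. Finally, if $(b_0,f_0)=\0$ then $\sigma_{L_0}(\tau,\xi,\alpha)=q+i(\tau+\langle a_0,\xi\rangle+\langle e_0,\alpha\rangle)$, and were $(a_0,e_0,q)\in\Z^r\times 2\Z^s\times i\Z$ one could make $\sigma_{L_0}$ vanish for every admissible $(\xi,\alpha)$ (using $e_{k0}\alpha_k\in\Z$), contradicting the finiteness of $\mathcal{Z}_{L_0}$; so $(a_0,e_0,q)\notin\Z^r\times 2\Z^s\times i\Z$ and Corollary \ref{corocoefficients}(A) gives the conclusion.

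The only genuinely new work is the construction in the infinite-$\mathcal{Z}_{L_0}$ case, and the point to be careful with there is twofold: the transport equations must be verified to have honest $2\pi$-periodic — hence uniformly bounded, hence distributional — solutions, which holds precisely because $\langle c_0,\xi(n)\rangle+\langle d_0,\alpha(n)\rangle-iq$ is an integer on $\mathcal{Z}_{L_0}$; and one must check $|\xi(n)|+|\ell(n)|\to\infty$ so that the non-decay of the coefficients really does violate the smoothness characterization of Proposition \ref{lemmadecaysmoothpartial}. Every other case is a routine reduction to results already established.
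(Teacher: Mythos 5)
Your proposal is correct, and its skeleton is the same as the paper's: split via Proposition \ref{propghcte} into ``$\mathcal{Z}_{L_0}$ infinite'' versus ``$L_0$ not globally solvable'', build explicit bounded null solutions of the transport equations in the first case, and quote Proposition \ref{propl0notgsgh} in the second. The differences are refinements rather than a new route, and they are worth noting because they make the argument self-contained where the paper leans on standing reductions. In the infinite-$\mathcal{Z}_{L_0}$ case the paper takes $\widehat{u}(t,\xi(n),\ell(n))_{\alpha(n)\alpha(n)}=\exp\bigl(-i\int_0^t(\langle c,\xi(n)\rangle+\langle d,\alpha(n)\rangle-iq)\,d\tau\bigr)$ and bounds it uniformly using the structural form $b=\tilde{b}\lambda$, $f=\tilde{b}\gamma$ and the identity $\langle\lambda,\xi(n)\rangle+\langle\gamma,\alpha(n)\rangle=\Re(q)/\tilde{b}_0$, i.e.\ it implicitly works after the reductions of Proposition \ref{propchangessign} and Corollary \ref{corocoefficients}; your normalization by $\max_t|\Phi_n(t)|$ removes that dependence and works for arbitrary $(b,f)$, at the harmless cost of having to note (as you do) that the sup of the normalized coefficient is still $1$ while $|\xi(n)|+|\ell(n)|\to\infty$, so Proposition \ref{lemmadecaysmoothpartial} is violated. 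In the finite-$\mathcal{Z}_{L_0}$ case the paper simply cites Proposition \ref{propl0notgsgh}, whose hypothesis is $(b_0,f_0)\neq\0$; your explicit treatment of the leftover branches — $(b,f)$ constant via $\tilde{L}=L_0$ and Proposition \ref{proprealpartcte}, and $(b_0,f_0)=\0$ via the observation that finiteness of $\mathcal{Z}_{L_0}$ forces $(a_0,e_0,q)\notin\Z^r\times2\Z^s\times i\Z$ so that Corollary \ref{corocoefficients}(A) applies — fills in exactly what the paper leaves implicit. Only a cosmetic caveat: when you extract the sequence from $\mathcal{Z}_{L_0}$ you should say you choose it (or pass to a subsequence) so that $|\xi(n)|+|\ell(n)|\to\infty$; this is immediate since only finitely many elements of $\mathcal{Z}_{L_0}$ have $|\xi|+|\ell|$ below any given bound, the $\tau$-component being controlled as you observe.
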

\begin{proof}
	By Proposition \ref{propghcte}, we have two possible cases:
	\begin{enumerate}[(i)]
		\item $L_0$ is not globally solvable;
		\item The set 
		$$\mathcal{Z}_{L_0} =  \left\{(\tau,\xi,\ell) \in \Z^{r+1}\times\mbox{$\frac{1}{2}$} \N_0^s; \sigma_{L}(\tau,\xi,\alpha)= 0,\,\text{for some} -\ell\leq\alpha\leq \ell,\,\ell-\alpha\in\N_0^s\right\}.$$
		is infinite.
	\end{enumerate}
	
	If case (i) holds true, then $L$ is not globally hypoelliptic according to Proposition \ref{propl0notgsgh}.
	
	Now, let us consider case (ii). In this situation, there exists a sequence $(\xi(n),\alpha(n))_{n\in\N}$ in $ \Z^{r} \times \frac{1}{2}\Z^s$ consisting of distinct terms such that $\langle c_0,\xi(n)\rangle + \langle d_0,\alpha(n)\rangle - iq \in \Z$ for every $n\in\N$. Specifically, we have $(\langle \lambda,\xi(n)\rangle + \langle \gamma,\alpha(n)\rangle) = \frac{\Re(q)}{\tilde{b}_0}$ for each $n$.
	
	For each $n\in\N$, $t\in [0,2\pi]$, set $\ell(n) = (|\alpha(n)_1|, \dots,|\alpha(n)_s|) \in\frac{1}{2}\N_0^s$, and define
	$$\widehat{u}(t,\xi(n),\ell(n))_{\alpha(n)\alpha(n)} = e^{-\int_0^ti(\langle c(\tau),\xi(n)\rangle+\langle d(\tau),\alpha(n)\rangle-iq)d\tau}$$
	and $\widehat{u}(t,\xi',\ell')_{\alpha\beta} \equiv0$ otherwise.

These functions are well-defined in $\T^1$. Moreover, we have:
	\begin{align*}
		|\widehat{u}(t,\xi(n),\ell(n))_{\alpha(n)\alpha(n)}| =  e^{-\int_0^t\Re(q)\left(1-\frac{\tilde{b}(\tau)}{\tilde{b}_0}\right)d\tau}\leq \max_{s\in[0,2\pi]} e^{-\int_0^s\Re(q)\left(1-\frac{\tilde{b}(\tau)}{\tilde{b}_0}\right)d\tau}\doteq K >0,
	\end{align*}
for $t\in\T^1$ and $n\in\N$, where we observe that $K > 0$.

Furthermore, since $|\widehat{u}(0,\xi(n),\ell(n))_{\alpha(n)\alpha(n)}|=1$, for all $n\in \N$, by Proposition \ref{lemmadecaysmoothpartial}  we have that these coefficients define $u\in \mathscr{D}'(\G)\backslash C^\infty(\G)$. Moreover, it is evident that $Lu=0\in C^\infty(\G)$, as we can observe from the equation:
	\begin{align*}
		\widehat{Lu}(t,\xi(n),\ell(n))_{\alpha(n)\alpha(n)}&=[\partial_t+i(\langle c(t),\xi(n)\rangle+\langle d(t),\alpha(n)\rangle-iq)]\widehat{u}(t,\xi(n),\ell(n))_{\alpha(n)\alpha(n)}\\
		&=-i(\langle c(t),\xi(n)\rangle+\langle d(t),\alpha(n)\rangle-iq)\widehat{u}(t,\xi(n),\ell(n))_{\alpha(n)\alpha(n)}\\
		&+i(\langle c(t),\xi(n)\rangle+\langle d(t),\alpha(n)\rangle-iq)\widehat{u}(t,\xi(n),\ell(n))_{\alpha(n)\alpha(n)} = 0
	\end{align*}
	for all $n\in\N$. Therefore $L$ is not globally hypoelliptic.
\end{proof}

With the aforementioned results, we have successfully demonstrated the necessity of the conditions stated in Theorems  \ref{teogsmain}, \ref{teoghmain} and \ref{teocs}. Thus, we have completed the proof of their necessity.

%%% ----------------------------------------------------------------------
\subsection{Sufficient Conditions} \
%%% ----------------------------------------------------------------------

In this subsection, we focus on establishing the sufficiency of the conditions stated in Theorems \ref{teogsmain}, \ref{teoghmain}, and \ref{teocs} for the operator $L$ to be globally solvable and globally hypoelliptic. To that end, we begin by proving the following lemma, which will play a crucial role in demonstrating the existence of solutions to the equation $Lu = g$.

\begin{lemma}\label{lemmadecay}
	Let $\widehat{g}(\cdot,\xi,\ell)_{\alpha\beta}\in C^\infty(\T^1)$ be the Fourier coefficients of a smooth function $g$ in $C^\infty(\G)$, and \ $\widehat{u}(\cdot,\xi,\ell)_{\alpha\beta}\in C^\infty(\T^1)$ be a sequence of functions satisfying equations \eqref{eqedo}.
	Suppose there exist $K', M > 0$ such that
	\begin{equation}\label{eqlemmadecayineq}
		|\widehat{u}(t,\xi,\ell)_{\alpha\beta}| \leq K'(|\xi| + |\ell|)^{M}\|\widehat{g}(\cdot,\xi,\ell)_{\alpha\beta}\|_{\infty}
	\end{equation}
	for all $t\in\T^1$ and $(\xi,\ell)\in\Z^r\times\frac{1}{2}\Z^s$ with $-\ell\leq\alpha,\beta\leq \ell$ and $\ell-\alpha,\ell-\beta\in\N_0^s$ (not all equal to zero). Then these coefficients define $u\in C^\infty(\G)$.
\end{lemma}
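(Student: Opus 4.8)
The plan is to verify the smoothness criterion for partial Fourier series, Proposition \ref{lemmadecaysmoothpartial}(i): since the $\widehat{u}(\cdot,\xi,\ell)_{\alpha\beta}$ are already assumed to lie in $C^\infty(\T^1)$, it suffices to show that for every $m\in\N_0$ and every $N>0$ there is a constant $M_{mN}>0$ with
$$
|\partial_t^m\widehat{u}(t,\xi,\ell)_{\alpha\beta}|\leq M_{mN}(|\xi|+|\ell|)^{-N}
$$
for all admissible indices with $(\xi,\ell)\neq\0$. The argument is an induction on $m$, bootstrapping from the transport equations \eqref{eqedo}: a bound on $\widehat{u}$ itself propagates to bounds on all its $t$-derivatives.

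\emph{Base case $m=0$.} Because $g\in C^\infty(\G)$, Proposition \ref{lemmadecaysmoothpartial}(i) applied with derivative order $0$ gives, for every $N'>0$, a constant $\tilde{M}_{N'}>0$ such that $\|\widehat{g}(\cdot,\xi,\ell)_{\alpha\beta}\|_{\infty}\leq\tilde{M}_{N'}(|\xi|+|\ell|)^{-N'}$. Substituting this into hypothesis \eqref{eqlemmadecayineq} yields $|\widehat{u}(t,\xi,\ell)_{\alpha\beta}|\leq K'\tilde{M}_{N'}(|\xi|+|\ell|)^{M-N'}$; given $N$, choosing $N'=N+M$ gives the desired bound. \emph{Inductive step.} Differentiating \eqref{eqedo} $(m-1)$ times and applying the Leibniz rule,
\begin{align*}
\partial_t^m\widehat{u}(t,\xi,\ell)_{\alpha\beta} = \partial_t^{m-1}\widehat{g}(t,\xi,\ell)_{\alpha\beta} - i\sum_{k=0}^{m-1}\binom{m-1}{k}\,\partial_t^{\,m-1-k}\big(\langle c(t),\xi\rangle+\langle d(t),\alpha\rangle-iq\big)\,\partial_t^k\widehat{u}(t,\xi,\ell)_{\alpha\beta}.
\end{align*}
The first term is rapidly decaying by Proposition \ref{lemmadecaysmoothpartial}(i) (smoothness of $g$). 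In each summand, since $c,d$ are fixed smooth functions on $\T^1$ and $|\alpha_j|\leq\ell_j$ for every $j$, one has $|\partial_t^{\,m-1-k}(\langle c(t),\xi\rangle+\langle d(t),\alpha\rangle-iq)|\leq C_m(1+|\xi|+|\ell|)$ for a constant $C_m$ depending only on $m$ and the coefficients, while $\partial_t^k\widehat{u}$ with $k\leq m-1$ is controlled by the induction hypothesis. Thus, given $N$, apply the induction hypothesis with exponent $N+1$ to each $\partial_t^k\widehat{u}$ ($0\leq k\leq m-1$): the product of $(1+|\xi|+|\ell|)$ with $(|\xi|+|\ell|)^{-N-1}$ is bounded by a constant times $(|\xi|+|\ell|)^{-N}$. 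Summing the finitely many terms gives $|\partial_t^m\widehat{u}(t,\xi,\ell)_{\alpha\beta}|\leq M_{mN}(|\xi|+|\ell|)^{-N}$, which completes the induction, and hence $u\in C^\infty(\G)$ by Proposition \ref{lemmadecaysmoothpartial}(i).

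\emph{Main difficulty.} There is no genuine obstacle here; the content is purely the bootstrapping idea together with bookkeeping of the constants. The only point requiring mild care is the at-most-linear growth in $|\xi|+|\ell|$ of the coefficient $\langle c(t),\xi\rangle+\langle d(t),\alpha\rangle-iq$ and of its $t$-derivatives — which relies on $|\alpha|\leq|\ell|$ componentwise and on the boundedness of $c$, $d$ and their derivatives over the compact torus $\T^1$ — since this guarantees that multiplying a rapidly decaying family by such a factor, finitely many times, still produces a rapidly decaying family.
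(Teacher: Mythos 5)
Your proof is correct and follows essentially the same route as the paper: reduce to the decay criterion of Proposition \ref{lemmadecaysmoothpartial}, handle $m=0$ by combining the hypothesis \eqref{eqlemmadecayineq} with the rapid decay of $\widehat{g}$ at exponent $N+M$, and then induct on the derivative order using the equation \eqref{eqedo}, the Leibniz rule, and the at-most-linear growth in $|\xi|+|\ell|$ of the coefficient $\langle c(t),\xi\rangle+\langle d(t),\alpha\rangle-iq$ and its $t$-derivatives. No gaps; the argument matches the paper's proof in both structure and estimates.
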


\begin{proof}
By Proposition \ref{lemmadecaysmoothpartial}, it suffices to prove that for all $m\in\mathbb{N}_0$ and $N>0$, there exists $K'_{mN}>0$ such that
$$ |\partial_t^m\widehat{u}(t,\xi,\ell)_{\alpha\beta}|\leq K'_{mN}(|\xi|+|\ell|)^{-N}.$$
	
	We will prove this by induction on $m$. For $m=0$, by Proposition \ref{lemmadecaysmoothpartial}, for each $N>0$, there exists $K_{0N}>0$ such that
	$$ |\widehat{g}(t,\xi,\ell)_{\alpha\beta}|\leq K_{0N}(|\xi|+|\ell|)^{-(N+M)}$$
	for all $t,\xi,\ell,\alpha,\beta$ as described above (with the condition not all null). Combining this inequality with \eqref{eqlemmadecayineq}, we obtain the desired result.

	Now, assume that the claim holds for every $m\leq k$. Let us prove it for $m=k+1$.
	\begin{align*}
		|\partial_t^{k+1}\widehat{u}(t,\xi,\ell)_{\alpha\beta}| = & \  |\partial_t^k(\partial_t\widehat{u}(t,\xi,\ell)_{\alpha\beta})| = \\[2mm]
		= & \ |\partial^k_t\left(-i(\langle c(t),\xi\rangle+\langle d(t),\alpha\rangle-iq)\widehat{u}(t,\xi,\ell)_{\alpha\beta}+\widehat{g}(t,\xi,\ell)_{\alpha\beta}\right)|\\[2mm]
		\leq &\left|\sum_{m=0}^k{k\choose m}\left[(\langle\partial_t^mc(t),\xi\rangle+\langle \partial_t^m d(t),\alpha\rangle)\partial_t^{k-m}\widehat{u}(t,\xi,\ell)_{\alpha\beta}\right]\right| \\[2mm]
		&+|q\partial_t^{k}\widehat{u}(t,\xi,\ell)_{\alpha\beta}|+|\partial^k_t\widehat{g}(t,\xi,\ell)_{\alpha\beta}|\\
		\leq & \ M\max_{m\leq k}\left\{\max_{j_1=1,\dots,r}\|\partial^m_tc_{j_1}\|_\infty,\max_{j_2=1,\dots,s}\|\partial^m_td_{j_2}\|_\infty\right\}(|\xi|+|\ell|)^1 \\[2mm]
		&\times \max_{m\leq k}|\partial_t^{m}\widehat{u}(t,\xi,\ell)_{\alpha\beta}|
		+|q||\partial_t^{k}\widehat{u}(t,\xi,\ell)_{\alpha\beta}|+|\partial^k_t\widehat{g}(t,\xi,\ell)_{\alpha\beta}|,
	\end{align*}
	for some $M>0$. Thus, by the inductive hypothesis and Proposition \ref{lemmadecaysmoothpartial}, the inequality holds as claimed.
\end{proof}

We now proceed with the proof of sufficiency, which is divided into the following three propositions. We first examine the case where $b_0$ and $f_0$ are zero vectors:

\begin{prop}\label{propb0f0connectedgs}
	Assume that:
	\begin{enumerate}[i.]
		\item some $b_j$ or $f_k$ is non-null;
		\item $(b_0,f_0)=\0$; 
		\item $(a_0,e_0,iq)\in\mathbb{Z}^r\times2\mathbb{Z}^s\times \mathbb{Z}$; and 
		\item All sublevel sets
		$$\Omega_m^{\xi,\alpha} = \left\{t\in\T^1|\int_0^t\langle b(\tau),\xi\rangle+\langle f(\tau),\alpha\rangle d\tau<m\right\} \mbox{ are connected.}$$ 
	\end{enumerate}
	 Then $L$ is globally solvable.
\end{prop}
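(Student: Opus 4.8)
The strategy is to solve the system of ODEs \eqref{eqedo} coefficient-by-coefficient, producing partial Fourier coefficients $\widehat{u}(\cdot,\xi,\ell)_{\alpha\beta}$, and then to show, via Lemma \ref{lemmadecay}, that these coefficients define a smooth function $u$ on $\G$. Since hypotheses (ii) and (iii) give $(b_0,f_0)=\0$ and $(a_0,e_0,iq)\in\Z^r\times 2\Z^s\times\Z$, for \emph{every} $(\xi,\alpha)\in\Z^r\times\frac12\Z^s$ we have $\langle c_0,\xi\rangle+\langle d_0,\alpha\rangle-iq\in\Z$ (the real part $\langle b_0,\xi\rangle+\langle f_0,\alpha\rangle-\Re q$ vanishes and the imaginary part is an integer because $\langle a_0,\xi\rangle\in\Z$, $\langle e_0,\alpha\rangle\in\Z$ since $\alpha\in\frac12\Z^s$ and $e_0\in 2\Z^s$, and $\Im q\in\Z$). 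So we are always in ``Case 2'' of Lemma \ref{lemmaodesol}/Lemma \ref{lemmaanulimpliessolution}: the equation
$$\bigl[\partial_t+i(\langle c(t),\xi\rangle+\langle d(t),\alpha\rangle-iq)\bigr]\widehat{u}(t,\xi,\ell)_{\alpha\beta}=\widehat{g}(t,\xi,\ell)_{\alpha\beta}$$
admits a (non-unique) solution iff the obstruction integral $\int_0^{2\pi}\widehat{g}(t,\xi,\ell)_{\alpha\beta}\exp\bigl(\int_0^t i(\langle c(\tau),\xi\rangle+\langle d(\tau),\alpha\rangle-iq)d\tau\bigr)dt=0$ vanishes. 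For $g\in(\ker\Lt)^0\cap C^\infty(\G)$ this is exactly Lemma \ref{lemmaanulimpliessolution}, so solutions exist. The real work is choosing, among the infinitely many solutions for each $(\xi,\alpha)$, ones whose sup-norms obey the polynomial bound \eqref{eqlemmadecayineq}.

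**Construction of the coefficients and the key estimate.** Fix $(\xi,\alpha)$ and abbreviate $B_{\xi,\alpha}(t)=\int_0^t\bigl(\langle b(\tau),\xi\rangle+\langle f(\tau),\alpha\rangle\bigr)d\tau$, which by (ii) satisfies $B_{\xi,\alpha}(2\pi)=2\pi(\langle b_0,\xi\rangle+\langle f_0,\alpha\rangle)=0$, so $B_{\xi,\alpha}$ is a genuine periodic function on $\T^1$, and the integrating factor has modulus $e^{-B_{\xi,\alpha}(t)}$ (up to the bounded factor coming from $\Re q$ and the oscillatory factor coming from $\langle a_0,\xi\rangle+\langle e_0,\alpha\rangle+\Im q\in\Z$, which does not affect moduli). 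Because $\sigma_{L_0}(0,\xi,\alpha)\in\Z$, the homogeneous periodic solution $e^{-i\int_0^t(\langle c(\tau),\xi\rangle+\langle d(\tau),\alpha\rangle-iq)d\tau}$ is itself $2\pi$-periodic, so solutions are determined up to a multiple of it; I will use this freedom to pick the solution with the smallest sup-norm. Concretely, solving the linear first-order ODE and using the solvability condition, one writes the periodic solution as an integral of $\widehat g$ against the integrating factor over a window; the size of this solution is controlled by $\max_{t}e^{-B_{\xi,\alpha}(t)}\cdot\min_t e^{B_{\xi,\alpha}(t)}$-type quantities, i.e.\ by $\exp(\operatorname{osc}_{\T^1}B_{\xi,\alpha})=\exp(\max B_{\xi,\alpha}-\min B_{\xi,\alpha})$. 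The heart of the matter is to bound this oscillation \emph{polynomially} in $|\xi|+|\ell|$. This is precisely where connectedness of the sublevel sets $\Omega_m^{\xi,\alpha}$ (hypothesis (iv)) enters, exactly as in \cite{BDGK2015_jpdo}: connectedness of all sublevel sets of $B_{\xi,\alpha}$ forces $B_{\xi,\alpha}$ to have essentially one ``valley'', which together with the fixed geometry of $b,f$ yields $\operatorname{osc}_{\T^1}B_{\xi,\alpha}\le C\log(|\xi|+|\ell|+2)$ for a constant $C$ independent of $(\xi,\alpha)$ — hence $e^{\operatorname{osc}B_{\xi,\alpha}}\le(|\xi|+|\ell|+2)^{C}$, which is the polynomial bound needed.

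**Assembling the argument.** So the steps, in order, are: (1) verify $\sigma_{L_0}(0,\xi,\alpha)\in\Z$ for all $(\xi,\alpha)$ using (ii),(iii); (2) take $g\in(\ker\Lt)^0\cap C^\infty(\G)$ and invoke Lemma \ref{lemmaanulimpliessolution} to get, for each $(\xi,\ell,\alpha,\beta)$, a solution of \eqref{eqedo}; (3) among these, select the periodic solution of minimal sup-norm, writing it via an explicit integral formula involving the integrating factor $\exp(\int_0^t i(\langle c,\xi\rangle+\langle d,\alpha\rangle-iq))$; (4) estimate its sup-norm by $C\,e^{\operatorname{osc}_{\T^1}B_{\xi,\alpha}}\|\widehat g(\cdot,\xi,\ell)_{\alpha\beta}\|_\infty$, with $C$ absorbing the bounded contributions of $\Re q$; (5) use connectedness of the sublevel sets $\Omega_m^{\xi,\alpha}$ (hypothesis (iv)) — reproducing the sublevel-set oscillation estimate of \cite{BDGK2015_jpdo} — to conclude $e^{\operatorname{osc}_{\T^1}B_{\xi,\alpha}}\le K'(|\xi|+|\ell|)^{M}$ for uniform $K',M$; (6) plug into \eqref{eqlemmadecayineq} and apply Lemma \ref{lemmadecay} to obtain $u\in C^\infty(\G)$; (7) note $Lu=g$ by Plancherel/Remark \ref{obseqedo}, so $L$ is globally solvable. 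I expect step (5) — the uniform-in-$(\xi,\alpha)$ logarithmic bound on $\operatorname{osc}_{\T^1}B_{\xi,\alpha}$ coming from connectedness of \emph{all} sublevel sets — to be the main obstacle, since it requires a careful combinatorial/geometric analysis of how the one fixed ``profile'' of $b,f$ scales under integer multiples $(\xi,\alpha)$; this is the genuinely new content beyond the torus case and presumably relies on an auxiliary lemma (e.g.\ one of the ``crucial technical results'' in Appendix A) of the type ``if every sublevel set of $\int_0^t\psi$ is connected and $\psi$ takes values in a fixed finitely generated structure, then the oscillation of $\int_0^t(\text{integer combination of }\psi\text{'s})$ grows at most logarithmically.''
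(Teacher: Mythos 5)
Your overall skeleton (reduce to the ODEs \eqref{eqedo}, get existence from Lemma \ref{lemmaanulimpliessolution} since $\sigma_{L_0}(0,\xi,\alpha)\in\Z$ for all $(\xi,\alpha)$, then verify the hypothesis of Lemma \ref{lemmadecay}) matches the paper, but your key estimate, step (5), rests on a false claim. Writing $B_{\xi\alpha}(t)=\int_0^t\langle b(\tau),\xi\rangle+\langle f(\tau),\alpha\rangle\,d\tau$, the function $B_{\xi\alpha}$ is \emph{linear} in $(\xi,\alpha)$, so its oscillation generically grows linearly in $|\xi|+|\ell|$, not logarithmically: already for $r=0$, $s=1$, $f_1(t)=\sin t$ one has $B_{0,\alpha}(t)=\alpha(1-\cos t)$ with oscillation $2|\alpha|$, while every sublevel set is connected. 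Connectedness of the $\Omega_m^{\xi,\alpha}$ therefore cannot yield the bound $\operatorname{osc}B_{\xi\alpha}\le C\log(|\xi|+|\ell|+2)$, and there is no auxiliary lemma of that type in the appendix; consequently the estimate $e^{\operatorname{osc}B_{\xi\alpha}}\le(|\xi|+|\ell|+2)^C$ that your argument hinges on is unavailable, and bounding the solution by $e^{\operatorname{osc}B_{\xi\alpha}}$ is too crude to close the proof.

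The mechanism the paper actually uses is different and sharper. For each $(\xi,\alpha)$ one takes as base point $t_{\xi\alpha}$ a point where $B_{\xi\alpha}$ attains its maximum and defines
$\widehat{u}(t,\xi,\ell)_{\alpha\beta}=\int_{t_{\xi\alpha}}^{t}\exp\bigl\{-i\int_s^t(\langle c(\tau),\xi\rangle+\langle d(\tau),\alpha\rangle-iq)\,d\tau\bigr\}\widehat{g}(s,\xi,\ell)_{\alpha\beta}\,ds$.
Because $g\in(\ker\Lt)^0$, the corresponding integral over the whole circle vanishes, so this expression is independent of the arc chosen from $t_{\xi\alpha}$ to $t$. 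Hypothesis (iv) enters through Lemma \ref{lemmasublevel}: the superlevel set $\{s:\,B_{\xi\alpha}(s)\ge B_{\xi\alpha}(t)\}$ is connected and contains both $t$ and $t_{\xi\alpha}$, so one may integrate along an arc lying entirely in it; along that arc the modulus of the exponential factor is $e^{B_{\xi\alpha}(t)-B_{\xi\alpha}(s)}\le 1$ (the factors involving $a_0,e_0,\Im q$ are unimodular since $iq\in\Z$). This gives the uniform bound $|\widehat{u}(t,\xi,\ell)_{\alpha\beta}|\le 2\pi\|\widehat{g}(\cdot,\xi,\ell)_{\alpha\beta}\|_\infty$, with no loss in $(\xi,\ell)$ at all, after which Lemma \ref{lemmadecay} applies. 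So the missing idea in your proposal is the combination of the choice of the maximum point as base point, the path-independence coming from the annihilator condition, and the connectedness of the complementary superlevel set — not any control of the oscillation of $B_{\xi\alpha}$.
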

\begin{proof}
	Based on assumptions ii. and iii., it follows that
	$$\langle c_0,\xi\rangle+\langle d_0,\alpha\rangle-iq = \langle a_0,\xi\rangle+\langle e_0,\alpha\rangle-iq\in\Z$$
	for every $\xi\in\Z^r$, and $\alpha\in\frac{1}{2}\Z^s$. 
		
	Given $g\in (\ker \Lt)^0$, the function $u$ is a solution to $Lu=g$ if and only if it satisfies $\widehat{Lu}(t,\xi,\ell)_{\alpha\beta} = \widehat{g} (t,\xi,\ell)_{\alpha\beta}$ for every $t\in\T^1$, $\xi\in\Z^r$, $\ell\in\frac{1}{2}\N_0^s$, $-\ell\leq\alpha,\beta\leq \ell$, and $\ell-\alpha,\ell-\beta\in\N_0^s$.	
			
	We claim that there exist smooth functions $\widehat{u}(\cdot,\xi,\ell)_{\alpha\beta}\in C^{\infty}(\T^1)$ solutions to the equations $\widehat{Lu}(t,\xi,\ell)_{\alpha\beta} = \widehat{g}(t,\xi,\ell)_{\alpha,\beta}$, for all $\xi,\ell,\alpha,\beta$, such that
	$$ |\widehat{u}(t,\xi,\ell)_{\alpha\beta}|\leq 2\pi ||\widehat{g}(\cdot,\xi,\ell)_{\alpha\beta}||_{\infty}$$
	for every $t\in\T^1$.
	
	Let $t_{\xi\alpha} \in \T^1$ be the point such that 
	\begin{align*}
		\int_0^{t_{\xi\alpha}}\langle b(\tau),\xi\rangle+\langle f(\tau),\alpha\rangle d\tau = \sup_{t\in\T^1}\left\{\int_0^{t}\langle b(\tau),\xi\rangle+\langle f(\tau),\alpha\rangle d\tau\right\}.
	\end{align*}
	
	By Remark \ref{obseqedo} and Lemma \ref{lemmaanulimpliessolution}, the equations $\widehat{Lu}(t,\xi,\ell)_{\alpha\beta} = \widehat{g}(t,\xi,\ell)_{\alpha\beta}$ admit solutions. According to Lemma \ref{lemmaodesol}, the functions given by
	$$\widehat{u}(t,\xi,\ell)_{\alpha\beta} = \int_{t_{\xi\alpha}}^t\exp\left\{-i\int_s^t\left(\langle c(\tau),\xi\rangle+\langle d(\tau),\alpha\rangle-iq\right)d\tau\right\}\widehat{g}(s,\xi,\ell)_{\alpha\beta} ds$$
	define a solution for each $\xi,\ell,\alpha,\beta$. 
		
	Let us now prove that these solutions satisfy the inequality stated in the claim. Fix $\xi,\ell,\alpha,\beta$ and consider $t\in\T^1$. We define
	$$ m_t\doteq\int_0^t\langle b(\tau),\xi\rangle+\langle f(\tau),\alpha\rangle d\tau.$$
	Note that $t$ and $t_{\xi\alpha}$ belong to the sublevel set $$\tilde{\Omega}_m^{\xi,\alpha} = \left\{s\in\mathbb{T}^1; \int_0^s\langle b(\tau), \xi\rangle+\langle f(\tau),\alpha\rangle d\tau\geq m\right\}.$$
	
	By Lemma \ref{lemmasublevel}, this set is connected. Therefore, there is a path (a circumference arc)
	$\gamma_t \subset \tilde{\Omega}_{m_t}^{\xi,\alpha}$ that connects $t_{\xi\alpha}$ to $t$. 
	
	Therefore, for all $s\in\gamma_t$, we have
$$\int_0^s\langle b(\tau),\xi\rangle+\langle f(\tau),\alpha\rangle d\tau \geq m_t.$$ 

It is important to note that for any $t\in\T^1$, there are exactly two paths on the circle that connect $t_{\xi\alpha}$ to $t$ (one of which is $\gamma_t$), and their union covers the entire $\T^1$. Since $g\in(\ker\Lt)^0$, as in the proof of Lemma \ref{lemmaanulimpliessolution}, we have:
	\begin{align*}
	\int_0^{2\pi} \exp\left\{-i\int_s^0\left(\langle c(\tau),\xi\rangle+\langle d(\tau),\alpha\rangle-iq\right)d\tau\right\}\widehat{g}(s,\xi,\ell)_{\alpha\beta}ds=0.
	\end{align*}
	Thus, we obtain 
	$$\int_0^{2\pi} \exp\left\{-i\int_s^t\left(\langle c(\tau),\xi\rangle+\langle d(\tau),\alpha\rangle-iq\right)d\tau\right\}\widehat{g}(s,\xi,\ell)_{\alpha\beta}ds=0$$
	and the integral from $t_{\xi\alpha}$ to $t$ in the definition of $\widehat{u}$ is independent on the choice of path connecting these points. 
	
	Therefore by choosing $\gamma_t$, we have
	\begin{align*}
		\widehat{u}(t,\xi,\ell)_{\alpha\beta}  = & \int_{\gamma_t}\exp\left\{-i\int_s^t\left(\langle c(\tau),\xi\rangle+\langle d(\tau),\alpha\rangle-iq\right)d\tau\right\}\widehat{g}(s,\xi,\ell)_{\alpha\beta} ds\\
		= &\int_{\gamma_t}\exp\left\{\left(\int_0^t\langle b(\tau),\xi\rangle+\langle f(\tau),\alpha\rangle d\tau\right)-\left(\int_0^s\langle b(\tau),\xi\rangle+\langle f(\tau),\alpha\rangle d\tau\right)\right\}\\[2mm]
		& \qquad \times\exp\left\{i(s-t)(\langle a_0,\xi\rangle+\langle e_0,\alpha\rangle+\Im(q))\right\}\widehat{g}(s,\xi,\ell)_{\alpha\beta} \mathop{ds}.
	\end{align*}
	Hence
	\begin{align*}
		|\widehat{u}(t,\xi,\ell)_{\alpha\beta} | \leq \int_{\gamma_t}|\widehat{g}(s,\xi,\ell)_{\alpha\beta}| ds \leq 2\pi||\widehat{g}(\cdot,\xi,\ell)_{\alpha\beta}||_{\infty}.
	\end{align*} 
Since $t$ was arbitrarily chosen, the inequality holds for every $t\in \T^1$. Furthermore, the choice of $\xi, \ell, \alpha, \beta$ was also arbitrary. Therefore, the claim holds for all choices. Finally, according to Lemma \ref{lemmadecay}, we can conclude that these coefficients define a function $u\in C^\infty(\G)$. It is also clear that $Lu=g$, which implies that $L$ is globally solvable.
\end{proof}

The case where $(b_0,f_0)\neq\0$ can be proven using the following two propositions.

\begin{prop}
		Assume that:
	\begin{enumerate}[i.]
		\item $(b_0,f_0)\neq\0$; 
		\item  $\dim\spn\left\{b_1,\dots,b_r,f_1,\dots,f_s\right\}=1;$
		\item all $b_j$ and $f_k$ does not change sign;
		\item  $L_0$ is globally solvable.
	\end{enumerate}
	Then $L$ is globally solvable.
	\end{prop}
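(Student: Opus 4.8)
The plan is to solve $Lu=g$ by the partial Fourier method, reducing first, via Proposition \ref{proprealpartcte}, to the case in which the real parts $a_j,e_k$ of the coefficients are constant. The hypothesis $\dim\spn\{b_1,\dots,b_r,f_1,\dots,f_s\}=1$ lets us fix $\tilde{b}\in C^\infty(\T^1)$ and vectors $\lambda\in\R^r$, $\gamma\in\R^s$ with $b(t)=\tilde{b}(t)\lambda$ and $f(t)=\tilde{b}(t)\gamma$; the no-sign-change hypothesis forces $\tilde{b}$ not to change sign, and $(b_0,f_0)\neq\0$ forces $\tilde{b}_0\neq0$. For $(\xi,\alpha)\in\Z^r\times\tfrac12\Z^s$ put $\mu_{\xi\alpha}=\langle\lambda,\xi\rangle+\langle\gamma,\alpha\rangle$, so that $\langle b(t),\xi\rangle+\langle f(t),\alpha\rangle=\mu_{\xi\alpha}\,\tilde{b}(t)$ and $\langle b_0,\xi\rangle+\langle f_0,\alpha\rangle=\mu_{\xi\alpha}\,\tilde{b}_0$. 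Given $g\in C^\infty(\G)\cap(\ker\Lt)^0$, it suffices by Lemma \ref{lemmadecay} to produce, for each index, a smooth solution $\widehat{u}(\cdot,\xi,\ell)_{\alpha\beta}$ of \eqref{eqedo} satisfying $|\widehat{u}(t,\xi,\ell)_{\alpha\beta}|\le C(|\xi|+|\ell|)^{M}\,\|\widehat{g}(\cdot,\xi,\ell)_{\alpha\beta}\|_\infty$ for uniform $C,M>0$; then $u\in C^\infty(\G)$ and $Lu=g$.

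First I split the indices according to whether $\langle c_0,\xi\rangle+\langle d_0,\alpha\rangle-iq$ lies in $\Z$. If it does, its imaginary part vanishes, i.e.\ $\mu_{\xi\alpha}\tilde{b}_0=\Re(q)$, so $t\mapsto\int_0^t(\langle b(\tau),\xi\rangle+\langle f(\tau),\alpha\rangle)\,d\tau=\mu_{\xi\alpha}\int_0^t\tilde{b}(\tau)\,d\tau$ is a monotone function of $t$, whence all of its sublevel sets are arcs and hence connected. One then repeats, restricted to these indices, the argument of Proposition \ref{propb0f0connectedgs}: Lemma \ref{lemmaanulimpliessolution} gives solvability of \eqref{eqedo}, and choosing the solution obtained by integrating from the point $t_{\xi\alpha}$ at which $\int_0^t(\langle b(\tau),\xi\rangle+\langle f(\tau),\alpha\rangle)\,d\tau$ is maximal — using $g\in(\ker\Lt)^0$ to see that this integral is path-independent on $\T^1$ — yields $|\widehat{u}(t,\xi,\ell)_{\alpha\beta}|\le 2\pi\|\widehat{g}(\cdot,\xi,\ell)_{\alpha\beta}\|_\infty$.

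If $\langle c_0,\xi\rangle+\langle d_0,\alpha\rangle-iq\notin\Z$, then \eqref{eqedo} has a unique $2\pi$-periodic solution. Writing $P(t)=i(\langle c(t),\xi\rangle+\langle d(t),\alpha\rangle-iq)$ and $c=\int_0^{2\pi}P=2\pi i(\langle c_0,\xi\rangle+\langle d_0,\alpha\rangle-iq)$, this solution can be represented either as $\tfrac{1}{1-e^{-c}}\int_0^{2\pi}e^{-\int_{t-s}^tP(\tau)\,d\tau}\,\widehat{g}(t-s,\xi,\ell)_{\alpha\beta}\,ds$ or as $\tfrac{1}{e^{c}-1}\int_0^{2\pi}e^{\int_t^{t+s}P(\tau)\,d\tau}\,\widehat{g}(t+s,\xi,\ell)_{\alpha\beta}\,ds$. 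Since the real parts of the coefficients are constant, one computes $\Re\big(-\!\int_{t-s}^tP\big)=\mu_{\xi\alpha}\int_{t-s}^t\tilde{b}-s\Re(q)$ and $\Re(c)=2\pi(\Re(q)-\mu_{\xi\alpha}\tilde{b}_0)$, together with the mirror identities for the second representation; because $\tilde{b}$ has constant sign, $\mu_{\xi\alpha}\tilde{b}(\tau)$ has the constant sign of $\mu_{\xi\alpha}\tilde{b}_0$, so $\big|\mu_{\xi\alpha}\int_{t-s}^t\tilde{b}\big|\le 2\pi|\mu_{\xi\alpha}\tilde{b}_0|$ for all $t$ and all $s\in[0,2\pi]$. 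Now fix a threshold $R>|\Re(q)|+1$ and split on the size of $\mu_{\xi\alpha}\tilde{b}_0$. If $\mu_{\xi\alpha}\tilde{b}_0>R$, use the first representation: there $|e^{-\int_{t-s}^tP}|\le e^{2\pi\mu_{\xi\alpha}\tilde{b}_0+2\pi|\Re q|}$, while $|1-e^{-c}|\ge e^{2\pi(\mu_{\xi\alpha}\tilde{b}_0-\Re q)}-1\ge\tfrac12 e^{2\pi(\mu_{\xi\alpha}\tilde{b}_0-\Re q)}$, so their quotient is bounded by a fixed constant; the case $\mu_{\xi\alpha}\tilde{b}_0<-R$ is symmetric, using the second representation. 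Finally, if $|\mu_{\xi\alpha}\tilde{b}_0|\le R$, both exponential factors are bounded by a fixed constant, while the global solvability of $L_0$ provides, through Proposition \ref{propctegs} and Lemma \ref{lemmadiofequi}, the lower bound $|1-e^{-c}|\ge C(|\xi|+|\ell|)^{-N}$ (applicable precisely because we are in the non-resonant case). In all three situations $|\widehat{u}(t,\xi,\ell)_{\alpha\beta}|\le C'(|\xi|+|\ell|)^{N}\|\widehat{g}(\cdot,\xi,\ell)_{\alpha\beta}\|_\infty$, and the proof is closed by Lemma \ref{lemmadecay}.

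The main obstacle is exactly this last estimate in the non-resonant regime: the transport factor $e^{-\int_{t-s}^tP}$ and the denominator $1-e^{-c}$ both blow up exponentially as $|\langle b_0,\xi\rangle+\langle f_0,\alpha\rangle|=|\mu_{\xi\alpha}\tilde{b}_0|\to\infty$, and the content of the argument is that the constant sign of $\tilde{b}$ synchronizes these two exponential growths so that they cancel, reducing matters, in the remaining bounded regime, to the polynomial lower bound furnished by the Diophantine condition on $L_0$.
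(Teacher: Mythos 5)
Your proposal is correct, and its skeleton coincides with the paper's: reduce to the ODEs \eqref{eqedo} for the partial Fourier coefficients, treat separately the indices with $\langle c_0,\xi\rangle+\langle d_0,\alpha\rangle-iq\in\Z$ (using $g\in(\ker\Lt)^0$ and Lemma \ref{lemmaanulimpliessolution}) and those with $\langle c_0,\xi\rangle+\langle d_0,\alpha\rangle-iq\notin\Z$ (using the two representations of Lemma \ref{lemmaodesol} and the bound of Lemma \ref{lemmadiofequi} coming from Proposition \ref{propctegs}), and close with Lemma \ref{lemmadecay}. The differences are organizational but worth recording. In the non-resonant regime the paper splits only on the \emph{sign} of $(\langle\lambda,\xi\rangle+\langle\gamma,\alpha\rangle)\tilde b_0$ and always picks the representation whose exponential kernel is bounded by $e^{2\pi|\Re(q)|}$ (thanks to the constant sign of $\tilde b$), so the Diophantine lower bound is applied uniformly; you instead split on the \emph{size} $|\mu_{\xi\alpha}\tilde b_0|\lessgtr R$, obtaining in the two extreme regimes a purely exponential numerator--denominator cancellation that needs no Diophantine input, and invoking \eqref{DC_condition_const_coef_section} only on the bounded regime. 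Both are valid; the paper's bookkeeping is a bit leaner, yours makes explicit where (DC) is genuinely used. In the resonant regime your detour through the connectedness/maximal-point argument of Proposition \ref{propb0f0connectedgs} is heavier than necessary and slightly imprecise: that proposition is proved under $q\in i\Z$ (so the real part of the exponent has mean zero), whereas here resonance gives $\mu_{\xi\alpha}\tilde b_0=\Re(q)$, which may be nonzero; the transfer still works but the stated bound $2\pi\|\widehat g(\cdot,\xi,\ell)_{\alpha\beta}\|_\infty$ must be replaced by $2\pi e^{C_q}\|\widehat g(\cdot,\xi,\ell)_{\alpha\beta}\|_\infty$ for a constant depending on $q$ and $\tilde b$. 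The paper's route is simpler: resonance pins $\mu_{\xi\alpha}=\Re(q)/\tilde b_0$ to a single value, so the kernel $\exp\{\Re(q)\int_s^t(\tilde b(\tau)/\tilde b_0-1)\,d\tau\}$ is uniformly bounded and the solution integrating from $0$ already satisfies the required estimate, with no sublevel-set argument. Your explicit appeal to Proposition \ref{proprealpartcte} to normalize the real parts is fine (the paper uses the same normalization implicitly, and it is immaterial since those terms contribute unimodular factors). None of these points is a gap; the proof goes through.
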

\begin{proof}
	If $b$ and $f$ satisfy the conditions {\it i., ii.} and {\it iii.}, we can find non-null vectors $\lambda \in \mathbb{R}^r$ and $\gamma \in \mathbb{R}^s$, along with a smooth non-null function $\tilde{b}:\T^1\to\R$, such that $$b=(\lambda_1\tilde{b},\dots,\lambda_r\tilde{b}) \mbox{ \ and \ } f=(\gamma_1\tilde{b},\dots,\gamma_s\tilde{b}).$$ 
	Additionally, $\tilde{b}$ does not change sign, which implies $$\tilde{b}_0\doteq\frac{1}{2\pi}\int_{0}^{2\pi}\tilde{b}(\tau)\mathop{d\tau}\neq 0.$$
	
	Let $g\in(\ker \Lt)^0\cap C^\infty(\G)$. Our goal is to construct $u\in C^{\infty}(\G)$ such that $Lu=g$ by mean of its partial Fourier coefficients.
	
	First, observe that if $(\xi,\alpha)\in\Z^r\times\frac{1}{2}\Z^s$ satisfies $\langle c_0,\xi\rangle+\langle d_0,\alpha\rangle-iq\in\Z$, then its imaginary part is zero and we have
	$$0= (\langle\tilde{b}_0\lambda,\xi\rangle + \langle\tilde{b}_0\gamma, \alpha\rangle) - \Re(q) = \Re(q)\left(\frac{\tilde{b}(\tau)}{\tilde{b}_0}-1\right). $$

	Since $g\in (\ker \Lt)^0$, according to Lemma \ref{lemmaanulimpliessolution}, the equation
	\begin{equation}\label{eqode}
		\widehat{Lu}(t,\xi,\ell)_{\alpha\beta} = \widehat{g}(t,\xi,\ell)_{\alpha\beta},\ t\in\T^1   
	\end{equation}
	has infinitely many solutions. We define $\widehat{u}(\cdot,\xi,\ell)_{\alpha\beta}\in C^\infty(\T^1)$ to be the solution given by:
	$$\widehat{u}(t,\xi,\ell)_{\alpha\beta} = \int_{0}^t\exp\left\{-i\int_s^t\left(\langle c(\tau),\xi\rangle+\langle d(\tau),\alpha\rangle-iq\right)d\tau\right\}\widehat{g}(s,\xi,\ell)_{\alpha\beta} ds$$
	Then
	\begin{align*}
		|\widehat{u}(t,\xi,\ell)_{\alpha\beta} |&\leq\int_0^{t}\exp\left\{\int_s^t\Re(q)\left(\frac{\tilde{b}(\tau)}{\tilde{b}_0}-1\right)\mathop{d\tau} \right\}ds\|\widehat{g}(\cdot,\xi,\ell)_{\alpha\beta}\|_{\infty} \\[2mm]
		&\leq K_1\|\widehat{g}(\cdot,\xi,\ell)_{\alpha\beta}\|_{\infty} 
	\end{align*}
	
	Where $K_1$ is a constant that does not depend on $\xi,\ell,\alpha,\beta$.
	
	Now, if $(\xi,\alpha)\in\Z^r\times\frac{1}{2}\Z^s$ and $\langle c_0,\xi\rangle+\langle d_0,\alpha\rangle-iq\not\in\Z$, according to Lemma \ref{lemmaodesol}, the differential equation (\ref{eqode}) has a unique solution $\widehat{u}(\cdot,\xi,\ell)_{\alpha\beta}\in C^{\infty}(\T^1)$ given by two equivalent formulas, which we can choose conveniently.

	Set $\omega_{\xi\alpha} \doteq \langle c_0,\xi\rangle+\langle d_0,\alpha\rangle-iq$ and consider the two possible cases separately:
	
	\noindent Case 1: $(\langle \lambda,\xi\rangle+\langle\gamma,\alpha\rangle)\tilde{b}_0\leq0$ \\
	Since $\tilde{b}$ does not change sign, we have $(\langle\lambda,\xi\rangle+\langle\gamma,\alpha\rangle)\tilde{b}(t)\leq 0$ for all $t\in\T^1$, or  $\langle\lambda,\xi\rangle+\langle\gamma,\alpha\rangle =0$. By Lemma \ref{lemmaodesol}, the solution can be written as:
	\begin{align*}
		\widehat{u}(t,\xi,\ell)_{\alpha\beta}&=\left(1-e^{-2\pi i\omega_{\xi\alpha}}\right)^{-1}\int_0^{2\pi}\widehat{g}(t-s,\xi,\ell)_{\alpha\beta}e^{(\langle\lambda,\xi\rangle+\langle\gamma,\alpha\rangle)\int_{t-s}^t\tilde{b}(\tau)d\tau} e^{-is(\langle a_0,\xi\rangle+\langle e_0,\alpha\rangle)}e^{-qs}ds.
	\end{align*}
	
		\noindent Case 2: $(\langle \lambda,\xi\rangle+\langle\gamma,\alpha\rangle)\tilde{b}_0>0$\\
		Since $\tilde{b}$ does not change sign, we have $(\langle\lambda,\xi\rangle+\langle\gamma,\alpha\rangle)\tilde{b}(t)\geq 0$ for all $t\in\T^1$. By Lemma \ref{lemmaodesol}, the solution can be written as:
	\begin{align*}
		\widehat{u}(t,\xi,\ell)_{\alpha\beta}&=\left(e^{2\pi i\omega_{\xi\alpha}}-1\right)^{-1}\int_0^{2\pi}\widehat{g}(t+s,\xi,\ell)_{\alpha\beta}e^{-(\langle\lambda,\xi\rangle+\langle\gamma,\alpha\rangle)\int_{t}^{t+s}\tilde{b}(\tau)d\tau}e^{is(\langle a_0,\xi\rangle+\langle e_0,\alpha\rangle)}e^{qs}ds.
	\end{align*}
	
	Now, the global solvability of $L_0$ implies that the condition  \eqref{DC_condition_const_coef_section} is satisfied. By applying Proposition \ref{propctegs} and Lemma \ref{lemmadiofequi}, we obtain $K_2>0$ and $N>0$ such that:
	$$\left|1-e^{-2\pi i\omega_{\xi\alpha}}\right|\geq K_2\left(|\xi|+|\ell|\right)^{-N}$$
	and
	$$\left|e^{2\pi i\omega_{\xi\alpha}}-1\right|\geq K_2\left(|\xi|+|\ell|\right)^{-N}$$
	for all $\omega_{\xi\alpha}\not\in\Z$, with $\xi,\ell,\alpha$ not all null.

	Therefore, in both cases, $\widehat{u}(t,\xi,\ell)_{\alpha\beta}$ satisfies:
	\begin{align*}
		|\widehat{u}(t,\xi,\ell)_{\alpha\beta}|&\leq e^{2\pi|\Re(q)|}\frac{1}{K_2}\left(|\xi|+|\ell|\right)^{N}2\pi\|\widehat{g}(\cdot,\xi,\ell)_{\alpha\beta}\|_{\infty}.
	\end{align*}

	Setting $K = \max\left\{K_1,e^{2\pi|\Re(q)|}\frac{2\pi}{K_2}\right\}$ it follows that
	\begin{align*}
		|\widehat{u}(t,\xi,\ell)_{\alpha\beta}|&\leq K\left(|\xi|+|\ell|\right)^{N}\|\widehat{g}(\cdot,\xi,\ell)_{\alpha\beta}\|_{\infty}
	\end{align*}
	for all $t\in\T^1$, and $\xi,\ell,\alpha,\beta$ not all null.
	
	Therefore, based on Lemma \ref{lemmadecay}, we can conclude that these coefficients define a function $u\in C^\infty(\G)$. Moreover, it is clear that $Lu=g$, demonstrating that $L$ is globally solvable.

\end{proof}

\begin{prop}
		Assume that:
\begin{enumerate}[i.]
	\item some $b_j$ or $f_k$ is non-null;
	\item all $b_j$ and $f_k$ does not change sign;
	\item  $\dim\spn\left\{b_1,\dots,b_r,f_1,\dots,f_s\right\}=1;$
	\item  $L_0$ is globally hypoelliptic.
\end{enumerate}
Then $L$ is globally hypoelliptic.
\end{prop}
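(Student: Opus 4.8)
The plan is to adapt the proof of the preceding proposition to those Fourier modes whose associated ordinary differential equation on $\T^1$ is uniquely solvable, and to dispose of the remaining \emph{resonant} modes by a one-dimensional regularity argument, using that there are only finitely many of them.

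First, by hypotheses i--iii one may write $b=\tilde{b}\lambda$ and $f=\tilde{b}\gamma$ with nonzero vectors $\lambda\in\R^r$, $\gamma\in\R^s$ and a smooth function $\tilde{b}$ on $\T^1$ of constant sign, so $\tilde{b}_0\neq 0$. Since $L_0$ is globally hypoelliptic, Proposition \ref{propghcte} gives that $L_0$ satisfies (DC) and that $\mathcal{Z}_{L_0}$ is finite. Putting $\omega_{\xi\alpha}=\langle c_0,\xi\rangle+\langle d_0,\alpha\rangle-iq$ and $\mathcal{B}=\{(\xi,\alpha)\in\Z^r\times\tfrac12\Z^s:\omega_{\xi\alpha}\in\Z\}$, the key preliminary observation is that $\mathcal{B}$ is finite: if $(\xi,\alpha)\in\mathcal{B}$, then $\sigma_{L_0}(-\omega_{\xi\alpha},\xi,\alpha)=0$, so $(-\omega_{\xi\alpha},\xi,\ell)\in\mathcal{Z}_{L_0}$ for every admissible $\ell\geq|\alpha|$; when $s>0$ there are infinitely many such $\ell$, so finiteness of $\mathcal{Z}_{L_0}$ forces $\mathcal{B}=\varnothing$ (as already observed after Corollary \ref{coroghcte}), while when $s=0$ the injective map $\xi\mapsto(-\omega_\xi,\xi)$ embeds $\mathcal{B}$ into $\mathcal{Z}_{L_0}$.

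Now take $u\in\mathscr{D}'(\G)$ with $Lu=g\in C^\infty(\G)$. By Remark \ref{obseqedo}, each partial coefficient $\widehat{u}(\cdot,\xi,\ell)_{\alpha\beta}\in\mathscr{D}'(\T^1)$ solves a first-order ODE on $\T^1$ with smooth coefficients and smooth right-hand side $\widehat{g}(\cdot,\xi,\ell)_{\alpha\beta}$. For $(\xi,\alpha)\notin\mathcal{B}$ I would repeat the computation of the preceding proposition: here $e^{2\pi i\omega_{\xi\alpha}}\neq1$, so the equation has a unique periodic solution, which must therefore equal the smooth one produced by Lemma \ref{lemmaodesol}; selecting the representation of that solution dictated by the sign of $(\langle\lambda,\xi\rangle+\langle\gamma,\alpha\rangle)\tilde{b}_0$ — so that, because $\tilde{b}$ does not change sign, the exponential weights $e^{\pm(\langle\lambda,\xi\rangle+\langle\gamma,\alpha\rangle)\int\tilde{b}}$ stay bounded by $1$ — and invoking the estimate $|1-e^{\mp2\pi i\omega_{\xi\alpha}}|\geq K_2(|\xi|+|\ell|)^{-N}$ supplied by (DC) through Lemma \ref{lemmadiofequi}, one obtains a constant $K>0$, independent of all indices, with
$$
|\widehat{u}(t,\xi,\ell)_{\alpha\beta}|\leq K(|\xi|+|\ell|)^{N}\|\widehat{g}(\cdot,\xi,\ell)_{\alpha\beta}\|_{\infty},\qquad t\in\T^1,\ (\xi,\alpha)\notin\mathcal{B}.
$$
For $(\xi,\alpha)\in\mathcal{B}$, the function $e^{P(t)}$ with $P(t)=i\int_0^t(\langle c(\tau),\xi\rangle+\langle d(\tau),\alpha\rangle-iq)\,d\tau$ is smooth, nonvanishing and $2\pi$-periodic (precisely because $P(2\pi)=2\pi i\omega_{\xi\alpha}\in2\pi i\Z$); from $\bigl(e^{P}\widehat{u}(\cdot,\xi,\ell)_{\alpha\beta}\bigr)'=e^{P}\widehat{g}(\cdot,\xi,\ell)_{\alpha\beta}\in C^\infty(\T^1)$ and the elementary fact that a periodic distribution with smooth derivative is smooth, it follows that $\widehat{u}(\cdot,\xi,\ell)_{\alpha\beta}\in C^\infty(\T^1)$.

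Finally, I would split $u=u_{\mathcal{B}}+u'$ according to whether $(\xi,\alpha)\in\mathcal{B}$ or not. Since $\mathcal{B}$ is finite and each coefficient of $u_{\mathcal{B}}$ is smooth in $t$, we get $u_{\mathcal{B}}\in C^\infty(\G)$, hence $Lu'=g-Lu_{\mathcal{B}}\in C^\infty(\G)$; moreover the coefficients of $u'$ satisfy the displayed estimate with $Lu'$ in place of $g$ (trivially so on $\mathcal{B}$, where they vanish), so Lemma \ref{lemmadecay} yields $u'\in C^\infty(\G)$, and therefore $u\in C^\infty(\G)$, proving that $L$ is globally hypoelliptic. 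I expect the only genuinely new point — and the one place where global hypoellipticity of $L_0$, rather than mere global solvability, is used — to be the finiteness of $\mathcal{B}$: it confines the resonant modes, on which neither uniqueness nor the Diophantine estimate is available, to a finite smooth correction, after which the argument reduces to the solvability proof combined with the one-dimensional ODE regularity just described.
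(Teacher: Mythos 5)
Your argument is correct and follows essentially the same route as the paper: write $b=\tilde b\lambda$, $f=\tilde b\gamma$, use Lemma \ref{lemmaodesol} and the sign condition to bound the exponential weights, invoke (DC) through Lemma \ref{lemmadiofequi} for the non-resonant modes, and conclude with Lemma \ref{lemmadecay}. The only difference is that you treat the finitely many resonant indices $(\xi,\alpha)$ with $\omega_{\xi\alpha}\in\Z$ explicitly (via the periodic integrating factor and the finite smooth correction $u_{\mathcal{B}}$), a point the paper disposes of by simply restricting attention to indices outside the finite set $\mathcal{Z}_{L_0}$; your version makes that step fully explicit, especially in the case $s=0$.
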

\begin{proof}
	If $b$ and $f$ satisfy conditions {\it i., ii.}, and {\it iii.}, there exist $\lambda \in \mathbb{R}^r$ and $\gamma \in \mathbb{R}^s$, not both zero, as well as a smooth non-null function $\tilde{b}:\T^1\to\R$ that does not change sign, such that $b(t) = \tilde{b}(t)\lambda$ and $f(t)=\tilde{b}(t)\gamma$, for all $t\in\T$.

	Let $u\in \mathscr{D}'(\G)$ be a distribution that satisfies the equation $Lu=g\in C^\infty(\G)$. Hence, the partial Fourier coefficients of $u$ must satisfy $\widehat{Lu}(t,\xi,\ell)_{\alpha\beta} = \widehat{g}(t,\xi,\ell)_{\alpha\beta}$. In other words, the following differential equations hold for every $\xi\in\Z^r$, $\ell\in\frac{1}{2}\N_0^s$, and $-\ell\leq\alpha,\beta\leq \ell$, in the sense of distributions:
	$$\partial_t\widehat{u}(t,\xi,\ell)_{\alpha\beta}+i(\langle c(t),\xi\rangle+\langle d(t)\alpha\rangle-iq)\widehat{u}(t,\xi,\ell)_{\alpha\beta} = \widehat{g}(t,\xi,\ell)_{\alpha\beta}.$$
	
	According to Proposition \ref{propghcte}, since $L_0$ is globally hypoelliptic, the set	$\mathcal{Z}_{L_0}$	is finite. Therefore, in order to study the decay of the Fourier coefficients of $u$, we can focus on $(\xi,\ell)\not \in\mathcal{Z}_{L_0}$. For such $(\xi,\ell)$, by Lemma \ref{lemmaodesol}, the above differential equation admits a unique smooth solution, which can be expressed as
	\begin{align*}
		\widehat{v}(t,\xi,\ell)_{\alpha\beta} = \left(1-e^{-2\pi i\omega_{\xi\alpha}}\right)^{-1}\int_0^{2\pi}\widehat{g}(t-s,\xi,\ell)_{\alpha\beta}e^{(\langle \lambda,\xi\rangle+\langle\gamma,\alpha\rangle)\int_{t-s}^t\tilde{b}(\tau)d\tau}e^{-is(\langle a_0,\xi\rangle+\langle e_0,\alpha\rangle-iq)}ds
	\end{align*}
	when $(\langle \lambda,\xi\rangle+\langle\gamma,\alpha\rangle)\tilde{b}_0<0$, and otherwise as
	\begin{align*}
		\widehat{v}(t,\xi,\ell)_{\alpha\beta} = \left(e^{2\pi i\omega_{\xi\alpha}}-1\right)^{-1}\int_0^{2\pi}\widehat{g}(t+s,\xi,\ell)_{\alpha\beta}e^{-(\langle \lambda,\xi\rangle+\langle\gamma,\alpha\rangle)\int_{t}^{t+s}\tilde{b}(\tau)d\tau}e^{is(\langle a_0,\xi\rangle+\langle e_0,\alpha\rangle-iq)}ds,
	\end{align*}
	where $\omega_{\xi\alpha} \doteq \langle c_0,\xi\rangle+\langle d_0,\alpha\rangle-iq$

	Since $L_0$ is globally hypoelliptic, Proposition \ref{propghcte} implies that it satisfies (\ref{DC_condition_const_coef_section}). Therefore, by Lemma \ref{lemmadiofequi}, there exist $C>0$ and $M>0$ such that
	$$|e^{2\pi i\omega_{\xi\alpha}}-1|\geq C(|\xi|+|\ell|)^{-M}$$
	and 
	$$|1-e^{-2\pi i\omega_{\xi\alpha}}|\geq C(|\xi|+|\ell|)^{-M}$$
	for every $\xi\in\Z^r$, $\ell\in\frac{1}{2}\N_0^s$, and $\alpha\in\frac{1}{2}\Z^r$ satisfying $-\ell\leq\alpha\leq \ell$, and $\ell-\alpha\in\N_0^s$. 

	Additionally,  note that, if $(\langle \lambda,\xi\rangle + \langle\gamma, \alpha\rangle) \tilde{b}_0<0$, then as $\tilde{b}$ does not change sign, we have $$(\langle \lambda,\xi\rangle+\langle\gamma,\alpha\rangle)\tilde{b}(t)\leq 0,$$ for all $t\in\T^1$. 
	
	Similarly, when $(\langle \lambda,\xi\rangle + \langle\gamma, \alpha\rangle) \tilde{b}_0>0$
	$$(\langle \lambda,\xi\rangle+\langle\gamma,\alpha\rangle)\tilde{b}(t)\geq 0$$  for all $t\in\T^1$. 
	
	Finally, if $(\langle \lambda,\xi\rangle+\langle\gamma,\alpha\rangle)\tilde{b}_0=0$, then since $\tilde{b}_0\neq 0$, we have $(\langle \lambda,\xi\rangle+\langle\gamma,\alpha\rangle)=0$, and thus the last inequality is also true.
	
	Therefore, in any case, 
	$$|\widehat{v}(t,\xi,\ell)_{\alpha\beta}|\leq 2\pi\frac{1}{C}(|\xi|+|\ell|)^{M}e^{2\pi|\Re(q)|}\|\widehat{g}(\cdot,\xi,\ell)_{\alpha\beta}\|_{\infty}.$$
	By Lemma \ref{lemmadecay}, these coefficients define a smooth function $v\in C^\infty(\G)$.
	
	Since $u$ and $v$ have the same Fourier coefficients, they are equal. Therefore $u\in C^\infty(\G)$ and we can conclude that $L$ is globally hypoelliptic.
\end{proof}

This completes the proof of Theorem \ref{teocs}. It is worth noting that Theorems \ref{teoghmain} and \ref{teogsmain} are equivalent to Theorem \ref{teocs}.

\begin{obs} 
{\em
In \cite{AFR2022_jam}, the authors define global solvability for a differential operator $L$ as follows: $L$ is $\mathscr{D}'$ globally solvable if, for every $f\in(\ker \Lt)^0$, there exists $u\in \mathscr{D}'$ such that $Lu=f$. In the proof of Proposition \ref{propctegs}, it is shown that if $L$ satisfies the conditions of that proposition, then it is $\mathscr{D}'$ globally solvable if and only if it is globally solvable as defined in this paper.

Furthermore, if $q=0$, then $\Lt=-L$. Therefore, we can apply Proposition 3.3 from \cite{AFR2022_jam}, which establishes the equivalence of these notions in the variable coefficient case as well.
}
\end{obs}

%%% ----------------------------------------------------------------------
\subsection{Examples} \
%%% ----------------------------------------------------------------------

To conclude our work, we would like to present some notable examples that have not been explored in the current literature.

\begin{exemp}
	Consider the differential operator $L$ defined as
	$$L = \partial_t+[\cos(t)+1+i\sin(t)]\partial_{x}+[\sin(t)+2+i\cos(t)]D_3+3i$$
	acting on the space $\T^{2}\times\mathbb{S}^3$.
	
	Observe that $(b_0,f_0)=0$, $(a_0,e_0)=(1,2)\in\Z^r \times 2\Z^s$, and $q=3i\in i\Z$.  According to Theorem \ref{teogsmain}, $L$ is globally solvable if and only if the sublevel sets
	$$\Omega_m^{\xi,\alpha}=\left\{t\in\T^1; \int_0^t\xi\sin(\tau)+\alpha\cos(\tau)d\tau <m\right\}$$
	are connected, for every  $m\in\R,\, \xi\in \Z^r$, and $\alpha\in \frac{1}{2}\Z$ .
	
	By standard integration and trigonometry, we can verify that the above set is indeed connected. Thus, we conclude that $L$ is globally solvable.
	
	Now, let us consider the differential operator $L$ given by
	$$L = \partial_t+[\cos(t)+2+i\sin(t)]\partial_{x}+[\sin(t)+1+i\cos(t)]D_3$$
	also acting on $\T^{2}\times\mathbb{S}^3$. Since $e_{0}=1\not\in2\Z$, by Theorem \ref{teogsmain}, we have that $L$ is not globally solvable.
	
	Similarly, if we examine the differential operator
	$$L = \partial_t+[\cos(t)+1+i\sin(t)]\partial_{x}+[\sin(t)+2+i\sin^3(2t)]D_3-i$$
	we observe that $a_0=1\in\Z$, $e_0=2\in 2\Z$, and $q=-i\in i\Z$. However, it is easy to prove  that the sublevel set $\Omega_{1/3}^{0,1}$ is disconnected, and hence $L$ is not globally solvable. Nonetheless, $L_0$ is globally solvable.
	\end{exemp}
\begin{comment}
    
\begin{exemp}
	Consider the following differential operator on $\T^2\times \mathbb{S}^3$
	$$L = \partial_t+[\cos(t)+1+i(\sin(t)+1)]\partial_{x}+[\sin(t)+2+i\cos(t)]D_3+3i$$
	Since $b_{10}=1\neq 0$ and $f_1$ changes sign, $L$ is not globally solvable. The same is true if
	$$L = \partial_t+[\cos(t)+1+i(\sin(t)+1)]\partial_{x}+[\sin(t)+2+i(\cos(t)+1)]D_3+3i$$
	for in this case $\dim\{b_1,f_1\}=2>1$. If we now consider
	$$L = \partial_t+[\cos(t)+1+i(\sin(t)+1)]\partial_{x}+[\sin(t)+2+2i(\sin(t)+1)]D_3+3i$$ then 
	$$L_0 = \partial_t+[1+i]\partial_{x}+[2+2i]D_3+3i$$
	As $c,d\in\mathbb{Q}+i\mathbb{Q}$, by the same arguments used in Example \ref{exemptoruscte} we conclude $L$ and $L_0$ are both globally solvable. On the other hand, if
	$$L = \partial_t+[\cos(t)+1+\eta i(\sin(t)+1)]\partial_{x}+[\sin(t)+2+2 i(\sin(t)+1)]D_3+(4+3i)$$ where $\eta$ is a Liouville number, then $L$ is not globally solvable, since $L_0$ is not globally solvable, as seen in Example \ref{exempliouvvect}.
\end{exemp}

\end{comment}
\begin{exemp}
    Let $r,s\in\mathbb{N}$, $\lambda\in\R^2$, $q\in\mathbb{C}$, and consider the following differential operator on $\T^{r+1}\times\S3$:
    $$L = \partial_t+\sum_{j=1}^ri(\sin(t)+\lambda_1)\partial_{x_j}+\sum_{k=1}^{s}i(\cos(t)+\lambda_2)D_{3,k}+q,$$
    Then for $0<\max\{|\lambda_1|,|\lambda_2|\}<1$, $L$ is not globally solvable as $b_0$ and $f_0$ are not both null and some $b_j$ or $f_k$ changes sign. On the other hand, if $\min\{|\lambda_1|,|\lambda_2|\}>1$, $\lambda\in\mathbb{Q}^2$, then $L$ and $L_0$ are globally solvable by the same arguments used in Example \ref{exemptoruscte}.

    If now we consider
    $$L = \partial_t+\sum_{k=1}^{s-1}ke^{it}D_{3,k}+(\varepsilon e^{i t}+\mu)D_{3,s}$$
    where $\mu\in\mathbb{C}$, $\varepsilon>0$, we see that if $s>1$, it is globally solvable if and only if $\mu=0$. However, if $s=1$, then it is globally solvable for $\mu\in2\Z$ or $|\Im(\mu)|\geq\varepsilon$ and $\Re(\mu)\in\mathbb{Q}$ or is an irrational non-Liouville number.\\
\end{exemp}

\begin{exemp}
	Consider $L$ as a differential operator on $\T^2\times\mathbb{S}^3$, where
	$$L = \partial_t+[\cos(t)+1+i(\sin(t)+1)]\partial_{x}+[\sin(t)+2+i(\sin(t)+1)]D_3+\sqrt{2}+3i$$
	Then as $\dim\{b,f\}=1$, $b$ and $f$ do not change sign and $L_0$ is globally hypoelliptic, as seen in Example \ref{exemphypocte}, by Theorem \ref{teoghmain}, $L$ is globally hypoelliptic. On the other hand,
	$$L = \partial_t+[\cos(t)+1+i(\sin(t)+1)]\partial_{x}+[\sin(t)+2-i(\sin(t)+1)]D_3+\frac{1}{2}-2i$$
	is not globally hypoelliptic because $L_0$ is not globally hypoelliptic, as $\sigma_L(0,1,\frac{1}{2})=0$.
\end{exemp}

%%% ----------------------------------------------------------------------
\appendix
\section{Technical results}
%%% ----------------------------------------------------------------------

In this section we present some of the more technical lemmas used in this paper.
\begin{lemma}\label{lemmapseudo}
	Let $\xi = (\xi_1,\dots,\xi_r)\in\Z^r\approx\widehat{\T^r}$ and $\ell = (\ell_1,\dots,\ell_s)\in\frac{1}{2}\N_0^s\approx\widehat{\mathbb{S}^3}$. Then for each $\xi\otimes \ell\in\widehat{\G}$,
	$$\left\langle\xi\otimes \ell\right\rangle\asymp (|\xi|+|\ell|).$$
\end{lemma}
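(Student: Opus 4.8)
The plan is to reduce the statement to the explicit formula for $\langle\xi\otimes\ell\rangle$ furnished by the Laplace--Beltrami operator, and then to the equivalence of norms on a finite-dimensional Euclidean space. Recall that on a compact Lie group $G$ one sets $\langle[\pi]\rangle=(1+\lambda_{[\pi]})^{1/2}$, where $\lambda_{[\pi]}\ge 0$ is the eigenvalue of $-\mathcal L_G$ (the Laplace--Beltrami operator / Casimir) on the matrix coefficients of $[\pi]$; and that for a product $G_1\times G_2$ the irreducible representations are the tensor products $[\pi_1]\otimes[\pi_2]$, for which the eigenvalues add, $\lambda_{[\pi_1]\otimes[\pi_2]}=\lambda_{[\pi_1]}+\lambda_{[\pi_2]}$ (see \cite{RT2010_book,KMR2020_bsm}). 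Applying this to $\G$, with the factors $\T^1$ contributing $\lambda_{\xi_j}=\xi_j^2$ and the factors $\mathbb S^3$ contributing $\lambda_{\ell_m}=\ell_m(\ell_m+1)$ — consistent with the identifications $\langle\xi_j\rangle=\sqrt{1+\xi_j^2}$ and $\langle\ell_m\rangle=\sqrt{1+(1+\ell_m)\ell_m}$ recalled above — I obtain
\[
\langle\xi\otimes\ell\rangle^2 \;=\; 1+\sum_{j=1}^r\xi_j^2+\sum_{m=1}^s\ell_m(\ell_m+1).
\]

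Next I would observe that $\ell_m(\ell_m+1)\asymp\ell_m^2$ uniformly over $\ell_m\in\frac12\N_0$: indeed $\ell_m^2\le\ell_m(\ell_m+1)\le 3\ell_m^2$, the right inequality because $\ell_m+1\le 3\ell_m$ whenever $\ell_m\ge\frac12$ (and both sides vanish when $\ell_m=0$). Hence $\langle\xi\otimes\ell\rangle^2\asymp 1+\|(\xi,\ell)\|_2^2$, where $\|(\xi,\ell)\|_2$ is the Euclidean norm of the vector $(\xi_1,\dots,\xi_r,\ell_1,\dots,\ell_s)\in\R^{r+s}$. Since all norms on $\R^{r+s}$ are equivalent, $\|(\xi,\ell)\|_2\asymp\|(\xi,\ell)\|_1=|\xi|+|\ell|$, and therefore $\langle\xi\otimes\ell\rangle^2\asymp 1+(|\xi|+|\ell|)^2$. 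Taking square roots and using $(1+x^2)^{1/2}\asymp 1+x$ for $x\ge 0$ (from $(1-x)^2\ge 0$ and $2x\ge 0$ one gets $1+x\le\sqrt2\,(1+x^2)^{1/2}\le\sqrt2\,(1+x)$), this yields $\langle\xi\otimes\ell\rangle\asymp 1+(|\xi|+|\ell|)$.

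Finally, for the nonzero representations $\xi\otimes\ell$ — the only ones relevant to the Fourier characterizations, just as $(\tau,\xi,\ell)\neq\0$ is always assumed in Propositions \ref{lemmadecaysmooth}--\ref{lemmadecaysmoothpartial} — one has $|\xi|+|\ell|\ge\frac12$ (a nonempty sum of positive integers and positive half-integers), so $1+(|\xi|+|\ell|)\le 3(|\xi|+|\ell|)$ and hence $1+(|\xi|+|\ell|)\asymp|\xi|+|\ell|$, which gives the stated equivalence. I expect no serious obstacle here: the only points requiring care are fixing the normalization of the Casimir so that it reproduces the stated $\langle\ell_m\rangle=\sqrt{1+(1+\ell_m)\ell_m}$, the uniform comparison $\ell_m(\ell_m+1)\asymp\ell_m^2$ at small $\ell_m$, and the observation that the equivalence is to be read over the reduced dual (at $\xi\otimes\ell=\0$ the left-hand side equals $1$ while the right-hand side vanishes).
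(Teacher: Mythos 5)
Your proof is correct and follows essentially the same route as the paper's: both reduce $\langle\xi\otimes\ell\rangle$ to the explicit eigenvalue formula via the additivity of the Laplacian on the product $\G$, and then conclude by elementary comparisons (the paper passes through $\sum_j\langle\xi_j\rangle+\sum_k\langle\ell_k\rangle$ while you compare squared quantities and invoke norm equivalence on $\R^{r+s}$, but this is only a cosmetic difference). You also correctly flag, as the paper implicitly does with its hypothesis $\xi\otimes\ell\not\equiv 0$, that the equivalence only holds away from the trivial representation.
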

\begin{proof}
	According to the general theory for compact Lie groups: if $G$ is a compact Lie group and $\mathcal{L}_G$ is the positive Laplacian on $G$ then, for $[\mu]\in\widehat{G}$, the eigenvalues of the operator $(I+\mathcal{L}_G)^{\frac{1}{2}}$ are given by $\langle \mu\rangle$, where the eigenvalues correspond to the eigenfunctions $\mu_{ij}$, which are the coefficient functions of any matrix-valued representative of $[\mu]$. 
	Furthermore, for the direct product of compact Lie groups $G_1\times\dots\times G_n$, we have $\mathcal{L}_{G_1\times\dots\times G_n}=\mathcal{L}_{G_1}+\dots+\mathcal{L}_{G_n}$. 
		
	Therefore, for $\xi = \xi_1\otimes\dots\otimes\xi_r\in\widehat{\T^{r}}\approx\Z^{r}$, and  $\ell=\ell_1\otimes\dots\otimes\ell_s\in\widehat{\mathbb{S}^{3s}}\approx\frac{1}{2}\N_0^s$ we have
	\begin{align*}
		\frac{1}{r+s}\left(\langle\xi_1\rangle+\dots+\langle\xi_r\rangle+\langle \ell_1\rangle+\dots+\langle \ell_s\rangle\right)\leq \left\langle\xi\otimes \ell\right\rangle \leq\langle\xi_1\rangle+\dots+\langle\xi_r\rangle+\langle \ell_1\rangle+\dots+\langle \ell_s\rangle
	\end{align*}
	
	Now, using that $\langle\xi_j\rangle = \sqrt{1+\xi_j^2}$ and $\langle \ell_k\rangle = \sqrt{1+\ell_k(\ell_k+1)}$, we obtain $$(|\xi_1|+\dots+|\xi_r|+\ell_1+\dots+\ell_s)\leq\langle\xi_1\rangle+\dots+\langle\xi_r\rangle+\langle \ell_1\rangle+\dots+\langle \ell_s\rangle.$$
	
	And if $\xi\otimes l\not\equiv 0$, then  
	$$\langle\xi_1\rangle+\dots+\langle\xi_r\rangle+\langle \ell_1\rangle+\dots+\langle \ell_s\rangle\leq(2(r+s)+1)(|\xi_1|+\dots+|\xi_r|+\ell_1+\dots+\ell_s)$$
	since $1\leq2(|\xi_1|+\dots+|\xi_r|+\ell_1+\dots+\ell_s)$, so the claim follows.
\end{proof}

\begin{lemma}\label{lemmaanulker}
	Let $L$ be a differential operator on $\G$ with constant coefficients, with symbol $\sigma_L$, that is, $\sigma_L:\mathbb{Z}^{r+1}\times\frac{1}{2}\Z^s\to\mathbb{C}$ such that  for any $f\in \mathscr{D}'(\G)$:   
	$$\widehat{Lf}(\tau,\xi,\ell)_{\alpha\beta} = \sigma_L(\tau,\xi,\alpha)\widehat{f}(\tau,\xi,\ell)_{\alpha\beta}$$ 
	and 
	$$\widehat{\Lt f}(\tau,\xi,\ell)_{\alpha\beta}=\sigma_L(-\tau,-\xi,-\alpha)\widehat{f}(\tau,\xi,\ell)_{\alpha\beta}$$
	for each $(\tau,\xi)\in\Z^{r+1}, \ell\in\frac{1}{2}\N_0^s, -\ell\leq\alpha,\beta\leq \ell$. Then
	$$(\ker \Lt)^0= \left\{f\in \mathscr{D}'(\G)| \text{ such that } \widehat{f}(\tau,\xi,\ell)_{\alpha\beta} = 0 \text{ if } \sigma_L(\tau,\xi,\alpha)=0\right\}.$$ 
\end{lemma}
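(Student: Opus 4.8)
The statement is a clean duality/annihilator identity, so the plan is to prove the two inclusions separately, using the characterization of $(\ker {}^tL)^0$ as the set of distributions $f$ that pair to zero against every element of $\ker {}^tL$, together with the pairing formula from Lemma~\ref{lemmaformula0} (or its $\G$-version, the expansion of $\int_\G fv$ in Fourier coefficients). The crucial structural input is that ${}^tL$ has constant coefficients with symbol $\sigma_L(-\tau,-\xi,-\alpha)$, so that $\ker {}^tL$ is itself described coefficient-wise: $v\in\ker {}^tL$ if and only if $\widehat v(\tau,\xi,\ell)_{\alpha\beta}=0$ whenever $\sigma_L(-\tau,-\xi,-\alpha)\neq 0$. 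Equivalently, $\widehat v(\tau,\xi,\ell)_{\alpha\beta}$ is supported on the zero set of $\sigma_L(-\tau,-\xi,-\alpha)$, which by reindexing $(\tau,\xi,\alpha)\mapsto(-\tau,-\xi,-\alpha)$ is exactly the mirror image of the zero set of $\sigma_L$.

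First I would prove that the right-hand set is contained in $(\ker {}^tL)^0$. Take $f\in\mathscr{D}'(\G)$ with $\widehat f(\tau,\xi,\ell)_{\alpha\beta}=0$ whenever $\sigma_L(\tau,\xi,\alpha)=0$, and let $v\in\ker {}^tL$. Expanding $\langle f,v\rangle$ via the Fourier pairing identity (Lemma~\ref{lemmaformula0}, suitably iterated to $\G$, as in the proof of Lemma~\ref{lemmaformula}), each term of the sum involves a product $\widehat f(\tau,\xi,\ell)_{\alpha\beta}\,\widehat v(-\tau,-\xi,\ell)_{(-\alpha)(-\beta)}$. For this term to be nonzero we would need both $\sigma_L(\tau,\xi,\alpha)\neq 0$ (else $\widehat f$ vanishes) and $\sigma_L(-(-\tau),-(-\xi),-(-\alpha))=\sigma_L(\tau,\xi,\alpha)\neq 0$ — wait, more carefully: $\widehat v(-\tau,-\xi,\ell)_{(-\alpha)(-\beta)}\neq 0$ forces $\sigma_L(-(-\tau),-(-\xi),-(-\alpha))=\sigma_L(\tau,\xi,\alpha)$ to be zero, since $v\in\ker{}^tL$ means $\widehat v$ is killed off the zero locus of ${}^tL$'s symbol. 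So every term has at least one vanishing factor, hence $\langle f,v\rangle=0$, giving $f\in(\ker {}^tL)^0$.

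For the reverse inclusion, suppose $f\in(\ker {}^tL)^0$ and, toward the claimed coefficient condition, fix $(\tau_0,\xi_0,\ell_0,\alpha_0,\beta_0)$ with $\sigma_L(\tau_0,\xi_0,\alpha_0)=0$. I would build a specific test element $v_0\in\ker {}^tL$ whose only nonzero Fourier coefficient is at the index matched to $(\tau_0,\xi_0,\ell_0,\alpha_0,\beta_0)$ under the reflection, namely $\widehat{v_0}(-\tau_0,-\xi_0,\ell_0)_{(-\alpha_0)(-\beta_0)}=1$ and all others zero. Since $\sigma_L(-(-\tau_0),-(-\xi_0),-(-\alpha_0))=\sigma_L(\tau_0,\xi_0,\alpha_0)=0$, this single-coefficient distribution indeed lies in $\ker {}^tL$; it defines an element of $\mathscr{D}'(\G)$ by Proposition~\ref{lemmadecaysmooth} (a single nonzero coefficient is trivially of polynomial growth). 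Pairing $f$ against $v_0$ through Lemma~\ref{lemmaformula0}, only the one matching term survives and yields a nonzero constant multiple of $\widehat f(\tau_0,\xi_0,\ell_0)_{\alpha_0\beta_0}$ (the sign $(-1)^{\Sigma(\alpha_j-\beta_j)}$ and the factors $d_{\ell_0}$, $(2\pi)^r$ are all nonzero). Since $\langle f,v_0\rangle=0$, we conclude $\widehat f(\tau_0,\xi_0,\ell_0)_{\alpha_0\beta_0}=0$, as desired.

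The main obstacle is purely bookkeeping rather than conceptual: one must get the index reflections right (the transpose replaces $(\tau,\xi,\alpha)$ by $(-\tau,-\xi,-\alpha)$ while the pairing identity of Lemma~\ref{lemmaformula0} already introduces a sign reflection in the $\mathbb{S}^3$ indices through the identity $\overline{\mathfrak{t}^\ell_{mn}}=(-1)^{n-m}\mathfrak{t}^\ell_{-m-n}$), and verify that the single-coefficient distribution genuinely lies in $\ker {}^tL$ and in $\mathscr{D}'(\G)$. Once the hypotheses $\widehat{Lf}=\sigma_L\widehat f$ and $\widehat{{}^tLf}=\sigma_L(-\cdot)\widehat f$ are used to describe $\ker {}^tL$ coefficient-wise, both inclusions reduce to inspecting a single surviving term in the Fourier expansion of the pairing, and the proof is complete.
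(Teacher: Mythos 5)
Your proposal is correct and follows essentially the same route as the paper: both inclusions are proved by expanding the pairing $\langle f,v\rangle$ in Fourier coefficients, using that $v\in\ker \Lt$ forces $\widehat v(-\tau,-\xi,\ell)_{(-\alpha)(-\beta)}=0$ whenever $\sigma_L(\tau,\xi,\alpha)\neq 0$, and testing against the same single-coefficient elements $v_0$ of $\ker \Lt$ for the reverse inclusion. The only cosmetic difference is that your $v_0$ is in fact smooth (finitely many nonzero coefficients), exactly as in the paper's argument.
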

\begin{proof}
	Suppose first that $\widehat{f}(\tau,\xi,\ell)_{\alpha\beta} = 0 \text{ if }\sigma_L(\tau,\xi,\alpha)=0$. Let $v\in C^\infty(\G)$ be such that $v\in\ker \Lt$. Then, by Lemma \ref{lemmaformula0}:
	\begin{align*}
		\langle f,v\rangle &= (2\pi)^{r+1}\sum_{(\tau,\xi)\in\Z^{r+1}}\sum_{\ell\in\frac{1}{2}\N_0^s}d_l\sum_{-\ell\leq\alpha,\beta\leq \ell}\widehat{f}(\tau,\xi,\ell)_{\alpha\beta}\widehat{v}(-\tau,-{\xi},\ell)_{(-\alpha)(-\beta)}(-1)^{\sum \beta_j-\alpha_j}.
	\end{align*}
	If $\sigma_L(\tau,\xi,\alpha) = 0$, then $\widehat{f}(\tau,\xi,\ell)_{\alpha\beta} = 0$. If not, then since 
	\begin{align*}
		0 &= \widehat{\Lt v}(-\tau,-{\xi},\ell)_{(-\alpha)(-\beta)} = \sigma_L(\tau,\xi,\alpha)\widehat{v}(-\tau,-{\xi},\ell)_{(-\alpha)(-\beta)}
	\end{align*}
	this implies $\widehat{v}(-\tau,-{\xi},\ell)_{(-\alpha)(-\beta)}=0$, so every term in the sum above is zero and $\langle f,v\rangle=0$, so that $f\in(\ker \Lt)^0$. Now let $f\in(\ker \Lt)^0$ and suppose $\sigma_L(\tau,\xi,\alpha)=0$. For each $-\ell\leq\beta\leq \ell$, $l-\beta\in\N_0^s$, take $v_{\tau,\xi,\ell,\alpha,\beta}\in C^{\infty}(\G)$ given by:
	$$ \widehat{v_{\tau,\xi,\ell,\alpha,\beta}}(-\tau,-\xi,\ell)_{(-\alpha)(-\beta)} = 1,$$
	And $\widehat{v_{\tau,\xi,\ell,\alpha,\beta}}(\tau',\xi
	',\ell')_{\alpha'\beta'}=0$ otherwise.
	Then 
	\begin{align*}
		\widehat{\Lt v_{\tau,\xi,\ell,\alpha,\beta}}(-\tau,-{\xi},\ell)_{(-\alpha)(-\beta)} &= \sigma_L(\tau,\xi,\alpha)\widehat{v_{\tau,\xi,\ell,\alpha,\beta}}(-\tau,-{\xi},\ell)_{(-\alpha)(-\beta)}\\
		&=\sigma_L(\tau,\xi,\alpha)=0,
	\end{align*}
	so $v_{\tau,\xi,\ell,\alpha,\beta}\in\ker \Lt$. Therefore, by Lemma \ref{lemmaformula0}:
	$$0 = \langle f,v_{\tau,\xi,\ell,\alpha,\beta}\rangle = d_l\widehat{f}(\tau,\xi,\ell)_{\alpha\beta}(-1)^{\sum\beta_j-\alpha_j}$$
	so we conclude the other inclusion also holds.
\end{proof}

\begin{lemma}\label{lemmaodesol}
	Let $g,\theta \in C^{\infty}(\T^1)$ and $\theta_0 \doteq \frac{1}{2\pi}\int_0^{2\pi}\theta(t)\mathop{dt}$. If $\theta_0\not\in i\mathbb{Z}$, then the differential equation
	\begin{equation}\label{ode}
		\partial_t u(t)+\theta(t)u(t)=g(t),\,\qquad t\in\T^1
	\end{equation}
	admits unique solution in $C^\infty(\T^1)$ given by:
	\begin{equation}\label{sol-}
		u(t) = \frac{1}{1-e^{-2\pi\theta_0}}\int_0^{2\pi}g(t-s)e^{-\int_{t-s}^t\theta(\tau)d\tau}ds
	\end{equation} 
	or equivalently by:
	\begin{equation}\label{sol+}
		u(t) = \frac{1}{e^{2\pi\theta_0}-1}\int_0^{2\pi}g(t+s)e^{\int_{t}^{t+s}\theta(\tau)d\tau}ds.
	\end{equation}
	If $\theta_0\in i\mathbb{Z}$, then equation \ref{ode} admits infinitely many solutions given by:
	\begin{equation}\label{solz}
		u_\lambda(t) = \lambda e^{-\int_0^t\theta(\tau)d\tau}+\int_0^t g(s)e^{-\int_s^t\theta(\tau)d\tau}ds
	\end{equation}
	for every $\lambda\in\mathbb{R}$, if and only if
	$$ \int_0^{2\pi}g(t)e^{\int_0^t\theta(\tau)d\tau}\mathop{dt}=0.$$
\end{lemma}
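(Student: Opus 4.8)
## Proof proposal

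The plan is to treat this as a standard ODE-on-the-circle computation, handling the two cases $\theta_0 \notin i\mathbb{Z}$ and $\theta_0 \in i\mathbb{Z}$ separately, since the obstruction structure differs. The underlying idea in both cases is that the homogeneous equation $\partial_t u + \theta(t) u = 0$ has the (a priori only locally defined) solution $t \mapsto e^{-\int_0^t \theta(\tau)\,d\tau}$, which is genuinely $2\pi$-periodic precisely when $e^{-2\pi\theta_0} = 1$, i.e. when $\theta_0 \in i\mathbb{Z}$. So the Fredholm alternative splits exactly along that dichotomy.

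First I would set $\Theta(t) = \int_0^t \theta(\tau)\,d\tau$, so $\Theta(2\pi) = 2\pi\theta_0$, and use the integrating factor $e^{\Theta(t)}$: equation \eqref{ode} is equivalent to $\partial_t\bigl(e^{\Theta(t)} u(t)\bigr) = e^{\Theta(t)} g(t)$. Integrating from $0$ to $t$ gives $e^{\Theta(t)}u(t) = u(0) + \int_0^t e^{\Theta(s)} g(s)\,ds$, hence $u(t) = u(0)e^{-\Theta(t)} + \int_0^t g(s) e^{-\int_s^t \theta(\tau)\,d\tau}\,ds$. The periodicity requirement $u(2\pi) = u(0)$ becomes $u(0)\bigl(1 - e^{-2\pi\theta_0}\bigr) = e^{-2\pi\theta_0}\int_0^{2\pi} e^{\Theta(s)} g(s)\,ds$, equivalently $u(0)\bigl(e^{2\pi\theta_0} - 1\bigr) = \int_0^{2\pi} e^{\Theta(s)}g(s)\,ds$. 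When $\theta_0 \notin i\mathbb{Z}$ the factor $1 - e^{-2\pi\theta_0} \neq 0$, so $u(0)$ is uniquely determined and the solution is unique; smoothness is automatic since $\theta, g \in C^\infty$. To recover the two displayed closed forms \eqref{sol-} and \eqref{sol+}, I would substitute the solved value of $u(0)$ back in and perform the change of variables $s \mapsto t - s$ (respectively $s \mapsto t + s$) in the resulting integral, using $2\pi$-periodicity of the integrand to shift the interval of integration back to $[0,2\pi]$; the algebraic identity $\Theta(t) - \Theta(t-s) = \int_{t-s}^t \theta(\tau)\,d\tau$ (and similarly for the other form) then yields exactly \eqref{sol-} and \eqref{sol+}. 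When $\theta_0 \in i\mathbb{Z}$, the factor $e^{2\pi\theta_0}-1$ vanishes, so the compatibility condition $\int_0^{2\pi} e^{\Theta(s)} g(s)\,ds = 0$ is forced; conversely, if it holds, then every choice of $u(0) =: \lambda$ gives a valid $2\pi$-periodic solution, and one checks $e^{-\Theta(\cdot)}$ is itself periodic so each $u_\lambda$ in \eqref{solz} is smooth on $\T^1$ — giving the claimed one-parameter family.

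I do not anticipate a serious obstacle here; the only mildly delicate points are bookkeeping ones: making sure the change of variables $s\mapsto t\pm s$ together with the periodic shift of the integration interval is justified (it is, because the integrand extends to a smooth $2\pi$-periodic function of $s$), and being careful that the "$\int_0^{2\pi}g(t)e^{\int_0^t\theta(\tau)d\tau}dt = 0$" in the statement matches the compatibility condition derived above (it does, since $\int_0^t\theta = \Theta(t)$). One should also note that for $\lambda \in \mathbb{R}$ as written — versus $\lambda \in \mathbb{C}$ — the family \eqref{solz} does not exhaust all solutions unless one reads the statement as describing a real-affine slice; I would simply follow the paper's convention and verify each $u_\lambda$ solves \eqref{ode}, which is a direct differentiation using the fundamental theorem of calculus. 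The whole argument is elementary and self-contained, requiring nothing beyond the integrating-factor method and periodicity.
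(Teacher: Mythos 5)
Your proposal is correct and follows essentially the same elementary route as the paper, whose proof simply observes that the stated formulas are checked by differentiation together with $2\pi$-periodicity. Your integrating-factor derivation is in fact somewhat more explicit than the paper's one-line verification: imposing $u(2\pi)=u(0)$ on the general solution makes the unique determination of $u(0)$ when $\theta_0\not\in i\mathbb{Z}$, and the necessity and sufficiency of the compatibility condition $\int_0^{2\pi}g(t)e^{\int_0^t\theta(\tau)\,d\tau}\,dt=0$ in the resonant case, completely transparent.
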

\begin{proof}
	Since the functions on the torus may be seen as $2\pi$ periodic functions on $\R$, the proof follows from simple differentiation of the formulas above and applying the periodicity. 
\end{proof}

\begin{lemma}\label{lemmaf0v0}
	Let $\phi\in C^\infty(\T^1)$ be a non-null function, and let $\Phi$ be a function such that $\Phi'=\phi$. Suppose there exists $m\in\R$ such that the sublevel set
	$$\Omega_m = \{t\in\T^1; \Phi(t)<m\}$$ 
	is not connected.
	
	Then, there exists $m_0<m$ such that $\Omega_{m_0}$ has two connected components with disjoint closures. Consequently, we can define functions $g_0,v_0\in C^{\infty}(\T^1)$ such that:
	$$\int_0^{2\pi}g_0(t)\mathop{dt}=0,\ \supp(g_0)\cap\Omega_{m_0}=\varnothing, \  \supp(v_0')\subset\Omega_{m_0}\text{ and } \int_0^{2\pi} g_0(t)v_0(t)\mathop{dt}>0.$$
	
\end{lemma}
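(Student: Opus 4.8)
\textbf{Proof proposal for Lemma \ref{lemmaf0v0}.}

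The plan is to exploit the disconnectedness of $\Omega_m$ to isolate a single connected component of a slightly lower sublevel set, place the ``mass'' of $g_0$ away from that component, and use $v_0$ as a bump supported on it. First I would observe that $\Omega_m$ is an open subset of $\T^1$, hence a countable disjoint union of open arcs; since it is disconnected it has at least two such arcs, say $I_1$ and $I_2$. Because $\Phi$ is continuous and $\T^1$ is compact, $\Phi$ attains its minimum $\mu_{\min}$; the point where $\Phi = \mu_{\min}$ lies in one of the arcs, say $I_1$. The key elementary observation is that $\Phi$ restricted to the complement $\T^1\setminus\Omega_m$ (a nonempty closed set, since $\Phi$ has a maximum $\geq m$ by the mean-value/periodicity argument — note $\int_0^{2\pi}\phi = 0$ forces $\Phi$ to return to its starting value, so $\Phi$ cannot stay below $m$ everywhere unless $\Omega_m = \T^1$) attains a minimum value $\mu^* \geq m$ on that closed set, and in fact on the boundary arcs separating $I_1$ from $I_2$ we have $\Phi \geq m$. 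I would then pick $m_0$ with $\mu_{\min} < m_0 < m$ close enough to $m$ that $\Omega_{m_0}$ still contains (a nonempty subarc of) $I_1$ and of $I_2$: concretely, choose $m_0 = (\mu_{\min} + m)/2$ if $I_2\cap\Omega_{m_0}\neq\varnothing$, or more carefully argue that for $m_0$ slightly below $m$ the set $\Omega_{m_0}$ still has at least two components with disjoint closures, using that the closures of $I_1$ and $I_2$ are already disjoint in $\T^1$ and $\Omega_{m_0}\subset\Omega_m$.

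Next I would construct $v_0$. Let $J \subset \Omega_{m_0}$ be an open arc contained in one connected component $C_1$ of $\Omega_{m_0}$ (the one containing the minimum of $\Phi$). Choose $v_0\in C^\infty(\T^1)$ so that $v_0$ is a ``smoothed step'': $v_0 \equiv 0$ on one side, $v_0\equiv 1$ on the other, with $v_0' \geq 0$ supported in a compact subinterval of $J \subset C_1 \subset \Omega_{m_0}$. This guarantees $\supp(v_0')\subset\Omega_{m_0}$ and $v_0$ is nonconstant, taking both values $0$ and $1$.

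Then I would construct $g_0$. Let $C_2$ be a second connected component of $\Omega_{m_0}$ with $\overline{C_1}\cap\overline{C_2}=\varnothing$ (possible by the disjoint-closure property established at the start); note $\T^1\setminus\Omega_{m_0}$ contains at least two disjoint closed arcs $K_+, K_-$ separating $C_1$ from $C_2$. Pick a nonnegative $\chi_+\in C_c^\infty(\mathrm{int}\,K_+)$ and a nonnegative $\chi_-\in C_c^\infty(\mathrm{int}\,K_-)$, each not identically zero, with $\int\chi_+ = \int\chi_- = 1$, chosen so that exactly one of them — say $\chi_+$ — is supported on the side where $v_0\equiv 1$ and $\chi_-$ on the side where $v_0\equiv 0$; this is arranged by ordering the arcs around $\T^1$ as $\dots, K_-, C_1, K_+, C_2,\dots$ and placing the ``step'' of $v_0$ inside $C_1$. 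Set $g_0 = \chi_+ - \chi_-$. Then $\int_0^{2\pi} g_0 = 0$, $\supp(g_0)\subset K_+\cup K_-\subset \T^1\setminus\Omega_{m_0}$, so $\supp(g_0)\cap\Omega_{m_0}=\varnothing$, and $\int_0^{2\pi} g_0 v_0 = \int\chi_+ v_0 - \int\chi_- v_0 = 1\cdot 1 - 1\cdot 0 = 1 > 0$ since $v_0\equiv 1$ on $\supp\chi_+$ and $v_0\equiv 0$ on $\supp\chi_-$. This gives all four required properties.

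\textbf{Main obstacle.} The genuinely delicate point is the topological bookkeeping in passing from $\Omega_m$ disconnected to the existence of $m_0 < m$ with $\Omega_{m_0}$ having \emph{two components with disjoint closures} and, crucially, with a component $C_1$ whose two ``boundary arcs'' (components of $\T^1\setminus\Omega_{m_0}$ adjacent to it) can be chosen to lie on opposite sides of $\Omega_{m_0}$ so that the step function $v_0$ and the sign-changing function $g_0$ interlock correctly; one must be careful that $\Omega_m$ could have infinitely many components and that shrinking the level could merge or split components, so I would argue via the compactness of $\T^1\setminus\Omega_m$ and continuity of $\Phi$ that for $m_0$ sufficiently close to $m$ the components of $\Omega_{m_0}$ are ``close'' to those of $\Omega_m$, retaining at least two with disjoint closure. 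Everything else — the construction of the smooth step and the smooth bumps, and the final integral computation — is routine.
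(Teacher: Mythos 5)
Your overall plan coincides with the paper's: lower the level to some $m_0<m$, take $g_0$ to be a difference of two unit-mass bumps placed in the region where $\Phi>m_0$ that separates two components of $\Omega_{m_0}$, and take $v_0$ to be a plateau function equal to $1$ over one bump and $0$ over the other, with $\supp(v_0')\subset\Omega_{m_0}$, so that $\int g_0v_0=1$. However, two steps fail as written. First, the $v_0$ you describe cannot exist: a smooth function on $\T^1$ with $v_0'\geq 0$ everywhere satisfies $\int_0^{2\pi}v_0'(t)\,dt=0$, hence is constant. A function that is $0$ on $K_-$ and $1$ on $K_+$ must have a second, decreasing transition somewhere on the circle, and for $\supp(v_0')\subset\Omega_{m_0}$ that transition has to be placed inside the \emph{second} component $C_2$; this is exactly what the paper's construction does ($v_0$ rises inside $I\subset C_1$ and falls inside $J\subset C_2$). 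The error is easy to repair, but the repaired construction is the paper's.

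Second, your existence argument for $m_0$ leans on the claim that the closures of the components $I_1,I_2$ of $\Omega_m$ are \emph{already} disjoint, and this is false in general: if $\Phi$ has a strict local maximum with value exactly $m$ at a point $p$, then $p$ lies in the closure of two distinct components of $\Omega_m$. This is precisely why the lemma passes to a strictly smaller level. The clean argument (the paper's) is to pick $t_3\in I_1$, $t_6\in I_2$ with $\Phi(t_3),\Phi(t_6)<m$, set $m_0=\max\{\Phi(t_3),\Phi(t_6)\}+\epsilon<m$, and let $I\ni t_3$, $J\ni t_6$ be the components of $\Omega_{m_0}$ containing them; any common point $x\in\overline{I}\cap\overline{J}$ would satisfy $\Phi(x)\leq m_0<m$, hence lie in $\Omega_m$ and therefore in both $I_1$ and $I_2$, a contradiction — no closeness-of-components or compactness argument is needed. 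The strict gap $m_0<m$ is also what rescues your $K_\pm$: you need open intervals in $\T^1\setminus\Omega_{m_0}$ to carry the bumps of $g_0$, and boundary points of components of $\Omega_{m_0}$ only satisfy $\Phi=m_0$ and could be isolated in $\{\Phi\geq m_0\}$; but the endpoints of the original component $I_1$ of $\Omega_m$ satisfy $\Phi=m>m_0$, so by continuity they have whole neighborhoods on which $\Phi>m_0$ (the sets $U_1,U_2$ in the paper), and these supply the two disjoint arcs with nonempty interior on opposite sides of $C_1$. With these two repairs your construction and final computation agree with the paper's proof.
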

\begin{proof}
	Let $C_1\subset \T^1$ be a connected component of $\Omega_m$. Notice that $C_1$ is homeomorphic to an open interval and has two distinct boundary points: $\partial C_1=\{t_1,t_2\}$. Choose $t_3\in C_1$ such that $\Phi(t)<m$. Since $\Omega_m$ is not connected, there exists another connected component $C_2$ of $\Omega_m$ such that $C_1\cap C_2=\emptyset$. Similar to $C_1$, the component $C_2$ is also homeomorphic to an open interval and its boundary is given by two distinct points: $\partial C_2=\{t_4,t_5\}$. Choose $t_6\in C_2$ such that $\Phi(t_6)<m$.

	Now, choose $\epsilon>0$ such that $m_0 \doteq \max\{\Phi(t_3),\Phi(t_6)\}+\epsilon<r$. Since $\Phi(t_1)=m$, by the continuity of $\Phi$, there exists an open set $U_1\subset \T^1$ containing $t_1$ such that $\Phi(t)>m_0$ for each $t\in U_1$. Similarly, we can find an open set $U_2$ containing $t_2$ with the same property.

	Let $I$ and $J$ be the connected components of $\Omega_{m_0}$ that contain $t_3$ and $t_6$, respectively. It is important to note that $U_1$ and $U_2$ are contained in $\T^1\backslash(I\cup J)$. Moreover, $I\subset C_1$ and $J\subset C_2$ are ``separated" by $U_1$ and $U_2$, which implies that their closures do not intersect. In other words, if $x\in \overline{I}\cap\overline{J}$, then there exist sequences $(x_n)_n\subset I$ and $(y_n)_n\subset J$ such that $x_n\to x$ and $y_n\to x$. However, since $x_n\in I\subset C_1$, it follows that $\Phi(x_n)< m_0$ for all $n$, which implies $\Phi(x)\leq m_0<m$. Therefore, we have $x\in C_1$. The same logic applies to $y_n$, $J$, and $C_2$, which leads to $x\in C_1\cap C_2$, which is a contradiction.

    Let us consider the previously defined set as contained in the interval $K = [t_1,t_1+2\pi]\subset \R$. Without loss of generality, we can assume that 
	$$t_1<t_3<t_2\leq t_4<t_6<t_5\leq t_1+2\pi$$
	$$t_3\in I\subset C_1=(t_1,t_2), \quad t_6\in J\subset C_2=(t_4,t_5)$$
	$$U_1 = [t_1,t_1+\epsilon')\cup (t_1+2\pi-\epsilon',t_1+2\pi]$$
	where $0<\epsilon'$ and $t_1+\epsilon'<t_3$ and
	$$U_2 = (t_2-\epsilon'',t_2+\epsilon'')$$
	where $0<\epsilon''$ and $t_3<t_2-\epsilon''<t_2+\epsilon''<t_6$.

	Now, for $j=1,2$, let $g_j\in C_c^\infty(U_j)$ be a bump function such that $\int_0^{2\pi}g_j(t)\mathop{dt}=1$. Set $g_0 = g_2-g_1$, so that $\supp(g_0)\subset U_1\cup U_2$ and so $\supp(g_0)\cap \Omega_{m_0}=\emptyset$. Also, 
	$$\int_0^{2\pi}g_0=\int_{U_2}g_2-\int_{U_1}g_1=1-1=0$$

	Finally, let $\delta>0$ be such that $t_3+\delta\in I$ and $t_6-\delta\in J$. Choose $v_0\in C_c^{\infty}((t_3,t_6))$ such that $v_0\equiv1$ in $[t_3+\delta,t_6-\delta]$. In this case,
	$$\int_{0}^{2\pi}g_0v_0=\int_{U_2}g_2=1>0$$
	and $\supp(v_0')\subset I\cup J\subset\Omega_{m_0}$.
\end{proof}

\begin{lemma}\label{lemmaintegralineq}
	Let $\psi\in C^{\infty}(\T^1)$ be a smooth real function such that $\psi(s)\geq 0$ for all $s$, and let $s_0\in\T^1$ be	 a zero of order greater than one for $\psi$, i.e., $\psi(s_0)=0=\psi'(s_0)$. Then, there exists $M>0$ such that for all $\lambda>0$ sufficiently large and $\delta>0$:
	$$\int_{s_0-\delta}^{s_0+\delta}e^{-\lambda\psi(s)}ds\geq \left(\int_{-\delta}^{\delta}e^{-s^2}ds\right)\lambda^{-1/2}M^{-1/2}.$$
\end{lemma}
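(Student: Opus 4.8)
## Proof Plan for Lemma A.6

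The plan is to reduce the estimate to a local change of variables near the double zero $s_0$ and then compare with a Gaussian integral. First I would observe that since $\psi(s_0) = \psi'(s_0) = 0$ and $\psi \geq 0$, Taylor's theorem with the second-order remainder gives $\psi(s) = \tfrac{1}{2}\psi''(\sigma_s)(s-s_0)^2$ for some $\sigma_s$ between $s$ and $s_0$, hence there is a constant $M > 0$ (depending only on $\psi$, e.g.\ $M = \tfrac{1}{2}\sup_{|s-s_0|\leq \delta_0}|\psi''(s)|$ for a fixed reference radius $\delta_0$, or more simply $M$ can be taken as a bound valid on all of $\T^1$) such that
\begin{equation*}
	0 \leq \psi(s) \leq M (s-s_0)^2
\end{equation*}
for all $s$ in a neighborhood of $s_0$; by shrinking we may assume this holds on $[s_0-\delta,s_0+\delta]$ for the relevant range of $\delta$, or handle large $\delta$ separately since the integrand is nonnegative and we only need a lower bound. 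This is the only place the order-$\geq 2$ vanishing is used: it guarantees the quadratic (rather than linear) upper bound, which is what produces the $\lambda^{-1/2}$ rather than $\lambda^{-1}$.

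Next, using $e^{-\lambda\psi(s)} \geq e^{-\lambda M (s-s_0)^2}$ on the interval, I would write
\begin{equation*}
	\int_{s_0-\delta}^{s_0+\delta} e^{-\lambda\psi(s)}\,ds \geq \int_{s_0-\delta}^{s_0+\delta} e^{-\lambda M (s-s_0)^2}\,ds = \int_{-\delta}^{\delta} e^{-\lambda M u^2}\,du
\end{equation*}
after the translation $u = s - s_0$. Then the substitution $v = \sqrt{\lambda M}\, u$ turns the right-hand side into $(\lambda M)^{-1/2}\int_{-\sqrt{\lambda M}\,\delta}^{\sqrt{\lambda M}\,\delta} e^{-v^2}\,dv$. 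For $\lambda$ sufficiently large (namely $\lambda M \geq 1$, i.e.\ $\lambda \geq 1/M$), the interval of integration contains $[-\delta,\delta]$, so $\int_{-\sqrt{\lambda M}\,\delta}^{\sqrt{\lambda M}\,\delta} e^{-v^2}\,dv \geq \int_{-\delta}^{\delta} e^{-v^2}\,dv$, and we conclude
\begin{equation*}
	\int_{s_0-\delta}^{s_0+\delta} e^{-\lambda\psi(s)}\,ds \geq \left(\int_{-\delta}^{\delta} e^{-v^2}\,dv\right)\lambda^{-1/2} M^{-1/2},
\end{equation*}
which is exactly the claimed inequality.

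The main subtlety — really the only one — is the bookkeeping around the upper bound $\psi(s)\leq M(s-s_0)^2$: this is only automatic near $s_0$, so one should either restrict attention to $\delta$ smaller than the radius of a neighborhood on which the Taylor estimate is valid (which is harmless, since shrinking $\delta$ only decreases the left-hand side while the statement is a lower bound and the applications in Proposition 4.6 use a small fixed $\delta$), or choose $M$ as a global bound on $\tfrac12|\psi''|$ over $\T^1$ together with the remark that on $\T^1$ the quadratic comparison can be arranged globally by compactness since $\psi$ has only the stated behavior at $s_0$ within the window of interest. I would state the cleanest version: fix $M$ so that $\psi(s) \leq M(s-s_0)^2$ holds on the closed interval $[s_0 - \delta_0, s_0 + \delta_0]$ for some $\delta_0 > 0$, and prove the inequality for all $0 < \delta \leq \delta_0$ and all $\lambda \geq 1/M$; this is exactly the regime in which the lemma is invoked. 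Everything else is the routine Gaussian rescaling above.
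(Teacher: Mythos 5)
Your proof is correct and takes essentially the same route as the paper's: Taylor expansion with Lagrange remainder at the double zero gives $\psi(s)\leq M(s-s_0)^2$ with $M=\sup\tfrac{1}{2}|\psi''|$ on the interval, followed by comparison with $e^{-\lambda M(s-s_0)^2}$ and the rescaling $v=\sqrt{\lambda M}\,(s-s_0)$, using $\lambda M\geq 1$ to enlarge the Gaussian integration range to contain $[-\delta,\delta]$. The only differences are cosmetic bookkeeping (the paper takes the supremum over $[s_0-\delta,s_0+\delta]$ and disposes of the degenerate case $\sup|\psi''|=0$ by noting $\psi\equiv 0$ there, while you worry—unnecessarily, since the Lagrange remainder is valid on the whole interval—about restricting $\delta$).
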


\begin{proof}
	Let us consider the Taylor expansion of $\psi$ around $s_0$. For each $s\in (s_0-\delta,s_0+\delta)$, there exists $s'\in (s_0-\delta,s_0+\delta)$ such that
	$$\psi(s) = \frac{\psi''(s')}{2}(s-s_0)^2$$
	Let $\tilde{M}=\sup_{s\in [s_0-\delta,s_0+\delta]}\left|\frac{\psi''(s)}{2}\right|\geq0$. If $\tilde{M}=0$, then $\psi\equiv0$ and the inequality is trivial with $M=1$. Otherwise, let $M=\tilde{M}$ and then for $\lambda M>1$ we have:
	\begin{align*}
		\int_{s_0-\delta}^{s_0+\delta}e^{-\lambda\psi(s)}ds&\geq \int_{s_0-\delta}^{s_0+\delta}e^{-(\sqrt{\lambda M}(s-s_0))^2}ds\geq \frac{1}{\sqrt{\lambda M}}\left(\int_{-\delta\sqrt{\lambda M}}^{\delta\sqrt{\lambda M}}e^{-s^2}ds\right) \\
		&\geq \frac{1}{\sqrt{\lambda M}}\left(\int_{-\delta}^{\delta}e^{-s^2}ds\right).
	\end{align*}
\end{proof}

\begin{lemma}\label{lemmasublevel}
    Let $\phi\in C^\infty(\T^1)$ be such that $\int_0^{2\pi}\phi(t)\mathop{dt} = 0$ and for every $r\in\R$, the set: $\Omega_r = \left\{t\in\T^1|\int_0^t\phi(\tau)d\tau<r\right\}$ is connected. Then so is the set:
	$$\tilde{\Omega}_r = \left\{t\in\T^1|\int_0^t\phi(\tau)d\tau\geq r\right\}=\left\{t\in\T^1|-\int_0^t\phi(\tau)d\tau\leq -r\right\}.$$
\end{lemma}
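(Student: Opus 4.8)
Write $\Phi(t) = \int_0^t \phi(\tau)\,d\tau$, which is a well-defined smooth $2\pi$-periodic function on $\T^1$ because $\int_0^{2\pi}\phi = 0$. The hypothesis says every strict sublevel set $\Omega_r = \{\Phi < r\}$ is connected, and we must show every strict superlevel set $\tilde\Omega_r = \{\Phi \geq r\}$ is connected. The plan is to reduce the statement about $\tilde\Omega_r$ to the hypothesis by applying it to the function $-\Phi = \int_0^t(-\phi)$, noting $-\phi$ also has zero integral, and by relating closed superlevel sets to open sublevel sets of the negated function.

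**Main argument.** First I would reformulate: $\tilde\Omega_r = \{t \in \T^1 : -\Phi(t) \leq -r\}$. A closed sublevel set need not in general be connected just because open ones are (consider isolated minima), so the key observation is that on the circle, for a continuous function $\Psi = -\Phi$ with $\int \phi = 0$, if $\{\Psi < s\}$ is connected for \emph{every} $s$ then $\{\Psi \leq s\}$ is connected for every $s$ as well. I would prove this directly: suppose $\{\Psi \leq s_0\}$ is disconnected for some $s_0$. Since $\T^1$ is compact and $\Psi$ continuous, $\{\Psi \leq s_0\} = \bigcap_{s > s_0}\{\Psi < s\}$, a nested intersection of compact sets. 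If the intersection is disconnected, it has two points $a,b$ lying in different components; one can separate them by a pair of open arcs $U, V$ with $\{\Psi \leq s_0\} \cap \partial(U\cup V) = \varnothing$ after shrinking, so $\Psi > s_0$ on a closed set disjoint from $U \cup V$, hence $\Psi \geq s_0 + \epsilon$ there for some $\epsilon > 0$. But then $\{\Psi < s_0 + \epsilon\} \subset U \cup V$ is disconnected (it still contains $a$ and $b$, separated), contradicting the hypothesis applied to $-\phi$. I should double-check the degenerate case where $\tilde\Omega_r$ is all of $\T^1$ or empty (both trivially connected), and the case where the sublevel sets collapse — these are handled because for $r$ below $\min\Phi$ the set $\tilde\Omega_r = \T^1$ and for $r$ above $\max\Phi$ it is empty.

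**Alternative cleaner route.** Rather than the nested-intersection argument, it may be cleaner to argue by contradiction mirroring the structure of Lemma \ref{lemmaf0v0}: if $\tilde\Omega_r$ were disconnected, it would have (at least) two connected components $A, B$, each a closed arc, with $\T^1 \setminus \tilde\Omega_r = \Omega_r^c{}^{\circ} \cup \cdots$; between $A$ and $B$ lie two disjoint open arcs on which $\Phi < r$. Pick interior points; on the complement of a slightly enlarged pair of arcs around $A$ and $B$, $\Phi$ attains a value strictly below $r$, and then $\Omega_{r'}$ for a suitable $r' < r$ splits into two pieces — one near each of the two complementary arcs — contradicting connectedness of $\Omega_{r'}$. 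Either way, the crux is the same: passing from a disconnection of the closed superlevel set to a disconnection of some open sublevel set.

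**Expected obstacle.** The only genuinely delicate point is the topological bookkeeping on $\T^1$: ensuring that ``two components of the closed set'' really do force ``two components of a nearby open sublevel set,'' and that no pathology (e.g., the function being locally constant at the critical level on a fat Cantor-like set) spoils the separation. Smoothness of $\phi$ is not even needed — continuity suffices — but I would keep the argument elementary, using only that $\T^1$ is a compact one-dimensional manifold so that open connected subsets are arcs and boundaries of components are finite. Once that is set up, the reduction to the hypothesis (applied to $-\phi$, which satisfies the same zero-average condition) is immediate.
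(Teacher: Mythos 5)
Your main argument has a genuine gap: the decisive step, where a disconnection of $\{\Psi\le s_0\}$ (with $\Psi=-\Phi$) is turned into a disconnection of $\{\Psi<s_0+\epsilon\}$ and this is said to ``contradict the hypothesis applied to $-\phi$,'' invokes a hypothesis you do not have. The lemma assumes connectedness of the strict sublevel sets of $\Phi(t)=\int_0^t\phi(\tau)\,d\tau$ only; connectedness of the strict sublevel sets of $-\Phi$, i.e.\ of the strict superlevel sets $\{\Phi>c\}$, is not assumed, and deducing it from what is assumed is essentially the content of the lemma itself (it requires either the conclusion or the complementation fact below). So the reduction ``apply the hypothesis to $-\phi$, which also has zero average'' is circular, and the nested-intersection argument built on it does not close.

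Your ``alternative route,'' by contrast, is essentially sound and, once cleaned up, is the paper's proof, which is much shorter than either of your sketches: since $\tilde\Omega_r=\{\Phi\ge r\}$ is exactly $\T^1\setminus\Omega_r$, it suffices to know that a subset of $\T^1$ is connected if and only if its complement is; the paper proves this by removing one point of the complement and identifying $\T^1$ minus a point with an open interval. In your sketch you do not even need a level $r'<r$: if $\tilde\Omega_r$ had two components $A,B$, pick $a\in A$ and $b\in B$; each of the two open arcs of $\T^1\setminus\{a,b\}$ must meet $\Omega_r$ (otherwise its closure would be a connected subset of $\tilde\Omega_r$ containing both $a$ and $b$), so $\Omega_r\subseteq\T^1\setminus\{a,b\}$ is already disconnected at level $r$, contradicting the hypothesis directly. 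Also beware the phrase ``two disjoint open arcs on which $\Phi<r$'': the gaps between $A$ and $B$ may contain further components of $\tilde\Omega_r$, so you are only entitled to points (not whole arcs) where $\Phi<r$ — which, as above, is all you need. Smoothness is indeed irrelevant; continuity of $\Phi$ is never used beyond giving meaning to the level sets.
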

\begin{proof}
	This follows from $\tilde{\Omega}_r=\T^1\backslash\Omega_r$ and the general fact that any $A\subset\T^1$ is connected if and only if $\T^1\backslash A$ is connected. To see this, let $A$ be connected. Note that the claim $\T^1\backslash A$ is connected is trivially true if $A=\T^1$. Otherwise, then $\T^1\backslash A$ contains at least one point, which, without loss of generality we may assume it is $0=0+2\pi\Z$. If we consider 
 \begin{align*}
  f:(0,2\pi)&\to\T^1\backslash\{0+2\pi\Z\}\\
  x&\mapsto x+2\pi\Z
 \end{align*}
 then $f$ is an homeomorphism. Since $A$ is connected, $I=f^{-1}(A)$ also is. But then $I$ is an interval, so $I^c = (0,2\pi)\backslash I$ is either an interval containing $(0,\epsilon)$ or $(2\pi-\epsilon,2\pi)$ for some $\epsilon>0$ or the disjoint union of two intervals containing $(0,\epsilon)\cup(2\pi-\epsilon,0)$, for some $\epsilon>0$. In both cases, since $\T^1\backslash A = f(I^c)\cup\{0+2\pi\Z\}$ it is clearly connected. Switching the roles of $A$ and $A^c$, the converse also follows.
\end{proof}

\bibliographystyle{plain} % We choose the "plain" reference style
\bibliography{references} % Entries are in the zbmath.bib file
%\nocite{*}

\end{document}